\numberwithin{equation}{section}
\newtheorem{defi}{Definition}[section]
\newtheorem{unTheorem}{Theorem}[section]
\newtheorem{propal}[defi]{Proposition}
\newtheorem{rem}[defi]{Remark}
\newtheorem{lem}[defi]{Lemma}
\newtheorem{ass}[defi]{Assumptions}
\newtheorem{nota}[defi]{Notation}
\numberwithin{equation}{section}
\title{Multiphase high-frequency solutions to Klein-Gordon-Maxwell equations in Lorenz gauge in the (3+1) Minkowski spacetime}
\author[]{Tony Salvi}
\date{}
\begin{document}

\maketitle

\begin{center}
    \textbf{Abstract}
    \hfill\begin{minipage}{\dimexpr\textwidth-0.25cm}
We study a 1-parameter family $(A_\lambda,\Phi_\lambda)_{0<\lambda}$ of multiphase high-frequency solutions to Klein-Gordon-Maxwell equations in Lorenz gauge in the (3+1)-dimensional Minkowski spacetime. This family is based on an ansatz for the initial data. We prove that for $\lambda$ small enough the family of solutions exists on an interval uniform in $\lambda$ depending only on the initial ansatz. These solutions are close to an approximate solution constructed by geometric optics. The initial ansatz needs to be regular enough, to satisfy a polarization condition and to satisfy the constraints for Maxwell in Lorenz gauge, but there is no need for smallness of any kind. The phases need to interact in a coherent way. We also observe that the limit $(A_0,\Phi_0)$ is not solution to Klein-Gordon-Maxwell equations but to a Klein-Gordon-Maxwell null-transport type system, i.e., there is a backreaction.
\end{minipage}
\end{center}
\tableofcontents

\section{Introduction}
\label{section:Intro}
In this article, we construct one-parameter families of multiphase high-frequency solutions to the Klein-Gordon-Maxwell (KGM) equations
\begin{equation}
\begin{cases}
\label{eq:KGMintro}
\partial_\alpha F^{\alpha\beta}=-\Im(\Phi\overline{\partial^{\beta}\Phi})+A^{\beta}|\Phi|^2,\\
(\partial^\alpha+iA^\alpha)(\partial_\alpha+iA_\alpha)\Phi=0,
\end{cases}
\end{equation}
in the Minkowski spacetime on $\mathbb{R}^{3+1}$. The unknowns are the charged scalar field $\Phi$ (a complex function) and the electromagnetic four-potential $A$. We also define the Faraday tensor (the electromagnetic field) by $F=dA$, that is, in coordinates, $F_{\alpha\beta}=\partial_\alpha A_\beta-\partial_\beta A_\alpha$.
Our family of high-frequency solutions $(A_\lambda,\Phi_\lambda)_{0<\lambda}$ is indexed by a small parameter $\lambda$, where $\lambda\to0$ represents the characteristic wavelength, and its initial data are constructed from the following WKB-type ansatz:
 \begin{align}
     \label{eq:initialansatzintro1}
        a_{1\lambda}^\alpha&=a_0^\alpha+\lambda^{1/2}\sum_{\mathscr{A}\in\mathbb{A}}\left(p_\mathscr{A}^\alpha \cos\left(\frac{v_\mathscr{A}}{\lambda}\right)+q_\mathscr{A}^\alpha \sin\left(\frac{v_\mathscr{A}}{\lambda}\right)\right),\\
        \phi_{1\lambda}&=\phi_0+\lambda^{1/2}\sum_{\mathscr{A}\in\mathbb{A}}\psi_\mathscr{A}e^{i\frac{v_\mathscr{A}}{\lambda}}.
 \end{align}
Here, $(a_0^\alpha,\phi_0)$ (together with $(\dot{a}_0^\alpha,\dot{\phi}_0)$ for their time derivatives) denotes the background initial data, on top of which we superpose a finite sum ($|\mathbb{A}|=N\in\mathbb{N}$) of small perturbations, scaled by $\lambda^{1/2}$. These perturbations consist of amplitudes $(p_\mathscr{A}^\alpha,q_\mathscr{A}^\alpha ,\psi_\mathscr{A})_{\mathscr{A}\in\mathbb{A}}$ multiplied by highly oscillating factors with non-stationary phases $(v_\mathscr{A})_{\mathscr{A}\in\mathbb{A}}$. This framework naturally falls within the viewpoint of geometric optics, in which we describe (electromagnetic or charged scalar fields) waves by localized rays. \\\\
Our result is twofold:
\begin{itemize}
\item We prove that the multiphase high-frequency approximation of solutions to \eqref{eq:KGMintro} constructed via geometric optics (and thus by the WKB expansion method) is stable and rigorous. In particular, the result holds for any number of superposed phases $N$, provided the phases interact coherently. This includes all possible plane wave interactions.\\
\item We exhibit a backreaction phenomenon. The dynamics of the limit $(A_0,\Phi_0)$ is non-trivial: small-scale inhomogeneities, corresponding to the high-frequency waves of low amplitude modelled by geometric optics in \eqref{eq:initialansatzintro1}, induce large-scale effects on the background $(A_0,\Phi_0)$ and act as an effective charge flux. 
\end{itemize}

Regarding the first point, we refer to \cite{METIVIER2009169} and \cite{10.1215/S0012-7094-93-07007-X} for a complete review of the WKB approximation for first-order hyperbolic systems. These results are not directly applicable to KGM since recovering the hyperbolicity requires fixing a gauge. Moreover, KGM equations \eqref{eq:KGMintro} are of second order. Nevertheless, the WKB approximation for KGM (and for the more general Yang-Mills-Higgs-Dirac system, of which KGM is a subcase) is known to be stable in the monophase case since the works \cite{zbMATH02124168,zbMATH01799448}, where approximate solutions are constructed at arbitrary order. Here, we extend the stability result to the multiphase setting and provide a sharper method that requires only a first-order approximate solution (as opposed to at least third-order ones in \cite{zbMATH02124168,zbMATH01799448}). As a consequence, our method allows for less regular initial data in the ansatz \eqref{eq:initialansatzintro1} than is allowed in \cite{zbMATH02124168,zbMATH01799448}. We state the following rough Theorem (see Section \ref{section:results} for the complete version):
\begin{unTheorem}
\label{unTheorem:Th1main}
Assuming an approximate gauge condition, a polarization condition, that the constraints imposed by Maxwell’s equations hold, and coherent phase interactions on the initial ansatz \eqref{eq:initialansatzintro1}, there exists (at least) one family of solutions $(A_\lambda,\Phi_\lambda)_{0<\lambda}$ based on \eqref{eq:initialansatzintro1}, with a uniform time of existence for $0<\lambda<\lambda_0$, for some $\lambda_0\in\mathbb{R}$ small enough.
\end{unTheorem}
In this paper, the gauge condition is the Lorenz  one: $\partial_\alpha A^\alpha=0$. To have an approximate gauge condition for the initial data, one must ask for $\partial_i a^i+\dot{a}^0=0$, but also for the polarization condition $-|\nabla v_\mathscr{A}|p_\mathscr{A}^0+\partial_i v_\mathscr{A}p_\mathscr{A}^i=-|\nabla v_\mathscr{A}|q_\mathscr{A}^0+\partial_i v_\mathscr{A}q_\mathscr{A}^i=0$. The latter corresponds to the standard polarization of light, at the level of the potential. The constraint is given by the 0-component of Maxwell's equations, also known as Gauss's law,
\begin{align}
\label{eq:constrmaxin}
\partial_\alpha F^{\alpha0}=J^0
\end{align}
which equates the divergence of the electric field with the charge density $J^0$. This yields an elliptic equation for the initial data $a^0_0$. The coherence assumption is related to the alignment of couples of phase gradients $(\nabla v_\mathscr{A},\nabla v_\mathscr{B})_{\mathscr{A},\mathscr{B}\in\mathbb{A}}$, which are non-zero due to non-stationarity.\\\\ Under the Lorenz gauge condition, the KGM system \eqref{eq:KGMintro} reduces to the schematic form
\begin{equation}
\label{eq:schemeqIntro}
    \Box \textbf{F}_\lambda=\textbf{F}_\lambda\partial\textbf{F}_\lambda+(\textbf{F}_\lambda)^3,
\end{equation}
where $\textbf{F}_{\lambda}$ denotes $(A_\lambda,\Phi_\lambda)$. This notation will be used throughout the rest of the paper. The basic strategy underlying the proof of Theorem \ref{unTheorem:Th1main} is to seek solutions to \eqref{eq:schemeqIntro} under the form
\begin{equation}
        \label{eq:schemfullparametrixIntro}       
        \textbf{F}_{\lambda}=\underbrace{\textbf{F}_0+\lambda^{1/2}\sum_{\mathscr{A}\in\mathbb{A}}e^{i\frac{u_\mathscr{A}}{\lambda}}\textbf{F}_\mathscr{A}}_\text{$\textbf{F}_{1\lambda}$}+\textbf{Z}_{\lambda},
\end{equation}
where $(u_\mathscr{A})_{\mathscr{A}\in\mathbb{A}}$ are phases, $(\textbf{F}_{1\lambda})_{0<\lambda}$ are approximate solutions to \eqref{eq:schemeqIntro} and $(\textbf{Z}_{\lambda})_{0<\lambda}$ denotes error terms as $\lambda\to0$. Then, we show that the Lorenz gauge condition is satisfied, ensuring that $(\textbf{F}_{\lambda})_{0<\lambda}$ indeed solves the KGM system \eqref{eq:KGMintro}. We also address the construction of initial data for the error term satisfying the constraint \eqref{eq:constrmaxin}, a point that is absent from \cite{zbMATH02124168,zbMATH01799448}. To obtain a uniform time of existence, we rely on a bootstrap argument on the smallness of the error term $(\textbf{Z}_{\lambda})_{0<\lambda<\lambda_0}$, for $0<\lambda_0$ small enough. As explained below, the most delicate nonlinearity to deal with is the $\textbf{F}_\lambda\partial\textbf{F}_\lambda$ term. This nonlinearity is also important because it gives rise to $O(1)$ interaction terms, which constitute the main mechanism responsible for the backreaction. This leads us to our second main subject.\\\\
The backreaction phenomenon in high-frequency limits is well understood for a related system: the Einstein vacuum equations. We refer to \cite{Choquetbruhat1969ConstructionDS} for the first mathematical work on the subject and \cite{huneau2022trilinear}, \cite{zbMATH07009768}, \cite{zbMATH06921833}, \cite{luk2020highfrequency}, \cite{huneau2024burnetts}, \cite{zbMATH07719650}, \cite{zbMATH07732086} and \cite{touati2024reverse} for recent contributions. These works are dedicated to the proof of the Burnett conjecture (or its reverse counterpart) enunciated in \cite{zbMATH04086482}. Roughly speaking, this conjecture states that the limits of such high-frequency solutions\footnote{The assumptions are that the convergence of the derivative of the metric is only a weak convergence, see \cite{zbMATH04086482} for a more precise statement.} to the vacuum Einstein equations solve the Einstein equations coupled with a massless Vlasov field, thereby exhibiting a backreaction through an effective stress–energy tensor. Thus, we establish in the present work an analog of the Burnett conjecture for the KGM system, though weaker for reasons stated in Section \ref{subsection:Einsteincomp}, together with other comparisons. From a structural perspective, it is not surprising to observe a common behavior for high-frequency limits of solutions to KGM and Einstein equations. Indeed, both systems are gauge-invariant nonlinear wave equations and share analogous nonlinear structures, for example, a partial null structure (see \cite{zbMATH03966906} and \cite{zbMATH03989952} for the introduction of this notion). Our second main result is the following (see Section \ref{section:results} for the complete version):
\begin{unTheorem}
 We consider the family $(A_\lambda,\Phi_\lambda)_{0<\lambda}$ of solutions to KGM given by the previous theorem. The $L^2$-limit $(A_0,\Phi_0)=\lim_{\lambda\to0}(A_\lambda,\Phi_\lambda)$ is solution to a Klein-Gordon-Maxwell-null-transport (KGMn) system 
\begin{equation}
\begin{cases}
\label{eq:KGMnullintro}
\partial_\alpha (F_0)^{\alpha\beta}=-\Im(\Phi_0\overline{\partial^{\beta}\Phi_0})+(A_0)^{\beta}|\Phi_0|^2+\sum_{\mathscr{A}\in\mathbb{A}}\partial^\beta u_{\mathscr{A}}\rho_\mathscr{A},\\
(\partial^\alpha+i(A_0)^\alpha)(\partial_\alpha+i(A_0)_\alpha)\Phi_0=0,\\
\forall\mathscr{A}\in\mathbb{A},\;\partial^\beta u_{\mathscr{A}}\partial_\beta u_{\mathscr{A}}=0,\\
\forall\mathscr{A}\in\mathbb{A},\;\partial^\beta u_{\mathscr{A}}\partial_\beta\rho_{\mathscr{A}}+\Box u_{\mathscr{A}}\rho_{\mathscr{A}}=0.
\end{cases}
\end{equation}
\end{unTheorem}
Here, $(u_\mathscr{A})_{\mathscr{A}\in\mathbb{A}}$ are a family of characteristic phases for the d'Alembertian $\Box=-\partial_{tt}^2+\Delta$ (see Section \ref{subsection:geoopt}), corresponding to the initial phases $(v_\mathscr{A})_{\mathscr{A}\in\mathbb{A}}$, and $(\rho_\mathscr{A})_{\mathscr{A}\in\mathbb{A}}$ denotes charge densities transported along the null rays of $(u_\mathscr{A})_{\mathscr{A}\in\mathbb{A}}$ from the initial densities $(|\psi_\mathscr{A}|^2)_{\mathscr{A}\in\mathbb{A}}$. The limit of high-frequency solutions to the KGM system \eqref{eq:KGMintro} is not itself solution to KGM \eqref{eq:KGMintro}, but rather to KGMn \eqref{eq:KGMnullintro}, i.e., there is a backreaction.\\\\
We now present a few basic computations to clarify the origin of the backreaction. Applying the wave operator, that is, the main operator of \eqref{eq:schemfullparametrixIntro}, to the first-order approximation $\textbf{F}_{1\lambda}$ yields
\begin{align*}
\Box\textbf{F}_{1\lambda}&=\Box\textbf{F}_{0}+\Box\left( \lambda^{1/2}\sum_{\mathscr{A}\in\mathbb{A}}e^{i\frac{u_\mathscr{A}}{\lambda}}\textbf{F}_\mathscr{A}\right)\\
&=\underbrace{\Box\textbf{F}_{0}}_\text{Background}+\lambda^{-3/2}\sum_{\mathscr{A}\in\mathbb{A}}\underbrace{-\partial_\alpha u_\mathscr{A}\partial^\alpha u_\mathscr{A}}_\text{\;$=0$, eikonal equation}e^{i\frac{u_\mathscr{A}}{\lambda}}\textbf{F}_\mathscr{A}+\lambda^{-1/2}\sum_{\mathscr{A}\in\mathbb{A}}\underbrace{i(2\partial^\alpha u_\mathscr{A}\partial_\alpha+\Box u_{\mathscr{A}})(\textbf{F}_\mathscr{A})}_\text{Transport} e^{i\frac{u_\mathscr{A}}{\lambda}}+O(\lambda^{1/2}).
\end{align*}
 At the highest amplitudes, we observe two of the main features of the WKB method: the eikonal equation and the transport of the profiles. Therefore, we naturally recover that the phases $(u_\mathscr{A})_{\mathscr{A}\in\mathbb{A}}$ are isotropic and that the profiles are transported along the associated null rays, as are the densities in \eqref{eq:KGMnullintro}. Non-schematically, the term $\lambda^{1/2}\sum_{\mathscr{A}\in\mathbb{A}}e^{i\frac{u_\mathscr{A}}{\lambda}}\textbf{F}_\mathscr{A}$ represents\footnote{For simplicity, we restrict ourselves to the case $q_\mathscr{A}=0$.} both oscillatory perturbations $\lambda^{1/2}\sum_{\mathscr{A}\in\mathbb{A}}P_\mathscr{A}^\alpha \cos\left(\frac{u_\mathscr{A}}{\lambda}\right)$ and $\lambda^{1/2}\sum_{\mathscr{A}\in\mathbb{A}}e^{i\frac{u_\mathscr{A}}{\lambda}}\Psi_\mathscr{A}$. Substituting the first-order approximation $\textbf{F}_{1\lambda}$ into the nonlinearity $\textbf{F}_\lambda\partial\textbf{F}_\lambda$, we obtain 
\begin{align*}
\Im(\Phi\overline{\partial^{\beta}\Phi})&=\underbrace{\Im(\Phi_0\overline{\partial^{\beta}\Phi_0})}_\text{Background}+\underbrace{\sum_{\mathscr{A}\in\mathbb{A}}\partial^\beta u_\mathscr{A}|\Psi_\mathscr{A}|^2}_\text{Backreaction}+\lambda^{-1/2}\sum_{\mathscr{A}\in\mathbb{A}}\underbrace{\Im\left(-i\Phi_0e^{i\frac{-u_\mathscr{A}}{\lambda}}\overline{\Psi_\mathscr{B}}\right)}_\text{Absorbed in the transport}\\
&+\underbrace{\sum_{\mathscr{A},\mathscr{B}\in\mathbb{A},\mathscr{A},\neq\mathscr{B}}\Im\left(-ie^{i\frac{u_\mathscr{A}-u_\mathscr{B}}{\lambda}}\Psi_\mathscr{A}\overline{\Psi_\mathscr{B}}\right)}_\text{$=\tilde{\Xi}_{A\lambda}$}+O(\lambda^{1/2})
\end{align*}
for the Maxwell equation, and 
\begin{align*}
-2iA^\alpha\partial_\alpha\Phi&=\underbrace{-2iA^\alpha_0\partial_\alpha\Phi_0}_\text{Background}+\underbrace{\sum_{\mathscr{A}\in\mathbb{A}}\Psi_\mathscr{A}\partial_\alpha u_\mathscr{A}P^\alpha_\mathscr{A}(e^{i2\frac{u_\mathscr{A}}{\lambda}}+1)}_\text{$=0$, by polarization}+\underbrace{\sum_{\mathscr{A}\in\mathbb{A}}2A^\alpha_0\partial_\alpha u_\mathscr{A}e^{i\frac{u_\mathscr{A}}{\lambda}}\Psi_\mathscr{A}}_\text{Absorbed by the transport}\\
&+\underbrace{\sum_{\mathscr{A},\mathscr{B}\in\mathbb{A},\mathscr{A},\neq\mathscr{B}}\Psi_\mathscr{A}\partial_\alpha u_\mathscr{A}P^\alpha_\mathscr{B}(e^{i\frac{u_\mathscr{A}+u_\mathscr{B}}{\lambda}}+e^{i\frac{u_\mathscr{A}-u_\mathscr{B}}{\lambda}})}_\text{$=\tilde{\Xi}_{\Phi\lambda}$}+O(\lambda^{1/2})
\end{align*} 
for the Klein-Gordon equation. The backreaction, namely the $O(1)$ non-highly-oscillating term, is clearly visible in the Maxwell equation. Since it acts at the level of the background, it must be incorporated into the limiting dynamics of the latter. In contrast, in the Klein-Gordon equation, the potential backreaction term is canceled by the polarization condition $\partial_\alpha u_\mathscr{A}P^\alpha_\mathscr{A}=0$, which is propagated from the initial data. Then, the remaining terms $\tilde{\Xi}_{A\lambda}$ and $\tilde{\Xi}_{\Phi\lambda}$ are detailed below. For the nonlinearity $(\textbf{F}_\lambda)^3$, there are no effects at this level of the analysis. Overall, we see the heuristic behind the limiting dynamics given by \eqref{eq:KGMnullintro}.\\\\
In the previous computations, we have essentially identified the main ingredients required to construct the first-order approximate solutions $(\textbf{F}_{1\lambda})_{0<\lambda}$. A first key advantage of our method is that it does not require the construction of higher-order approximate solutions, thereby avoiding the derivation of a larger hierarchy of equations. A second advantage is reflected in the coherence assumption. Specifically, we assume that pairwise phase interactions are either everywhere non-resonant, namely $\partial_\alpha (u_\mathscr{A}\pm u_\mathscr{B})\partial^\alpha(u_\mathscr{A}\pm u_\mathscr{B})\neq0$, or everywhere resonant, namely $\partial_\alpha (u_\mathscr{A}\pm u_\mathscr{B})\partial^\alpha(u_\mathscr{A}\pm u_\mathscr{B})=0$. We emphasize that the restriction is propagated from the initial data and that it applies only to pairwise interactions, which is natural since higher-order interactions do not appear at the level of the first-order approximation. In particular, this assumption is sufficient to treat the $\tilde{\Xi}_{A\lambda}$ and $\tilde{\Xi}_{\Phi\lambda}$ terms with the error term, despite  their apparent lack of smallness. Indeed, these terms are highly oscillatory, with phases arising from pairwise interactions, and may therefore be separated into two categories corresponding to non-resonant and resonant interactions. In both cases, the terms $\tilde{\Xi}_{A\lambda}$ and $\tilde{\Xi}_{\Phi\lambda}$ are absorbed by introducing additional unknowns in the error decomposition, namely $\boldsymbol{\mathcal{E}}_\lambda^{ell}$ and $(\breve{\textbf{F}}^+_{\mathscr{A}\pm\mathscr{B}})_{(\mathscr{A},\mathscr{B})\in\mathscr{C}}$, for $\mathscr{C}$ the set of resonant interactions. On the other hand, since the error term $\textbf{Z}_\lambda$ is constructed only after the first-order approximation, it admits uniform control of at most half a derivative. The coefficient $1/2$ corresponds to the scale-invariant $\dot{H}^{1/2}$ norm for the KGM system \eqref{eq:KGMintro} and is naturally related to the conjectured minimal regularity threshold for its well-posedness, see \cite{zbMATH00567356,Cuccagna01011999,zbMATH01782982,zbMATH02063481,keel2010globalwellposednessmaxwellkleingordonequation,zbMATH07196901}. These poor uniform bounds on the error term $\textbf{Z}_\lambda$ create additional difficulties when estimating the nonlinear term in the right-hand side of \eqref{eq:schemfullparametrixIntro}. Nonetheless, only specific directions of oscillation are responsible for this bad behavior. We therefore isolate these contributions by introducing further unknowns ($(\textbf{F}^+_{\lambda\mathscr{A}})_{\mathscr{A}\in\mathbb{A}}$) into the error. In the end, the error term has the form 
\begin{equation}
\label{eq:schemparaerrorIntro}
\textbf{Z}_\lambda=\sum_{(\mathscr{A},\mathscr{B})\in\mathscr{C}}\lambda^{1}\breve{\textbf{F}}^+_{\mathscr{A}\pm\mathscr{B}}e^{i\frac{u_{\mathscr{A}}\pm u_{\mathscr{B}}}{\lambda}}+\boldsymbol{\mathcal{E}}_\lambda^{ell}+\sum_{\mathscr{A}\in\mathbb{A}}\lambda^{1}\textbf{F}^+_{\lambda\mathscr{A}}e^{i\frac{u_\mathscr{A}}{\lambda}}+\boldsymbol{\mathcal{E}}_\lambda^{evo},
\end{equation} 
whose precise structure is more detailed in Sections \ref{subsection:Idea} and \ref{section:exactsolut}. The various components of $\textbf{Z}_\lambda$ are either computed explicitly with respect to the background or solve a wave-transport system. Despite an apparent loss of derivatives, this system is proved to be well-posed through the introduction of appropriate auxiliary functions. The refined structure of $\textbf{Z}_\lambda$ allows us to close the bootstrap and, in particular, to control the nonlinearity $\textbf{F}_\lambda\partial\textbf{F}_\lambda$. This nonlinearity generates dangerous error-perturbation interaction terms, which are absorbed by the variable $(\textbf{F}^+_{\lambda\mathscr{A}})_{\mathscr{A}\in\mathbb{A}}$, as well as error-error interaction terms. The latter are dealt with using precise product estimates involving Strichartz inequalities. Such interaction terms also appear in \cite{zbMATH02124168,zbMATH01799448}. However, in those works, they are controlled by constructing a smaller error term based on a third-order approximate solution. By contrast, our approach avoids this requirement. Finally, we add that we are able to construct generic families of error terms satisfying Maxwell's constraints and possessing the required smallness. This step is non-trivial, notably because our approximate solutions are constructed for the reduced system in Lorenz gauge \eqref{eq:schemeqIntro} rather than for the full KGM system \eqref{eq:KGMintro}. The main ingredients for this construction are once again the coherence assumption, the polarization condition, the non-stationarity of the phases, and the precise structure of $\tilde{\Xi}_{A\lambda}$.  \\\\
Overall, the steps of the proof of both Theorems are more detailed in Section \ref{subsection:Idea}.
\subsection{Generalities on KGM and KGMn}
\label{subsection:geneKGM}
We reintroduce the Klein-Gordon-Maxwell (KGM) system
\begin{equation}
\begin{cases}
\label{eq:KGMgeneKGM}
\partial_\alpha F^{\alpha\beta}=- \Im(\Phi\overline{\partial^{\beta}\Phi})+A^{\beta}|\Phi|^2,\\
(\partial^\alpha+iA^\alpha)(\partial_\alpha+iA_\alpha)\Phi=0
\end{cases}
\end{equation}
and the Klein-Gordon-Maxwell-null-transport (KGMn) system
\begin{equation}
\begin{cases}
\label{eq:KGMnullgeneKGM}
\partial_\alpha F^{\alpha\beta}=-\Im(\Phi\overline{\partial^{\beta}\Phi})+A^{\beta}|\Phi|^2+\sum_{\mathscr{A}\in\mathbb{A}}\partial^\beta u_{\mathscr{A}}\rho_\mathscr{A},\\
(\partial^\alpha+iA^\alpha)(\partial_\alpha+iA_\alpha)\Phi=0,\\
\forall\mathscr{A}\in\mathbb{A},\;\partial^\beta u_{\mathscr{A}}\partial_\beta u_{\mathscr{A}}=0,\\
\forall\mathscr{A}\in\mathbb{A},\;\partial^\beta u_{\mathscr{A}}\partial_\beta\rho_{\mathscr{A}}+\Box u_{\mathscr{A}}\rho_{\mathscr{A}}=0.
\end{cases}
\end{equation}
The KGM system is a classical model of quantum electrodynamics describing charged, spinless particles, which in our setting are moreover massless, interacting with an electromagnetic field. The KGMn system is obtained by introducing an additional charge source, given by a finite collection of charge densities $(\rho_\mathscr{A})_{\mathscr{A}\in\mathbb{A}}$ transported along prescribed (irrotational) flux directions $(\textbf{d}u^\#_\mathscr{A})_{\mathscr{A}\in\mathbb{A}}$. While these charges influence the electromagnetic field, their trajectories (following the flow lines of $(\textbf{d}u^\#_\mathscr{A})_{\mathscr{A}\in\mathbb{A}}$) are free and are not affected by the field itself. \\\\
Both systems are invariant under $U(1)$-gauge transformation. More precisely, let $\chi$ be a real function on $\mathbb{R}^{3+1}$ and define 
\begin{align*}
    &A'_\alpha=A_\alpha+\partial_\alpha\chi, &\Phi'=e^{-i\chi}\Phi.
\end{align*}
Then, formally, $(A',\Phi')$ (resp. $(A',\Phi',(\rho_\mathscr{A})_{\mathscr{A}\in\mathbb{A}},(u_\mathscr{A})_{\mathscr{A}\in\mathbb{A}})$) is a solution to KGM (resp. KGMn) if and only if $(A,\Phi)$ (resp. $(A,\Phi,(\rho_\mathscr{A})_{\mathscr{A}\in\mathbb{A}},(u_\mathscr{A})_{\mathscr{A}\in\mathbb{A}})$) is a solution too. One may therefore fix a gauge and remove the indeterminacy by imposing an additional equation. In this paper, we adopt the \textbf{Lorenz gauge} 
\begin{equation}
    \label{eq:lorenzgaugegeneKGM}
    \partial_\alpha A^\alpha=0
\end{equation}
which allows us to recover a system of semilinear wave equations. Under this choice \eqref{eq:KGMgeneKGM} reduces to
\begin{equation}
\begin{cases}
\label{eq:KGMLgeneKGM}
\Box A^\beta=- \Im(\Phi\overline{\partial^{\beta}\Phi})+A^{\beta}|\Phi|^2,\\
\Box\Phi=-2iA^\alpha\partial_\alpha\Phi+A^\alpha A_\alpha\Phi.
\end{cases}
\end{equation}
We refer to \eqref{eq:KGMLgeneKGM} as KGML. For simplicity, we also refer to the reduced Maxwell and reduced Klein-Gordon equations as Maxwell and Klein-Gordon equations. A completely analogous reduced formulation is obtained for the KGMn system in Lorenz gauge. \\
Other gauge choices frequently used in the literature include the temporal gauge and the Coulomb gauge. These gauges have notably been employed to establish global existence results, as in \cite{zbMATH03781718}, as well as global or local existence for low-regularity classes of solution, as in \cite{dfb0a54d-bfd7-3b95-ae19-04c1b390d84d}, \cite{10.1215/S0012-7094-94-07402-4}, \cite{zbMATH02063481} and \cite{keel2010globalwellposednessmaxwellkleingordonequation}.\\\\
By Noether's Theorem, gauge invariance implies the conservation of the charge flux
\begin{align*}
    &J^\beta=- \Im(\Phi\overline{\partial^{\beta}\Phi})+A^{\beta}|\Phi|^2  &\text{(resp.}\;\;J^\beta=- \Im(\Phi\overline{\partial^{\beta}\Phi})+A^{\beta}|\Phi|^2+\sum_{\mathscr{A}\in\mathbb{A}}\partial^\beta u_{\mathscr{A}}\rho_\mathscr{A})
\end{align*}
which satisfies
\begin{equation}
\label{eq:divchargegeneKGM}
    \partial_\alpha J^\alpha=0.
\end{equation}
This conservation law plays a central role in the propagation of the Lorenz gauge from the initial data. 
\subsubsection{Lorenz gauge propagation}
\label{subsubsection:propaggauegeneKGM}
To solve \eqref{eq:KGMgeneKGM} (or similarly \eqref{eq:KGMnullgeneKGM}), one may first prescribe initial data $(A^\alpha|_{t=0}=a^\alpha,\partial_tA^\alpha|_{t=0}=\dot{a}^\alpha,\Phi|_{t=0}=\phi,\partial_t\Phi|_{t=0}=\dot{\phi})$ satisfying both the Lorenz gauge condition and the Maxwell constraint equation arising from Gauss's law, namely
\begin{equation}
\begin{cases}
\label{eq:constraintsgeneKGM}
\partial_j a^j+\dot{a}^0=0,\\
\Delta a^0+\partial_j\dot{a}^j=J_0.
\end{cases}
\end{equation}
One then solves the hyperbolic KGML system \eqref{eq:KGMLgeneKGM} (or the corresponding reduced system associated with \eqref{eq:KGMnullgeneKGM}) and subsequently shows that the Lorenz gauge condition propagates in time. To this end, we take the divergence of the Maxwell equation in \eqref{eq:KGMLgeneKGM} and obtain that 
\begin{equation}
\label{eq:propaggaugegeneKGM}
     \Box \partial_\beta A^\beta=-\partial_\beta A^{\beta}|\Phi|^2,
\end{equation}
where we have used the conservation of charge \eqref{eq:divchargegeneKGM}. From there, since the initial data are chosen so that $\partial_\alpha A^\alpha|_{t=0}=0$ and $\partial_t(\partial_\alpha A^\alpha)|_{t=0}=0$ as a consequence of the constraints, it follows that the Lorenz gauge condition \eqref{eq:lorenzgaugegeneKGM} remains satisfied for all time, i.e., the gauge propagates.\\
\subsection{Geometric optics}
\label{subsection:geoopt}
\subsubsection{Plane wave}
\label{subsubsection:planewave}
Before turning to the standard WKB method and multiphase high-frequency solutions, we briefly recall the plane wave solutions of the Klein-Gordon-Maxwell system linearized around the vacuum state, 
\begin{equation}
    \begin{cases}
    \label{eq:KGMlineargeneKGM}
        \partial_\alpha F^{\alpha\beta}_{p.w.}=\Box A^{\beta}_{p.w.}-\partial^\beta\partial_{\alpha}A^\alpha_{p.w.}=0,\\
        \Box\Phi_{p.w.}=0.
    \end{cases}
\end{equation}
Plane wave solutions are constant along affine hyperplanes orthogonal to the direction of propagation. They can be written in the form 
\begin{align*}
    & A^\beta_{p.w.}=P^\beta \cos(k_\alpha x^\alpha)+Q^\beta \sin(k_\alpha x^\alpha), &\Phi_{p.w.}=\Psi e^{ik'_\alpha x^\alpha}
\end{align*}
for the four-potential and the scalar field, respectively. The quantities $P,Q,\Psi,k$ and $k'$ are constant vectors or scalars. Seeking solutions to \eqref{eq:KGMlineargeneKGM} leads us to the following constraints for the parameters\footnote{One may verify that the Maxwell vacuum equation also admits plane wave solutions with non-characteristic phases, i.e.\ $k_\alpha k^\alpha\neq0$. These correspond to pure gauge solutions and are therefore invisible at the level of the Faraday tensor.}
\begin{align*}
    &k_\alpha k^\alpha=k'_\alpha k'^\alpha=0, &k_\alpha P^\alpha=k_\alpha Q^\alpha=0.
\end{align*}
The waves propagate in a null direction, i.e., at the speed of light, and the amplitudes of the potential are polarized\footnote{The Faraday tensor itself is polarized. The electromagnetic waves oscillate perpendicularly to the direction of propagation.}. It is not surprising that the initial ansatz considered in this work is assumed to satisfy the same polarization.
\subsubsection{Brief overview of geometric optics}
\label{subsection:revgeoopt}
Geometric optics is a method for constructing high-frequency approximations of solutions to partial differential equations. It originates from classical optics, whose purpose is to describe the propagation of light, that is, electromagnetic waves governed by the Maxwell equations. In this framework, propagation occurs along rays and fits the particle-like viewpoint of light. This high-frequency approximation method is generalized to first-order symmetric hyperbolic systems of the form
\begin{equation}
\label{eq:symhypsysgeoopt}
    M(U)^\alpha\partial_\alpha U=F(U),
\end{equation}
where $U$ is a vector-valued unknown defined on $\mathbb{R}^{3+1}$. We refer to \cite{METIVIER2009169} and \cite{10.1215/S0012-7094-93-07007-X} for very complete descriptions of the subject. Typically, by analogy with electromagnetic waves, the amplitudes are propagated at the speeds given by the eigenvalues of the system and are polarized in the corresponding eigenspaces. In the monophase case, we look for approximate solutions in the asymptotic regime $\lambda\to0$ under the form of a WKB expansion
\begin{equation}
\label{eq:WKBexp1geoopt}
    U_{app}(t,x)=\lambda^p\sum_{j\geq0} \lambda^{j/2}U_{j}\left(t,x,\frac{u}{\lambda}\right),
\end{equation}
where $\lambda$ denotes the typical wavelength (or period), $p$ is a tuning parameter, $u$ is a phase function and where the profiles $U_j(t,x,\theta)$ are periodic with respect to their third variable. The function $U_{app}$ is an approximate solution at order $m$ if, for $0\leq j\leq m$, the profiles $U_j$ are solutions to a cascade of equations depending on the phase $u$, each equation being designed to cancel a corresponding power of $\lambda$.\\
The two main features of this hierarchy are the so-called eikonal equation\footnote{Written here for linear systems.} for the phase
\begin{equation}
\label{eq:eikonalgeoopt}
    \det(M^\alpha\partial_\alpha u)=0,
\end{equation}
and its associated polarisation condition 
\begin{equation}
\label{eq:polargeoopt}
    U_0\in \ker(M^\alpha\partial_\alpha u).
\end{equation}
A phase satisfying the eikonal equation is said to be characteristic. We do not detail here the transport equation governing the evolution of the amplitudes.
In the linear case, there is by definition neither interaction nor generation of harmonics. Consequently, one may consider a more precise ansatz of the form 
\begin{equation}
\label{eq:WKBexp2geoopt}
    U_{app}(t,x)=\lambda^p\sum_{j\geq0} \lambda^{j/2}e^{i\frac{u}{\lambda}}U_{j}(t,x).
\end{equation}
In the nonlinear case\footnote{In this chapter, we restrict ourselves to the semilinear case, where $M^{\alpha}$ does not depend on $U$.}, such a simple description is generally no longer valid, since harmonics might play a crucial role. In fact, it strongly depends on the parameter $p$. If the amplitude is large, that is, for $p$ small, nonlinear terms appear at leading order in the transport equations and produce harmonics. This regime is commonly referred to as weakly nonlinear optics. This highlights the fact that geometric optics takes into account the scaling properties of the problem. Nevertheless, certain structures in the nonlinearities might still prevent the appearance of nonlinearities in the transport equations, we then talk about transparency \cite{zbMATH01517171,zbMATH06295894}, and the generation of harmonics. A notable example is provided by null structures (see \cite{zbMATH06295894}). \\\\
In the multiphase case, the general ansatz becomes 
\begin{equation}
\label{eq:WKBexpmultigeoopt}
    U_{app}(t,x)=\lambda^p\sum_{j\geq0} \lambda^{j/2}U_{j}\left(t,x,\frac{\mathscr{U}}{\lambda}\right),
\end{equation}
where $\mathscr{U}$ denotes the vector containing all the phases. We recall that only finitely many phases are considered, so that the group of frequencies, which encodes phase interactions, is isomorphic to $\mathbb{Z}^d$ for some finite $d$. In order to apply geometric optics techniques, one must qualify the interactions between phases. A standard assumption is the strong coherence condition, namely that for any $z\in\mathbb{Z}^d$ and any phase $u_z=z\cdot\mathscr{U}$
\begin{equation}
\label{eq:coherencegeoopt}
      \forall(t,x)\in\mathbb{R}^{3+1}  \det(M^\alpha\partial_\alpha u_z)(t,x)=0 \;\;\textbf{or}\;\; \forall(t,x)\in\mathbb{R}^{3+1}  \det(M^\alpha\partial_\alpha u_z)(t,x)\neq0.
\end{equation}
In other words, the arising phase $u_z$ is either everywhere characteristic or nowhere characteristic. Under such an assumption, one can generally build approximate solutions at arbitrary order, see \cite{METIVIER2009169} for more details. \\

The next step consists of studying the stability of such approximate solutions, that is, the existence of an exact solution remaining close to the approximation on a fixed interval (independent of $\lambda$). The exact solution is constructed as the sum of the approximate solution (obtained by truncating the WKB expansion \eqref{eq:WKBexpmultigeoopt} at order $m$) and an error term $Z_\lambda$, leading to the parametrix
\begin{equation}
\label{eq:parametrixgeoopt}
    U_{\lambda}(t,x)=\lambda^p\sum^m_{j\geq0} \lambda^{j/2}U_{j}\left(t,x,\frac{\mathscr{U}}{\lambda}\right)+Z_\lambda.
\end{equation}
A natural question is to determine the minimal order of expansion required to ensure the existence of such an exact solution. As in many applications of the WKB method, regularity is typically lost at each order of expansion (see Section \ref{subsection:regubg}). Consequently, constructing the remainder term at low order allows one to work with a less regular initial ansatz. More generally, higher-order constructions necessitate the analysis of increasingly complicated hierarchical systems of equations.
\subsubsection{Application of geometric optics to KGM}
\label{subsubsection:applyKGM}
In this paper, we begin with the initial ansatz
 \begin{equation}
 \label{eq:initialansatzgeoopt}
        a_{1\lambda}^\alpha=a_0^\alpha+\lambda^{1/2}\sum_{\mathscr{A}\in\mathbb{A}}\left(p_\mathscr{A}^\alpha \cos\left(\frac{v_\mathscr{A}}{\lambda}\right)+q_\mathscr{A}^\alpha \sin\left(\frac{v_\mathscr{A}}{\lambda}\right)\right),\;\;\;\;
        \phi_{1\lambda}=\phi_0+\lambda^{1/2}\sum_{\mathscr{A}\in\mathbb{A}}\left(\psi_\mathscr{A}e^{i\frac{v_\mathscr{A}}{\lambda}}\right)
 \end{equation}
and seek solutions to \eqref{eq:KGMintro} constructed from this ansatz. A first fundamental obstruction preventing the direct application of the general results of \cite{METIVIER2009169} and \cite{10.1215/S0012-7094-93-07007-X} lies in the fact that the hyperbolic character of the KGM system depends on the choice of gauge. In our approach, we apply geometric optics directly to the hyperbolic reduced system KGML \eqref{eq:KGMLgeneKGM} (or schematically \eqref{eq:schemeqIntro}), obtained after imposing the Lorenz gauge condition. Then, we are able to recover exact solutions to \eqref{eq:KGMintro} provided that the initial data in \eqref{eq:initialansatzgeoopt} are polarized, satisfy the approximate Lorenz gauge condition and solve the Maxwell constraint. We refer to Section \ref{subsection:initansatz}, and in particular to Definition \ref{defi:admissiblebginitansatz}, for a precise notion of admissible initial data. In our approach, all gauge-related difficulties are ultimately dealt with in the error term. The second reason preventing the direct use of the results of \cite{METIVIER2009169} and \cite{10.1215/S0012-7094-93-07007-X} is that both KGM \eqref{eq:KGMintro} and KGML \eqref{eq:KGMLgeneKGM} are second order\footnote{Although it is possible to rewrite these systems in first-order form, this comes at the expense of clarity.} systems. In particular, nonlinearities involving derivatives, such as $\textbf{F}\partial\textbf{F}$, appear. This feature naturally leads us to consider a $\lambda^{1/2}$ scaling for the amplitude for the high-frequency perturbations. At this critical scaling we capture non-trivial effects such as backreaction, as shown in explicit computations in the introduction, while preserving the required stability of the WKB analysis. We also refer to \cite{zbMATH00567356,Cuccagna01011999,zbMATH01782982,zbMATH02063481,keel2010globalwellposednessmaxwellkleingordonequation,zbMATH07196901} for further discussion on the relationship between the $1/2$ coefficient, the scale-invariant regularity of solutions to KGM and the associated low-regularity Cauchy theory.\\\\
The initial ansatz \eqref{eq:initialansatzgeoopt} yields first-order approximate solutions of the form
 \begin{equation}
 \label{eq:WKBexp3geoopt}       A_{1\lambda}^\alpha=A_0^\alpha+\lambda^{1/2}\sum_{\mathscr{A}\in\mathbb{A}}\left(P_\mathscr{A}^\alpha \cos\left(\frac{u_\mathscr{A}}{\lambda}\right)+Q_\mathscr{A}^\alpha \sin\left(\frac{u_\mathscr{A}}{\lambda}\right)\right), 
            \;\;\;\; \Phi_{1\lambda}=\Phi_0+\lambda^{1/2}\sum_{\mathscr{A}\in\mathbb{A}}\left(\Psi_\mathscr{A}e^{i\frac{u_\mathscr{A}}{\lambda}}\right).
 \end{equation}
 The leading-order equations in the WKB hierarchy associated with KGM \eqref{eq:KGMintro} consist of the eikonal equation
 \begin{align*}
 \forall\mathscr{A}\in\mathbb{A},\;\partial^\beta u_{\mathscr{A}}\partial_\beta u_{\mathscr{A}}=0,
 \end{align*}
 together with the polarization condition  
  \begin{align*}
 \forall\mathscr{A}\in\mathbb{A},\;\partial_\beta u_{\mathscr{A}}P^\beta_{\mathscr{A}}=\partial_\beta u_{\mathscr{A}}Q^\beta_{\mathscr{A}}=0.
 \end{align*}
Then, the transport equations for the profiles read 
\begin{align}
\label{eq:transportgeoopt}
 &\forall\mathscr{A}\in\mathbb{A},\;(2\partial^\alpha u_{\mathscr{A}}\partial_\alpha+\Box u_{\mathscr{A}})\Psi_{\mathscr{A}}=-i2A^\alpha_0\partial_\alpha u_\mathscr{A}\Psi_\mathscr{A},\\
& \forall\mathscr{A}\in\mathbb{A},\;(2\partial^\alpha u_{\mathscr{A}}\partial_\alpha+\Box u_{\mathscr{A}})W_{\mathscr{A}}^\beta=i\partial^\beta u_{\mathscr{A}}\overline{\Psi_\mathscr{A}}\Phi_0,
\end{align}
where $W_{\mathscr{A}}=P_{\mathscr{A}}+iQ_{\mathscr{A}}$. Finally, the background satisfies 
\begin{align}
\Box A_0^\beta&=-\Im(\Phi_0\overline{\partial^{\beta}\Phi_0})+(A_0)^{\beta}|\Phi_0|^2+\sum_{\mathscr{A}\in\mathbb{A}}\partial^\beta u_{\mathscr{A}}|\Psi_{\mathscr{A}}|^2,\\
\Box\Phi_0&=-2i(A_0)^\alpha\partial_\alpha\Phi_0+(A_0)^\alpha (A_0)_\alpha\Phi_0.
\end{align}
To initiate our discussion on the approximate solution \eqref{eq:WKBexp3geoopt}, we first disregard interactions between distinct phases. Although, for general approximate solutions, one must consider expansions of the form \eqref{eq:WKBexp1geoopt} in order to capture harmonic generation, the approximate solutions \eqref{eq:WKBexp3geoopt} closely resemble WKB expansions of the linear case \eqref{eq:WKBexp2geoopt}. The reason is that no harmonics appear at order $O(\lambda^{-1/2})$, which corresponds precisely to the level of the transport equations. Moreover, due to a triangular structure of KGM, the transport equations \eqref{eq:transportgeoopt} are linear in the profiles. We emphasize that it is noteworthy to derive the linear transport equation at such a critical scaling. As one can see, the first equation of \eqref{eq:transportgeoopt} corresponds to the transport of the densities $|\Psi_\mathscr{A}|^2$ in KGM-null-transport \eqref{eq:KGMnullintro}. As for the second equation, we observe that, while the transport of $(W_{\mathscr{A}})_{\mathscr{A}\in\mathbb{A}}$ depends on the background, the phases and $(\Psi_{\mathscr{A}})_{\mathscr{A}\in\mathbb{A}}$, it does not feed back into their evolution. In particular, it does not contribute to the limiting dynamics of $(A_0,\Phi_0)$, namely the dynamics given by KGM-null-transport \eqref{eq:KGMnullintro}. We refer to Section \ref{section:Approx} for a detailed discussion of the approximate solution.\\\\
In fact, no harmonics appear at order $O(1)$ either. As explained in the introduction, in the Klein-Gordon equation, the polarization cancels the $O(1)$ interaction of the perturbations, thereby suppressing harmonic generation. In the Maxwell equation, only the zeroth harmonic survives and takes the form of the backreaction. We distinguish the backreaction from the related rectification phenomenon (see \cite{METIVIER2009169}), where the zero mode emerges dynamically and is not present in the initial data. In our case, the presence of the zero mode initially is induced by the Maxwell constraint equation, namely the second equation of \eqref{eq:constraintsgeneKGM}, which also exhibits the backreaction.\\\\
When phase interactions are taken into account, we observe $O(1)$ highly-oscillating terms arising in both Klein-Gordon and Maxwell equations. Under the strong coherence assumption \eqref{eq:coherencegeoopt}, each newly generated phase falls into one of two categories. If the emerging phase is non-characteristic (for non-resonant interactions), we proceed as in \cite{zbMATH07009768}, for example (see Section \ref{subsubsection:infoEell}). The d'Alembertian behaves (microlocally) like an elliptic operator for non-characteristic oscillations. Thus, we gain smallness while inverting (by hand) the wave operator via the elliptic error variable $\boldsymbol{\mathcal{E}}_\lambda^{ell}$. If, on the other hand, it is characteristic, we exploit the triangular structure of KGM to introduce a smaller perturbation, namely the error variable $(\breve{\textbf{F}}^+_{\mathscr{A}\pm\mathscr{B}})_{(\mathscr{A},\mathscr{B})\in\mathscr{C}}$, at the level of the error term, which absorbs these oscillating terms (see Section \ref{subsubsection:infofab}). Interactions involving these terms in the nonlinearities then produce smaller remainders. Implementing those methods at a higher order would require more general profile expansions and additional assumptions on the initial ansatz.\\\\
Finally, we address the gauge issue. This appears only at the level of the initial data for the error term. Indeed, as shown in Section \ref{subsubsection:propaggauegeneKGM}, if a solution to the KGML \eqref{eq:KGMLgeneKGM} system satisfies the constraints \eqref{eq:constraintsgeneKGM} at initial time, then the gauge condition is satisfied throughout the lifespan of the solution. Specifically, we know that, because of the approximate Lorenz gauge condition and the polarization,
\begin{align}
 \partial_\alpha (A_{1\lambda})^\alpha|_{t=0}&=O(\lambda^{1/2}),
 \end{align}
so that the initial gauge defect is of order $O(\lambda^{1/2})$ and can be absorbed into the divergence of the error term. Then, the constraint equations are of the form of the full KGM system \eqref{eq:KGMintro}. Thus, they contain the term $(\partial^\beta\partial_\alpha(A_{1\lambda})^\alpha)|_{t=0}$, which is of order $O(\lambda^{-1/2})$, by propagation of gauge and polarization for the background, and oscillates with non-stationary phases. Since Maxwell's constraint is elliptic for the time-component of the potential, we compensate for the apparent lack of smallness of  $(\partial^\beta\partial_\alpha(A_{1\lambda})^\alpha)|_{t=0}$ when inverting the operator, ensuring that the error is indeed small initially. We refer to Section \ref{subsection:Initconstr} for more details on the initial data of the error and to Remark \ref{rem:defectremApprox} on the treatment of the gauge defect.\\\\
We do not detail the bootstrap argument on the error term $\textbf{Z}_\lambda$, nor the precise role of $\boldsymbol{\mathcal{E}}_\lambda^{evo}$ or $(\textbf{F}^+_{\lambda\mathscr{A}})_{\mathscr{A}\in\mathbb{A}}$ at this stage of the analysis. We instead refer to Section \ref{subsection:Idea} for a rigorous description and to Section \ref{section:exactsolut} for the complete proof.
\subsection{Null forms and backreaction}
\label{subsection:nullforms}
We now extend the discussion on the null structure of KGM and, more specifically, on the $\textbf{F}\partial\textbf{F}$ nonlinearities. Non-schematically, these correspond to
\begin{align*}
&-2iA^\alpha\partial_\alpha\Phi, &-\Im(\Phi\overline{\partial^\beta\Phi}),
\end{align*}
for Klein-Gordon and Maxwell equations, respectively. We recall that null forms are exhaustively characterized in \cite{zbMATH03966906} and \cite{zbMATH03989952} as linear combinations of the following quadratic forms 
\begin{align}
\label{eq:nullforms}
&Q_0(f,g)=\partial_\alpha f\partial^\alpha g&Q_{\mu\nu}(f,g)=\partial_\mu f\partial_\nu g-\partial_\mu g\partial_\nu f. 
 \end{align}
In Coulomb gauge, KGM equations are a hyperbolic-elliptic system, where the component $A^0$ is determined by inverting a Laplacian. It is shown in \cite{10.1215/S0012-7094-94-07402-4,dfb0a54d-bfd7-3b95-ae19-04c1b390d84d} that the Klein-Gordon nonlinearity, as well as the spatial components of the Maxwell nonlinearity, can essentially be decomposed as 
a sum of null forms (for the right unknowns) plus elliptic terms involving $A^0$. This structural property, combined with smoothing estimates (see \cite{zbMATH00567356,zbMATH01782982}), allows the authors to prove that the Cauchy problem associated with KGM \eqref{eq:KGMintro} in Coulomb gauge is (globally) well-posed in energy space. This result was refined in \cite{keel2010globalwellposednessmaxwellkleingordonequation} to attain lower regularity classes. Later, in \cite{selberg2010finiteenergyglobalwellposednessmaxwellkleingordon}, it is shown that, in Lorenz gauge this time, the Klein-Gordon nonlinearity still enjoys a null structure, whereas the Maxwell nonlinearity does not. The null structure only appears at the level of the equation for the electromagnetic field
 \begin{align}
 \label{eq:nullformonF}
 \Box F_{\alpha\beta}=\partial_\alpha J_\beta-\partial_\beta J_\alpha.
 \end{align}
 We obtain \eqref{eq:nullformonF} using Maxwell's equation and the fact that $F$ is a closed form. Exploiting this structure, the authors of \cite{selberg2010finiteenergyglobalwellposednessmaxwellkleingordon} are able to establish the global well-posedness of KGM in Lorenz gauge in energy norms.\\\\The absence of a full null structure in the Maxwell nonlinearity is also reflected in the present work. Indeed, the backreaction arises precisely from this non-null interaction. A direct computation shows that null forms \eqref{eq:nullforms} prevent the apparition of backreactions. Thus, the emergence of the backreaction is a genuine manifestation of the failure of the null condition.
\subsection{Comparison with Einstein equations and the Burnett conjecture}
\label{subsection:Einsteincomp}
The Einstein vacuum equations describe a vacuum spacetime and take the form 
\begin{equation}
\label{eq:eve}
Ric(g)_{\mu\nu}=0
\end{equation}
where the unknown $g$ is a Lorentzian metric on a $(3+1)$-dimensional manifold $\mathcal{M}$ and where $Ric(g)$ denotes the Ricci curvature tensor associated with $g$. These equations are (gauge-)invariant under a change of coordinates. The Burnett conjecture \cite{zbMATH04086482} asserts that, given a family $(g_\lambda)_{0<\lambda<1}$ of solutions to \eqref{eq:eve} such that $g_\lambda\to g_0$ uniformly in compact sets and $g_\lambda\rightharpoonup g_0$ in $L^2_{loc}$, the limit metric $g_0$ is a weak solution to  the massless Vlasov-Einstein equations:
\begin{equation}
\begin{cases}
\label{eq:evevlasov}
Ric(g_0)_{\mu\nu}-\frac{1}{2}R(g_0)(g_0)_{\mu\nu}=\int_{g_0(p,p)=0}f(p)p_{\mu}p_{\nu}dp\\
p^\alpha\partial_\alpha f-p^\alpha p^\beta\Gamma(g_0)_{\alpha\beta}^\gamma\partial_\gamma f=0
\end{cases}
\end{equation}
where $R(g_0)$ is the scalar curvature, $\Gamma(g_0)$ the Christoffel symbols associated with $g_0$ and $f$ the density of massless particles on $T\mathcal{M}$. In other words, the Einstein vacuum equations exhibit a backreaction phenomenon in high-frequency limit. As discussed in Section \ref{subsection:nullforms}, this observation reflects the fact that the Einstein equations do not enjoy a complete null structure, but rather a weak null structure (or weak null-condition) as stated in \cite{zbMATH01981308}. The Burnett conjecture (or its reverse counterpart) is largely proved in various cases in  \cite{huneau2022trilinear,zbMATH07009768,zbMATH06921833,luk2020highfrequency,huneau2024burnetts,zbMATH07719650,zbMATH07732086,touati2024reverse} (see \cite{zbMATH07879915} for a review). The backreaction is, in particular, easier to read when working in wave gauge  $g^{\alpha\beta}\Gamma(g)_{\alpha\beta}^\gamma=0$, where the Einstein equations take the schematic form 
\begin{equation}
\label{eq:evesche}
\Box_{g} g=\partial g\partial g.
\end{equation}
A major distinction between KGM \eqref{eq:KGMintro} (or schematically \eqref{eq:schemeqIntro}) and Einstein \eqref{eq:eve} (or schematically \eqref{eq:evesche}) lies in the quasilinear nature of the latter, the wave operator $\Box_{g}$ depends on $g$, and the fact that it enjoys stronger quadratic nonlinearities. These stronger nonlinearities naturally lead to (first-order) approximate solutions under the form 
\begin{align}
g_{1\lambda}=g_0+\lambda\sum_{\mathscr{A}\in\mathbb{A}}\cos\left(\frac{u_\mathscr{A}}{\lambda}\right)g_1
\end{align}
where the scaling in $\lambda$ matches, in particular, the quadratic nonlinearity $\partial g\partial g$. We refer to \cite{zbMATH07009768}, with a $U(1)$-symmetry assumption, or \cite{touati2024reverse} in an approximate wave gauge. We observe that the amplitudes in this ansatz are smaller than those considered in Section \ref{subsubsection:applyKGM} for KGM. Due to the weaker nonlinearities in KGM, we are able to significantly relax our assumptions in comparison with the Einstein case. In particular, in our work, no smallness condition is imposed on the background initial data and resonant interactions can be treated effectively. The latter feature relies on an intrinsic triangular structure of the KGM system, which induces a shift in the cascade of WKB equations: resonant interactions arise at a scale strictly lower than that of the transport equations for the leading profiles. Specifically, applying our strategy to the Einstein equations would generate an infinite feedback loop. We further note that the shift in the WKB hierarchy of KGM is responsible for the linear nature of the transport at the critical scaling we consider. For the Einstein equations, the mechanism that provides the linearity is more subtle and genuinely corresponds to transparency, see \cite{zbMATH01517171,zbMATH06295894} and the discussion on this subject in \cite{touati2024reverse}.\\\\
The treatment of the gauge also differs between the two systems. For example, the Lorenz condition in KGM being linear allows for the treatment of high-frequency solutions in an exact gauge, while the analysis of \cite{touati2024reverse} provides solutions in an approximate wave gauge. Nevertheless, the two analyses share some similarities in that the gauge is linked with the polarization and that the polarization plays a role in managing the harmonics. \\\\
Finally, we point out that the backreaction of KGM is strictly weaker than that of Einstein. Indeed, the KGM-null-transport system \eqref{eq:KGMnullintro} possesses a triangular structure: the transport of the densities does not see the electromagnetic field. In contrast, the massless Vlasov-Einstein \eqref{eq:evevlasov} equations are self-consistent, since the densities are influenced by gravity through the $\Gamma(g_0)$ appearing in the transport equations. In other words, the dynamics of the backreaction and the background are coupled in the Einstein setting. 
\subsection{Acknowledgment}
The author thanks Cécile Huneau for her precious advice and her guidance during the elaboration of this paper, and Arthur Touati for the helpful discussions. The actual form of the paper is also due to the reviewer. The author thanks him for his time and his corrections, notably for the error spotted in the definition of angular separations of phases. 
\subsection{Outline of the paper}
\label{subsection:outline}
\begin{itemize}
    \item \label{item:sect1outline} In Section \ref{section:Notdef}, we provide some Notations and Definitions on the functional spaces, on the phases and on the multiphase high-frequency initial ansatz.
      \item \label{item:sect2outline} In Section \ref{section:results}, we state the two main Theorems and the steps of the proof.
      \item \label{item:sect3outline}In Section \ref{section:Approx}, we derive the evolution equations for the approximate solution. 
      \item  \label{item:sect4outline}In Section \ref{section:Init}, we give details on the initial data for the error term.  
        \item \label{item:sect5outline}In Section \ref{section:exactsolut}, we give the details on the error term, the existence of a local in time solution to KGML, the propagation of gauge and finally, the uniform time of existence with the smallness results and the bootstrap.
         \item \label{item:sect6outline}In Section \ref{section:conclu}, we conclude and put everything together. We make some other general remarks.
          \item \label{item:sect7outline} In Appendix \ref{section:Appendix}, we give technical parts of the proofs. 
         
\end{itemize}
\section{Notations and definitions}
\label{section:Notdef}
\subsection{General notations}
\label{subsection:genenot}
\begin{defi}
\label{defi:weightedSobgenenot}
    Let $m\in\mathbb{N}$, $1\leq p<+\infty$ and $\delta\in\mathbb{R}$. We define the weighted Sobolev space $W^{m,p}_{\delta}$ as the closure of $C^\infty_0$ with respect to the norm 
$$||u||_{W^{m,p}_{\delta}}=(\sum_{j\leq m}||(1+|x|^2)^{\frac{\delta+j}{2}}\partial^ju||_{L^p}^p)^{\frac{1}{p}}.$$
We also set $H^k_{\delta}=W^{k,2}_{\delta}$ and $L^p_\delta=W^{0,p}_{\delta}$. All these are Banach spaces.\\
\end{defi}
\begin{defi}
\label{defi:weightedholdergenenot}
 Let $m\in\mathbb{N}$ and $\delta\in\mathbb{R}$. We define the weighted Hölder space $C^{m}_{\delta}$ as the Banach space of $m$-times continuously differentiable functions on $\mathbb{R}^3$ endowed with the norm
$$||u||_{C^{m}_{\delta}}=\sum_{j\leq m}\underset{x\in\mathbb{R}^3}{\sup}|(1+|x|^2)^{\frac{\delta+j}{2}}\partial^ju(x)|.$$
\end{defi}
\begin{nota}
\label{nota:supportgenenot}
  For a function $f$ defined on $\mathbb{R}^3$, we denote by $Supp(f)$ its support. For a function of space and time, defined on $[0,t]\times\mathbb{R}^3$ for some $t>0$, we slightly abuse the notation and write $Supp(f)=U_{s\in[0,t]}Supp(f(s))$.\\
\end{nota}
\begin{nota}
\label{nota:supportpropaggenenot}
  For $\Omega$ a compact set of $\mathbb{R}^3$, we define the domain of influence
  $\mathscr{I}(t,\Omega):=\{x\in\mathbb{R}^3|\exists y\in\Omega,\; |x-y|\leq{t} \}$.\\
\end{nota}
\begin{defi}
    \label{defi:projfreq}
Given $\kappa>0$, we define the projectors on frequency
    \begin{equation}
\label{eq:projfreqIdea}
      \Pi_{\kappa,-}f(x)=\mathscr{F}^{-1}(\mathbf{1}_{|\xi|\leq \frac{1}{\lambda^\kappa}}\hat{f}(\xi))(x),
\end{equation}
and $\Pi_{\kappa,+}=1-\Pi_{\kappa,-}$.
\end{defi}

\subsection{Phases}
\label{subsection:phases}
\subsubsection{Generalities}
\label{subsubsection:genephase}
Here, we gather the definitions and basic properties related to phases.\\
\begin{defi}  
\label{defi:phasegenephase}
A phase $u(x,t)$ is defined as a scalar real-valued function of space and time whose spacetime gradient $\textbf{d}u^{\#}$ never vanishes, i.e., it has no critical points. In this paper, a phase is said to be \textbf{characteristic} if it satisfies the so-called \textbf{eikonal} equation associated with the wave operator for a Lorentzian metric $g$  
\begin{equation}
    \label{eq:eikonalgenephase}
    \partial^\alpha u\partial_\alpha u=g(\textbf{d}u^\#,\textbf{d}u^\#)=0.
\end{equation}
In this case, the phase is \textbf{isotropic} for the metric. For the Minkowski metric in Cartesian coordinates, this condition is equivalent to
\begin{equation}
    \label{eq:eikonalminkgenephase}
   -(\partial_t u)^2+|\nabla u|^2=0.
\end{equation}
Moreover, the phase is said to be future-directed if 
\begin{equation}
    \label{eq:futuredirgenephase}
    g(\textbf{d}u^\#,\partial_t)<0.
\end{equation}
\end{defi}
\begin{propal}
\label{propal:geodcharacgenephase}
 If the phase $u$ is characteristic for a Lorentzian metric $g$, then its gradient $\textbf{d}u^\#$ satisfies the geodesic equation 
 \begin{equation}
 \label{eq:geodgenephase}
    g^{\alpha\beta}\partial_\beta u \partial^2_{\alpha\gamma} u-\Gamma(g)^\beta_{\gamma\alpha}\partial^\alpha u\partial_{\beta} u=0
\end{equation}
where $\Gamma(g)^{\beta}_{\alpha\gamma}=\frac{1}{2}g^{\beta\mu}(\partial_\gamma g_{\alpha\mu}+\partial_\alpha g_{\mu\gamma}-\partial_\mu g_{\alpha\gamma})$ denote the Christoffel symbols of $g$.
\end{propal} 
\begin{proof} 
Differentiating the eikonal equation yields
\begin{align*}
    0&=\partial_\gamma(g^{\alpha\beta}\partial_\beta u \partial_{\alpha} u)\\
    &=2g^{\alpha\beta}\partial_\beta u\partial^2_{\alpha\gamma} u -2g^{\alpha\mu}\Gamma^\gamma_{\mu\beta}\partial_ \alpha u\partial_{\gamma}u.
\end{align*}
\end{proof}
\begin{rem}
\label{rem:christogenephase}
  The previous formulation is valid for any Lorentzian metric $g$ and for any system of coordinates. For the Minkowski metric expressed in Cartesian coordinates, the Christoffel symbols are 0.\\
\end{rem}
From now on, we restrict ourselves to the Minkowski metric in Cartesian coordinates.
\begin{defi}
\label{defi:initialphasegenephase}
   A pair $(v,\dot{v})\in C^2(\mathbb{R}^3)\times C^1(\mathbb{R}^3)$ is called \textbf{eikonal initial data} if
    \begin{equation}
        \label{eq:initeikonalgenephase}  
        (\dot{v})^2=|\nabla v|^2,
    \end{equation}
with $\dot{v}<-\eta$ for some $\eta>0$.\\
\end{defi}
\begin{propal}
\label{propal:initialphasegenephase}
   Given \textbf{eikonal initial data} $(v,\dot{v})\in C^2(\mathbb{R}^3)\times C^1(\mathbb{R}^3)$ in the sense of Definition \ref{defi:initialphasegenephase}, there exists some time $T_v>0$ such that  one can construct a unique future-directed characteristic phase $u(t,x)\in C^2([0,T_v]\times\mathbb{R}^{3})$ with initial data $u|_{t=0}=v$ and $\partial_tu|_{t=0}=\dot{v}$.
\end{propal}
\begin{proof}
We seek a solution $u$ to the eikonal equation
 \begin{equation}
 \label{eq:eikonalforpropal}
  \partial^\alpha u \partial_{\alpha} u=0,
\end{equation}
which leads to the geodesic equation 
 \begin{equation}
  \label{eq:geodesicforpropal}
  \partial^\alpha u \partial_{\alpha}\partial_{\beta} u=0,
\end{equation}
as established in Proposition \ref{propal:geodcharacgenephase}. Now, we can solve \eqref{eq:geodesicforpropal} locally with classical results for self-transport equations. The solution admits the representation formula 
\begin{equation}
\label{eq:represgenephase}
    \textbf{d}u^\#(\chi(y,t),t)=|\nabla v(y)|(1,\xi(y)),
\end{equation}
or, equivalently,
\begin{equation}
\begin{cases}
    \partial_tu(\chi(y,t),t)=-|\nabla v(y)|,\\
    \nabla u(\chi(y,t),t)=\nabla v(y),
\end{cases}
\end{equation}
where $\xi=\frac{\nabla v}{|\nabla v|}$ and $\chi(y,t)=y+t\xi(y)$. The solution exists as long as $\chi(t)$ remains a diffeomorphism of $\mathbb{R}^3$. The value of $u$ is then recovered by time integration of  $\partial_tu$.
\end{proof}
In this article, we restrict ourselves to only two types of interactions that contain all possible plane wave interactions (see the geometric optics Section \ref{subsection:geoopt}). The interaction of two phases must be either everywhere resonant or nowhere resonant. This distinction is encoded in the behavior of the scalar product of the phase gradients. 
\begin{defi} 
\label{defi:angularsepgenephase}
Two phases $u_\mathscr{A}$ and $u_\mathscr{B}$ are said to be \textbf{separated} at time $t$ if $$\forall x\in\mathbb{R}^3\;\frac{\partial_iu_\mathscr{A}\partial^iu_\mathscr{B}}{|\nabla u_\mathscr{A}||\nabla u_\mathscr{B}|}(x,t)<1.$$
\end{defi}
\begin{defi} 
\label{defi:angularcolgenephase}
Two phases $u_\mathscr{A}$ and $u_\mathscr{B}$ are said to be \textbf{positively aligned} at time $t$ if 
$$\forall x\in\mathbb{R}^3\;\frac{\partial_iu_\mathscr{A}\partial^iu_\mathscr{B}}{|\nabla u_\mathscr{A}||\nabla u_\mathscr{B}|}(x,t)=1.$$
\end{defi}
\begin{rem}
These notions naturally extend to couples of time-independent phases $(v_\mathscr{A},v_\mathscr{B})$, in particular to initial data for phases.
   Moreover, without any mention of $t$, positively aligned and separated is understood to hold throughout the common interval of existence of the corresponding couple $(u_\mathscr{A},u_\mathscr{B})$.
\end{rem}
\begin{propal}
\label{propal:angularforevergenephase}
Let $(v_\mathscr{A},\dot{v}_\mathscr{A})\in C^2(\mathbb{R}^3)\times C^1(\mathbb{R}^3)$ and $(v_\mathscr{B},\dot{v}_\mathscr{B})\in C^2(\mathbb{R}^3)\times C^1(\mathbb{R}^3)$ be eikonal initial data as in Definition \ref{defi:initialphasegenephase}, and let $u_\mathscr{A}$ and $u_\mathscr{B}$ be their respective developments from Proposition \ref{propal:initialphasegenephase} on some common interval $[0,T^\star]$ for $T^\star>0$.
\begin{itemize}
\item If $u_\mathscr{A}$ and $u_\mathscr{B}$ are positively aligned at initial time $t=0$, then they remain positively aligned for all $t\in[0,T^\star]$.
\item If $u_\mathscr{A}$ and $u_\mathscr{B}$ are separated at initial time $t=0$, then they remain separated for all $t\in[0,T^\star]$. 
\end{itemize}
\end{propal}
\begin{proof}
Assume first that the two phases are positively aligned at time $0$. Then, there exists $k:\mathbb{R}^3\to\mathbb{R}^\star_+$ such that  
\begin{align*}
 &\forall x\in\mathbb{R}^3,\;\nabla v_\mathscr{A}(x)=k(x)\nabla v_\mathscr{B}(x)\\
 &\Leftrightarrow\forall x\in\mathbb{R}^3,\;\nabla u_\mathscr{A}(x,0)=k(x)\nabla u_\mathscr{B}(x,0)\\
&\Leftrightarrow\forall x\in\mathbb{R}^3,\;\forall t\in[0,T*],\;\nabla u_\mathscr{A}(\chi_\mathscr{A}(x,t),t)=k(x)\nabla u_\mathscr{B}(\chi_\mathscr{A}(x,t),t)\\
&\Leftrightarrow\forall y\in\mathbb{R}^3,\forall t\in[0,T*],\;\nabla u_\mathscr{A}(y,t)=k(\chi_\mathscr{A}^{-1}(y,t))\nabla u_\mathscr{B}(y,t),
\end{align*}
where we use the representation formula and its inverse for both phases. In particular, we see that we substitute $\chi_\mathscr{B}(x,t)$ with $\chi_\mathscr{A}(x,t)$ because $\chi_\mathscr{B}(x,t)$ only depends on $\xi_\mathscr{B}=\frac{\nabla v_\mathscr{B}}{|\nabla v_\mathscr{B}|}=\frac{\nabla v_\mathscr{A}}{|\nabla v_\mathscr{A}|}=\xi_\mathscr{A}$ and so $\chi_\mathscr{B}(x,t)=\chi_\mathscr{A}(x,t)$. At a time $t$, two phases are positively aligned if and only if they were positively aligned at time $0$. This is due to the constant speed of propagation, i.e., $\xi_\mathscr{A}$ and $\xi_\mathscr{B}$ are normalized.\\\\
 The reverse argument applies for separated couples of phases. The idea is to look at the problem backward in time, with the new unknowns $(\nabla w_\mathscr{A}(y,s),\nabla w_\mathscr{B}(y,s))=(\nabla u_\mathscr{A}(y,t_1-s),\nabla u_\mathscr{B}(y,t_1-s))$ defined on $[0,t_1]$ for some final time $t_1\in[0,T*]$. We can also use the representation formula on $(\nabla w_\mathscr{A},\nabla w_\mathscr{B})$, where the respective flows also have constant speeds and only depend on the value of $(\nabla w_\mathscr{A}(\cdot,0),\nabla w_\mathscr{B}(\cdot,0))$. Thus, if there exists a point $y\in\mathbb{R}^3$ such that $\nabla w_\mathscr{A}(y,0)$ and $\nabla w_\mathscr{B}(y,0)$ are locally positively aligned, then there exists a point $x\in\mathbb{R}^3$ such that $\nabla w_\mathscr{A}(x,t_1)$ and $\nabla w_\mathscr{B}(x,t_1)$ are locally positively aligned. Translating this fact in terms of $(\nabla u_\mathscr{A},\nabla u_\mathscr{B})$ yields that if there exists a point $y\in\mathbb{R}^3$ such that $\nabla u_\mathscr{A}(y,t_1)$ and $\nabla u_\mathscr{B}(y,t_1)$ are locally positively aligned, then there must exist a point $x\in\mathbb{R}^3$ such that $\nabla u_\mathscr{A}(x,0)$ and $\nabla u_\mathscr{B}(x,0)$ are locally positively aligned. Thus, if two phases $u_\mathscr{A}$ and $u_\mathscr{B}$ are separated initially, they cannot become positively aligned at any point and thus remain separated.
\end{proof} 

\begin{propal}  
\label{propal:newnotcharasetphase}
Let $u_\mathscr{A},u_\mathscr{B}\in C^2([0,T],\mathbb{R}^3)$ be two future-directed characteristic phases. Then:
\begin{itemize}
\item The phases $u_\mathscr{A}$ and $u_\mathscr{B}$ are positively aligned if and only if $u_\mathscr{A}\pm u_\mathscr{B}$ is characteristic (resonant interaction). 
\item The phases $u_\mathscr{A}$ and $u_\mathscr{B}$ are separated if and only if $u_\mathscr{A}\pm u_\mathscr{B}$ is not characteristic (non-resonant interaction). 
\end{itemize}
\end{propal}
\begin{proof}
Recall that for $u_\mathscr{A}$ a characteristic future-directed phase, $\partial_tu_\mathscr{A}=-|\nabla u_\mathscr{A}|<0$. A direct computation gives
\begin{align*}
\partial_\beta(u_\mathscr{A}\pm u_\mathscr{B})\partial^\beta(u_\mathscr{A}\pm u_\mathscr{B})&=\pm2\partial_\beta u_\mathscr{A}\partial^\beta u_\mathscr{B},\\
&=\pm2(-|\nabla u_\mathscr{A}||\nabla u_\mathscr{B}|+\partial_i u_\mathscr{A}\partial^i u_\mathscr{B}).
\end{align*}
This implies that $u_\mathscr{A}$ and $u_\mathscr{B}$ are positively aligned, namely $\frac{\partial_i u_\mathscr{A}\partial^i u_\mathscr{B}}{|\nabla u_\mathscr{A}||\nabla u_\mathscr{B}|}=1$, if and only if $u_\mathscr{A}\pm u_\mathscr{B}$ is characteristic, namely $\partial_\beta(u_\mathscr{A}\pm u_\mathscr{B})\partial^\beta(u_\mathscr{A}\pm u_\mathscr{B})=0$. The same argument applies to the second point.
\end{proof}
 \begin{nota}
\label{nota:transportgenenot}
    For a given characteristic phase $u_\mathscr{A}$ (resp. $u_\mathscr{A}\pm u_\mathscr{B}$), the transport operator $2\partial^\alpha u_{\mathscr{A}}\partial_\alpha+\Box u_{\mathscr{A}}$ (resp. $2\partial^\alpha (u_\mathscr{A}\pm u_\mathscr{B})\partial_\alpha+\Box (u_\mathscr{A}\pm u_\mathscr{B})$) is denoted as $\mathscr{L}_\mathscr{A}$ (resp. $\mathscr{L}_{\mathscr{A}\pm\mathscr{B}}$).\\
\end{nota}
\subsubsection{Phase set}
\label{subsubsection:setphase}
\begin{defi} 
\label{defi:initialphasesetsetphase}
Let $|\mathbb{A}|=N$. A set $\{v_\mathscr{A},\dot{v}_\mathscr{A}|\mathscr{A}\in\mathbb{A}\}$ is called a (characteristic) \textbf{initial phase set} if each $(v_\mathscr{A},\dot{v}_\mathscr{A})$ is an \textbf{eikonal initial data} in the sense of Definition \ref{defi:initialphasegenephase}. For simplicity, we exclude the degenerate situation where $\nabla v_\mathscr{A}=\nabla v_\mathscr{B}$ for two distinct $\mathscr{A}$ and $\mathscr{B}$. We denote $\{u_\mathscr{A}|\mathscr{A}\in\mathbb{A}\}$ the associated \textbf{phase set} emerging naturally from the initial phase set with Proposition \ref{propal:initialphasegenephase}. Each $u_\mathscr{A}$ is a future-directed characteristic phase defined on $[0,T_{min}]\times\mathbb{R}^3$, where $T_{min}$ is the minimum of all individual existence times. We denote by $\mathscr{U}_\mathbb{A}$ the vector of size $N$ collecting all phases.
\end{defi}
\begin{defi} 
\label{defi:angularadaptedsetphase}
An initial phase set $\{v_\mathscr{A},\dot{v}_\mathscr{A}|\mathscr{A}\in\mathbb{A}\}$ (respectively a phase set $\{u_\mathscr{A}|\mathscr{A}\in\mathbb{A}\}$) is said to be \textbf{angularly adapted} or \textbf{strongly coherent},  if every couple $(v_\mathscr{A},v_\mathscr{B})$ (respectively $(u_\mathscr{A},u_\mathscr{B})$) with $\mathscr{A},\mathscr{B}\in\mathbb{A}$ the corresponding phases are either positively aligned or separated. 
\end{defi}

\begin{defi} 
\label{defi:cutphasesetsetphase}
For a given phase set $\{u_\mathscr{A}|\mathscr{A}\in\mathbb{A}\}$ of smooth future-directed characteristic phases, we denote by $\mathscr{C}\subset\mathbb{A}\times\mathbb{A}$ the set of couples of distinct phases that are positively aligned (or equivalently, that lead to resonant interactions) and by $\mathscr{S}\subset\mathbb{A}\times\mathbb{A}$ the set of couples of phases that are separated (or equivalently, that lead to non-resonant interactions). If the phase set is angularly adapted, then $\mathscr{C}\cup\mathscr{S}\cup_{\mathscr{A}\in\mathbb{A}}(\mathscr{A},\mathscr{A})=\mathbb{A}\times\mathbb{A}$.
\end{defi}
Under the strong coherence assumption, one obtains uniform lower bounds for several quantities that play a crucial role in the analysis.
\begin{propal}  
\label{propal:smallnesscontrolsetphase}
Let $\{v_\mathscr{A}|\mathscr{A}\in\mathbb{A}\}$ be a given strongly coherent characteristic initial phase set. Then, for $\{u_\mathscr{A}|\mathscr{A}\in\mathbb{A}\}$ the associated phase set, it holds that for any compact set $\Omega\subset\mathbb{R}^3$ and any $T_{min}>\tau>0$ there exists $\eta_0$, depending on $(\Omega,\tau)$, such that for all $(x,t)\in\Omega\times[0,\tau]$,
\begin{equation}
\label{eq:eta1}
\min_{\mathscr{A}\in \mathbb{A}}|\partial^0u_\mathscr{A}(x,t))|>\eta_0,
\end{equation}
\begin{equation}
\label{eq:eta5}
\min_{\mathscr{A}\in \mathbb{A}}|\nabla u_\mathscr{A}(x,t)|>\eta_0,
\end{equation}
\begin{equation}
\label{eq:eta3}
\min_{(\mathscr{A},\mathscr{B})\in \mathscr{S}}|\partial_i(u_\mathscr{A}- u_\mathscr{B})\partial^i(u_\mathscr{A}- u_\mathscr{B})(x,t)|>\eta_0,
\end{equation}
\begin{equation}
\label{eq:eta4}
\min_{(\mathscr{A},\mathscr{B})\in \mathscr{C}}|\partial_i(u_\mathscr{A}\pm u_\mathscr{B})\partial^i(u_\mathscr{A}\pm u_\mathscr{B})(x,t)|>\eta_0
\end{equation}
\begin{equation}
\label{eq:eta2}
\;\;\min_{(\mathscr{A},\mathscr{B})\in \mathscr{S}}|\partial_\beta(u_\mathscr{A}\pm u_\mathscr{B})\partial^\beta(u_\mathscr{A}\pm u_\mathscr{B})(x,t)|>\eta_0.
\end{equation}
\end{propal} 
\begin{proof}
We recall that the index set is finite $|\mathbb{A}|=N$ and that the phases have no critical points, see Definition \ref{defi:initialphasesetsetphase}. Firstly, the estimates \eqref{eq:eta1} and \eqref{eq:eta5} are inherited from the initial data of Definition \ref{defi:initialphasegenephase}, and even hold on $\Omega\times[0,\tau]$ for $\Omega$ non-compact.
Then, the estimate \eqref{eq:eta3} follows from the fact that $\mathscr{S}$ contains the couples of separated phases, which satisfy $-\partial_iu_\mathscr{A}\partial^iu_\mathscr{B}>-|\nabla u_\mathscr{A}||\nabla u_\mathscr{B}|$. In particular, the vector $\nabla(u_\mathscr{A}+u_\mathscr{B})$, contrary to $\nabla(u_\mathscr{A}-u_\mathscr{B})$, could be zero. Next, the estimate \eqref{eq:eta4} relies on the positive alignment of couples of phases in $\mathscr{C}$, together with the assumption that all phase gradients are distinct and non-vanishing. Finally, the estimate \eqref{eq:eta2} is a direct consequence of Proposition \ref{propal:newnotcharasetphase}.
\end{proof} 
\subsection{Initial ansatz}
\label{subsection:initansatz}
In this section, we introduce the required material to formulate the assumptions on the initial data for the background and the initial ansatz. Throughout this section and the rest of the paper, we abuse terminology and refer to all components of the initial ansatz, and later of the first-order approximate solution, as the background, since they are independent of the parameter $\lambda$. 
\begin{defi} 
\label{defi:bginitansatz} We call a \textbf{background initial data set} the 
 following set $(a^\alpha_0,\dot{a}^\alpha_0,\phi_0,\dot{\phi}_0,v_{\mathscr{A}},\dot{v}_\mathscr{A},\psi_\mathscr{A},w^\alpha_\mathscr{A})$, for $\mathscr{A}\in\mathbb{A}$ with $|\mathbb{A}|=N$,
 such that $(a^\alpha_0,\dot{a}^\alpha_0,\dot{\phi}_0)$ are real valued functions, $(\phi_0,\dot{\phi}_0,\psi_\mathscr{A},w^\alpha_\mathscr{A})$ are complex valued functions and where $(v_{\mathscr{A}},\dot{v}_\mathscr{A})_{\mathscr{A}\in\mathbb{A}}$ constitutes a characteristic initial phase set in the sense of Definition \ref{defi:initialphasesetsetphase}. We adopt the notation $p^\alpha_{\mathscr{A}}+iq^\alpha_{\mathscr{A}}=w^\alpha_{\mathscr{A}}$ for convenience.\\
\end{defi}
\begin{defi} 
\label{defi:initansatzinitansatz}
 For a given $\lambda$, we define the \textbf{initial ansatz} as 
 \begin{equation}
     \label{eq:truncinitpotential}
        a_{1\lambda}^\alpha=a_0^\alpha+\lambda^{1/2}\sum_{\mathscr{A}\in\mathbb{A}}\left(p_\mathscr{A}^\alpha \cos\left(\frac{v_\mathscr{A}}{\lambda}\right)+q_\mathscr{A}^\alpha \sin\left(\frac{v_\mathscr{A}}{\lambda}\right)\right),
 \end{equation}
 \begin{equation}
     \label{eq:truncinitwave}
        \phi_{1\lambda}=\phi_0+\lambda^{1/2}\sum_{\mathscr{A}\in\mathbb{A}}\psi_\mathscr{A}e^{i\frac{v_\mathscr{A}}{\lambda}}.
 \end{equation}
 We refer to $a_{1\lambda}^\alpha$ as the first-order approximation of the electromagnetic vector potential and to $\phi_{1\lambda}$ as the first-order approximation of the wave function. The functions $\phi_0$, $\phi_{1\lambda}$, $\psi_\mathscr{A}$ and $w^\alpha_{\mathscr{A}}$ are complex functions.\\
\end{defi}

\begin{defi}
\label{defi:constraintbginitansatz}
We call the \textbf{background constraint equations} the following system associated with a given background initial data set
       \begin{equation}
    \label{eq:constraintbginitansatz}
        -\Delta a_0^0-\partial_i(\dot{a_0}^i)=-\Im(\phi_0\overline{\dot{\phi_0}})-a^0_0|\phi_0|^2+\sum_{\mathscr{A}\in\mathbb{A}}\dot{v}_{\mathscr{A}}|\psi_{\mathscr{A}}|^2,
    \end{equation} 
    \begin{equation}
    \label{eq:initlorenzbginitansatz}   
    \dot{a}^0_0=-\partial_ia^i_0,
\end{equation}
    \begin{equation} 
    \label{eq:initeikonalinitansatz}   
   \forall \mathscr{A}\in\mathbb{A},\; (\dot{v}_\mathscr{A})^2=|\nabla v_\mathscr{A}|^2,
    \end{equation}
\begin{equation}
    \label{eq:initpolarinitansatz} 
     \forall \mathscr{A}\in\mathbb{A},\; \partial_i v_\mathscr{A}w_\mathscr{A}^i+\dot{v}_\mathscr{A}w_\mathscr{A}^0=0.
\end{equation}
\end{defi}
\begin{rem}
\label{rem:explainconstraintinitansatz}
The first three equations correspond to the constraints for the initial data of the KGM-null transport system in Lorenz gauge, whose velocities are given by $(|\nabla v_\mathscr{A}|,\nabla v_\mathscr{A})$ at initial time. More precisely, \eqref{eq:constraintbginitansatz} is the Maxwell constraint, \eqref{eq:initlorenzbginitansatz} is the Lorenz gauge condition, and \eqref{eq:initeikonalinitansatz} is the eikonal equation, which is automatically satisfied for characteristic initial phase sets.\\
The last equation \eqref{eq:initpolarinitansatz} is a polarization condition: the first-order term in the expansion \eqref{eq:truncinitpotential} is orthogonal to the direction of propagation, as discussed in Section \ref{subsection:geoopt}.
\end{rem}
The construction of the full high-frequency ansatz relies on the notion of \textbf{admissible} background initial data set. \\
\begin{defi}
\label{defi:admissiblebginitansatz}
    A background initial data set $(a^\alpha_0,\dot{a}^\alpha_0,\phi_0,\dot{\phi}_0,v_{\mathscr{A}},\dot{v}_\mathscr{A},\psi_\mathscr{A},w^\alpha_\mathscr{A})$ is said to be \textbf{admissible} if it satisfies the background constraint equations \eqref{eq:constraintbginitansatz}, \eqref{eq:initlorenzbginitansatz}, \eqref{eq:initeikonalinitansatz} and \eqref{eq:initpolarinitansatz}, if its initial phase set is strongly coherent in the sense of Definition \eqref{defi:angularadaptedsetphase}, and if the following regularity bounds hold
\begin{align*}
\label{regularityinitansatz}
 &||\phi_0||_{H^{3}}+||a^i_0||_{H^{3}}+ ||\dot{a}^i_0||_{H^2}+ ||\dot{\phi}_0||_{H^2}+ ||a^0_0||_{H^{3}_{\delta_0}}+||\dot{a}^i_0||_{H^2_{\delta_0}}\leq c'_0, \\
 &\max_{\mathscr{A}\in\mathbb{A}}(||\psi_{\mathscr{A}}||_{H^{3}}+||w_{\mathscr{A}}||_{H^{3}}+||v_{\mathscr{A}}||_{H^{5}_{\delta_1}}+||\dot{v}_{\mathscr{A}}||_{H^{4}_{\delta_1+1}})\leq c'_0,   
\end{align*}
for\footnote{We defined the weighted Sobolev norms in Definition \ref{defi:weightedSobgenenot}.} $-3/2<\delta_0$  and $\delta_1<-5/2$. No smallness assumption is imposed on the constant $c'_0>0$. \\
Moreover, we assume that 
\begin{align*} 
Supp(a^i_0,\dot{a}^i_0,\phi_0,\dot{\phi}_0,\psi_\mathscr{A},w^\alpha_\mathscr{A})\subset B_{S'},
\end{align*}
for some $S'\in\mathbb{R_+}$. Only the phases $(v_\mathscr{A})_{\mathscr{A}\in\mathbb{A}}$ and their gradients are not compactly supported, while $\dot{a}^0_0$ and $a^0_0$ need not be compactly supported. In particular, neither need belong to $L^2$.
\end{defi}
\begin{rem}
\label{rem:weightinitansatz}
The condition $-3/2<\delta_0$ is a compatibility condition imposed so that the background component $a^0_0$ belongs to the same weighted space $H^2_\delta$ as the full solution, where $-3/2<\delta<-1/2$ satisfying $\delta\leq \delta_0$ is later chosen to invert a Laplacian on $\mathbb{R}^3$, see Theorem \ref{unTheorem:laplcianAx}. Invertibility of the Laplacian is only required at the level of the error term, not for the background data. The condition $\delta_1<-5/2$ is chosen so as to include plane waves, namely phases whose gradients are constant and, in particular, do not decay.
\end{rem}

We seek exact solutions to Klein-Gordon-Maxwell equations in Lorenz gauge based on the initial ansatz of Definition \ref{defi:initansatzinitansatz}. To this end, we introduce an error that depends on the background initial data, as detailed in Section \ref{section:Init}.
\begin{defi}
\label{defi:initparaerrorinitansatz}
The \textbf{initial parametrices} are of the form
        \begin{align*}
        &a^\alpha_\lambda=a^\alpha_{1\lambda}+z^\alpha_\lambda, \\  &\phi_\lambda=\phi_{1\lambda}+\zeta_\lambda,
    \end{align*}
where $(z^\alpha_\lambda ,\zeta_\lambda)$ are referred to as \textbf{error initial data}.
\end{defi}
\subsection{Notion of approximate solution}
In this work, we construct approximate solutions to the reduced Klein-Gordon-Maxwell system in Lorenz gauge \eqref{eq:KGMLgeneKGM}, rather than to the full KGM system \eqref{eq:KGMintro}.  Moreover, the multiphase high-frequency framework naturally generates nonlinear interactions between distinct phases, producing highly oscillatory terms of large amplitude that only the strong coherence assumption allows us to treat. Thus, these terms must be distinguished from other (possibly non-oscillating) smaller remainders. These two specifications motivate the following notion of an almost approximate solution.
\begin{defi}
\label{defi:almostapproxresults}
        We say that $(A_{m\lambda},\Phi_{m\lambda})_{0<\lambda}$ is an \textbf{almost approximate solution} of order $m$ in $\lambda$ to KGM in Lorenz gauge (KGML) on a time interval $[0,T]$ if it satisfies 
    \begin{equation}
    \begin{cases}
    \label{eq:almostapproxsysresults}
\Box A^\beta_{m\lambda}+\Im(\Phi_{m\lambda}\overline{(\partial^{\beta}+iA^{\beta}_{m\lambda})\Phi_{m\lambda}})=\lambda^{m/2}\Xi^\beta_{A\lambda}+\lambda^{m/2-1/2}\tilde{\Xi}_{A\lambda}^\beta,\\
(\partial^{\alpha}+iA^{\alpha}_{m\lambda})(\partial_{\alpha}+i(A_{m\lambda})_\alpha )\Phi_{m\lambda}=\lambda^{m/2}\Xi_{\Phi\lambda}+\lambda^{m/2-1/2}\tilde{\Xi}_{\Phi\lambda}, \\
\partial_\alpha A^\alpha_{m\lambda}=\lambda^{m/2}\Xi_{L\lambda}.
    \end{cases}
    \end{equation}
      The remainders $\Xi_{A\lambda}$,  $\Xi_{\Phi\lambda}$ and $\Xi_{L\lambda}$ are assumed to be uniformly bounded in $L^2$ with respect to $\lambda$. The terms $\tilde{\Xi}_{A\lambda}$ and $\tilde{\Xi}_{\Phi\lambda}$ are finite sums ($N'\in\mathbb{N}$) of highly oscillatory contributions of the form
    \begin{align*}
        \lambda^{m-1/2}\sum_{N'\geq k\geq 0}e^{i\frac{u_k}{\lambda}}\nu_k,
    \end{align*} 
    where:
\begin{itemize}
\item the phases $(u_k)_{0\leq k\leq N'}$ are independent of $\lambda$, belong to
$\bigcap_{j=0}^{4+m} C^j([0,T],H^{4+m-j}_{\delta_1+j})$ for some $\delta_1<-5/2$, and are either everywhere characteristic or nowhere characteristic,
\item the amplitudes $(\nu_k)_{0\leq k\leq N'}$ are independent of $\lambda$, compactly supported and in
$\bigcap_{j=0}^{2+m} C^j([0,T],H^{2+m-j})$.
\end{itemize}
\end{defi}
\begin{rem}
\label{rem:jeanneapproxresults}
    In the monophase case, a similar definition for approximate solutions is introduced in \cite{zbMATH01799448}, see Théorème 2 therein, for example. 
\end{rem}
\begin{rem}
\label{rem:defectremApprox}
An important feature of Definition \ref{defi:almostapproxresults} is that the Lorenz gauge remainder $\Xi_{L\lambda}$ is itself highly oscillatory. Consequently, the Maxwell equation exhibits a worse apparent remainder than the Klein-Gordon equation. Indeed, the Maxwell equation involves the divergence of the Faraday tensor
$F_{\alpha\beta}=\partial_\alpha A_\beta-\partial_\beta A_\alpha$. For Definition \ref{defi:almostapproxresults}, at first order, one has
\begin{equation}
    \Box A^\beta_{1\lambda}+\Im(\phi_{1\lambda}\overline{(\partial^{\beta}+iA^{\beta}_{1\lambda})\phi_{1\lambda}})=\lambda^{1/2}\Xi^\beta_{A\lambda}(\lambda)+\tilde{\Xi}_{A\lambda}^\beta(\lambda)
\end{equation}
and therefore
 \begin{equation}
 \label{eq:maxdefect}
 \partial_\alpha F^{\alpha\beta}_{1\lambda}+\Im(\phi_{1\lambda}\overline{(\partial^{\beta}+iA^{\beta}_{1\lambda})\phi_{1\lambda}})=\lambda^{1/2}\Xi^\beta_{A\lambda}+\lambda^{0}\tilde{\Xi}_{A\lambda}^\beta-\lambda^{1/2}\partial^\beta \Xi_{L},
\end{equation}
which is of order $O(\lambda^{-1/2})$. This issue is particularly relevant for the constraint equation that is of the form of \eqref{eq:maxdefect} and must be solved at the level of the error term in order to obtain an exact solution.  However, since all contributions at order
$\lambda^{-1/2}$ are high-frequency and the constraint operator is elliptic, the mandatory smallness of the error can be recovered.
\end{rem}
\section{Main results}
\label{section:results}
This section presents the main results of the paper.
Theorem \ref{unTheorem:mainth1results} states that the multiphase WKB analysis for KGM is stable for the high-frequency ansatz considered. It is to compare with the result of \cite{zbMATH01799448}, which addresses the monophase case using a different approach. We refer to Section \ref{subsection:jeanneompare}. Theorem \ref{unTheorem:mainth2results} establishes the presence of backreaction at the high-frequency limit of solutions to KGM. We refer to discussions in the introductory Section \ref{section:Intro} and to Section \ref{subsection:Einsteincomp} for comparison with the Einstein case.
\subsection{Main Theorems}
\begin{unTheorem}
   \label{unTheorem:mainth1results}
Let $(a^\alpha_0,\dot{a}^\alpha_0,\phi_0,\dot{\phi}_0,v_{\mathscr{A}},\dot{v}_\mathscr{A},\psi_\mathscr{A},w^\alpha_\mathscr{A})$ be an admissible background initial data set in the sense of Definition \ref{defi:admissiblebginitansatz} and let $-3/2<\delta<-1/2$ with $\delta\leq\delta_0$. Then, there exists $\lambda_0>0$ such that for $0<\lambda<\lambda_0$:

\begin{enumerate}
\item \label{item:item1th1results} There exists a family $(A_\lambda,\Phi_\lambda)_{0<\lambda<\lambda_0}$ of high-frequency solutions to KGM in Lorenz gauge based on the initial ansatz of Definition \ref{defi:initansatzinitansatz} corresponding to the background initial data set. This family satisfies $(A_\lambda^i,\Phi_\lambda)\in (\cap^2_{j=0}C^j([0,T],H^{2-j}))^2$ and $A_\lambda^0\in \cap^2_{j=0}C^j([0,T],H^{2-j}_\delta)$, where the time $T>0$ depends only on the background initial data set and, in particular, is independent of $0<\lambda<\lambda_0$.\\
   \item \label{item:item2th1results} These solutions admit the following decomposition, under the form of parametrices,
\begin{align*}
        &A^\alpha_\lambda=\underbrace{A^\alpha_0+\lambda^{1/2}\sum_{\mathscr{A}\in\mathbb{A}}\left(\cos\left(\frac{u_\mathscr{A}}{\lambda}\right)P^\alpha_\mathscr{A}+\sin\left(\frac{u_\mathscr{A}}{\lambda}\right)Q^\alpha_\mathscr{A}\right)}_\text{$A^\alpha_{1\lambda}$}+Z_\lambda^\alpha, &\Phi_\lambda=\underbrace{\Phi_0+\lambda^{1/2}\sum_{\mathscr{A}\in\mathbb{A}}e^{i\frac{u_\mathscr{A}}{\lambda}}\Psi_\mathscr{A}}_\text{$\Phi_{1\lambda}$}+\mathcal{Z}_\lambda,
    \end{align*}
    where $(A^\alpha_{1\lambda},\Phi_{1\lambda})$ is an almost approximate solution of order one to KGM in Lorenz gauge in the sense of Definition \ref{defi:almostapproxresults}, while $(Z_\lambda,\mathcal{Z}_\lambda)$ denotes the error term. The background\footnote{We write $W^\beta_\mathscr{A}=P^\beta_\mathscr{A}+iQ^\beta_\mathscr{A}$ for convenience.} $(A_0^0,A_0^i,\Phi_0,\Psi_\mathscr{A},W_\mathscr{A},u_\mathscr{A})$ belongs to $\left(\cap^3_{j=0}C^{j}([0,T],H^{3-j}_{\delta_0+j})\right)\times\left(\cap^3_{j=0}C^{j}([0,T],H^{3-j})\right)^4\times\left(\cap^5_{j=0}C^{j}([0,T],H^{5-j}_{\delta_1+j})\right)$ and the error $(Z_{\lambda}^0,Z_{\lambda}^i,\mathcal{Z}_{\lambda})$ to $\left(\cap^2_{j=0}C^{j}([0,T],H^{2-j}_{\delta_0+j})\right)\times\left(\cap^2_{j=0}C^{j}([0,T],H^{2-j})\right)^2$.\\
\item \label{item:item3th1results}For some $C>0$, depending only on $c_0$ from Definition \ref{defi:admissiblebginitansatz}, such that the following uniform bounds hold 
\begin{align*}
    &\forall 0<\lambda<\lambda_0,\; \forall t\in[0,T],\;||A_\lambda^0(t)||_{H^{1/2}_{\delta}}+||A_\lambda^i(t)||_{H^{1/2}}+||\Phi_\lambda(t)||_{H^{1/2}}<C,\\
    &\forall 0<\lambda<\lambda_0,\; \forall t\in[0,T],\;||Z_{\lambda}^0(t)||_{H^{1/2}_{\delta}}+||Z_{\lambda}^i(t)||_{H^{1/2}}+||\mathcal{Z}_{\lambda}(t)||_{H^{1/2}}<\lambda^{1/2}C,
\end{align*}
and more generally
\begin{align*}
    &\forall 0<\lambda<\lambda_0,\; \forall s\in[0,1/2],\;  \forall t\in[0,T],\;||Z_{\lambda}^0(t)||_{H^{1-s}_{\delta}}+||Z_{\lambda}^i(t)||_{H^{1-s}}+||\mathcal{Z}_{\lambda}(t)||_{H^{1-s}}<\lambda^{s}C.
\end{align*}
\end{enumerate}
\end{unTheorem}
\begin{unTheorem}
      \label{unTheorem:mainth2results}
     Under the assumptions of Theorem \ref{unTheorem:mainth1results}:
     \begin{enumerate}
\item  \label{item:item1th2results} The following convergence holds
     \begin{align*}
    &\forall t\in[0,T],\;(A^i_\lambda,\Phi_\lambda)(t)\xrightarrow{L^2}(A^i_0,\Phi_0)(t),
     &A^0_\lambda(t)\xrightarrow{L^2_{\delta}}A_0^0(t),\\
    &\forall t\in[0,T],\;(\partial A^i_\lambda,\partial\Phi_\lambda)(t)\xrightharpoonup{L^2}(\partial A^i_0,\partial\Phi_0)(t), 
     &\partial A^0_\lambda(t)\xrightharpoonup{L^2}\partial A_0^0(t),\\
     &\forall 1\leq p<\infty,\; (A^\alpha_\lambda,\Phi_\lambda)\xrightarrow{L^p([0,T],L^\infty)}(A_0^\alpha,\Phi_0).
     \end{align*}
     \item  \label{item:item2th2results} The limit $(A_0,\Phi_0)$ is solution to the Klein-Gordon-Maxwell-null-transport system \eqref{eq:KGMnullintro}, where the velocities are given by the family $(\textbf{d}u_\mathscr{A}^\#)_{\mathscr{A}\in\mathbb{A}}$ and the associated densities by $(|\Psi_\mathscr{A}|^2)_{\mathscr{A}\in\mathbb{A}}$.
     \end{enumerate}
\end{unTheorem}
\subsection{Steps of the proof}
\label{subsection:Idea}
The proof of both Theorems relies on the construction of the multiphase high-frequency solutions (point \ref{item:item1th1results} of Theorem \ref{unTheorem:mainth1results}). Thus, we focus on this part of the argument and describe in detail the structure of the error term and its various components. \\
Given an initial ansatz satisfying the assumptions of Theorem \ref{unTheorem:mainth1results}, namely admissible in the sense of Definition \ref{defi:admissiblebginitansatz}, the five main steps of the construction are:
\begin{enumerate}[label=(\roman*)]
\label{itemIdea}
    \item\label{item:item1Idea} Construction of an approximate solution based on the initial ansatz via geometric optics.
    \item\label{item:item2Idea} Preparation of suitable initial data for the error components.
    \item\label{item:item3Idea} Construction of an exact solution to KGML around the approximate solution, with a specific structure for the error term.
    \item\label{item:item4Idea} Proof that the time of existence of the solutions is independent of $0<\lambda<\lambda_0$ by means of a bootstrap argument.
        \item\label{item:item5Idea} Propagation of the Lorenz gauge condition and recovery of a solution to KGM.
\end{enumerate}
We do not detail here step \ref{item:item1Idea}, since the construction of the approximate solution is standard and has already been presented Section \ref{subsubsection:applyKGM}, see Section \ref{section:Approx} for the full derivation. We postpone the discussion of step \ref{item:item2Idea} because the decomposition of the error term is more transparent at the level of the evolution equations and the constraint equations are closely related to the gauge issue of step \ref{item:item5Idea}. Consequently, we begin with the analysis of the hyperbolic system of equations KGML \eqref{eq:KGMLgeneKGM}, namely points \ref{item:item3Idea} and \ref{item:item4Idea}, and we do not yet consider the problem of the gauge.\\\\
The structure of KGML equations \eqref{eq:KGMLgeneKGM} is captured by the following schematic equation for $\textbf{F}_\lambda$, 
\begin{equation}
\label{eq:schemeqIdea}
    \Box \textbf{F}_\lambda=\textbf{F}_\lambda\partial\textbf{F}_\lambda+(\textbf{F}_\lambda)^3,
\end{equation}
with 
    \begin{equation}
        \label{eq:schemfullparametrixIdea}       \textbf{F}_\lambda=\underbrace{\textbf{F}_0+\lambda^{1/2}\sum_{\mathscr{A}\in\mathbb{A}}e^{i\frac{u_\mathscr{A}}{\lambda}}\textbf{F}_\mathscr{A}}_\text{$F_{1\lambda}$}+\textbf{Z}_\lambda,
    \end{equation}
    where $\textbf{Z}_\lambda$ is the \textbf{error term}, while $F_{1\lambda}$ forms a \textbf{first-order} almost approximate solution to KGM \eqref{eq:KGMintro} in the sense of Definition \ref{defi:almostapproxresults} constructed on the \textbf{background} $\textbf{F}_0$ and $\textbf{F}_\mathscr{A}$ obtained in Section \ref{section:Approx}. \\\\
Restricting the expansion to the first order $F_{1\lambda}$ does not provide much smallness in high-order Sobolev norms. In particular, one does not obtain uniform pointwise bounds in $\lambda$ on compact sets. This makes the bootstrap argument delicate. On the other hand, this choice significantly simplifies the description of the approximate solution and the treatment of phase interactions.\\\\
In what follows, the Big-$O$ notation is understood in the $L^2$ sense and may involve high-frequency terms, which typically exhibit worse behavior in higher Sobolev norms. In contrast to Definition \ref{defi:almostapproxresults}, a classical definition of an approximate solution of order one yields the following error equation
\begin{equation}
\label{eq:erroreq1Idea}
     \Box\textbf{Z}_\lambda=O(\lambda^{1/2})+[\ldots],
\end{equation}
where the $O(\lambda^{1/2})$-term is high-frequency and depends on the background, and where the $[\ldots]$-term involves $\textbf{Z}_\lambda$. Then, one could hopefully obtain\footnote{Ignoring weighted Sobolev norms, for simplicity.}
\begin{align*}
 &\sum_{k\leq 1}( \lambda^k||\textbf{Z}_\lambda(t)||_{H^{k+1}}+\lambda^k||\partial_t\textbf{Z}_\lambda(t)||_{H^k})+\lambda||\partial^2_{tt}\textbf{Z}_\lambda(t)||_{L^2}\sim \lambda^{1/2},
 \end{align*}
with natural energy estimates, and close the bootstrap argument. There are obstacles to such an ideal situation. Indeed, in our case, the evolution equation for the error corresponds to
\begin{equation}
\label{eq:erroreq2Idea}
     \Box\textbf{Z}_\lambda=\tilde{\boldsymbol{\Xi}}_{\lambda}+\textbf{Z}_\lambda\partial(\textbf{F}_{1\lambda})+\textbf{Z}_\lambda\partial\textbf{Z}_\lambda+O(\lambda^{1/2})+[\ldots],
\end{equation}
where $[\ldots]$ now gathers harmless terms. To address these difficulties, we introduce the refined parametrix 
\begin{equation}
\label{eq:schemparaerrorIdea}
\textbf{Z}_\lambda=\sum_{\mathscr{A}\in\mathbb{A}}\lambda^{1}\textbf{F}^+_{\lambda\mathscr{A}}e^{i\frac{u_\mathscr{A}}{\lambda}}+\sum_{(\mathscr{A},\mathscr{B})\in\mathscr{C}}\lambda^{1}\breve{\textbf{F}}^+_{\mathscr{A}\pm\mathscr{B}}e^{i\frac{u_{\mathscr{A}}\pm u_{\mathscr{B}}}{\lambda}}+\boldsymbol{\mathcal{E}}_\lambda^{ell}+\boldsymbol{\mathcal{E}}_\lambda^{evo}.
\end{equation}  
We now examine in greater detail the various obstructions appearing in the terms of \eqref{eq:erroreq2Idea}:
\begin{enumerate}[label=\arabic*)]
    \item \label{item:obst1Idea} The term $\tilde{\boldsymbol{\Xi}}_{\lambda}$ contains $O(1)$ high-frequency interaction terms.
    \item \label{item:obst2Idea} The term $\textbf{Z}_\lambda\partial(\textbf{F}_{1\lambda})$ contains schematically $\textbf{Z}_\lambda\lambda^{-1/2}\sum_{\mathscr{A}\in\mathbb{A}}e^{i\frac{u_\mathscr{A}}{\lambda}}\partial u_\mathscr{A}\textbf{F}_\mathscr{A}$, which exhibit a loss of smallness.
       \item \label{item:obst3Idea} The term $\Box\textbf{Z}_\lambda$ contains $\lambda\sum_{\mathscr{A}\in\mathbb{A}}e^{i\frac{u_\mathscr{A}}{\lambda}}\Box \textbf{F}^+_{\lambda\mathscr{A}}$  which induces a loss of derivative.
       \item \label{item:obst4Idea} The term $\textbf{Z}_\lambda\partial\textbf{Z}_\lambda$ contains $\boldsymbol{\mathcal{E}}_\lambda^{evo}\partial\boldsymbol{\mathcal{E}}_\lambda^{evo}$ that cannot be controlled by Sobolev product estimates and embeddings. 
       \item  \label{item:obst5Idea} The term $\textbf{Z}_\lambda\partial\textbf{Z}_\lambda$ contains $i\boldsymbol{\mathcal{E}}_\lambda^{evo}\sum_{\mathscr{A}\in\mathbb{A}}\partial u_\mathscr{A}e^{i\frac{u_\mathscr{A}}{\lambda}}\textbf{F}^+_{\lambda\mathscr{A}}$ that cannot be controlled either. 
   \end{enumerate}
\subsubsection*{\ref{item:obst1Idea}-Phase interactions}
Due to the strong coherence assumption on the admissible initial data Definition \ref{defi:admissiblebginitansatz}, we have the decomposition 
\begin{align*}
\tilde{\boldsymbol{\Xi}}_\lambda=\sum_{(\mathscr{A},\mathscr{B})\in\mathscr{C}}e^{i\frac{u_\mathscr{B}\pm u_\mathscr{B}}{\lambda}}\boldsymbol{K}_{(\mathscr{A}\pm\mathscr{B})}+\sum_{(\mathscr{A},\mathscr{B})\in\mathscr{S}}e^{i\frac{u_\mathscr{B}\pm u_\mathscr{B}}{\lambda}}\textbf{S}_{(\mathscr{A}\pm\mathscr{B})}.
\end{align*}
The resonant interactions, indexed by $\mathscr{C}$, are dealt with $(\breve{\textbf{F}}^+_{\mathscr{A}\pm\mathscr{B}})_{(\mathscr{A},\mathscr{B})\in\mathscr{C}}$. A direct computation yields
\begin{align*}
\Box\left(\sum_{(\mathscr{A},\mathscr{B})\in\mathscr{C}}\lambda^{1}\breve{\textbf{F}}^+_{\mathscr{A}\pm\mathscr{B}}e^{i\frac{u_{\mathscr{A}}\pm u_{\mathscr{B}}}{\lambda}}\right)&=-\sum_{(\mathscr{A},\mathscr{B})\in\mathscr{C}}\lambda^{-1}\breve{\textbf{F}}^+_{\mathscr{A}\pm\mathscr{B}}\underbrace{\partial_\alpha(u_{\mathscr{A}}\pm u_{\mathscr{B}})\partial^\alpha(u_{\mathscr{A}}\pm u_{\mathscr{B}})}_\text{$=0$}e^{i\frac{u_{\mathscr{A}}\pm u_{\mathscr{B}}}{\lambda}}\\
&+ i\sum_{(\mathscr{A},\mathscr{B})\in\mathscr{C}}\lambda^{0}\mathscr{L}_{\mathscr{A}\pm\mathscr{B}} \breve{\textbf{F}}^+_{\mathscr{A}\pm\mathscr{B}}e^{i\frac{u_{\mathscr{A}}\pm u_{\mathscr{B}}}{\lambda}}+O(\lambda),
\end{align*}
where the transport operator $\mathscr{L}_{\mathscr{A}\pm\mathscr{B}}$ is defined in Notation \ref{nota:transportgenenot}. Thus, the $O(1)$ resonant interactions are eliminated by solving
\begin{equation}
\label{eq:FABeqIdea}
    \mathscr{L}_{\mathscr{A}\pm\mathscr{B}} \breve{\textbf{F}}^+_{\mathscr{A}\pm\mathscr{B}} =\boldsymbol{K}_{(\mathscr{A}\pm\mathscr{B})}+[\ldots].
\end{equation}
We refer to Section \ref{subsubsection:infofab} for detailed estimates. Then, the non-resonant interactions, indexed by $\mathscr{C}$, are dealt with $\boldsymbol{\mathcal{E}}_\lambda^{ell}$. The idea is to invert the d'Alembertian by hand. We set 
\begin{align*}
\boldsymbol{\mathcal{E}}_\lambda^{ell}=-\lambda^2\sum_{(\mathscr{A},\mathscr{B})\in\mathscr{S}}e^{i\frac{u_\mathscr{B}\pm u_\mathscr{B}}{\lambda}}\frac{\textbf{S}_{(\mathscr{A}\pm\mathscr{B})}}{\partial_\alpha(u_\mathscr{B}\pm u_\mathscr{B})\partial^\alpha(u_\mathscr{B}\pm u_\mathscr{B})}
\end{align*}
so that 
\begin{align*}
\Box\left(\boldsymbol{\mathcal{E}}_\lambda^{ell}\right)=\sum_{(\mathscr{A},\mathscr{B})\in\mathscr{S}}e^{i\frac{u_\mathscr{B}\pm u_\mathscr{B}}{\lambda}}\textbf{S}_{(\mathscr{A}\pm\mathscr{B})}+O(\lambda),
\end{align*}
where we gain smallness but lose derivatives on the background. We refer to Section \ref{subsubsection:infoEell} for detailed estimates.
Both $\boldsymbol{\mathcal{E}}_\lambda^{ell}$ and $\breve{\textbf{F}}^+_{\mathscr{A}\pm\mathscr{B}}$ depend only on the background, and so their time of existence too. Therefore, we treat the two \textbf{decoupled error components} $\boldsymbol{\mathcal{E}}_\lambda^{ell}$ and $(\breve{\textbf{F}}^+_{\mathscr{A}\pm\mathscr{B}})_{(\mathscr{A},\mathscr{B})\in\mathscr{C}}$ as background terms in the sense that their estimates are not part of the bootstrap argument. Nonetheless, these terms are of the order of the error term in the decomposition \eqref{eq:schemparaerrorIdea} and therefore correspond to error terms.
\subsubsection*{\ref{item:obst2Idea}-Error system}
The problematic term $\textbf{Z}_\lambda\lambda^{-1/2}\sum_{\mathscr{A}\in\mathbb{A}}e^{i\frac{u_\mathscr{A}}{\lambda}}\partial u_\mathscr{A}\textbf{F}_\mathscr{A}$ is highly oscillating along \textbf{characteristic} phases. This structure allows it to be absorbed by introducing new oscillatory components, $(\textbf{F}^+_{\lambda\mathscr{A}})_{\mathscr{A}\in\mathbb{A}}$, that also oscillate along these phases. At this stage, the only remaining component of the error that must be controlled dynamically is $\boldsymbol{\mathcal{E}}_\lambda^{evo}$, which absorbs all remaining contributions and is coupled to $(\textbf{F}^+_{\lambda\mathscr{A}})_{\mathscr{A}\in\mathbb{A}}$. See Sections \ref{subsection:errterm} and \ref{subsubsection:evosys} for the construction.\\\\
The quantities $\boldsymbol{\mathcal{E}}_\lambda^{evo}$ and $\textbf{F}^+_{\lambda\mathscr{A}}$ must satisfy the coupled system 
\begin{equation}
\begin{cases}
\label{eq:schemsys1Idea}
&\Box\boldsymbol{\mathcal{E}}_\lambda^{evo}=\textbf{Z}_\lambda\partial\textbf{Z}_\lambda-\sum_{\mathscr{A}\in\mathbb{A}}\lambda^{1}\Box\textbf{F}^+_{\lambda\mathscr{A}}e^{i\frac{u_\mathscr{A}}{\lambda}}+O(\lambda^{1/2})+[\ldots],\\
&\mathscr{L}_\mathscr{A} \textbf{F}^+_{\lambda\mathscr{A}} =\lambda^{-1/2}\partial u_\mathscr{A}\boldsymbol{\mathcal{E}}_\lambda^{evo}\textbf{F}_\mathscr{A}  +[\ldots].\\
\end{cases}
\end{equation}
The next logical step is to show that \eqref{eq:schemsys1Idea} is actually well-posed. Classical energy estimates for a wave-transport system give us 
\begin{align}
&\partial\boldsymbol{\mathcal{E}}_\lambda^{evo}\sim\partial\partial \textbf{F}^+_{\lambda\mathscr{A}},\\
&\textbf{F}^+_{\lambda\mathscr{A}}\sim\boldsymbol{\mathcal{E}}_\lambda^{evo},
\end{align}
in terms of derivatives. We observe a loss of derivative due to $\Box\textbf{F}^+_{\lambda\mathscr{A}}$. Without this term, the system is well-posed for $\boldsymbol{\mathcal{E}}_\lambda^{evo}$ and $\textbf{F}^+_{\lambda\mathscr{A}}$ in $H^2$ by standard results. This leads us to problem \ref{item:obst3Idea}. 
\subsubsection*{\ref{item:obst3Idea}-Loss of derivative}
To compensate for the apparent loss of derivative, we introduce an auxiliary variable $\textbf{G}^+_{\lambda\mathscr{A}}:=\Box \textbf{F}_\mathscr{A}^+$. Since $\textbf{F}^+_{\lambda\mathscr{A}}$ satisfies a transport equation, one can improve the regularity of $\Box \textbf{F}^+_{\lambda\mathscr{A}}$ (if the initial data are well chosen, see Section \ref{subsection:criteria}) by deriving a transport equation for $\Box \textbf{F}_\mathscr{A}^+$. The auxiliary variable $\textbf{G}^+_{\lambda\mathscr{A}}$ satisfies better estimates than two standard derivatives of $\textbf{F}_\mathscr{A}^+$. The system \eqref{eq:schemeqIdea} is therefore replaced by 
\begin{equation}
\begin{cases}
\label{eq:schemsys2Idea}
&\Box\boldsymbol{\mathcal{E}}_\lambda^{evo}=\textbf{Z}_\lambda\partial\textbf{Z}_\lambda-\sum_{\mathscr{A}\in\mathbb{A}}\lambda^{1}\textbf{G}^+_{\lambda\mathscr{A}}e^{i\frac{u_\mathscr{A}}{\lambda}}+O(\lambda^{1/2})+[\ldots],\\
&\mathscr{L}_\mathscr{A} \textbf{F}^+_{\lambda\mathscr{A}} =\lambda^{-1/2}\partial u_\mathscr{A}\boldsymbol{\mathcal{E}}_\lambda^{evo}\textbf{F}_\mathscr{A}  +[\ldots],\\
&\mathscr{L}_\mathscr{A} \textbf{G}^+_{\lambda\mathscr{A}} =[\mathscr{L}_\mathscr{A},\Box]\textbf{F}_{\lambda\mathscr{A}}^+ +\Box(\mathscr{L}_\mathscr{A} \textbf{F}^+_{\lambda\mathscr{A}} ).\\
\end{cases}
\end{equation}
The commutator estimates are given in Section \ref{subsection:commut}. The quantities $\boldsymbol{\mathcal{E}}_\lambda^{evo}$,  $\textbf{F}^+_{\lambda\mathscr{A}}$ and $\textbf{G}^+_{\lambda\mathscr{A}}$ are the \textbf{coupled error components}.
The last term that needs special care is $\textbf{Z}_\lambda\partial\textbf{Z}_\lambda$. This brings us to problems \ref{item:obst4Idea} and \ref{item:obst5Idea}.
\subsubsection*{\ref{item:obst4Idea}-Product estimates}
To control $\boldsymbol{\mathcal{E}}_\lambda^{evo}\partial\boldsymbol{\mathcal{E}}_\lambda^{evo}$ one must resort to dispersion. Without spacetime product estimates, the standard Sobolev product estimates and embeddings are not sufficient, see Remark \ref{rem:badestiestiEevo}. Thus, we state refined spacetime product estimates involving Strichartz estimates as defined in Lemmas \ref{lem:lemma1Ax} and \ref{lem:lemma2Ax}. In general, we refer to Section \ref{subsubsection:estiEevo} for more details.
\subsubsection*{\ref{item:obst5Idea}-Projector on frequency}
Finally, one must control $i\boldsymbol{\mathcal{E}}_\lambda^{evo}\sum_{\mathscr{A}\in\mathbb{A}}\partial u_\mathscr{A}e^{i\frac{u_\mathscr{A}}{\lambda}}\textbf{F}^+_{\lambda\mathscr{A}}$. This term is quadratic of the form $\boldsymbol{\mathcal{E}}_\lambda^{evo} \textbf{F}^+_{\lambda\mathscr{A}}$ and does not possess any extra smallness that could be exploited through dispersive estimates. Nonetheless, it enjoys a favorable structure as it oscillates along characteristic phases. Thus, rather than absorbing this term into the RHS of the wave equation for $\boldsymbol{\mathcal{E}}_\lambda^{evo}$, we can incorporate it into the transport equation for
$\textbf{F}^+_{\lambda\mathscr{A}}$, as done for the first problematic term \ref{item:obst2Idea}.
 \begin{itemize}
     \item[--]\label{item:projecplace1Idea} On the RHS of $\Box\boldsymbol{\mathcal{E}}_\lambda^{evo}$, this term is lacking smallness. Since it is nonlinear, it requires a smallness of $O(\lambda^{1/2+\varepsilon})$ for some $\varepsilon>0$ to close the bootstrap, see \ref{item:item2improveass} of Section \ref{subsubsection:ass}. On the other hand, it is sufficiently regular as it involves no derivatives and is only quadratic.
     \item[--]\label{item:projecplace2Idea}  On the RHS of $\mathscr{L}_\mathscr{A} \textbf{F}^+_{\lambda\mathscr{A}}$, this term enjoys enough smallness, but it fails to have sufficient regularity. Indeed, the transport equation for $\Box\textbf{F}^+_{\lambda\mathscr{A}}$ is deduced by commuting the transport for $\textbf{F}^+_{\lambda\mathscr{A}}$ with $\Box$. This generates the source term $\partial\textbf{F}^+_{\lambda\mathscr{A}}\partial\boldsymbol{\mathcal{E}}_\lambda^{evo}$, which does not belong to $H^1$, where $\Box\textbf{F}^+_{\lambda\mathscr{A}}$ needs to be, since $\partial\textbf{F}^+_{\lambda\mathscr{A}}$ and $\partial\boldsymbol{\mathcal{E}}_\lambda^{evo}$ lie only in $H^1$.
 \end{itemize}
The idea is to exploit the best of both scenarios by decomposing $\boldsymbol{\mathcal{E}}_\lambda^{evo} \textbf{F}^+_{\lambda\mathscr{A}}$ using a projector on frequency.
We place the high-frequency component $\Pi_{\kappa,+}(\boldsymbol{\mathcal{E}}_\lambda^{evo} \textbf{F}^+_{\lambda\mathscr{A}})$ in the RHS of $\Box\boldsymbol{\mathcal{E}}_\lambda^{evo}$. For any $\kappa>0$, this yields an additional smallness of order $\lambda^{\kappa/2}$: in other words, we exchange regularity for smallness. Conversely, in the equation for $\textbf{F}_\mathscr{A}^+$ (and so in the equation for $
\Box\textbf{F}_\mathscr{A}^+$), we place the low frequency component $\Pi_{\kappa,-}(\boldsymbol{\mathcal{E}}_\lambda^{evo} \textbf{F}^+_{\lambda\mathscr{A}})$. This allows us to gain regularity. The cost of a derivative is arbitrarily small (since $\kappa$ may be chosen as small as desired) and can be covered by the natural smallness of $\boldsymbol{\mathcal{E}}_\lambda^{evo}$: we exchange smallness for regularity. In the equation for $\textbf{F}_\mathscr{A}^+$ (and so in the equation for $
\Box\textbf{F}_\mathscr{A}^+$), we place the low frequency part, that is, $\Pi_{\kappa,-}(\boldsymbol{\mathcal{E}}_\lambda^{evo} \textbf{F}^+_{\lambda\mathscr{A}})$. The value of $\kappa$ is not fixed at this stage. It only needs to be "small enough" to perform the bootstrap, see Sections \ref{subsubsection:estiFplus}, \ref{subsubsection:estiGplus}, \ref{subsubsection:estiEevo} and \ref{subsubsection:bootstrap}.
\subsubsection*{General remarks}
From the previous discussion, we have basically established
 \begin{align*}
&||\textbf{Z}_\lambda||_{H^{1/2}}\sim \lambda^{1/2},\\
&||\textbf{Z}_\lambda||_{H^1}\sim 1.
\end{align*}
Although the error term might appear insufficiently small in high-order Sobolev norms to handle the estimates of the RHS of \eqref{eq:schemeqIdea}, which contains cubic terms, its structure, that is, the fact that $\textbf{F}^+_{\lambda\mathscr{A}} e^{i\frac{u_{\mathscr{A}}}{\lambda}}$ and $\breve{\textbf{F}}^+_{\mathscr{A}\pm\mathscr{B}} e^{i\frac{u_{\mathscr{A}}\pm u_{\mathscr{B}}}{\lambda}}$ carry all the lack of smallness, compensates for this apparent defect.\\
It is noteworthy to point out that no smallness assumption is required on the initial ansatz (hence the background initial data) to close the bootstrap. The smallness of $\lambda$ is sufficient. This completes the discussion on constructing exact solutions to KGML \ref{item:item3Idea} and the bootstrap argument \ref{item:item4Idea}.
\subsubsection*{Error initial data and gauge}
The previous arguments rely, in fact, on the choice of \textbf{well-prepared error initial data} (see Definition \ref{defi:admissibleKGMLcriteria}) satisfying the \textbf{required smallness} condition of Definition \ref{defi:reqsmallnesscriteria}. By well-prepared, we mean morally with the right regularity and chosen so that the initial data for the auxiliary function
$\textbf{G}^+_{\lambda\mathscr{A}}$ are well-defined. Moreover, we want an exact solution to KGM. This requires the Lorenz gauge condition
\begin{equation}
\label{eq:lorenzgaugeIdea}
    \partial_\alpha A_\lambda^\alpha=0.
\end{equation}
To obtain this condition, the initial data must satisfy the \textbf{constraint equation for KGM in Lorenz gauge} of Definition \ref{defi:admissibleKGMgaugecriteria}. Once imposed, gauge propagation is straightforward, see Section \ref{subsubsection:propaggauegeneKGM}. 
Even under these constraints, for each $\lambda>0$ and each initial ansatz there remains some freedom in the choice of the error initial data, and hence in the resulting solutions. The constraints are underdetermined. We therefore construct a class of generic error initial data satisfying all three requirements above. Each element of the class leads to a one-parameter family of multiphase high-frequency solutions to KGM. 
The construction proceeds as follows
\begin{itemize}
\label{pointlistinitialdataIdea}
\item[--] We first construct \textbf{initial data} $\textbf{z}_\lambda$ for the \textbf{error term} $\textbf{Z}_\lambda$:
\begin{itemize}
\item[--] We choose the free part of the error initial data with compact support and generic smallness properties, imposing one additional smallness condition in Proposition \ref{propal:invertsystinitLap} to control the first time derivative of the gauge.
\item[--] We solve the constraint equations for Maxwell in Lorenz gauge with respect to the initial ansatz and the previously chosen free initial data. In particular, solving the Maxwell constraints boils down to inverting a perturbed Laplacian, which is done in some weighted Sobolev spaces introduced in Definition \ref{defi:weightedSobgenenot}. The RHS of the equation corresponds to that of the Maxwell evolution equation and thus reproduces the difficulties described above. Moreover, as explained in Remark \ref{rem:defectremApprox}, the gauge term generates a $O(\lambda^{-1/2})$ contribution. Nevertheless, the ellipticity of the constraint and its decoupled nature simplify the analysis.\\
\end{itemize}
\item[--] We decompose the \textbf{error term initial data} $\textbf{z}_\lambda$ into initial data for the \textbf{error components} (see the \textbf{precise error term} \eqref{eq:schemparaerrorIdea}):
\begin{itemize}
\item[--] We express $\boldsymbol{\mathcal{E}}_\lambda^{ell}|_{t=0}$ in terms of the initial background.
\item[--] We set $\textbf{F}^+_{\lambda\mathscr{A}}|_{t=0}=0$ and $\breve{\textbf{F}}^+_{\mathscr{A}\pm\mathscr{B}}|_{t=0}=0$ in order to ensure the smallness of $\boldsymbol{\mathcal{E}}_\lambda^{evo}$ and the smallness and the regularity of $\textbf{G}^+_{\lambda\mathscr{A}}$.
\item[--] We set $\boldsymbol{\mathcal{E}}_\lambda^{evo}|_{t=0}$ with respect to $\textbf{z}_\lambda$ and the previous choices so that the initial data for the precise error term solves the constraints.
\end{itemize}
\end{itemize}
\section{First-order approximation}
\label{section:Approx}
In this Section, we construct the first-order approximation based on the initial ansatz to have an almost approximate solution. This first-order approximation is composed of \textbf{background} terms, which \textbf{do not depend on $\lambda$}. \\
\begin{propal}
\label{propal:firstapproxApprox}
For a given admissible background initial data set in the sense of Definition \ref{defi:admissiblebginitansatz}:
\begin{enumerate}
    \item \label{item:approxit1Approx}
    There exists a \textbf{background} $(A^\alpha_0,\Phi_0,u_{\mathscr{A}},\Psi_\mathscr{A},W^\alpha_\mathscr{A})$ and a time $T>0$ such that it is solution to 
\begin{equation}
    \begin{cases}
    \label{eq:firstapproxsysApprox}
    \Box A^{\beta}_0=-\Im(\Phi_0\overline{\partial^{\beta}\Phi_0})+A^{\beta}_0|\Phi_0|^2+\sum_{{\mathscr{A}}}\partial^{\beta}u_{\mathscr{A}}|\Psi_{\mathscr{A}}|^2,\\
\Box \Phi_0+2iA^{\alpha}_0\partial_{\alpha}\Phi_0-A^{\alpha}_0A_{0\alpha}\Phi_0=0,\\
\mathscr{L}_{\mathscr{A}}\Psi_{\mathscr{A}}=-i2A^\alpha_0\partial_\alpha u_\mathscr{A}\Psi_\mathscr{A},\\
\partial^\beta u_{\mathscr{A}}\partial_{\beta} u_\mathscr{A}=0,\\
\mathscr{L}_{\mathscr{A}}W_{\mathscr{A}}^\beta=i\partial^\beta u_{\mathscr{A}}\overline{\Psi_\mathscr{A}}\Phi_0,\\
    \end{cases}
\end{equation}
with initial data
\begin{equation}
\label{eq:initdatabackgroundApprox}
\begin{split}
&(A^\alpha_0|_{t=0}=a_0,\partial_tA^\alpha_0|_{t=0}=\dot{a}_0,\Phi_0|_{t=0}=\phi_0,\partial_t\Phi_0|_{t=0}=\dot{\phi}_0), \\
&(u_{\mathscr{A}}|_{t=0}=v_{\mathscr{A}},\partial_tu_{\mathscr{A}}|_{t=0}=\dot{v}_{\mathscr{A}},\Psi_\mathscr{A}|_{t=0}=\psi_\mathscr{A},W^\alpha_\mathscr{A}|_{t=0}=w^\alpha_\mathscr{A}),
\end{split}
\end{equation}
 and with the regularity

\begin{align*}
\label{regularityApprox}
 &\sum_{j=0}^3(||\Phi_0||_{C^{j}([0,T],H^{3-j})}+||A^i_0||_{C^{j}([0,T],H^{3-j})}+ ||A^0_0||_{C^{j}([0,T],H^{3-j}_{\delta_0+j})})\leq c_0, \\
 &\max_{\mathscr{A}\in\mathbb{A}}(\sum_{j=0}^3(||\Psi_{\mathscr{A}}||_{C^{j}([0,T],H^{3-j})}+||W_{\mathscr{A}}|||_{C^{j}([0,T],H^{3-j})})\leq c_0,\\
 &\max_{\mathscr{A}\in\mathbb{A}}\sum_{j=0}^5||u_{\mathscr{A}}||_{C^{j}([0,T],H^{5-j}_{{\delta_1}+j})}\leq c_0,
\end{align*}
where  $c_0$ depends only on $c'_0$. Moreover, 
\begin{align*}
Supp(A^i_0,\Phi_0,\Psi_\mathscr{A},W^\alpha_\mathscr{A})\subset\mathscr{I}(T,B_{S'})\subset B_S,
\end{align*}
for $S=S'+T$. Only the phases $(u_{\mathscr{A}})_{\mathscr{A}\in\mathbb{A}}$ and $A^0_0$ are not compactly supported and need not be in $L^2$.\\
\item \label{item:approxit2Approx}
The background satisfies the \textbf{polarization}  
\begin{equation}
\label{eq:polarApprox}
\partial_{\beta}u_{\mathscr{A}}W^\beta_\mathscr{A}=0,
\end{equation}
and the \textbf{Lorenz gauge} condition 
\begin{equation}
\label{eq:lorenzgaugebgApprox}
\partial_\alpha A^\alpha_0=0.
\end{equation}
\item \label{item:approxit3Approx}
The first-order expansion 
 \begin{equation}    
 \label{eq:firstorderexpApprox}A_{1\lambda}^\alpha=A_0^\alpha+\lambda^{1/2}\sum_{\mathscr{A}\in\mathbb{A}}\left(P_\mathscr{A}^\alpha \cos\left(\frac{u_\mathscr{A}}{\lambda}\right)+Q_\mathscr{A}^\alpha \sin\left(\frac{u_\mathscr{A}}{\lambda}\right)\right), \;\;\;\;          \Phi_{1\lambda}=\Phi_0+\lambda^{1/2}\sum_{\mathscr{A}\in\mathbb{A}}\left(\Psi_\mathscr{A}e^{i\frac{u_\mathscr{A}}{\lambda}}\right)
 \end{equation}
defines an almost approximate solution of order one to KGM in Lorenz gauge, in the sense of Definition \ref{defi:almostapproxresults}.\\
\end{enumerate}
\end{propal}

\begin{rem}
\label{rem:structsysbgApprox}
    The system \eqref{eq:firstapproxsysApprox} has a triangular structure. The phase equations are decoupled. Then, the first three equations form a coupled subsystem. Taken together, these four equations are equivalent to KGML null-transport \eqref{eq:KGMnullgeneKGM}, presented in Section \ref{subsection:geneKGM}, with velocities given by the phase gradient $(\textbf{d}u^\#_\mathscr{A})_{\mathscr{A}\in\mathbb{A}}$ and the corresponding charge density $(|\Psi_\mathscr{A}|^2)_{\mathscr{A}\in\mathbb{A}}$. Indeed, the following transport equation holds
    \begin{align*}
        \mathscr{L}_{\mathscr{A}}|\Psi_{\mathscr{A}}|^2=\Psi_{\mathscr{A}}(\mathscr{L}_{\mathscr{A}}\overline{\Psi_{\mathscr{A}}})+\overline{\Psi}_{\mathscr{A}}(\mathscr{L}_{\mathscr{A}}\Psi_{\mathscr{A}})=|\Psi_{\mathscr{A}}|^2(2iA^\alpha_0\partial_\alpha u_\mathscr{A})-|\Psi_{\mathscr{A}}|^2(2iA^\alpha_0\partial_\alpha u_\mathscr{A})=0.
    \end{align*}
    The interaction terms produce a backreaction, an $O(1)$ non-oscillating term that we interpret as an effective charge flux. 
   Finally, the equation for $W^\beta_\mathscr{A}$ is decoupled.\\
\end{rem} 
\begin{rem}
\label{rem:nochargeApprox}
      In the case where $\Psi_{\mathscr{A}}|_{t=0}=0$ for all $\mathscr{A}\in\mathbb{A}$, then they remain identically zero (propagation of charge density) and $(A_0,\Phi_0)$ is solution of KGM in Lorenz gauge. \\
\end{rem}
\begin{rem}
\label{rem:polarnobackApprox}
 Due to the polarization condition, there is no backreaction in the Klein-Gordon equation.\\   
\end{rem}
\begin{nota}
\label{nota:cst0Approx}
      We denote with $C_0=C_{0}(c_0,T,N,\frac{1}{\eta_0})$ any constants that are a polynomial in $c_0$, $T$,  $N=|\mathbb{A}|$ and $\frac{1}{\eta_0}$, where $c_0$ and $T$ are from Proposition \ref{propal:firstapproxApprox}, $N$ is the number of phases from Definition \ref{defi:initialphasesetsetphase} and $\frac{1}{\eta_0}$ is from Proposition \ref{propal:smallnesscontrolsetphase} with the compact set given by $\Omega=B_S$ from Proposition \ref{propal:firstapproxApprox}.\\
\end{nota} 
\begin{nota}
\label{nota:b0background}
       We have the schematic notation $\textbf{B}_0'$ for all background terms $(A_0,\Phi_0,w_\mathscr{A},\psi_\mathscr{A},\textbf{d} u_{\mathscr{A}})$ and $\textbf{B}_0$ for all background terms excluding the phases, that is, $(A_0,\Phi_0,w_\mathscr{A},\psi_\mathscr{A})$.\\
\end{nota} 
\begin{proof}[Proof of point \ref{item:approxit1Approx} of Proposition \ref{propal:firstapproxApprox}]
Following Remark \ref{rem:structsysbgApprox}, we first determine the phases, relying on the geodesic equation and the method of characteristics, as explained in Proposition \ref{propal:initialphasegenephase}. In particular, the regularity is sufficient (see Definition \ref{defi:admissiblebginitansatz}) for Proposition \ref{propal:initialphasegenephase} and Sobolev regularity is also propagated. Once the phases are determined, classical local existence results based on energy estimates apply to the remaining wave/transport system, which here consists of the first three equations and the last equation of \eqref{eq:firstapproxsysApprox}. Regularity propagates on the interval of existence and the compact support property is preserved by finite speed of propagation.\\
\end{proof}
\begin{proof}[Proof of point \ref{item:approxit2Approx} of Proposition \ref{propal:firstapproxApprox}]
The initial data are admissible in the sense of Definition \ref{defi:admissiblebginitansatz}. This ensures that the polarization condition and the Lorenz gauge condition are satisfied at $t=0$. To recover the propagation of the polarization condition, one can easily show that $\mathscr{L}_{\mathscr{A}}\left(\partial_{\beta}u_{\mathscr{A}}W^\beta_\mathscr{A}\right)=0$. The propagation of the gauge follows from the argument of Section \ref{subsubsection:propaggauegeneKGM}.\\
\end{proof}
\begin{proof}[Proof of point \ref{item:approxit3Approx} of Proposition \ref{propal:firstapproxApprox}]
The polarization condition \eqref{eq:polarApprox} together with the Lorenz gauge \eqref{eq:lorenzgaugeapproxApprox} for $A_0$ imply that 
 \begin{align*}
        \partial_\alpha A_{1\lambda}^\alpha=\lambda^{1/2}\sum_{\mathscr{A}\in\mathbb{A}}\left(\partial_\alpha P_\mathscr{A}^\alpha \cos\left(\frac{u_\mathscr{A}}{\lambda}\right)+\partial_\alpha Q_\mathscr{A}^\alpha \sin\left(\frac{u_\mathscr{A}}{\lambda}\right)\right).  
 \end{align*}
We just proved that
 \begin{equation}
     \label{eq:lorenzgaugeapproxApprox}
\partial_\alpha A^\alpha_{1\lambda}=O(\lambda^{1/2}).
 \end{equation}

Substituting the truncated WKB expansions \eqref{eq:firstorderexpApprox} into KGML \eqref{eq:KGMLgeneKGM}, we get
\begin{equation}
\label{eq:plugKGMLMaxApprox}
\begin{split}
    \Box A_{1\lambda}^{\beta}+\Im(\Phi_{1\lambda}\overline{(\partial^{\beta}+iA^{\beta}_{1\lambda})\Phi_{1\lambda}})&=\underbrace{\Box A_{0}^{\beta}+\Im(\Phi_0\overline{\partial^{\beta}\Phi_0})-A_0^{\beta}|\Phi|^2-\sum_{\mathscr{A}\in\mathbb{A}}|\Psi_\mathscr{A}|^2\partial^{\beta}u_\mathscr{A}}_\text{$=0$}\\
    &-\lambda^{-1/2}\sum_{\mathscr{A}\in\mathbb{A}}\sin(\frac{u_\mathscr{A}}{\lambda})\underbrace{(\Box u_\mathscr{A}P_\mathscr{A}^\beta+2\partial^{\alpha}u_\mathscr{A}\partial_{\alpha}P_\mathscr{A}^{\beta}+\partial^{\beta}u_\mathscr{A}\Im(\Phi_0\overline{\Psi_\mathscr{A}}))}_\text{=0}\\
    &+\lambda^{-1/2}\sum_{\mathscr{A}\in\mathbb{A}}\cos(\frac{u_\mathscr{A}}{\lambda})\underbrace{(\Box u_\mathscr{A}Q_\mathscr{A}^\beta+2\partial^{\alpha}u_\mathscr{A}\partial_{\alpha}Q_\mathscr{A}^{\beta}-\partial^{\beta}u_\mathscr{A}\Re(\Phi_0\overline{\Psi_\mathscr{A}}))}_\text{=0}\\
    &-\lambda^{-3/2}\sum_{\mathscr{A}\in\mathbb{A}}\underbrace{\partial^\alpha u_\mathscr{A}\partial_\alpha u_\mathscr{A}}_\text{=0}\left(\cos\left(\frac{u_\mathscr{A}}{\lambda}\right)P^\beta_\mathscr{A}+\sin\left(\frac{u_\mathscr{A}}{\lambda}\right)Q^\beta_\mathscr{A}\right)\\
    &-\underbrace{\sum_{\mathscr{A},\mathscr{B}\in\mathbb{A},\mathscr{A}\neq\mathscr{B}}\partial^\beta u_{\mathscr{B}}\Im(e^{i\frac{u_\mathscr{A}}{\lambda}}\Psi_\mathscr{A}\overline{(ie^{i\frac{u_\mathscr{B}}{\lambda}}\Psi_\mathscr{B})})}_\text{$=\tilde{\Xi}_{A\lambda}$}+\lambda^{1/2}\Xi_{A\lambda}^\beta     
\end{split}
\end{equation}
and 
\begin{equation}
\label{eq:plugKGMLKleinApprox}
\begin{split}
    (\partial^{\alpha}+iA^{\alpha}_{1\lambda})(\partial_{\alpha}+i(A_{1\lambda})_\alpha )\Phi_{1\lambda}&=\underbrace{\Box \Phi_0+2iA_0^{\alpha}\partial_{\alpha}\Phi_0+A^{\alpha}_0A_{\alpha0}\Phi_0}_\text{$=0$}\\
    &+\sum_{\mathscr{A}\in\mathbb{A}}\lambda^{-1/2}ie^{i\frac{u_{\mathscr{A}}}{\lambda}}\underbrace{(\Box u_{\mathscr{A}}\Psi_{\mathscr{A}}+2\partial^{\alpha}u_{\mathscr{A}}\partial_{\alpha}\Psi_{\mathscr{A}}+2iA_0^{\alpha}\partial_{\alpha}u_{\mathscr{A}}\Psi_{\mathscr{A}})}_\text{=0}\\
     &-\lambda^{-3/2}\sum_{\mathscr{A}\in\mathbb{A}}\underbrace{\partial^\alpha u_\mathscr{A}\partial_\alpha u_\mathscr{A}}_\text{=0}e^{i\frac{u_\mathscr{A}}{\lambda}}\Psi_\mathscr{A}\\
&+\underbrace{\sum_{\mathscr{A},\mathscr{B}\in\mathbb{A},\mathscr{A}\neq\mathscr{B}}i2\left(\cos\left(\frac{u_{\mathscr{A}}}{\lambda}\right)P^\alpha_{\mathscr{A}}+\sin\left(\frac{u_{\mathscr{A}}}{\lambda}\right)Q^\alpha_{\mathscr{A}}\right)\partial_\alpha u_{\mathscr{B}}ie^{i\frac{u_{\mathscr{B}}}{\lambda}}\Psi_\mathscr{B}}_\text{$=\tilde{\Xi}_{\Phi\lambda}$}+\lambda^{1/2}\Xi_{\Phi\lambda}.\\
\end{split}
\end{equation}
The first two equations of \eqref{eq:firstapproxsysApprox} cancel the $O(1)$ non-oscillating terms, including the backreaction.\\ 
The third and fifth equations cancel the $O(\lambda^{-1/2})$ terms in the Klein-Gordon and Maxwell equations, respectively. \\
Then, the eikonal equation eliminates the $O(\lambda^{-3/2})$ terms
arising from the d'Alembertian. The remaining contributions are either of order $O(1)$, composed of the high-frequency interactions, or of order $O(\lambda^{1/2})$. The latter are precisely $\Xi_{\Phi\lambda}$ and $\Xi_{A\lambda}$ and we show in the next Proposition that the former correspond exactly to the $\tilde{\Xi}_{\Phi\lambda}$ and $\tilde{\Xi}_{A\lambda}$ as introduced in Definition \ref{defi:almostapproxresults}. \\
\end{proof}
\begin{nota}
\label{nota:xixi}
We use the  schematic notation $\tilde{\boldsymbol{\Xi}}_\lambda$ for both $\tilde{\Xi}_{A\lambda}$ and $\tilde{\Xi}_{\Phi\lambda}$, and $\boldsymbol{\Xi}_\lambda$ for both $\Xi_{A\lambda}$ and $\Xi_{\Phi\lambda}$.
\end{nota}
\begin{propal}
\label{propal:KandSApprox}  
       The $O(1)$ high-frequency interaction terms $\tilde{\Xi}_{A\lambda}$ of \eqref{eq:plugKGMLMaxApprox} and $\tilde{\Xi}_{\Phi\lambda}$ of \eqref{eq:plugKGMLKleinApprox} can be written and separated as follows:
\begin{equation}
\label{eq:XitildeMApprox}  
\tilde{\Xi}_{A\lambda}^\beta=\sum_{(\mathscr{A},\mathscr{B})\in\mathscr{C}}\Re\left(\overline{K^\beta_{A(\mathscr{A}\pm\mathscr{B})}(\textbf{B}_0')}e^{i\frac{u_\mathscr{A}\pm u_\mathscr{B}}{\lambda}}\right)+\sum_{(\mathscr{A},\mathscr{B})\in\mathscr{S}}\Re\left(\overline{S^\beta_{A(\mathscr{A}\pm\mathscr{B})}(\textbf{B}_0')}e^{i\frac{u_\mathscr{A}\pm u_\mathscr{B}}{\lambda}}\right),\\
\end{equation}
for the Maxwell equation and 
\begin{equation}
\label{eq:XitildeMKGMApprox}  
\tilde{\Xi}_{\Phi\lambda}=\sum_{(\mathscr{A},\mathscr{B})\in\mathscr{C}}e^{i\frac{u_\mathscr{A}\pm u_\mathscr{B}}{\lambda}}K_{\phi(\mathscr{A}\pm\mathscr{B})}(\textbf{B}_0' )+\sum_{(\mathscr{A},\mathscr{B})\in\mathscr{S}}e^{i\frac{u_\mathscr{A}\pm u_\mathscr{B}}{\lambda}}S_{\Phi(\mathscr{A}\pm\mathscr{B})}(\textbf{B}_0'),
\end{equation}
for the Klein-Gordon equation, with $\textbf{B}_0'$ from Notation \ref{nota:b0background}. In particular, for $(\mathscr{A},\mathscr{B})\in\mathscr{S}$,
\begin{align*}
 &S^\beta_{A(\mathscr{A}+\mathscr{B})}=0, &S^\beta_{A(\mathscr{A}-\mathscr{B})}=-\partial^\beta u_{\mathscr{B}}\overline{\Psi_\mathscr{A}}\Psi_\mathscr{B},\\
 &S_{\Phi(\mathscr{A}+\mathscr{B})}=\Psi_\mathscr{A}\partial_\alpha u_\mathscr{A}(P^\alpha_\mathscr{B}-iQ^\alpha_\mathscr{B}), &S_{\Phi(\mathscr{A}-\mathscr{B})}=\Psi_\mathscr{A}\partial_\alpha u_\mathscr{A}(P^\alpha_\mathscr{B}+iQ^\alpha_\mathscr{B}),
 \end{align*}
 and for $(\mathscr{A},\mathscr{B})\in\mathscr{C}$
 \begin{align*}
 &K^\beta_{A(\mathscr{A}+\mathscr{B})}=0, &K^\beta_{A(\mathscr{A}-\mathscr{B})}=-\partial^\beta u_{\mathscr{B}}\overline{\Psi_\mathscr{A}}\Psi_\mathscr{B},\\
 &K_{\Phi(\mathscr{A}+\mathscr{B})}=\Psi_\mathscr{A}\partial_\alpha u_\mathscr{A}(P^\alpha_\mathscr{B}-iQ^\alpha_\mathscr{B}), &K_{\Phi(\mathscr{A}-\mathscr{B})}=\Psi_\mathscr{A}\partial_\alpha u_\mathscr{A}(P^\alpha_\mathscr{B}+iQ^\alpha_\mathscr{B}).
 \end{align*}
\end{propal}
 \begin{nota}
 \label{nota:schemanotApprox}
    The term $\tilde{\boldsymbol{\Xi}}_\lambda$, representing schematically $\tilde{\Xi}_{A\lambda}$ and $\tilde{\Xi}_{\Phi\lambda}$, has the form
 \begin{align*}
\tilde{\boldsymbol{\Xi}}_\lambda=\sum_{(\mathscr{A},\mathscr{B})\in\mathscr{C}}e^{i\frac{u_\mathscr{B}\pm u_\mathscr{B}}{\lambda}}\boldsymbol{K}_{(\mathscr{A}\pm\mathscr{B})}(\textbf{B}_0' )+\sum_{(\mathscr{A},\mathscr{B})\in\mathscr{S}}e^{i\frac{u_\mathscr{B}\pm u_\mathscr{B}}{\lambda}}\textbf{S}_{(\mathscr{A}\pm\mathscr{B})}(\textbf{B}_0').
\end{align*}
 \end{nota}
 \begin{rem}
  \label{rem:KandSform}
 The $K$-terms correspond to contributions arising from resonant interactions,
whereas the $S$-terms to those emerging from non-resonant interactions.
 \end{rem}
\begin{proof}[Proof of Proposition \ref{propal:KandSApprox}]
The result follow from direct calculation using trigonometric formulas. We thus recover for $\tilde{\Xi}_{A\lambda}$ and $\tilde{\Xi}_{\Phi\lambda}$ the structure of Definition \ref{defi:almostapproxresults}, i.e., high-frequency terms whose phases are either always characteristic or never characteristic and which enjoy the appropriate regularity. This concludes the proof of Propositions \ref{propal:KandSApprox} and \ref{propal:firstapproxApprox}.
\end{proof}
We now state the properties of the interaction terms.
\begin{propal}
\label{propal:KandSpropertiesApprox}
For $\textbf{K}_{\mathscr{A}\pm\mathscr{B}}$ and $\textbf{S}_{\mathscr{A}\pm\mathscr{B}}$ defined as in Proposition \ref{propal:KandSApprox} and for $C_0$ given by Notation \ref{nota:cst0Approx}: 
\begin{enumerate}
       \item \label{item:itm1KandSpropertiesApprox} 
        The following estimates hold
       \begin{align*}
            &\forall t\in[0,T],\;\sum_{k\leq 3}||\textbf{K}_{\mathscr{A}\pm\mathscr{B}}(t)||_{H^{k}}dt\leq C_0,\\
             &\forall t\in[0,T],\;\sum_{k\leq 3}||\textbf{S}_{\mathscr{A}\pm\mathscr{B}}(t)||_{H^{k}}dt\leq C_0.
       \end{align*}
\item \label{item:itm2KandSpropertiesApprox} 
For $(\mathscr{A},\mathscr{B})\in\mathscr{C}$, the vector $K^\beta_{A(\mathscr{A}\pm\mathscr{B})}$ is orthogonal to $\partial^\beta u_{\mathscr{A}}\pm\partial^\beta u_{\mathscr{B}}$.
   \end{enumerate}

\end{propal}
\begin{proof}[Proof of point \ref{item:itm1KandSpropertiesApprox} of Proposition \ref{propal:KandSpropertiesApprox}]
This follows from direct estimates, as $H^3(\mathbb{R}^3)$ is an algebra, and all the terms involved
belong to $H^3(\mathbb{R}^3)$ uniformly on $[0,T]$. 
\end{proof}
\begin{proof}[Proof of point \ref{item:itm2KandSpropertiesApprox} of Proposition \ref{propal:KandSpropertiesApprox}]
By Proposition \ref{propal:KandSApprox}, one has $K^\beta_{A(\mathscr{A}-\mathscr{B})}=-\partial^\beta u_{\mathscr{B}}\overline{\Psi_\mathscr{A}}\Psi_\mathscr{B}$ and $K^\beta_{A(\mathscr{A}+\mathscr{B})}=0$. Consequently, for $(\mathscr{A},\mathscr{B})\in\mathscr{C}$, one has $(\partial^\alpha u_\mathscr{A}-\partial^\alpha u_\mathscr{B})\partial_\alpha u_\mathscr{A}=(\partial^\alpha u_\mathscr{A}-\partial^\alpha u_\mathscr{B})\partial_\alpha u_\mathscr{B}=0$, so that $K^\beta_{A(\mathscr{A}-\mathscr{B})}(\partial_\beta u_{\mathscr{A}}-\partial_\beta u_{\mathscr{B}})=0$. Then, the identity $K^\beta_{A(\mathscr{A}+\mathscr{B})}(\partial_\beta u_{\mathscr{A}}+\partial_\beta u_{\mathscr{B}})=0$ holds trivially.
\end{proof}

\section{Initial data for exact solutions}
\label{section:Init}
Here we provide a detailed description of the requirements imposed on the initial data for the error term in order to prove Theorems \ref{unTheorem:mainth1results} and \ref{unTheorem:mainth2results}.\\\\
More precisely, we aim to construct initial data for the precise error term \eqref{eq:schemparaerrorIdea}. As explained in Section \ref{subsection:Idea}, the evolution of the error term is governed by a system of coupled equations \eqref{eq:schemfullsystemerrterm} for distinct components. In order to obtain the precise structure required to close the argument, one must prescribe initial data for each \textbf{error component} under suitable \textbf{admissibility conditions} given in Section \ref{subsection:criteria}.\\\\
In Sections \ref{subsectionLap} and \ref{subsection:consinit}, we construct for any $\lambda>0$ a \textbf{generic set} of initial data fulfilling these requirements, see Proposition \ref{propal:specsetconsinit}. The construction proceeds by first solving the constraint equations for the KGM system in Lorenz gauge \eqref{eq:initconstrainterrorInit} in a small-data regime, and then separating the resulting solutions into the initial data associated with the different error components, taking care of the corresponding required properties.
Since the number of variables exceeds the number of constraint equations, the construction leaves a certain freedom of choice. In what follows, we exploit this freedom as little as possible.\\\\
Before that, we enunciate the definition of the constraint equations and the admissibility criteria.
\subsection{Equations of constraint for KGM in Lorenz gauge}
\label{subsection:Initconstr}
The \textbf{initial data set for Klein-Gordon-Maxwell} consists of
$(a^\alpha_\lambda,\dot{a}^\alpha_\lambda,\phi_\lambda,\dot{\phi}_\lambda)$. \\
In the high-frequency regime, we expand these quantities as the \textbf{initial parametrices}
\begin{equation} 
\label{eq:initialfulldefInit}
\begin{split}
&a^\alpha_\lambda=a^\alpha_{1\lambda}+z^\alpha_\lambda,
\;\;\;\;\;\;\dot{a}^\alpha_\lambda=\dot{a}^\alpha_{1\lambda}+\dot{z}_\lambda^\alpha,\\
&\phi_\lambda=\phi_{1\lambda}+\zeta_\lambda,
\;\;\;\;\;\;\dot{\phi}_\lambda=\dot{\phi}_{1\lambda}+\dot{\zeta}_\lambda,
\end{split}
\end{equation}
where $(z_\lambda,\zeta_\lambda,\dot{z}_\lambda,\dot{\zeta}_\lambda)$ is the
\textbf{error initial data} associated with the \textbf{error term} $(Z_\lambda,\mathcal{Z}_\lambda)$. 
\begin{defi}
\label{defi:initialbgdefInit}
For a given admissible background initial data set $(a^\alpha_0,\dot{a}^\alpha_0,\phi_0,\dot{\phi}_0,v_{\mathscr{A}},\dot{v}_\mathscr{A},\psi_\mathscr{A},w^\alpha_\mathscr{A})$, we define the initial ansatz $(a^\alpha_{1\lambda},\phi_{1\lambda})$ directly from the background data as in Definition \ref{defi:initansatzinitansatz}.\\
Then, we also define $(\dot{a}^\alpha_{1\lambda},\dot{\phi}_{1\lambda})$ as $((\partial_tA^\alpha_{1\lambda})|_{t=0},(\partial_t\Phi_{1\lambda})|_{t=0})$ where $(A^\alpha_{1\lambda},\Phi_{1\lambda})$
are almost approximate solutions to KGM given using Proposition \ref{propal:firstapproxApprox}. The time derivatives $((\partial_tA^\alpha_{1\lambda})|_{t=0},(\partial_t\Phi_{1\lambda})|_{t=0})$ can be expressed only in terms of the background initial data and their space derivatives.\\
\end{defi}    
 \begin{rem}
 \label{defi:constraintfullInit}
   Recall that the constraints for $(a^\alpha_\lambda,\dot{a}^\alpha_\lambda,\phi_\lambda,\dot{\phi}_\lambda)$ associated with KGM in Lorenz gauge  are 
\begin{equation}
\begin{cases}
 \label{eq:initconstraintfullInit}
    \dot{a}^0_\lambda=-\partial_ia^i_\lambda , \\
        -\Delta a_\lambda^0-\partial_i(\dot{a}_\lambda^i)=-\Im(\phi_\lambda\overline{\dot{\phi_\lambda}})-a^0_\lambda|\phi_\lambda|^2.
\end{cases}
    \end{equation}   
 \end{rem}
\begin{rem}
\label{rem:remainfreedomInit}
Once $(a^\alpha_{1\lambda},\phi_{1\lambda},\dot{a}^\alpha_{1\lambda},\dot{\phi}_{1\lambda})$ is fixed according to Definition \ref{defi:initialbgdefInit}, it remains to determine the \textbf{error initial data} $(z_\lambda,\zeta_\lambda,\dot{z}_\lambda,\dot{\zeta}_\lambda)$. This corresponds to $10$ unknowns (as $z_\lambda$ and $\dot{z}_\lambda$ are four-vectors), while only two constraint equations are imposed. Hence, $8$ degrees of freedom remain.
 \end{rem}
The following Proposition specifies the constraint equations and the regularity requirement for the \textbf{error initial data}. Their resolution is postponed to Section \ref{subsectionLap}.
\begin{propal}
 \label{propal:initconstrainterrorInit}
For a given admissible background initial data set $(a^\alpha_0,\dot{a}^\alpha_0,\phi_0,\dot{\phi}_0,v_{\mathscr{A}},\dot{v}_\mathscr{A},\psi_\mathscr{A},w^\alpha_\mathscr{A})$ from Definition \ref{defi:admissiblebginitansatz}, let $(a^\alpha_{1\lambda},\dot{a}^\alpha_{1\lambda},\phi_{1\lambda},\dot{\phi}_{1\lambda})$ be given as in Definition \ref{defi:initialbgdefInit} for some $\lambda>0$ and let the error initial data set $(z_\lambda^0,z_\lambda^i,\zeta_\lambda,\dot{z}_\lambda^0,\dot{z}_\lambda^i,\dot{\zeta}_\lambda)\in H^2_\delta\times H^2\times H^2\times H^1_{\delta}\times H^1\times H^1$, for some $-3/2<\delta<-1/2$ with $\delta\leq\delta_0$, satisfy
    \begin{equation}   
    \begin{cases} 
     \label{eq:initconstrainterrorInit}
   \dot{z}_\lambda^0+\partial_jz^j_\lambda=-\lambda^{1/2}\sum_{\mathscr{A}\in\mathbb{A}}\Re\left(e^{i\frac{u_\mathscr{A}}{\lambda}}(\overline{\partial_\alpha W^\alpha_\mathscr{A}})|_{t=0}\right),\\\\ 
-\Delta z_\lambda^0+|\phi_{1\lambda}+\zeta_\lambda|^2z^0_\lambda-\partial_j\dot{z}_\lambda^j=
\lambda^{1/2}\sum_{\mathscr{A}\in\mathbb{A}}\left(\partial_t\Re\left(e^{i\frac{u_\mathscr{A}}{\lambda}}\overline{\partial_\alpha W^\alpha_\mathscr{A}}\right)\right)|_{t=0}+\lambda^{1/2}\xi_{A\lambda}^0(\textbf{B}_0')+\tilde{\xi}_{A\lambda}^0(\textbf{B}_0'),\\\\
-\Im(\phi_{1\lambda}\overline{\dot{\zeta}_\lambda})-\Im(\zeta_\lambda\overline{\dot{\phi}_{1\lambda}})-\Im(\zeta_\lambda\overline{\dot{\zeta}_\lambda})-|\zeta_\lambda|^2a_{1\lambda}^0-(\zeta_\lambda\overline{\phi_{1\lambda}}+\overline{\zeta_\lambda}\phi_{1\lambda})a_{1\lambda}^0.
\end{cases}
\end{equation}
Here, $\lambda^{1/2}\xi_{A\lambda}^0(\textbf{B}_0')=\lambda^{1/2}\Xi_{A\lambda}^0(\textbf{B}_0')|_{t=0}$ and $\tilde{\xi}_{A\lambda}^0(\textbf{B}_0')=\tilde{\Xi}_{A\lambda}^0(\textbf{B}_0')|_{t=0}$ are remainders terms defined in Proposition \ref{propal:KandSApprox}, with Notation \ref{nota:b0background} for the background terms.
Then, with $(a^\alpha_\lambda,\dot{a}^\alpha_\lambda,\phi_\lambda,\dot{\phi}_\lambda)$ defined by \eqref{eq:initialfulldefInit}, the constraints \eqref{eq:initconstraintfullInit} are satisfied with sufficient regularity for our purposes.
\end{propal}
\begin{proof}
    We first consider the Lorenz gauge condition. We require  
\begin{align*}  
&\dot{a}^0_\lambda+\partial_ia^i_\lambda=\dot{a}^0_{1\lambda}+\partial_ia^i_{1\lambda}+\dot{z}_\lambda^0+\partial_iz^i_\lambda=0,
\intertext{and we know that}
&\dot{a}^0_{1\lambda}+\partial_ia^i_{1\lambda}=\lambda^{1/2}\sum_{\mathscr{A}\in\mathbb{A}}\Re\left(e^{i\frac{u_\mathscr{A}}{\lambda}}(\overline{\partial_\alpha W^\alpha_\mathscr{A}}\right)|_{t=0}),
\end{align*}
from Definition \ref{defi:constraintbginitansatz}. This yields the first equation in
\eqref{eq:initconstrainterrorInit}.\\\\
For the second constraint, we compute
\begin{align*}
&-\Delta\left(\lambda^{1/2}\Re\left(e^{i\frac{u_\mathscr{A}}{\lambda}}\overline{W_\mathscr{A}^{0}}\right)\right)-\partial^2_{jt}\left(\lambda^{1/2} \Re\left(e^{i\frac{u_\mathscr{A}}{\lambda}}\overline{W_\mathscr{A}^{j}}\right)\right)\\
&=-\lambda^{-1/2}\Re\left(ie^{i\frac{u_\mathscr{A}}{\lambda}}(\Delta u_\mathscr{A}\overline{W_\mathscr{A}^{0}}+2\partial_ju_\mathscr{A}\partial_j\overline{W_\mathscr{A}^{0}}+\partial_t(\partial_ju_\mathscr{A}\overline{W_\mathscr{A}^{j}})+\partial_tu_\mathscr{A}\partial_j\overline{W_\mathscr{A}^{j}})\right)\\
&-\lambda^{1/2}\Re\left(e^{i\frac{u_\mathscr{A}}{\lambda}}(\Delta W_\mathscr{A}^{0}+\partial^2_{tj}\overline{W_\mathscr{A}^{j}})\right)+\lambda^{-3/2}\Re\left(e^{i\frac{u_\mathscr{A}}{\lambda}}(\partial_ju_\mathscr{A}\partial_ju_\mathscr{A}\overline{W_\mathscr{A}^0}+\partial_ju_\mathscr{A}\partial_tu_\mathscr{A}\overline{W_\mathscr{A}^j})\right)\\
&=-\lambda^{-1/2}\Re\left(ie^{i\frac{u_\mathscr{A}}{\lambda}}(\Delta u_\mathscr{A}\overline{W_\mathscr{A}^{0}}-\partial^2_{tt}u_\mathscr{A}\overline{W_\mathscr{A}^{0}}-\partial_{t}u_\mathscr{A}\partial_{t}\overline{W_\mathscr{A}^{0}}+2\partial_ju_\mathscr{A}\partial_j\overline{W_\mathscr{A}^{0}}-\partial_tu_\mathscr{A}\partial_t\overline{W_\mathscr{A}^{0}})\right)\\
&-\lambda^{1/2}\Re\left(e^{i\frac{u_\mathscr{A}}{\lambda}}(\Delta \overline{W_\mathscr{A}^0}-\partial^2_{tt}\overline{W_\mathscr{A}^{0}})\right)-\partial_t\left(\Re\left(e^{i\frac{u_\mathscr{A}}{\lambda}}\lambda^{1/2}\partial_\alpha \overline{W_\mathscr{A}^\alpha}\right)\right)\\
&=-\Box\left(\lambda^{1/2}\Re\left(e^{i\frac{u_\mathscr{A}}{\lambda}}\overline{W_\mathscr{A}^{0}}\right)\right)-\partial_t\left(\Re\left(e^{i\frac{u_\mathscr{A}}{\lambda}}\lambda^{1/2}\partial_\alpha \overline{W_\mathscr{A}^\alpha}\right)\right),
\end{align*}
where we use the polarization condition $\partial_\alpha u_\mathscr{A}W_\mathscr{A}^\alpha=0$. Since the background initial data are admissible,
they satisfy the constraint equation \eqref{eq:constraintbginitansatz}, which implies that  
\begin{equation}
        -\Delta a_{1\lambda}^0-\partial_i(\dot{a}_{1\lambda}^i)+\Im(\phi_{1\lambda}\overline{\dot{\phi_{1\lambda}}})+a^0_{1\lambda}|\phi_{1\lambda}|^2=-\lambda^{1/2}\sum_{\mathscr{A}\in\mathbb{A}}\partial_t\left(\Re\left(e^{i\frac{u_\mathscr{A}}{\lambda}}\overline{\partial_\alpha W_\mathscr{A}^\alpha}\right)\right)|_{t=0}+\lambda^{1/2}\xi_{A\lambda}^0(\textbf{B}_0')+\tilde{\xi}_{A\lambda}^0(\textbf{B}_0').
\end{equation}   
All remaining terms are obtained by direct calculations.\\
\end{proof}
\subsection{Definition of the criteria for admissible error initial data}
\label{subsection:criteria}
As explained in Section \ref{subsection:Idea}, the error term is decomposed into several components to recover the necessary structure for the bootstrap argument of Section \ref{subsubsection:evosys}. Further details are provided in Section \ref{subsection:errterm}. The \textbf{error components} are given by $((E^{evo})^{\alpha}_\lambda,\mathcal{E}^{evo}_\lambda,W^{+\alpha}_{\lambda\mathscr{A}},\breve{W}^{+\alpha}_{\mathscr{A}\pm\mathscr{B}},\Psi^+_{\lambda\mathscr{A}},\breve{\Psi}^+_{\mathscr{A}\pm\mathscr{B}},G^{+\alpha}_{W^+_{\lambda\mathscr{A}}},G^+_{\Psi^+_{\lambda\mathscr{A}}},(E^{ell}_\lambda)^\alpha,\mathcal{E}^{ell}_\lambda)$ and their corresponding \textbf{error components initial data} by
 \begin{align*}
((e^{evo}_\lambda)^{\alpha},\epsilon^{evo}_\lambda,(\dot{e}^{evo}_\lambda)^{\alpha},\dot{\epsilon}^{evo}_\lambda,w^{+\alpha}_{\lambda\mathscr{A}},\breve{w}^{+\alpha}_{\mathscr{A}\pm\mathscr{B}},\psi^+_{\lambda\mathscr{A}},\breve{\psi}^+_{\mathscr{A}\pm\mathscr{B}},g^{+\alpha}_{W^+_{\lambda\mathscr{A}}},g^+_{\Psi^+_{\lambda\mathscr{A}}},(e^{ell}_\lambda)^{\alpha},\epsilon^{ell}_\lambda,(\dot{e}^{ell}_\lambda)^{\alpha},\dot{\epsilon}^{ell}_\lambda).   
 \end{align*}
 In our construction, the quantities $((e^{evo}_\lambda)^{\alpha},\epsilon^{evo}_\lambda,(\dot{e}_\lambda^{evo})^{\alpha},\dot{\epsilon}^{evo},w^{+\alpha}_{\lambda\mathscr{A}},\psi^+_{\lambda\mathscr{A}},g^{+\alpha}_{W^+_{\lambda\mathscr{A}}},g^+_{\Psi^+_{\lambda\mathscr{A}}})$ serve as initial data for the error system \eqref{eq:schemfullsystemerrterm}, the component $(\breve{w}^{+\alpha}_{\mathscr{A}\pm\mathscr{B}},\breve{\psi}^+_{\mathscr{A}\pm\mathscr{B}})$ corresponds to initial data for the \textbf{decoupled equations} \eqref{eq:schemtpfabinfofab} and $((e^{ell}_\lambda)^{\alpha},\epsilon^{ell}_\lambda,(\dot{e}_\lambda^{ell})^{\alpha},\dot{\epsilon}^{ell}_\lambda)$ are prescribed \textbf{directly by the background}. Details on these equations are given in Section \ref{subsection:errterm}.\\ 
The next definition explains how the \textbf{error initial data} in the sense of \eqref{eq:initialfulldefInit} are reconstructed from \textbf{error components initial data}.

\begin{defi}
\label{defi:glueerrorcomponentcriteria}
For a given \textbf{admissible background initial data} and for \textbf{error components initial data}, we define $(a^\alpha_\lambda,\dot{a}^\alpha_\lambda,\phi_\lambda,\dot{\phi}_\lambda)$, the initial data for KGM, by    
    \begin{align*} 
&a^\alpha_\lambda=a^\alpha_{1\lambda}+z^\alpha_\lambda,
&\dot{a}^\alpha_\lambda=\dot{a}^\alpha_{1\lambda}+\dot{z}_\lambda^\alpha,\\
&\phi_\lambda=\phi_{1\lambda}+\zeta_\lambda,
&\dot{\phi}_\lambda=\dot{\phi}_{1\lambda}+\dot{\zeta}_\lambda,\\
\end{align*} where $(a^\alpha_{1\lambda},\dot{a}^\alpha_{1\lambda},\phi_{1\lambda},\dot{\phi}_{1\lambda})$ is expressed only in terms of the \textbf{background initial data} as in Definition \ref{defi:initialbgdefInit}, while the \textbf{precise error term initial data} $(z^\alpha_\lambda,\dot{z}_\lambda^\alpha,\zeta_\lambda,\dot{\zeta}_\lambda)$ is expressed in terms of the \textbf{error components initial data} as
\begin{equation}
\label{eq:glueerrorcomponentbigerrorcriteria}
\begin{cases}
\!\begin{aligned}
&z^\alpha_\lambda=\sum_{\mathscr{A}\in\mathbb{A}}\lambda^{1}\Re\left(e^{i\frac{v_\mathscr{A}}{\lambda}}\overline{w^{+\alpha}_{\lambda\mathscr{A}}}\right)+\sum_{(\mathscr{A},\mathscr{B})\in\mathscr{C}}\lambda^{1}\Re\left(e^{i\frac{v_{\mathscr{A}}\pm v_{\mathscr{B}}}{\lambda}}\overline{\breve{w}^{+\alpha}_{\mathscr{A}\pm\mathscr{B}}}\right)+(e^{evo}_\lambda)^{\alpha}+(e^{ell}_\lambda)^{\alpha}, \\
&\zeta_\lambda=\sum_{\mathscr{A}\in\mathbb{A}}\lambda^{1}\psi^+_{\lambda\mathscr{A}}e^{i\frac{v_\mathscr{A}}{\lambda}}+\sum_{(\mathscr{A},\mathscr{B})\in\mathscr{C}}\lambda^{1}\breve{\psi}^+_{\mathscr{A}\pm\mathscr{B}}e^{i\frac{v_{\mathscr{A}}\pm v_{\mathscr{B}}}{\lambda}}+\epsilon^{evo}_\lambda+\epsilon^{ell}_\lambda,\\
&\dot{z}^\alpha_A=\sum_{\mathscr{A}\in\mathbb{A}}\lambda^1\Re\left(e^{i\frac{v_\mathscr{A}}{\lambda}}(\overline{\partial_tW^{+\alpha}_{\lambda\mathscr{A}}})|_{t=0}\right)+\sum_{\mathscr{A}\in\mathbb{A}}\dot{v}_\mathscr{A}\Re\left(ie^{i\frac{v_\mathscr{A}}{\lambda}}\overline{w^{+\alpha}_{\lambda\mathscr{A}}}\right)+\sum_{(\mathscr{A},\mathscr{B})\in\mathscr{C}}\lambda^1 \Re\left(e^{i\frac{v_{\mathscr{A}}\pm v_{\mathscr{B}}}{\lambda}}(\overline{\partial_t \breve{W}_{\mathscr{A}\pm\mathscr{B}}^{+\alpha}})|_{t=0}\right)\\
&+\sum_{(\mathscr{A},\mathscr{B})\in\mathscr{C}}(\dot{v}_{\mathscr{A}}\pm \dot{v}_{\mathscr{B}}) \Re\left(e^{i\frac{v_{\mathscr{A}}\pm v_{\mathscr{B}}}{\lambda}}\overline{ \breve{w}_{\mathscr{A}\pm\mathscr{B}}^{+\alpha}}\right)+(\dot{e}_\lambda^{evo})^{\alpha}+(\dot{e}_\lambda^{ell})^{\alpha},\\
 & \dot{\zeta}_\lambda=\sum_{\mathscr{A}\in\mathbb{A}}\lambda^1  (\partial_t\Psi^+_{\lambda\mathscr{A}})|_{t=0}e^{i\frac{v_\mathscr{A}}{\lambda}}+\sum_{\mathscr{A}\in\mathbb{A}}  \dot{v}_\mathscr{A}\psi^+_{\lambda\mathscr{A}}ie^{i\frac{v_\mathscr{A}}{\lambda}}+\sum_{(\mathscr{A},\mathscr{B})\in\mathscr{C}}\lambda^1\partial_t(\breve{\Psi}^+_{\mathscr{A}\pm\mathscr{B}})|_{t=0}e^{i\frac{v_{\mathscr{A}}\pm v_{\mathscr{B}}}{\lambda}}\\
&+\sum_{(\mathscr{A},\mathscr{B})\in\mathscr{C}}(\dot{v}_{\mathscr{A}}\pm \dot{v}_{\mathscr{B}})\breve{\psi}^+_{\mathscr{A}\pm\mathscr{B}}ie^{i\frac{v_{\mathscr{A}}\pm v_{\mathscr{B}}}{\lambda}}+\dot{\epsilon}^{evo}_\lambda+\dot{\epsilon}^{ell}_\lambda.      
\end{aligned}
\end{cases}
\end{equation} 
\end{defi}
\begin{rem}
\label{rem:noGpluscriteria}
    We observe that the quantities $(g^{+\alpha}_{W^+_{\lambda\mathscr{A}}},g^+_{\Psi^+_{\lambda\mathscr{A}}})$ do not appear explicitly in these parametrices. Indeed, they correspond to the initial data for \textbf{auxiliary functions} introduced to handle regularity loss in view of local well-posedness, see Section \ref{subsubsection:estiGplus}. 
\end{rem}
\begin{rem}
\label{rem:timederivcriteria}
    The quantities $(\partial_tW^{+\alpha}_{\lambda\mathscr{A}})_{t=0}$ and $(\partial_t\Psi^{+}_{\lambda\mathscr{A}})_{t=0}$ are computed directly using the evolution equations \eqref{eq:nonschemtpFpluserrterm} and depend on $((e^{evo}_\lambda)^{\alpha},\epsilon^{evo}_\lambda,w^{+\alpha}_{\lambda\mathscr{A}},\psi^+_{\lambda\mathscr{A}},\nabla w^{+\alpha}_{\lambda\mathscr{A}},\nabla\psi^+_{\lambda\mathscr{A}})$ (a part of the \textbf{initial data} for the \textbf{error components}) and the background. The same remark applies to $(\partial_t\breve{\Psi}^+_{\mathscr{A}\pm\mathscr{B}})|_{t=0}$ and $(\partial_t \breve{W}_{\mathscr{A}\pm\mathscr{B}}^{+\alpha})|_{t=0}$.
\end{rem}
We now introduce the three criteria required for the initial data in order to prove Theorems \ref{unTheorem:mainth1results} and \ref{unTheorem:mainth2results}. First, a specific definition is given for the elliptic component $((e^{ell}_\lambda)^{\alpha},\epsilon^{ell}_\lambda,(\dot{e}^{ell}_\lambda)^{\alpha},\dot{\epsilon}^{ell}_\lambda)$.\\
\begin{defi}
 \label{defi:defiEellinitcriteria}
     We define $((e^{ell}_\lambda)^{\alpha},\epsilon^{ell}_\lambda,(\dot{e}^{ell}_\lambda)^{\alpha},\dot{\epsilon}^{ell}_\lambda)$ as
\begin{align*}
&(e^{ell}_\lambda)^{\alpha}=((E^{ell}_\lambda)^\alpha(t))|_{t=0},  &\epsilon^{ell}_\lambda=(\mathcal{E}^{ell}_\lambda(t))||_{t=0},\\
&(\dot{e}^{ell}_\lambda)^{\alpha}=\partial_t(E^{ell}_\lambda)^\alpha(t))|_{t=0}, &\dot{\epsilon}^{ell}_\lambda=(\partial_t\mathcal{E}^{ell}_\lambda(t))|_{t=0}, 
\end{align*}
where $((E^{ell}_\lambda)^\alpha,\mathcal{E}^{ell}_\lambda)$ are given by Definition \ref{defi:defiEellinfoEell}. The time derivatives are computed with the transport equations of Proposition \ref{propal:firstapproxApprox}. To be more precise, starting from the initial ansatz we construct a background solution to \eqref{eq:firstapproxsysApprox} and evaluate the time derivatives of the corresponding background quantities at time $t=0$. These derivatives depend only on the \textbf{background initial data} and their spatial derivatives.\\
\end{defi}
\begin{propal}
For $((e^{ell}_\lambda)^{\alpha},\epsilon^{ell}_\lambda,(\dot{e}^{ell}_\lambda)^{\alpha},\dot{\epsilon}^{ell}_\lambda)$ given by Definition \ref{defi:defiEellinitcriteria} and for $C_0$ given by Notation \ref{nota:cst0Approx}, the following regularity and smallness estimates hold:
\begin{equation}
 \label{eq:estimatesEellinitcriteria}
\sum_{k\leq 2}\lambda^k(||e^{ell}_\lambda||_{H^{k}}+||\epsilon^{evo}_\lambda||_{H^{k}})+\sum_{k\leq 1}\lambda^{k+1}(||\dot{e}^{ell}_\lambda||_{H^{k}}+||\dot{\epsilon}^{ell}_\lambda||_{H^{k}})\leq \lambda^{2} C_0.\\
\end{equation}
\end{propal}
\begin{proof}
The estimates follow directly from the results of Section \ref{subsubsection:infoEell}.
\end{proof}
\begin{defi}
\label{defi:admissibleKGMLcriteria}
     We say that the \textbf{error components initial data} are \textbf{admissible for KGML} with respect to an \textbf{admissible background initial data} (in the sense of Definition \ref{defi:admissiblebginitansatz}) if, for some $-3/2<\delta<-1/2$ with $\delta\leq\delta_0$, we have
 \begin{align*}
     &g^{+\alpha}_{W^+_{\lambda\mathscr{A}}}=(\Box W^{+\alpha}_{\lambda\mathscr{A}})|_{t=0}, 
     & g^+_{\Psi^+_{\lambda\mathscr{A}}}=(\Box \Psi^{+}_{\lambda\mathscr{A}})|_{t=0},
 \end{align*}
and 
 \begin{align*}
&\forall\mathscr{A}\in\mathbb{A},\;
w^{+\alpha}_{\lambda\mathscr{A}}\in H^2,
&\forall(\mathscr{A},\mathscr{B})\in\mathscr{C},\;\breve{w}^{+\alpha}_{\mathscr{A}\pm\mathscr{B}}\in H^3,\\
&\forall\mathscr{A}\in\mathbb{A},\;
\psi^+_{\lambda\mathscr{A}}\in H^2,
&\forall(\mathscr{A},\mathscr{B})\in\mathscr{C},\;\breve{\psi}^+_{\mathscr{A}\pm\mathscr{B}}\in H^3,\\
&\forall\mathscr{A}\in\mathbb{A},\;g^{+\alpha}_{W^+_{\lambda\mathscr{A}}}\in H^1,
     &\forall\mathscr{A}\in\mathbb{A},\; g^+_{\Psi^+_{\lambda\mathscr{A}}}\in H^1,\\
     &(e^{evo}_\lambda)^i\in H^2,
&(\dot{e}^{evo}_\lambda)^i\in H^1, \\
 &(e^{evo}_\lambda)^0\in H^2_\delta, &(\dot{e}^{evo}_\lambda)^0\in H^1_{\delta+1},\\
 &\epsilon^{evo}_\lambda\in H^2,
  &\dot{\epsilon}^{evo}_\lambda\in H^1,
    \end{align*}
    and if $((e^{ell}_\lambda)^{\alpha},\epsilon^{ell}_\lambda,(\dot{e}^{ell}_\lambda)^{\alpha},\dot{\epsilon}^{ell}_\lambda)$ are given by Definition \ref{defi:defiEellinitcriteria}. In addition, we require that the functions \\$(g^\alpha_{W^+_{\lambda\mathscr{A}}},g^\alpha_{\Psi^+_{\lambda\mathscr{A}}},w^{+\alpha}_{\lambda\mathscr{A}},\psi^{+}_{\lambda\mathscr{A}},\breve{w}^{+\alpha}_{\mathscr{A}\pm\mathscr{B}},\breve{\psi}^{+}_{\mathscr{A}\pm\mathscr{B}},(e^{evo}_\lambda)^i,
(\dot{e}^{evo}_\lambda)^i,\epsilon^{evo}_\lambda,
  \dot{\epsilon}^{evo}_\lambda)$ are compactly supported\footnote{We choose the same $S'$ as for the background in Proposition \ref{propal:firstapproxApprox} without loss of generality.} in $B_{S'}$.
 \end{defi}
 \begin{rem}
 \label{rem:timetimederivcriteria}
 The second derivatives, $(\partial^2_{tt}W^{+\alpha}_{\lambda\mathscr{A}})_{t=0}$ and $(\partial^2_{tt}\Psi^{+}_{\lambda\mathscr{A}})_{t=0}$, are obtained by differentiating the transport equations once more. They are expressed with respect to $((e^{evo}_\lambda)^\alpha,\epsilon^{evo}_\lambda,(\dot{e}^{evo}_\lambda)^\alpha,\dot{\epsilon}^{evo}_\lambda,w^{+\alpha}_{\lambda\mathscr{A}},\psi^+_{\lambda\mathscr{A}},\nabla w^{+\alpha}_{\lambda\mathscr{A}},\nabla\psi^+_{\lambda\mathscr{A}})$, together with $((\partial_tW^{+\alpha}_{\lambda\mathscr{A}})_{t=0},(\partial_t\Psi^{+}_{\lambda\mathscr{A}})_{t=0},\nabla(\partial_tW^{+\alpha}_{\lambda\mathscr{A}})_{t=0},\nabla(\partial_t\Psi^{+}_{\lambda\mathscr{A}})_{t=0})$ and the background. Taking into account Remark \ref{rem:timederivcriteria}, it follows that the quantities $(\Box \Psi^{+}_{\lambda\mathscr{A}})|_{t=0}$ and $(\Box W^{+\alpha}_{\lambda\mathscr{A}})|_{t=0}$ can be expressed solely in terms of the \textbf{error components initial data} and the background.
\end{rem} 
 \begin{defi}
 \label{defi:admissibleKGMgaugecriteria}
     We say that the \textbf{error components initial data set} is \textbf{admissible for KGM in Lorenz gauge} with respect to a \textbf{background initial data} if the corresponding quantities $(z^\alpha_\lambda,\dot{z}^\alpha_\lambda,\zeta_\lambda,\dot{\zeta}_\lambda)$ defined by \eqref{eq:glueerrorcomponentbigerrorcriteria} solve the constraint equations \eqref{eq:initconstrainterrorInit}. 
 \end{defi}
\begin{defi}
 \label{defi:reqsmallnesscriteria}
    Let $C_0$ be defined as in Notation \ref{nota:cst0Approx}. The \textbf{required smallness} conditions for the coupled components are   
    \begin{align*}
 &\sum_{k\leq 1} \lambda^k(||\epsilon^{evo}_\lambda||_{H^{k+1}}+||\dot{\epsilon}^{evo}_\lambda||_{H^{k}}+||(e^{evo}_\lambda)^i||_{H^{k+1}}+||(\dot{e}^{evo}_\lambda)^i||_{H^{k}}+||(e^{evo}_\lambda)^0||_{H_\delta^{k+1}}+||(\dot{e}^{evo}_\lambda)^0||_{H^{k}_{\delta+1}})\leq \lambda^{1/2}C_0, \\  
  &\max_{\mathscr{A}\in\mathbb{A}}(\sum_{k\leq 1} \lambda^k(||w^{+\alpha}_{\lambda\mathscr{A}}||_{H^{k+1}}+||g^{+\alpha}_{W^{+}_{\lambda\mathscr{A}}}||_{H^{k}}+||\psi^{+}_{\lambda\mathscr{A}}||_{H^{k+1}}+||g^+_{\Psi^{+}_{\lambda\mathscr{A}}}||_{H^{k}}))\leq C_0,  \\
   \end{align*}
   and for the decoupled components\footnote{The quantities $(\breve {w}^{+\alpha}_{\mathscr{A}\pm\mathscr{B}},\breve {\phi}^{+}_{\mathscr{A}\pm\mathscr{B}})$ are treated as background terms in the evolution scheme, since the evolution equation  \eqref{eq:schemtpfabinfofab} is decoupled. See Section \ref{subsubsection:infofab}.}
   \begin{align*}
&\max_{(\mathscr{A},\mathscr{B})\in\mathscr{C}}(||\breve {w}^{+\alpha}_{\mathscr{A}\pm\mathscr{B}}||_{H^{3}}+||\breve {\phi}^{+}_{\mathscr{A}\pm\mathscr{B}}||_{H^{3}})\leq C_0. 
\end{align*}
\end{defi}
\begin{rem}
  \label{rem:admissiblefullcriteria}  
    Initial data satisfying the three criteria of Definitions \ref{defi:admissibleKGMLcriteria}, \ref{defi:admissibleKGMgaugecriteria} and \ref{defi:reqsmallnesscriteria} are said to be admissible. They provide sufficient conditions to construct exact solutions, close the bootstrap argument of Section~\ref{subsubsection:bootstrap}, and prove Theorems~\ref{unTheorem:mainth1results} and~\ref{unTheorem:mainth2results}.
\end{rem}
The next two Sections are devoted to the construction of a \textbf{generic set} of \textbf{error components initial data} satisfying these criteria. 
\subsection{Inversion of the perturbed Laplacian and smallness for the error term}
\label{subsectionLap}
This Section provides the basis for the construction of the \textbf{generic set} of admissible initial data (see Proposition \ref{propal:specsetconsinit}). In particular, it ensures that the \textbf{precise error initial data} \eqref{eq:glueerrorcomponentbigerrorcriteria} satisfy the constraint system \eqref{eq:initconstrainterrorInit}. These results will be used crucially in the next Section. In addition, since we aim to construct solutions whose error term is small with respect to the parameter~$\lambda$, we must impose suitable smallness assumptions on the corresponding initial data. Thus, in the Proposition below, we assume that the free component of the \textbf{error initial data} $(z_\lambda^i,\zeta_\lambda,\dot{z}_\lambda^i,\dot{\zeta}_\lambda)$ satisfy appropriate smallness conditions, and we determine the remaining component $(z_\lambda^0,\dot{z}_\lambda^0)$ through the equations. Moreover, an additional smallness assumption $\partial_i\dot{z}_\lambda^i$ is required to recover the smallness of $z^0_\lambda$. This is possible thanks to the freedom in the choice of initial data, although it makes the resulting precise error initial data slightly less generic.\\\\
 During this entire Section, we denote with $C_0$ any constant that only depends on the background data. See Notation \ref{nota:cst0Approx}.
\begin{propal}
\label{propal:invertsystinitLap}
Let $(z^i_\lambda,\zeta_\lambda,\dot{z}_\lambda^i,\dot{\zeta}_\lambda)\in H^2\times H^2\times H^1\times H^1$ be compactly supported\footnote{We pick the same $S'$ as for the background in Proposition \ref{propal:firstapproxApprox} without loss of generality.} in $B_{S'}$ and satisfy the following smallness conditions
\begin{equation}
\label{eq:smallinitfreeerrorLap}
 \sum_{k\leq 1} \lambda^k||\zeta_\lambda||_{H^{k+1}}+\lambda^k||\dot{\zeta}_\lambda||_{H^{k}}+\lambda^k||z^i_\lambda||_{H^{k+1}}+\lambda^k||\dot{z}_\lambda^i||_{H^{k}}\leq \lambda^{1/2}C_0,   
\end{equation}
together with
\begin{align}
\label{eq:extrasmallLap}
    ||\partial_i\dot{z}_\lambda^i||_{L^2}\leq \lambda^{1/2}C_0.
\end{align}
Then, for any $-3/2<\delta<-1/2$ there exists a solution $(z^0_\lambda,\dot{z}_\lambda^0)\in H_\delta^{k+1}\times H_{\delta+1}^{k}$ to \eqref{eq:initconstrainterrorInit} such that 
\begin{align}
\label{eq:smallinitconstrerrorLap}
 \sum_{k\leq 1} \lambda^k||z^0_\lambda||_{H_\delta^{k+1}}+\lambda^k||\dot{z}_\lambda^0||_{H_{\delta+1}^{k}}\leq \lambda^{1/2}C_0,   
\end{align}
where the weighted Sobolev norms are introduced in Definition \ref{defi:weightedSobgenenot}.
\end{propal}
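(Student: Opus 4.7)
The plan is to treat the two constraints in \eqref{eq:initconstrainterrorInit} successively: the first is algebraic and determines $\dot z^0_\lambda$, while the second is a perturbed Poisson equation $(-\Delta+V_\lambda)z^0_\lambda = R_\lambda$ with $V_\lambda := |\phi_{1\lambda}+\zeta_\lambda|^2\geq 0$. The key difficulty is that the source $R_\lambda$ contains the $O(1)$ oscillating term $\tilde\xi^0_{\lambda A}$ from proposition \ref{propal:KandSApprox}, whereas \eqref{eq:smallinitconstrerrorLap} demands $z^0_\lambda$ of size $O(\lambda^{1/2})$ in $H^1_\delta$; the extra smallness hypothesis \eqref{eq:extrasmallLap} and the high-frequency structure of $\tilde\xi^0_{\lambda A}$ are precisely what close the estimates.

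First I set $\dot z^0_\lambda := -\partial_i z^i_\lambda - \lambda^{1/2}\sum_\mathscr{A}\operatorname{Re}(e^{iv_\mathscr{A}/\lambda}(\overline{\partial_\alpha W^\alpha_\mathscr{A}})|_{t=0})$, which solves the first constraint by construction. The first summand inherits the smallness \eqref{eq:smallinitfreeerrorLap}; for the oscillating summand, the $H^k$ norm of $\lambda^{1/2}e^{iv_\mathscr{A}/\lambda}F$ (with $F$ compactly supported and bounded by $c_0$) scales like $\lambda^{1/2-k}$, which gives $\lambda^k\|\cdot\|_{H^k}\leq \lambda^{1/2}C_0$ for $k\in\{0,1\}$. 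Since everything is compactly supported in $B_{S'}$, the $(\delta+1)$-weight is absorbed into a constant.

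For the elliptic equation I first establish that $-\Delta+V_\lambda:H^2_\delta\to L^2_{\delta+2}$ is an isomorphism for $-3/2<\delta<-1/2$, with operator norm uniform in small $\lambda$: indeed $-\Delta$ is such an isomorphism by classical weighted theory, $V_\lambda$ is a compact non-negative perturbation, and the kernel is trivial (integrate against $z^0_\lambda$ and use $V_\lambda\geq 0$). Next I split $R_\lambda$ into a non-oscillating part --- $\partial_i\dot z^i_\lambda$ (small by \eqref{eq:extrasmallLap}), $\lambda^{1/2}\xi^0_{\lambda A}$, and the quadratic brackets in $\zeta_\lambda,\dot\zeta_\lambda,\phi_{1\lambda},\dot\phi_{1\lambda},a^0_{1\lambda}$, all compactly supported and of size $O(\lambda^{1/2})$ in $L^2_{\delta+2}$ via Moser product estimates and \eqref{eq:smallinitfreeerrorLap} --- and an oscillating part gathering $\tilde\xi^0_{\lambda A}$ and $\lambda^{1/2}\partial_t(\ldots)|_{t=0}$. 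Direct inversion of the non-oscillating part yields a contribution of size $O(\lambda^{1/2})$ in $H^2_\delta$.

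The main obstacle is the oscillating part, dominated by $\tilde\xi^0_{\lambda A}$: by proposition \ref{propal:KandSApprox} it is a finite sum of compactly supported terms $Fe^{iw/\lambda}$ with $w\in\{v_\mathscr{A}\pm v_\mathscr{B}\}$ and $|\nabla w|\geq\eta_0$ by \eqref{eq:eta3}--\eqref{eq:eta4}. For each such summand I make the ansatz $z^{0,\mathrm{osc}}_\lambda := \lambda^2 G\,e^{iw/\lambda}$, with $G := \chi F/|\nabla w|^2$ and $\chi$ a smooth cutoff equal to $1$ on the support of $F$; a direct calculation yields
\[
(-\Delta+V_\lambda)(\lambda^2 G e^{iw/\lambda}) - Fe^{iw/\lambda} = e^{iw/\lambda}\bigl[-i\lambda(2\nabla G\cdot\nabla w + G\Delta w) + \lambda^2(V_\lambda G - \Delta G)\bigr],
\]
so the leading $|\nabla w|^2 G = F$ cancels the source and the defect is $O(\lambda)$ in $L^2$ with compact support, which is re-inverted by $(-\Delta+V_\lambda)^{-1}$ at the cost of another $O(\lambda)$ in $H^2_\delta$. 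Since $\|\lambda^2 G e^{iw/\lambda}\|_{H^k_\delta}\sim\lambda^{2-k}$, we obtain $\|z^{0,\mathrm{osc}}_\lambda\|_{H^1_\delta} = O(\lambda)$ and $\|z^{0,\mathrm{osc}}_\lambda\|_{H^2_\delta} = O(1)$, both well within the targets of \eqref{eq:smallinitconstrerrorLap}; the remaining oscillating piece $\lambda^{1/2}\partial_t(\ldots)|_{t=0}$ is handled identically and gains an additional $\lambda^{1/2}$. Summing all contributions produces $z^0_\lambda$ satisfying the claim; the only non-trivial bookkeeping is to verify that every remainder inherits compact support, so that weighted and unweighted estimates agree up to a constant depending on $S'$.
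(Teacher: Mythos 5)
Your overall strategy is the same as the paper's: solve the first constraint explicitly for $\dot z^0_\lambda$, then split the source of the perturbed Poisson equation $(-\Delta+|\phi_{1\lambda}+\zeta_\lambda|^2)z^0_\lambda=R_\lambda$ into a part that is inverted ``by hand'' through an oscillatory ansatz $\lambda^2 G e^{iw/\lambda}$ (your $z^{0,\mathrm{osc}}_\lambda$ is the paper's $z^{0BIS}_\lambda$) and a small remainder inverted with the weighted elliptic isomorphism (the paper's $z^{0TER}_\lambda$ via lemma \ref{lem:laplcianpluscompactAx}). However, there is a genuine gap in your splitting: you classify all the quadratic brackets, in particular $\operatorname{Im}(\zeta_\lambda\overline{\dot\phi_{1\lambda}})$, as non-oscillating terms of size $O(\lambda^{1/2})$ in $L^2_{\delta+2}$ ``via Moser product estimates.'' This is false for that term. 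By definition \ref{defi:initialbgdefInit}, $\dot\phi_{1\lambda}$ contains $\lambda^{-1/2}\sum_\mathscr{A} i\dot v_\mathscr{A}\psi_\mathscr{A} e^{i v_\mathscr{A}/\lambda}$, so $\|\dot\phi_{1\lambda}\|_{L^\infty}\sim\lambda^{-1/2}$, and with $\|\zeta_\lambda\|_{L^2}\leq\lambda^{1/2}C_0$ the product $\operatorname{Im}(\zeta_\lambda\overline{\dot\phi_{1\lambda}})$ is only $O(1)$ in $L^2$; no H\"older/Sobolev pairing improves this, since $\zeta_\lambda$ carries no oscillation you can exploit abstractly. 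Feeding an $O(1)$ source into the elliptic inversion only yields $\|z^0_\lambda\|_{H^2_\delta}\lesssim 1$, and the weighted elliptic estimate gives no separate gain at the $H^1_\delta$ level, so you cannot reach the required bound $\|z^0_\lambda\|_{H^1_\delta}\leq\lambda^{1/2}C_0$ of \eqref{eq:smallinitconstrerrorLap} this way. (Contrast with $\operatorname{Im}(\phi_{1\lambda}\overline{\dot\zeta_\lambda})$ and $\operatorname{Im}(\zeta_\lambda\overline{\dot\zeta_\lambda})$, which really are $O(\lambda^{1/2})$, the latter via $\|\zeta_\lambda\|_{H^1}\|\dot\zeta_\lambda\|_{H^{1/2}}$ and interpolation.)

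The fix is exactly the mechanism you already use for $\tilde\xi^0_{\lambda A}$, and it is what the paper does: the bad piece of $\operatorname{Im}(\zeta_\lambda\overline{\dot\phi_{1\lambda}})$ is explicitly oscillatory, $-\sum_\mathscr{A}\operatorname{Im}(i\zeta_\lambda\lambda^{-1/2}\dot v_\mathscr{A}\overline{\psi}_\mathscr{A}e^{-iv_\mathscr{A}/\lambda})$, so one adds to the by-hand corrector the term $-\lambda^{3/2}\sum_\mathscr{A}\operatorname{Im}(i\zeta_\lambda\dot v_\mathscr{A}\overline{\psi}_\mathscr{A}e^{-iv_\mathscr{A}/\lambda})/(\partial^i v_\mathscr{A}\partial_i v_\mathscr{A})$. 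Note that the modulating coefficient now involves $\zeta_\lambda$, which is only in $H^2$ with $\|\zeta_\lambda\|_{H^2}\leq\lambda^{-1/2}C_0$, so when you compute the defect of the ansatz the term $\lambda^{3/2}\Delta\zeta_\lambda$ appears; it is admissible ($O(\lambda)$ in $L^2$) precisely because $\zeta_\lambda\in H^2$, a point the paper makes explicitly and which your defect computation, written for a smooth background profile $F$, does not cover. With that corrector added (and its defect re-inverted), the rest of your argument — the treatment of $\dot z^0_\lambda$, the isomorphism of $-\Delta+V_\lambda$ on $H^2_\delta\to L^2_{\delta+2}$, the use of \eqref{eq:eta3}--\eqref{eq:eta4} for the spatial non-degeneracy of $v_\mathscr{A}\pm v_\mathscr{B}$, the role of \eqref{eq:extrasmallLap}, and the compact-support bookkeeping — matches the paper's proof.
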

\begin{proof}
   For $\dot{z}_\lambda^0$, we use the first equation of \eqref{eq:initconstrainterrorInit}. The desired estimate follows directly from the assumed smallness of the free data.
\begin{rem}
\label{rem:polarkillLap}
     The polarization condition \eqref{eq:initpolarinitansatz} is essential here, as it cancels potentially harmful terms of size $O(\lambda^{-1/2})$ in the first constraint equation.   
 \end{rem}
 For $z^0_\lambda$, we use at the second equation of \eqref{eq:initconstrainterrorInit}, involving the operator $-\Delta+|\phi_{1\lambda}+\zeta_\lambda|^2$.\\
If one were to invert this perturbed Laplacian directly\footnote{The perturbed Laplacian operator is only invertible in some weighted Sobolev spaces.} with Theorem \ref{unTheorem:laplcianAx}, one would generally obtain an estimate of the form\footnote{The constant in Lemma \ref{lem:laplcianpluscompactAx} only depends on $L^2$ norms of $C_0$ size initial data.} $||z^0_\lambda||_{H^2_\delta}\sim \lambda^{-1/2}$ due to the presence of several problematic source terms. This includes the derivative of the divergence of $W_\mathscr{A}$ (see Remark \ref{rem:defectremApprox}), $O(1)$ high-frequency interactions and the term  $\Im(\zeta_\lambda\overline{\dot{\phi}_{1\lambda}})$. The latter contains contributions of the form $-\sum_{\mathscr{A}\in\mathbb{A}}\Im(i\zeta_\lambda\lambda^{-1/2}\dot{v}_\mathscr{A}\overline{\psi}_{\mathscr{A}}e^{-i\frac{v_\mathscr{A}}{\lambda}})$, which corresponds precisely to the obstruction described in \ref{item:obst2Idea} of Section \ref{subsection:Idea}. This difficulty is addressed at the level of the evolution equations by introducing the unknown $\textbf{F}^+_{\lambda\mathscr{A}}$.\\
At the level of the constraints, these obstructions are all treated by explicitly inverting by hand the $\lambda^{-1/2}$ and $\lambda^0$ terms. 
We set 
\begin{align*}
\label{zbisLap}
z^{0BIS}_\lambda&=\lambda^{3/2}\sum_{\mathscr{A}\in\mathbb{A}}[\frac{\dot{v}_\mathscr{A}\Re\left(ie^{i\frac{v_\mathscr{A}}{\lambda}}(\overline{\partial_\alpha W_\mathscr{A}^\alpha})|_{t=0}\right)}{\partial^iv_\mathscr{A}\partial_iv_\mathscr{A}}]-\lambda^{3/2}\sum_{\mathscr{A}\in\mathbb{A}}[\frac{\Im\left(i\zeta_\lambda\dot{v}_\mathscr{A}\overline{\psi}_{\mathscr{A}}e^{-i\frac{v_\mathscr{A}}{\lambda}}\right)}{\partial^iv_\mathscr{A}\partial_iv_\mathscr{A}}],\\   &+\lambda^{2}\sum_{(\mathscr{A},\mathscr{B})\in\mathscr{C}}\frac{\Re\left(\overline{K^\beta_{A(\mathscr{A}\pm\mathscr{B})}(\textbf{B}_0')}e^{i\frac{v_\mathscr{A}\pm v_\mathscr{B}}{\lambda}}\right)}{\partial^i(v_\mathscr{A}\pm v_\mathscr{B})\partial_i(v_\mathscr{A}\pm v_\mathscr{B})}+\lambda^2\sum_{(\mathscr{A},\mathscr{B})\in\mathscr{S}}\frac{\Re\left(\overline{S^\beta_{A(\mathscr{A}\pm\mathscr{B})}(\textbf{B}_0')}e^{i\frac{u_\mathscr{A}\pm u_\mathscr{B}}{\lambda}}\right)}{\partial^i(v_\mathscr{A}\pm v_\mathscr{B})\partial_i(v_\mathscr{A}\pm v_\mathscr{B})},
\end{align*}
where the $K^\beta_{A(\mathscr{A}\pm\mathscr{B})}(\textbf{B}_0')$'s and $S^\beta_{A(\mathscr{A}\pm\mathscr{B})}(\textbf{B}_0')$'s terms are defined in Proposition \ref{propal:KandSApprox} (here with respect to the background initial data) and arise from the term $\tilde{\xi}^0_{A\lambda}(\textbf{B}_0')$ of the second equation of \eqref{eq:initconstrainterrorInit}. In particular, $K^\beta_{A(\mathscr{A}+\mathscr{B})}(\textbf{B}_0')=S^\beta_{A(\mathscr{A}+\mathscr{B})}(\textbf{B}_0')=0$.
 By construction, $z^{0BIS}_\lambda$ possesses the sufficient smallness and is compactly supported. More precisely, using Proposition \ref{propal:smallnesscontrolsetphase} and estimates \eqref{eq:eta5}, \eqref{eq:eta3}, \eqref{eq:eta4} to control the denominators, we obtain
 \begin{align}
 \sum_{k\leq 1} \lambda^k||z^{0BIS}_\lambda||_{H^{k+1}}\leq \lambda^{1/2}C_0.
\end{align}
 Then, we search for $z^{0TER}_\lambda$ as a solution to 
 \begin{equation}
 \label{eq:ztereqLap}
 \begin{split}
     (-\Delta+|\phi_{1\lambda}+\zeta_\lambda|^2) z_\lambda^{0TER}-\partial_i\dot{z}_\lambda^i&=
(\Delta-|\phi_{1\lambda}+\zeta_\lambda|^2) z_\lambda^{0BIS}+\lambda^{1/2}\xi_{A\lambda}^0(\textbf{B}_0)+\tilde{\xi}_{A\lambda}^0(\textbf{B}_0)\\
&-\Im(\phi_{1\lambda}\overline{\dot{\zeta}_\lambda})-\Im(\zeta_\lambda\overline{\dot{\phi}_{1\lambda}})-\Im(\zeta_\lambda\overline{\dot{\zeta}_\lambda})-|\zeta_\lambda|^2a_{1\lambda}^0-(\zeta_\lambda\overline{\phi_{1\lambda}}+\overline{\zeta_\lambda}\phi_{1\lambda})a_{1\lambda}^0\\
&+\lambda^{1/2}\sum_{\mathscr{A}\in\mathbb{A}}\left(\partial_t\Re\left(e^{i\frac{u_\mathscr{A}}{\lambda}}(\overline{\partial_\alpha W_\mathscr{A}^\alpha})\right)\right)|_{t=0}.\\
 \end{split}
 \end{equation}
The RHS of \eqref{eq:ztereqLap} is now of order $O(\lambda^{1/2})$ and is well defined in terms of regularity. In particular, the Laplacian of $\zeta_\lambda$ appears and belongs to $L^2$ because $\zeta_\lambda\in H^2$. The remaining terms only involve background terms which are more regular. Moreover, the quantity $|\phi_{1\lambda}+\zeta_\lambda|^2$ belongs to $H^2$ and is compactly supported. This allows us to use the Lemma \ref{lem:laplcianpluscompactAx} (as $-3/2<\delta<-1/2$) to get 
\begin{align*}
\label{estimzterLap}
    ||z_\lambda^{0TER}||_{H^2_\delta}\leq \lambda^{1/2}C_0.
\end{align*}
In particular, the term $\Im(\zeta_\lambda\overline{\dot{\zeta}\lambda})$, which is problematic in the evolution equations (see item \ref{item:obst4Idea} in Section \ref{subsection:Idea}), is harmless at the level of the constraints due to the decoupled nature of the constraints. Indeed, 
\begin{align*}
||\Im(\zeta_\lambda\overline{\dot{\zeta}_\lambda})||_{L^2}\leq||\zeta_\lambda||_{H^1}||\dot{\zeta}_\lambda||_{H^{1/2}}\leq\lambda^{1/2} C_0,
\end{align*}
where we used the product estimate \eqref{eq:holdsobAx} and the interpolation inequality \eqref{eq:interpolsobAx}. All other terms are easily controlled in $L^2$, using in particular the additional smallness \eqref{eq:extrasmallLap} for $\partial_i\dot{z}^i_\lambda$. Finally, defining $z^{0}_\lambda=z^{0BIS}_\lambda+z^{0TER}_\lambda$, we obain 
\begin{align*}
    \sum_{k\leq 1} \lambda^k||z^0_\lambda||_{H_\delta^{k+1}}\leq \lambda^{1/2}C_0,
\end{align*} 
which ends the proof.
\end{proof}
 \subsection{Construction of the precise error initial data}
 \label{subsection:consinit}
 In the previous Section, we constructed small \textbf{error initial data} $(z_\lambda,\zeta_\lambda,\dot{z}_\lambda,\dot{\zeta}_\lambda)$ that solve the constraints \eqref{eq:initconstrainterrorInit}. However, prescribing these initial data directly as initial conditions for the error evolution system (see \eqref{eq:schemeqerrorerrterm}) does \textbf{not} provide the appropriate structure to close the bootstrap. Therefore, the purpose of this Section is to construct a generic set of \textbf{initial data} for the \textbf{error components} that are \textbf{admissible} in the sense of Remark \ref{rem:admissiblefullcriteria} and that are based on the error initial data obtained in Proposition \ref{propal:invertsystinitLap}.
\begin{propal}
\label{propal:specsetconsinit}
    For a given admissible background initial data set $(a^\alpha_0,\dot{a}^\alpha_0,\phi_0,\dot{\phi}_0,v_{\mathscr{A}},\dot{v}_\mathscr{A},\psi_\mathscr{A},w^\alpha_\mathscr{A})$ and for $\lambda>0$, Let $(z_\lambda^0,z_\lambda^i,\zeta_\lambda,\dot{z}_\lambda^0,\dot{z}_\lambda^i,\dot{\zeta}_\lambda)$ be given by Proposition \ref{propal:invertsystinitLap} and $(e^{ell}_\lambda,\epsilon^{ell}_\lambda,\dot{e}^{ell}_\lambda,\dot{\epsilon}^{ell}_\lambda)$ by Definition \ref{defi:defiEellinitcriteria}. Then, for 
    \begin{align*}  
&(e^{evo}_\lambda)^\alpha=z_\lambda^\alpha-(e^{ell}_\lambda)^\alpha,
&\epsilon^{evo}_\lambda=\zeta_\lambda-\epsilon^{ell}_\lambda,\\
&\forall\mathscr{A}\in\mathbb{A},\;
w^{+\alpha}_{\lambda\mathscr{A}}=0,
&\forall(\mathscr{A},\mathscr{B})\in\mathscr{C},\;\breve{w}^{+\alpha}_{\mathscr{A}\pm\mathscr{B}}=0,\\
&\forall\mathscr{A}\in\mathbb{A},\;
\psi^+_{\lambda\mathscr{A}}=0,
&\forall(\mathscr{A},\mathscr{B})\in\mathscr{C},\;\breve{\psi}^+_{\mathscr{A}\pm\mathscr{B}}=0,\\
&\forall\mathscr{A}\in\mathbb{A},\;g^{+\alpha}_{W^+_{\lambda\mathscr{A}}}=(\Box W^{+\alpha}_{\lambda\mathscr{A}})|_{t=0}, 
     &\forall\mathscr{A}\in\mathbb{A},\; g^+_{\Psi^+_{\lambda\mathscr{A}}}=(\Box \Psi^{+}_{\lambda\mathscr{A}})|_{t=0},\\
    \end{align*}
\begin{align*}
&(\dot{e}^{evo}_\lambda)^{\alpha}=\dot{z}^\alpha_\lambda-\lambda\sum_{\mathscr{A}\in\mathbb{A}} \Re\left(e^{\frac{v_\mathscr{A}}{\lambda}}(\overline{\partial_tW_{\lambda\mathscr{A}}^{+\alpha}})|_{t=0}\right)-\sum_{(\mathscr{A},\mathscr{B})\in\mathscr{C}}\lambda \Re\left(e^{\frac{v_{\mathscr{A}}\pm v_{\mathscr{B}}}{\lambda}}(\overline{\partial_t \breve{W}_{\mathscr{A}\pm\mathscr{B}}^{+\alpha}})|_{t=0}\right) -(\dot{e}^{ell}_\lambda)^{\alpha},\\ 
  &\dot{\epsilon}^{evo}_\lambda=\dot{\zeta}_\lambda-\lambda\sum_{\mathscr{A}\in\mathbb{A}}  (\partial_t\Psi^+_{\lambda\mathscr{A}})|_{t=0}e^{i\frac{v_\mathscr{A}}{\lambda}}-\sum_{(\mathscr{A},\mathscr{B})\in\mathscr{C}}\lambda\partial_t(\breve{\Psi}^+_{\mathscr{A}\pm\mathscr{B}})|_{t=0}e^{i\frac{v_{\mathscr{A}}\pm v_{\mathscr{B}}}{\lambda}}-\dot{\epsilon}^{ell}_\lambda, 
\end{align*}
the initial data for the \textbf{error components }are \textbf{admissible for KGML}, \textbf{admissible for KGM in Lorenz gauge} and satisfy the \textbf{required smallness}.
\end{propal}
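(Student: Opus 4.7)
The plan is to verify the three conditions of admissibility in turn: (i) the reconstruction \eqref{eq:glueerrorparameterbigerrorcriteria} applied to the chosen error-parameter data yields a precise error term satisfying the constraint system \eqref{eq:initconstrainterrorInit} (Definition \ref{defi:admissibleKGMgaugecriteria}); (ii) each datum has the regularity and compact support of Definition \ref{defi:admissibleKGMLcriteria}; (iii) the sizes match Definition \ref{defi:reqsmallnesscriteria}.

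For (i), I substitute $w^{+\alpha}_{\lambda\mathscr{A}}=0$, $\psi^+_{\lambda\mathscr{A}}=0$, $\breve w^{+\alpha}_{\mathscr{A}\pm\mathscr{B}}=0$, $\breve\psi^+_{\mathscr{A}\pm\mathscr{B}}=0$ into \eqref{eq:glueerrorparameterbigerrorcriteria}. The oscillating sums in the first two lines collapse, yielding $z^\alpha_\lambda=(e^{evo}_\lambda)^\alpha+(e^{ell}_\lambda)^\alpha$ and $\zeta_\lambda=\epsilon^{evo}_\lambda+\epsilon^{ell}_\lambda$, which match the definitions of $(e^{evo}_\lambda)^\alpha$ and $\epsilon^{evo}_\lambda$. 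In the time-derivative lines, the terms proportional to $w^+,\breve w^+,\psi^+,\breve\psi^+$ vanish, leaving only the sums $\sum_\mathscr{A}\lambda Re(e^{iv_\mathscr{A}/\lambda}\overline{\partial_tW^{+\alpha}_{\lambda\mathscr{A}}|_{t=0}})$ and the analogous $\breve W^+$ sums, where $\partial_tW^{+\alpha}_{\lambda\mathscr{A}}|_{t=0}$ and $\partial_t\breve W^{+\alpha}_{\mathscr{A}\pm\mathscr{B}}|_{t=0}$ are the values prescribed by the transport equations of Section \ref{subsection:errterm} (Remark \ref{rem:timederivcriteria}). These are cancelled exactly by our choice of $\dot e^{evo}_\lambda,\dot\epsilon^{evo}_\lambda$, so the reconstruction recovers $\dot z^\alpha_\lambda$ and $\dot\zeta_\lambda$ of Proposition \ref{propal:invertsystinitLap}. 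Since that proposition already ensures \eqref{eq:initconstrainterrorInit}, condition (i) holds.

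For (ii), the zero quantities trivially satisfy every requirement. The spatial pieces $(e^{evo}_\lambda)^i,\epsilon^{evo}_\lambda,(\dot e^{evo}_\lambda)^i,\dot\epsilon^{evo}_\lambda$ are differences of the $H^2$/$H^1$ compactly supported data from Proposition \ref{propal:invertsystinitLap} and of $(e^{ell}_\lambda)^i,\epsilon^{ell}_\lambda,(\dot e^{ell}_\lambda)^i,\dot\epsilon^{ell}_\lambda$ from Definition \ref{defi:defiEellinitcriteria}, whose supports lie in those of the compactly supported background quantities of Proposition \ref{propal:firstapproxApprox} and whose regularity exceeds the requirement. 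The zero components $(e^{evo}_\lambda)^0,(\dot e^{evo}_\lambda)^0$ inherit weighted $H^2_\delta,H^1_{\delta+1}$ regularity from $z^0_\lambda,\dot z^0_\lambda$ (Proposition \ref{propal:invertsystinitLap}) and from the corresponding $(e^{ell}_\lambda)^0$. Finally, since $W^{+\alpha}_{\lambda\mathscr{A}}|_{t=0}=0$ gives $\Delta W^{+\alpha}_{\lambda\mathscr{A}}|_{t=0}=0$, one has $g^{+\alpha}_{W^+_{\lambda\mathscr{A}}}=\partial^2_{tt}W^{+\alpha}_{\lambda\mathscr{A}}|_{t=0}$, computed by differentiating the transport equation once in time and evaluating at $t=0$ (Remark \ref{rem:timetimederivcriteria}); the resulting expression is a polynomial combination of background quantities and of the prescribed error-parameter data, hence in $H^1$, and similarly for $g^+_{\Psi^+_{\lambda\mathscr{A}}}$.

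The smallness (iii) for $(e^{evo}_\lambda,\epsilon^{evo}_\lambda)$ and their time derivatives follows by summing the $\lambda^{1/2}C_0$ bounds of Proposition \ref{propal:invertsystinitLap} (using the extra smallness \eqref{eq:extrasmallLap}) with the $\lambda^2C_0$ bounds of Definition \ref{defi:defiEellinitcriteria}, and with the background correction terms $\lambda Re(e^{iv_\mathscr{A}/\lambda}\overline{\partial_tW^+_{\lambda\mathscr{A}}|_{t=0}})$ whose explicit $\lambda$ prefactor absorbs the $O(\lambda^{-1/2})$ size of $\partial_tW^+|_{t=0}$ in the relevant norm. \textbf{The main obstacle} is the bound $\sum_{k\le 1}\lambda^k\|g^{+\alpha}_{W^+_{\lambda\mathscr{A}}}\|_{H^k}\le C_0$: because $\partial^2_{tt}W^{+\alpha}_{\lambda\mathscr{A}}|_{t=0}$ arises from differentiating in time a transport equation whose right-hand side contains the $\lambda^{-1/2}$-scaled term $\lambda^{-1/2}\partial u_\mathscr{A}\mathcal E^{evo}_\lambda W_\mathscr{A}$, a naive count gives $\|g^+\|_{H^1}\sim\lambda^{-1}C_0$, which only just saturates the allowance $C_0/\lambda$. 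The choice $w^{+\alpha}_{\lambda\mathscr{A}}|_{t=0}=0$ is essential here: it forces every spatial derivative of $W^+$ at $t=0$ to vanish, removing all commutator terms that would otherwise exceed the budget, and reduces the computation to a product of background quantities (bounded in $H^3$) with $\partial_t\mathcal E^{evo}_\lambda|_{t=0}$, which is controlled in $H^1$ by the required smallness of Definition \ref{defi:reqsmallnesscriteria}. The same argument handles $g^+_{\Psi^+_{\lambda\mathscr{A}}}$, completing the proof.
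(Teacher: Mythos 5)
Your proposal is correct and follows essentially the same route as the paper: you verify the Lorenz-gauge constraint by observing that with the zero choices the reconstruction of Definition \ref{defi:glueerrorparametercriteria} collapses back onto the data of Proposition \ref{propal:invertsystinitLap}, and you handle the only delicate point exactly as the paper does, namely that $w^{+\alpha}_{\lambda\mathscr{A}}=\psi^+_{\lambda\mathscr{A}}=0$ turns $g^{+}=(\Box W^{+})|_{t=0}$ (resp. $(\Box\Psi^+)|_{t=0}$) into a pure second time derivative computed from the transport equation, whose $\lambda^{-1/2}$-weighted dependence on $\epsilon^{evo}_\lambda,\dot\epsilon^{evo}_\lambda$ exactly fits the allowance $\sum_{k\le1}\lambda^k\|g^+\|_{H^k}\le C_0$ of Definition \ref{defi:reqsmallnesscriteria}. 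Minor imprecisions (the sign $g^+=-\partial^2_{tt}W^+|_{t=0}$, and the omission of the $\nabla(\partial_tW^+)|_{t=0}$ and quadratic contributions that the paper records schematically as $L_w$, $Q_w$) do not affect the argument, since those terms obey the same bounds.
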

\begin{rem}
\label{rem:specsetremconsinit}
This initial data set is generic in the sense that no additional freedom is used on $(z_\lambda^0,z_\lambda^i,\zeta_\lambda,\dot{z}_\lambda^0,\dot{z}_\lambda^i,\dot{\zeta}_\lambda)$, aside from the convenient smallness and regularity conditions, together with the technical smallness of $\partial_i\dot{z}_\lambda^i$. As already observed from estimates \eqref{eq:smallinitfreeerrorLap} and
\eqref{eq:smallinitconstrerrorLap}, the error initial data $(z_\lambda^0,z_\lambda^i,\zeta_\lambda,\dot{z}_\lambda^0,\dot{z}_\lambda^i,\dot{\zeta}_\lambda)$ enjoy better
smallness properties than the full error terms $(Z_\lambda,\mathcal{Z}_\lambda)$ (see point \ref{item:item2th2results} of Theorem \ref{unTheorem:mainth2results}) in terms of smallness. This improvement is due to the choice $(w^{+\alpha}_{\lambda\mathscr{A}},
\psi^+_{\lambda\mathscr{A}},\breve{w}^{+\alpha}_{\mathscr{A}\pm\mathscr{B}},\breve{\psi}^{+}_{\mathscr{A}\pm\mathscr{B}})=0$, which yields additional smallness at time $t=0$ and ensures sufficient regularity to $(g^{+\alpha}_{W^+_{\lambda\mathscr{A}}},g^+_{\Psi^+_{\lambda\mathscr{A}}})$. At a later time, the extra smallness of the error initial data does not propagate, as there is no reason that $(W^{+\alpha}_{\lambda\mathscr{A}},
\Psi^+_{\lambda\mathscr{A}},\breve{W}^{+\alpha}_{\mathscr{A}\pm\mathscr{B}},\breve{\Psi}^{+}_{\mathscr{A}\pm\mathscr{B}})=0$, whereas the regularity of $(g^{+\alpha}_{W^+_{\lambda\mathscr{A}}},g^+_{\Psi^+_{\lambda\mathscr{A}}})$ does. \\
Overall, the freedom of choice is used only on the auxiliary components, while the $10$ real initial data are only
subject to $2$ constraints, retaining their $8$ real degrees of freedom.
\end{rem}

 \begin{proof}[Proof of Proposition \ref{propal:specsetconsinit}]
 
For the KGML admissibility, it suffices to verify regularity and smallness. The estimates for $(z_\lambda,\zeta_\lambda,\dot{z}_\lambda,\dot{\zeta}_\lambda)$
and $(e^{ell}_\lambda,\epsilon^{ell}_\lambda,\dot{e}^{ell}_\lambda,\dot{\epsilon}^{ell}_\lambda)$ follow directly from Proposition \ref{propal:invertsystinitLap} together with \eqref{eq:smallinitfreeerrorLap}, \eqref{eq:smallinitconstrerrorLap}, and \eqref{eq:estimatesEellinitcriteria}. The only delicate part concerns the time derivatives of $W^{+\alpha}_{\lambda\mathscr{A}},\breve{W}^{+\alpha}_{\mathscr{A}\pm\mathscr{B}},\Psi^+_{\lambda\mathscr{A}}$ and $\breve{\Psi}^+_{\mathscr{A}\pm\mathscr{B}}$. In particular, for $g^{+\alpha}_{W^+_{\lambda\mathscr{A}}}$ and $ g^+_{\Psi^+_{\lambda\mathscr{A}}}$, Remark \ref{rem:timetimederivcriteria} and the fact that $w^{+\alpha}_{\lambda\mathscr{A}}=0$ and $\psi^+_{\lambda\mathscr{A}}=0$ by choice imply that, although $\Box$ terms appear to involve two derivatives, it is not the case. Indeed, the two spatial derivatives vanish, and the two time derivatives only depend on quantities that involve one derivative with sufficient regularity. We have
\begin{align*}
    &(\partial_t W^{+\alpha}_{\lambda\mathscr{A}})|_{t=0}=\lambda^{-1/2}f_w^\alpha(\textbf{B}'_0)\epsilon^{evo}_\lambda, \\
    &(\partial_t \Psi^{+}_{\lambda\mathscr{A}})|_{t=0}=\lambda^{-1/2}f_{\psi\alpha}(\textbf{B}'_0)(e^{evo}_\lambda)^{\alpha},\\
    &(\partial^2_{tt} W^{+\alpha}_{\lambda\mathscr{A}})|_{t=0}=L^\alpha_w(\nabla(\partial_t W^{+\alpha}_{\lambda\mathscr{A}})|_{t=0},\lambda^{-1/2}\dot{\epsilon}^{evo}_\lambda,\textbf{B}'_0)+Q^\alpha_w(\epsilon^{evo}_\lambda,e^{evo}_\lambda)+[\ldots],\\
    &(\partial^2_{tt} \Psi^{+}_{\lambda\mathscr{A}})|_{t=0}=L^\alpha_\psi(\nabla(\partial_t \Phi^{+\alpha}_{\mathscr{A}})|_{t=0},\lambda^{-1/2}\dot{e}^{evo}_\lambda,\textbf{B}'_0)+Q^\alpha_\psi(e^{evo}_\lambda,e^{evo}_\lambda)+[\ldots],
\end{align*}
where the $[\ldots]$ involve lower-order more regular terms. The $f_w$ and $f_\psi$ terms involve no derivatives of $\textbf{B}'_0$, $L_w$ and $L_\psi$ depends linearly on their first two parameters, and $Q_w$ and $Q_\psi$ contain a product with respect to their two parameters. All these contributions are controlled using \eqref{eq:eta1} and the estimates of Proposition \ref{propal:invertsystinitLap} and Definition \ref{defi:admissiblebginitansatz}. To estimate $\dot{e}^{evo}_\lambda$ and $\dot{\epsilon}^{evo}_\lambda$, we use
\begin{align*}
    &\sum_{k\leq 1}\lambda^k||\lambda e^{i\frac{v_\mathscr{A}}{\lambda}}(\partial_t W^{+}_{\lambda\mathscr{A}})|_{t=0}||_{H^k}\leq \lambda C_0, &\sum_{k\leq 1}\lambda^k||\lambda e^{i\frac{v_\mathscr{A}}{\lambda}}(\partial_t \Phi^{+}_{\lambda\mathscr{A}})|_{t=0}||_{H^k}\leq \lambda C_0, 
\end{align*}
where $C_0$ is as in Notation \ref{nota:cst0Approx}, other direct estimates. For $g^{+\alpha}_{W^+_{\lambda\mathscr{A}}}$ and $g^+_{\Psi^+_{\lambda\mathscr{A}}}$, we use $C_0$ 
\begin{align*}
    &\sum_{k\leq 1}\lambda^k||(\partial^2_{tt} W^{+}_{\lambda\mathscr{A}})|_{t=0}||_{H^k}\leq C_0, &\sum_{k\leq 1}\lambda^k||(\partial^2_{tt} \Psi^{+}_{\lambda\mathscr{A}})|_{t=0}||_{H^k}\leq  C_0,
\end{align*}
so that we have the required smallness and the required regularity of Definition \ref{defi:reqsmallnesscriteria}. In particular, the quantities $g^{+\alpha}_{W^+_{\lambda\mathscr{A}}}$ and $g^+_{\Phi^+_{\lambda\mathscr{A}}}$ are more regular than two standard derivatives of $W^+_{\lambda\mathscr{A}}$ and $\Psi^+_{\lambda\mathscr{A}}$ are expected to be.\\\\
Finally, the KGM in Lorenz gauge admissibility follows because our \textbf{precise error initial data} are constructed on the \textbf{error initial data} from Definition \ref{defi:glueerrorcomponentcriteria}, which directly implies that $(a^\alpha_\lambda,\dot{a}^\alpha_\lambda,\phi_\lambda,\dot{\phi}_\lambda)$ is solution to the constraint equations \eqref{eq:initconstraintfullInit}.
 \end{proof} 

\section{Exact multiphase high-frequency solutions}
\label{section:exactsolut}
\subsection{Generalities on the error term and evolution equations}
\label{subsection:errterm}
Recall that we seek exact solutions $(A_\lambda, \Phi_\lambda)_{0<\lambda}$ to KGM under the form:
\begin{equation}
\label{eq:fullparametrixerrterm}
        A^\alpha_\lambda=A^\alpha_{1\lambda}+Z_\lambda^\alpha, \;\;\;\;
         \Phi_\lambda=\Phi_{1\lambda}+\mathcal{Z}_\lambda,         
 \end{equation}
which satisfy the Lorenz gauge condition $\partial_\alpha A^\alpha_\lambda=0$. In this Section, we assume that 
\begin{align}
\label{exporder1evosys}
&A^\alpha_{1\lambda}=A^\alpha_0+\lambda^{1/2}\sum_{\mathscr{A}\in\mathbb{A}}\cos\left(\frac{u_\mathscr{A}}{\lambda}\right)P^\alpha_\mathscr{A}+\sin\left(\frac{u_\mathscr{A}}{\lambda}\right)Q^\alpha_\mathscr{A},
&\Phi_{1\lambda}=\Phi_0+\lambda^{1/2}\sum_{\mathscr{A}\in\mathbb{A}}e^{i\frac{u_\mathscr{A}}{\lambda}}\Psi_\mathscr{A},
\end{align} 
are \textbf{first-order almost approximate solutions} to KGM in Lorenz gauge, in the sense of Definition \ref{defi:almostapproxresults}. These solutions are constructed via Proposition \ref{propal:firstapproxApprox}, and more generally Section \ref{section:Approx}, and are based on an admissible initial ansatz, in the sense of Definition \ref{defi:admissiblebginitansatz}. In other words, we assume that step \ref{item:item1Idea} of Section \ref{subsection:Idea} is completed. We further assume that step \ref{item:item2Idea} is completed, namely that the initial data for \eqref{eq:fullparametrixerrterm} solve the constraint of KGM in Lorenz gauge \eqref{defi:constraintfullInit}, and that the initial data for the error term possesses both the appropriate structure and smallness properties. The construction of such initial data is detailed in Section \ref{section:Init}, and in particular in Proposition \ref{propal:specsetconsinit}. The present Section corresponds to steps \ref{item:item3Idea}, \ref{item:item4Idea} and  \ref{item:item5Idea}. 
\begin{nota}
\label{nota:schemnotaparametrixerrterm}
   Since
    $\cos\left(\frac{u_\mathscr{A}}{\lambda}\right)P^\alpha_\mathscr{A}+\sin\left(\frac{u_\mathscr{A}}{\lambda}\right)Q^\alpha_\mathscr{A}$ may be shorten as $\Re\left(e^{i\frac{u_\mathscr{A}}{\lambda}}\overline{W^\alpha_\mathscr{A}}\right)$ for $W^\alpha_\mathscr{A}=P^\alpha_\mathscr{A}+iQ^\alpha_\mathscr{A}$, we introduce the schematic notation 
    \begin{equation}
        \label{eq:schemfullparametrixerrterm}      
        \textbf{F}_\lambda=\underbrace{\textbf{F}_0+\lambda^{1/2}\sum_{\mathscr{A}\in\mathbb{A}}e^{i\frac{u_\mathscr{A}}{\lambda}}\textbf{F}_\mathscr{A}}_\text{$\textbf{F}_{1\lambda}$}+\textbf{Z}_\lambda.
    \end{equation}
    Throughout the paper, bold symbols, such as $\textbf{Z}_\lambda$, denote both the potential and the wave function unknowns at a schematic level. Recall also Notation \ref{nota:b0background},:
    $\textbf{B}_0'$ denotes the background terms $(\textbf{F}_0, \textbf{F}_{\mathscr{A}},\textbf{d}u_\mathscr{A})$, while $\textbf{B}_0$ denotes the background terms excluding the phases $(\textbf{d}u_\mathscr{A})_{\mathscr{A}\in\mathbb{A}}$. 
\end{nota}
With Notation \ref{nota:schemnotaparametrixerrterm}, the KGM system \eqref{eq:KGMintro} in Lorenz gauge, namely the KGML system \eqref{eq:KGMLgeneKGM}, reduces to the schematic equation
\begin{equation}
\label{eq:schemeqError}
    \Box \textbf{F}_\lambda=\textbf{F}_\lambda\partial\textbf{F}_\lambda+(\textbf{F}_\lambda)^3.
\end{equation}
The gauge propagation is addressed in Section \ref{subsubsection:gpropag}.
The error term $\textbf{Z}_\lambda$ (denoting $(Z_\lambda,\mathcal{Z}_\lambda)$) must therefore satisfy schematically 
\begin{equation}
\label{eq:schemeqerrorerrterm}
    \Box \textbf{Z}_\lambda=\textbf{Z}_\lambda\partial\textbf{Z}_\lambda+\mathscr{L}_1\left(\textbf{Z}_\lambda,\partial \textbf{B}_0,\textbf{B}'_0,\frac{\mathscr{U}_\mathbb{A}}{\lambda}\right)+\mathscr{L}_2\left(\partial\textbf{Z}_\lambda,\textbf{B}_0,\frac{\mathscr{U}_\mathbb{A}}{\lambda}\right)+\mathscr{H}\left(\textbf{Z}_\lambda,\textbf{B}_0,\frac{\mathscr{U}_\mathbb{A}}{\lambda}\right)+\lambda^{1/2}\boldsymbol{\Xi}_\lambda(\textbf{B}_0)+\tilde{\boldsymbol{\Xi}}_\lambda(\textbf{B}_0),
\end{equation}
so that the parametrix \eqref{eq:schemfullparametrixerrterm} solves \eqref{eq:schemeqError}, provided that $\textbf{F}_{1\lambda}$ is an almost approximate solution to \eqref{eq:schemeqError}.
Here, $\mathscr{L}_1$  (resp. $\mathscr{L}_2$) is linear in its first three (resp. first two) arguments and periodic in its fourth (resp. third) argument, while $\mathscr{H}$ is at least quadratic and at most cubic in its first argument and periodic in its third argument. The nonlinear term $\textbf{Z}_\lambda\partial\textbf{Z}_\lambda$ is explicit. The terms $\boldsymbol{\Xi}_\lambda(\textbf{B}_0)$ and $\tilde{\boldsymbol{\Xi}}_\lambda(\textbf{B}_0)$ denote the remainders arising from the approximation procedure introduced in Definition \ref{defi:almostapproxresults}, see Notation \ref{nota:xixi}.\\ \\
The system \eqref{eq:schemeqerrorerrterm} (represented here by a single equation) is clearly locally well-posed for $\lambda>0$ and for initial data $(\textbf{Z}_\lambda,\dot{\textbf{Z}}_\lambda)\in H^2\times H^1$, although it is far from obvious that the time of existence is uniform in $\lambda$.
To obtain such a uniform time of existence, we refine the structure of the error term by writing it as
\begin{equation}  
\label{eq:schemfullerrorparametrixerrterm}
\textbf{Z}_\lambda=\sum_{\mathscr{A}\in\mathbb{A}}\lambda^{1}\textbf{F}^+_{\lambda\mathscr{A}}e^{i\frac{u_\mathscr{A}}{\lambda}}+\sum_{(\mathscr{A},\mathscr{B})\in\mathscr{C}}\lambda^{1}\breve{\textbf{F}}^+_{\mathscr{A}\pm\mathscr{B}}e^{i\frac{u_{\mathscr{A}}\pm u_{\mathscr{B}}}{\lambda}}+\boldsymbol{\mathcal{E}}_\lambda^{ell}+\boldsymbol{\mathcal{E}}_\lambda^{evo}.
\end{equation} 
The components $\breve{\textbf{F}}^+_{\mathscr{A}\pm\mathscr{B}}$ and $\boldsymbol{\mathcal{E}}_\lambda^{ell}$ are the \textbf{decoupled error components} and solve
\begin{equation}
\label{eq:schemFABerrterm}
 \forall(\mathscr{A},\mathscr{B})\in\mathscr{C},\;\mathscr{L}_{\mathscr{A}\pm\mathscr{B}} \breve{\textbf{F}}^+_{\mathscr{A}\pm\mathscr{B}} =\textbf{K}_{(\mathscr{A}\pm\mathscr{B})}(\textbf{B}_0')+(\partial u_{\mathscr{A}}\pm \partial u_{\mathscr{B}})\textbf{B}_0 \breve{\textbf{F}}_{\mathscr{A}\pm\mathscr{B}}^+,
\end{equation}
\begin{equation}
\label{eq:schemellipticerrorerrterm}
    \boldsymbol{\mathcal{E}}_\lambda^{ell}=-\lambda^2\sum_{(\mathscr{A},\mathscr{B})\in\mathscr{S}}e^{i\frac{u_\mathscr{B}\pm u_\mathscr{B}}{\lambda}}\frac{\textbf{S}_{(\mathscr{A}\pm\mathscr{B})}(\textbf{B}_0 ')}{\partial_\alpha(u_\mathscr{A}\pm u_\mathscr{B})\partial^\alpha(u_\mathscr{A}\pm u_\mathscr{B})},
\end{equation}
for $\textbf{S}_{(\mathscr{A}\pm\mathscr{B})}(\textbf{B}_0 ')$ and $\textbf{K}_{(\mathscr{A}\pm\mathscr{B})}(\textbf{B}_0 ')$ given by Proposition \ref{propal:KandSApprox} and Notations \ref{nota:schemanotApprox}.\\\\
The components $(\boldsymbol{\mathcal{E}}_\lambda^{evo}, \textbf{F}^+_{\lambda\mathscr{A}})$, together with the auxiliary function $\textbf{G}^+_{\lambda\mathscr{A}}$ (introduced to handle the derivative loss discussed in point \ref{item:obst3Idea} of Section \ref{subsection:Idea}) are the \textbf{coupled error components}. They solve the coupled system of
\begin{equation}
\begin{cases}
\label{eq:schemfullsystemerrterm}
&\Box\boldsymbol{\mathcal{E}}_\lambda^{evo}=\boldsymbol{\mathcal{E}}_\lambda^{evo}\partial\boldsymbol{\mathcal{E}}_\lambda^{evo}+\sum_{\mathscr{A}\in\mathbb{A}}e^{i\frac{u_\mathscr{A}}{\lambda}}(i\partial u_\mathscr{A}\Pi_{\kappa,+}(\boldsymbol{\mathcal{E}}_\lambda^{evo}\textbf{F}^+_{\lambda\mathscr{A}})-\lambda^{1}\textbf{G}^+_{\lambda\mathscr{A}})+[\ldots]_{\boldsymbol{\mathcal{E}}_\lambda^{evo}},\\\\
&\mathscr{L}_\mathscr{A} \textbf{F}^+_{\lambda\mathscr{A}} =\lambda^{-1/2}\partial u_\mathscr{A}\boldsymbol{\mathcal{E}}_\lambda^{evo}\textbf{B}_0   +\partial u_\mathscr{A}\Pi_{\kappa,-}(\boldsymbol{\mathcal{E}}_\lambda^{evo}\textbf{F}^+_{\lambda\mathscr{A}})+\partial u_\mathscr{A}\textbf{F}_{\lambda\mathscr{A}}^+\textbf{B}_0 , \\\\
    &\mathscr{L}_\mathscr{A}\textbf{G}^+_{\lambda\mathscr{A}}=[\mathscr{L}_A,\Box]\textbf{F}^+_{\lambda\mathscr{A}}+\lambda^{-1/2}\partial u_\mathscr{A}\Box\boldsymbol{\mathcal{E}}_\lambda^{evo}\textbf{B}_0+\Box(\partial u_\mathscr{A}\Pi_{\kappa,-}(\boldsymbol{\mathcal{E}}_\lambda^{evo}\textbf{F}^+_{\lambda\mathscr{A}}))\\
    &+\partial u_\mathscr{A}\Pi_{\kappa,-}(\boldsymbol{\mathcal{E}}_\lambda^{evo}\textbf{G}^+_{\lambda\mathscr{A}})-\partial u_\mathscr{A}\Pi_{\kappa,-}(\boldsymbol{\mathcal{E}}_\lambda^{evo}\Box\textbf{F}^+_{\lambda\mathscr{A}})+\partial u_\mathscr{A}\textbf{G}^+_{\lambda\mathscr{A}}\textbf{B}_0+[\ldots]_{\textbf{G}^+_{\lambda\mathscr{A}}},\\
\end{cases}
\end{equation}
where the $[\ldots]_{\boldsymbol{\mathcal{E}}_\lambda^{evo}}$ and the $[\ldots]_{\textbf{G}^+_{\lambda\mathscr{A}}}$ depend on $\boldsymbol{\mathcal{E}}_\lambda^{ell}$, $\breve{\textbf{F}}^+_{\mathscr{A}\pm\mathscr{B}}$, the background $\textbf{B}_0$ or lower-order terms. We refer to Sections \ref{subsubsection:estiFplus}, \ref{subsubsection:estiGplus} and \ref{subsubsection:estiEevo} for their precise form. The projectors $\Pi_{\kappa,+}$ and $\Pi_{\kappa,-}$ are introduced in Definition \ref{defi:projfreq} and depend on a parameter $\kappa>0$, which is fixed in Section \ref{subsubsection:estiGplus}.\\\\
The decoupled error components obey linear transport equations or depend only on the background, and are therefore well-defined on the entire interval of existence $[0,T]$ of the background solution. Consequently, the decoupled components are not part of the bootstrap argument. By contrast, the coupled error components satisfy nonlinear wave and transport equations that depend on both the background and the decoupled components, and therefore require a bootstrap argument to extend their existence interval up to $[0,T]$. \\\\
At the level of the error term, Steps \ref{item:item3Idea}, \ref{item:item4Idea}, and \ref{item:item5Idea} are summarized by the following three propositions.
\begin{propal}[Exact solutions to KGML]
\label{propal:exaKGMLmainpropal}
Let $(\textbf{f}^+_{\lambda\mathscr{A}},\textbf{g}^+_{\lambda\mathscr{A}},\breve{\textbf{f}}^+_{\mathscr{A}\pm\mathscr{B}},\textbf{e}^{evo}_\lambda,\dot{\textbf{e}}^{evo}_\lambda,\textbf{e}_\lambda^{ell},\dot{\textbf{e}}_\lambda^{ell})$ be initial data for the error components that are \textbf{admissible for KGML} in the sense of Definition \ref{defi:admissibleKGMLcriteria}. Then, there exist error components $(\textbf{F}^+_{\lambda\mathscr{A}},\textbf{G}^+_{\lambda\mathscr{A}},\breve{\textbf{F}}^+_{\mathscr{A}\pm\mathscr{B}},\boldsymbol{\mathcal{E}}^{evo}_\lambda,\boldsymbol{\mathcal{E}}^{ell}_\lambda)$ satisfying \eqref{eq:schemFABerrterm}, \eqref{eq:schemellipticerrorerrterm} and \eqref{eq:schemfullsystemerrterm}. The associated error term $\textbf{Z}_\lambda$ defined by \eqref{eq:schemfullerrorparametrixerrterm}, when added to the approximate solution $\textbf{F}_{1\lambda}$, yields an exact solution $\textbf{F}_\lambda=\textbf{F}_{1\lambda}+\textbf{Z}_\lambda$ to KGML \eqref{eq:schemeqError}, defined on some interval $[0,t_\lambda]$. 
\end{propal} 
\begin{rem}
\label{rem:nonschemparaerrterm}
    In a non-schematic form, the parametrix $\textbf{F}_\lambda$ reads
\begin{equation} 
 \label{eq:fullparametrixAerrterm} 
\begin{split}
&A^\alpha_\lambda=A^\alpha_0+\lambda^{1/2}\sum_{\mathscr{A}\in\mathbb{A}}\Re\left(e^{i\frac{u_\mathscr{A}}{\lambda}}\overline{W^\alpha_\mathscr{A}}\right)+\sum_{\mathscr{A}\in\mathbb{A}}\lambda^{1}\Re\left(e^{i\frac{u_\mathscr{A}}{\lambda}}\overline{W^{+\alpha}_{\lambda\mathscr{A}}}\right)+\sum_{(\mathscr{A},\mathscr{B})\in\mathscr{C}}\lambda^{1}\Re\left(e^{i\frac{u_{\mathscr{A}}\pm u_{\mathscr{B}}}{\lambda}}\overline{\breve{W}^{+\alpha}_{\mathscr{A}\pm\mathscr{B}}}\right)\\
&+(E_\lambda^{evo})^{\alpha}+(E_\lambda^{ell})^{\alpha}, 
\end{split}
 \end{equation}
\begin{equation}
\label{eq:fullparametrixPhierrterm} 
\Phi_\lambda=\Phi_0+\lambda^{1/2}\sum_{\mathscr{A}\in\mathbb{A}}e^{i\frac{u_\mathscr{A}}{\lambda}}\Psi_\mathscr{A}+\sum_{\mathscr{A}\in\mathbb{A}}\lambda^{1}\Psi^+_{\lambda\mathscr{A}}e^{i\frac{u_\mathscr{A}}{\lambda}}+\sum_{(\mathscr{A},\mathscr{B})\in\mathscr{C}}\lambda^{1}\breve{\Psi}^+_{\mathscr{A}\pm\mathscr{B}}e^{i\frac{u_{\mathscr{A}}\pm u_{\mathscr{B}}}{\lambda}}+\mathcal{E}_\lambda^{evo}+\mathcal{E}_\lambda^{ell}.
    \end{equation} 
\end{rem}

\begin{propal}[Uniform existence time in $\lambda$]
\label{propal:exasmallKGMmainpropal}
Let $(\textbf{F}^+_{\lambda\mathscr{A}},\textbf{G}^+_{\lambda\mathscr{A}},\boldsymbol{\mathcal{E}}_\lambda^{evo},\boldsymbol{\mathcal{E}}_\lambda^{ell},\breve{\textbf{F}}^+_{\mathscr{A}\pm\mathscr{B}})$ be given by the Proposition \ref{propal:exaKGMLmainpropal}. If, in addition, the initial data satisfy \textbf{the required smallness} in the sense of Definition \ref{defi:reqsmallnesscriteria}, then there exists $\lambda_0>0$, such that the family of solutions $(\textbf{F}_\lambda)_{0<\lambda<\lambda_0}$ is defined on the full interval $[0,T]$.
\end{propal} 

\begin{propal}[Exact solutions to KGM in Lorenz gauge]
\label{propal:exaKGMmainpropal}
Let $(\textbf{F}^+_{\lambda\mathscr{A}},\textbf{G}^+_{\lambda\mathscr{A}},\boldsymbol{\mathcal{E}}_\lambda^{evo},\boldsymbol{\mathcal{E}}_\lambda^{ell},\breve{\textbf{F}}^+_{\mathscr{A}\pm\mathscr{B}})$ be given by the Proposition \ref{propal:exaKGMLmainpropal}. If the initial data are also \textbf{admissible for KGM in Lorenz gauge} in the sense of  Definition \ref{defi:admissibleKGMgaugecriteria}, then the parametrix $\textbf{F}_\lambda=\textbf{F}_{1\lambda}+\textbf{Z}_\lambda$ is solution to KGM \eqref{eq:KGMintro} in Lorenz gauge, i.e., the gauge propagates.
\end{propal} 
\subsection{Decoupled error components}
\label{subsection:decouplederrorcompo}
\subsubsection{Information on $\breve{\textbf{F}}^+_{\mathscr{A}\pm\mathscr{B}}$}
\label{subsubsection:infofab}
\begin{defi}
\label{defi:defifabinfofab}
For a given admissible background initial data, let $\textbf{B}'_0$ be the associated background provided by Proposition \ref{propal:firstapproxApprox}. We define the transport equation for $(\breve{\textbf{F}}^+_{\mathscr{A}\pm\mathscr{B}})_{(\mathscr{A},\mathscr{B})\in\mathscr{C}}$ by
\begin{equation}
\label{eq:schemtpfabinfofab}
   \forall (\mathscr{A},\mathscr{B})\in\mathscr{C},\;\; \mathscr{L}_{\mathscr{A}\pm\mathscr{B}} \breve{\textbf{F}}^+_{\mathscr{A}\pm\mathscr{B}} =\textbf{K}_{(\mathscr{A}\pm\mathscr{B})}(\textbf{B}_0')+(\partial u_{\mathscr{A}}\pm \partial u_{\mathscr{B}})\textbf{B}_0 \breve{\textbf{F}}_{\mathscr{A}\pm\mathscr{B}}^+. 
\end{equation}
\end{defi}
\begin{rem}
The error components $(\breve{\textbf{F}}^+_{\mathscr{A}\pm\mathscr{B}})_{(\mathscr{A},\mathscr{B})\in\mathscr{C}}$ schematically represent $(\breve{W}_{\mathscr{A}\pm\mathscr{B}},\breve{\Psi}^+_{\mathscr{A}\pm\mathscr{B}})_{(\mathscr{A},\mathscr{B})\in\mathscr{C}}$, see the parametrices \eqref{eq:fullparametrixAerrterm}. The transport equation \eqref{eq:schemtpfabinfofab} therefore corresponds, for all $(\mathscr{A},\mathscr{B})\in\mathscr{C}$, to 
\begin{equation}
\begin{cases}
\label{eq:nonschemtpFABerrterm}
\mathscr{L}_{\mathscr{A}\pm\mathscr{B}}\breve{W}_{\mathscr{A}\pm\mathscr{B}}^{+\beta}=i(\partial^\beta u_\mathscr{A}\pm\partial^\beta u_\mathscr{B})\overline{\breve{\Psi}^+_{\mathscr{A}\pm\mathscr{B}}}\Phi_0+iK^\beta_{A(\mathscr{A}\pm\mathscr{B})}(\textbf{B}_0'), \\
\mathscr{L}_{\mathscr{A}\pm\mathscr{B}}\breve{\Psi}^+_{\mathscr{A}\pm\mathscr{B}}=-i2A^\alpha_0(\partial^\alpha u_\mathscr{A}\pm\partial_\alpha u_\mathscr{B})\breve{\Psi}^+_{\mathscr{A}\pm\mathscr{B}}-iK_{\Phi(\mathscr{A}\pm\mathscr{B})}(\textbf{B}_0'),  \\ 
\end{cases}
\end{equation}
where, in particular, the RHS of the equation for $\breve{W}_{\mathscr{A}\pm\mathscr{B}}^{+\beta}$ is orthogonal to $(\partial_{\beta} u_{\mathscr{A}}\pm \partial_{\beta} u_{\mathscr{B}})$, see Proposition \ref{propal:KandSpropertiesApprox}.
\end{rem}
\begin{rem}
\label{rem:rolefabinfofab}
    The components $\breve{\textbf{F}}^+_{\mathscr{A}\pm\mathscr{B}}$ are introduced to absorb the term $\sum_{(\mathscr{A},\mathscr{B})\in\mathscr{C}}e^{i\frac{u_\mathscr{B}\pm u_\mathscr{B}}{\lambda}}\textbf{K}_{(\mathscr{A}\pm\mathscr{B})}(B_0 ')$, i.e., the $O(1)$ high-frequency terms oscillating along characteristic phases emerging from resonant interaction contained in $\tilde{\boldsymbol{\Xi}}_\lambda$. The equation \eqref{eq:schemtpfabinfofab} is \textbf{decoupled} from the system \eqref{eq:schemfullsystemerrterm}. For simplicity, and exploiting the freedom in the choice of these parameters, we impose $\breve{\textbf{F}}^+_{\mathscr{A}\pm\mathscr{B}}|_{t=0}=0$ in Proposition \ref{propal:specsetconsinit}. Further details concerning the initial data are given in Section \ref{section:Init}.
\end{rem}
\begin{rem}
\label{rem:RHSfabinfofab}
    The RHS term
     \begin{align}
&(\partial u_{\mathscr{A}}\pm \partial u_{\mathscr{B}})\textbf{B}_0 \breve{\textbf{F}}_{\mathscr{A}\pm\mathscr{B}}^+
\end{align} 
originates from $\mathscr{L}_2(\partial\textbf{Z},\textbf{B}_0,\frac{\mathscr{U}_\mathbb{A}}{\lambda})$ in \eqref{eq:schemeqerrorerrterm}, while  
the term 
\begin{align}
&\textbf{K}_{(\mathscr{A}\pm\mathscr{B})}(\textbf{B}_0')
\end{align}
arises from the decomposition of $\tilde{\boldsymbol{\Xi}}_\lambda$ given in Proposition \ref{propal:KandSApprox}.\\
\end{rem}
The next proposition establishes the smallness and regularity properties of  $(\breve{\textbf{F}}^+_{\mathscr{A}\pm\mathscr{B}}
)_{(\mathscr{A},\mathscr{B})\in\mathscr{C}}$.
\begin{propal}
\label{propal:propertiesfabinfofab}
Let $\breve{\textbf{F}}^+_{\mathscr{A}\pm\mathscr{B}}\in H^3$ with $Supp(\breve{\textbf{F}}^+_{\mathscr{A}\pm\mathscr{B}})\subset\Omega$ for $\Omega$ a compact set of $\mathbb{R}^3$.
There exists a unique local solution $\breve{\textbf{F}}^+_{\mathscr{A}\pm\mathscr{B}}\in \cap^3_{j=0}(C^j([0,T],H^{3-j})$ defined on the time interval $[0,T]$ of existence of the background, and satisfying $\breve{\textbf{F}}^+_{\mathscr{A}\pm\mathscr{B}}|_{t=0}=\breve{\textbf{F}}^+_{\mathscr{A}\pm\mathscr{B}}$. Moreover,  $Supp(\breve{\textbf{F}}^+_{\mathscr{A}\pm\mathscr{B}})\subset\mathscr{I}(T,\Omega)$ and, for $C_0$ from Notation \ref{nota:cst0Approx}, the following estimate holds
 \begin{equation}
     \label{eq:estimatesfabinfofab}
     \max_{(\mathscr{A},\mathscr{B})\in\mathscr{C}}\sum_{k\leq 3}(||\partial^{k}_t\breve{\textbf{F}}^+_{\mathscr{A}\pm\mathscr{B}}||_{L^\infty([0,T],H^{3-k})})\leq C_0.\\
 \end{equation}
\end{propal}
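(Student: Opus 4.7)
The plan is to treat \eqref{eq:schemtpfabinfofab} as a standard linear first-order transport equation along the vector field $\partial^\#(u_\mathscr{A}\pm u_\mathscr{B})$, which is a non-vanishing null vector field on $[0,T]\times\mathbb{R}^3$: since $(\mathscr{A},\mathscr{B})\in\mathscr{C}$, we have $\partial^\alpha u_\mathscr{A}=k\partial^\alpha u_\mathscr{B}$ with $k>0$, and remark \ref{rem:coherencebacksetphase} together with \eqref{eq:eta1} in proposition \ref{propal:smallnesscontrolsetphase} ensure that $\partial^0(u_\mathscr{A}\pm u_\mathscr{B})$ is bounded below by $\eta_0$ uniformly on any compact set and on $[0,T]$. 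Thus the characteristics are graphs over time and the operator $\mathscr{L}_{\mathscr{A}\pm\mathscr{B}}$ is globally non-degenerate on $[0,T]\times\mathbb{R}^3$.

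First, I would rewrite \eqref{eq:schemtpfabinfofab} as $\partial_t \breve{\boldsymbol{F}}^+_{\mathscr{A}\pm\mathscr{B}} = \tfrac{1}{2\partial^0(u_\mathscr{A}\pm u_\mathscr{B})}\bigl(-2\partial^i(u_\mathscr{A}\pm u_\mathscr{B})\partial_i\breve{\boldsymbol{F}}^+_{\mathscr{A}\pm\mathscr{B}}-\Box(u_\mathscr{A}\pm u_\mathscr{B})\breve{\boldsymbol{F}}^+_{\mathscr{A}\pm\mathscr{B}}+\mathrm{RHS}\bigr)$, which is a linear symmetric-hyperbolic (in fact transport) system with smooth coefficients. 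Propositions \ref{propal:firstapproxApprox} and \ref{propal:KandSpropertiesApprox} give that all coefficients and the source $\textbf{K}_{(\mathscr{A}\pm\mathscr{B})}(\textbf{B}_0')$ lie in $L^\infty([0,T],H^3)$, while the factor $1/(2\partial^0(u_\mathscr{A}\pm u_\mathscr{B}))$ is controlled by $1/\eta_0$ from \eqref{eq:eta1}. Existence and uniqueness of $\breve{\boldsymbol{F}}^+_{\mathscr{A}\pm\mathscr{B}} \in C^0([0,T],L^2)$ then follow from the standard theory of linear hyperbolic equations with $H^3$ coefficients.

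To propagate $H^3$ regularity, I would commute the equation with $\partial^k$ for $k\leq 3$, obtaining a system with lower-order commutator terms schematically of the form $\partial^j(\mathrm{coeff})\cdot\partial^{k-j}\breve{\boldsymbol{F}}^+_{\mathscr{A}\pm\mathscr{B}}$; Moser and product estimates in $H^3$ (which is an algebra on $\mathbb{R}^3$) close the energy inequality, and a Grönwall argument then yields the $L^\infty([0,T],H^3)$ bound on $\breve{\boldsymbol{F}}^+_{\mathscr{A}\pm\mathscr{B}}$ with a constant of the form $C_0(c_0,1/\eta_0,N,T)$. The higher-order time regularity $\partial_t^j\breve{\boldsymbol{F}}^+_{\mathscr{A}\pm\mathscr{B}} \in L^\infty([0,T],H^{3-j})$ for $j\leq 3$ is then obtained by repeatedly differentiating the equation in time and expressing $\partial_t^j\breve{\boldsymbol{F}}^+_{\mathscr{A}\pm\mathscr{B}}$ algebraically in terms of the already-bounded spatial derivatives, the coefficients, and their time derivatives (the latter controlled again by proposition \ref{propal:firstapproxApprox}).

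Finally, the support property $\mathrm{Supp}(\breve{\boldsymbol{F}}^+_{\mathscr{A}\pm\mathscr{B}})\subset \mathscr{I}(T,\Omega)$ follows from finite speed of propagation along the null characteristics of $\partial^\#(u_\mathscr{A}\pm u_\mathscr{B})$ together with the fact that $\textbf{K}_{(\mathscr{A}\pm\mathscr{B})}(\textbf{B}_0')$ is itself supported inside $\mathscr{I}(T,B_{S'})\subset \mathscr{I}(T,\Omega)$ (enlarging $\Omega$ to contain $B_{S'}$ if necessary, without loss of generality as in definition \ref{defi:admissibleKGMLcriteria}); indeed, outside of $\mathscr{I}(T,\Omega)$ the equation reduces to $\mathscr{L}_{\mathscr{A}\pm\mathscr{B}}\breve{\boldsymbol{F}}^+_{\mathscr{A}\pm\mathscr{B}} = (\partial u_{\mathscr{A}}\pm\partial u_{\mathscr{B}})\textbf{B}_0 \breve{\boldsymbol{F}}^+_{\mathscr{A}\pm\mathscr{B}}$ with zero initial data, so uniqueness forces $\breve{\boldsymbol{F}}^+_{\mathscr{A}\pm\mathscr{B}}\equiv 0$ there. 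The only genuine (and mild) obstacle is the bookkeeping of the $1/\eta_0$ factors in the Grönwall constant and the verification that the $H^3$ regularity of the background is sufficient to close the Moser estimates at top order — which it is, since the coefficients involve at most one derivative of the phases (in $H^4$) and zeroth-order background quantities (in $H^3$).
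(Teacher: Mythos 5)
Your argument is correct and is essentially the paper's proof: the paper simply invokes the transport energy estimate of proposition \ref{propal:estienergytpAX} (commutation with spatial derivatives plus a Gr\"onwall absorption of the linear term) for the $H^3$ bound, proposition \ref{propal:estienergytpderivtempsAX} for the time derivatives, and finite speed of propagation for the support, which are exactly the three steps you carry out by hand. The only imprecision is your appeal to \eqref{eq:eta1} to bound $|\partial_t(u_\mathscr{A}\pm u_\mathscr{B})|$ from below: for the difference phase the two same-sign time components could a priori nearly cancel, and the correct source is \eqref{eq:eta4} of proposition \ref{propal:smallnesscontrolsetphase} combined with the fact that the collinear combination $u_\mathscr{A}\pm u_\mathscr{B}$ is itself characteristic, so that $|\partial_t(u_\mathscr{A}\pm u_\mathscr{B})|=|\nabla(u_\mathscr{A}\pm u_\mathscr{B})|$ is bounded below on compact sets.
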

\begin{proof}
  Existence and uniqueness follow from the standard energy method. Finite speed of propagation yields the support property.  For \eqref{eq:estimatesfabinfofab}, we refer to Section \ref{subsection:estimatesappendix}. For \eqref{eq:estimatesfabinfofab}, we refer to Section \ref{subsection:estimatesappendix}. In particular, we apply Proposition\footnote{The linear term on the RHS of \eqref{eq:estienergytp1AX} is harmless and can be controlled in the same way as the other linear term appearing in the evolution equation.} \ref{propal:estienergytpAX} to obtain control of the $H^3$ norm, using that $\breve{\textbf{f}}^+_{\mathscr{A}\pm\mathscr{B}} \in H^3$. Then, we use Proposition \ref{propal:estienergytpderivtempsAX} to control the time derivatives.\\
\end{proof}
\begin{rem}
\label{rem:moreregfabinfofab}
    We note that $\breve{\textbf{F}}^+_{\mathscr{A}\pm\mathscr{B}}$ belongs to $H^3$ for $t\in[0,T]$, just as the background does, and is more regular than the coupled components $\textbf{F}^+_{\lambda\mathscr{A}}$ and $\boldsymbol{\mathcal{E}}_\lambda^{evo}$. This additional regularity is useful since the RHS of \eqref{eq:schemwaveEevosestiEevo}
contains the term $\Box\breve{\textbf{F}}^+_{\mathscr{A}\pm\mathscr{B}}$. 
\end{rem}
\subsubsection{Information on $\boldsymbol{\mathcal{E}}_\lambda^{ell}$}
\label{subsubsection:infoEell}
\begin{defi}
 \label{defi:defiEellinfoEell}
For a given admissible background initial data, let $\textbf{B}'_0$ denote the background provided by Proposition \ref{propal:firstapproxApprox}. We set 
\begin{equation}
\label{eq:eqEellinfoEell}
 \boldsymbol{\mathcal{E}}_\lambda^{ell}=-\lambda^2\sum_{(\mathscr{A},\mathscr{B})\in\mathscr{S}}e^{i\frac{u_\mathscr{A}\pm u_\mathscr{B}}{\lambda}}\frac{\textbf{S}_{(\mathscr{A}\pm\mathscr{B})}(\textbf{B}_0 ')}{\partial_\alpha(u_\mathscr{A}\pm u_\mathscr{B})\partial^\alpha(u_\mathscr{A}\pm u_\mathscr{B})}.   
\end{equation}
\end{defi}
\begin{rem}
\label{rem:nonschematEll}
Non-schematically, \eqref{eq:eqEellinfoEell} reads
\begin{equation} 
    \label{eq:schemEellAerrterm}
        (E^{ell}_\lambda)^{\alpha}=-\lambda^2\sum_{(\mathscr{A},\mathscr{B})\in\mathscr{S}}\frac{\Re\left(\overline{S^{\alpha}_{A(\mathscr{A}\pm\mathscr{B})}(\textbf{B}_0 ')}e^{i\frac{u_\mathscr{A}\pm u_\mathscr{B}}{\lambda}}\right)}{\partial_\alpha(u_\mathscr{A}\pm u_\mathscr{B})\partial^\alpha(u_\mathscr{A}\pm u_\mathscr{B})},   \\
\end{equation}
\begin{equation} 
    \label{eq:schemEellPhierrterm}
    \mathcal{E}^{ell}_\lambda=-\lambda^2\sum_{(\mathscr{A},\mathscr{B})\in\mathscr{S}}\frac{S_{\Phi(\mathscr{A}\pm\mathscr{B})}(\textbf{B}_0 ')e^{i\frac{u_\mathscr{A}\pm u_\mathscr{B}}{\lambda}}}{\partial_\alpha(u_\mathscr{A}\pm u_\mathscr{B})\partial^\alpha(u_\mathscr{A}\pm u_\mathscr{B})}.   
\end{equation}
We also refer to the non-schematic parametrices \eqref{eq:fullparametrixAerrterm}. These expressions are well defined since the denominator is uniformly bounded away from zero, see Proposition \ref{propal:smallnesscontrolsetphase}. 
\end{rem}
\begin{rem}
\label{rem:roleEellinfoEell}
The component $\boldsymbol{\mathcal{E}}_\lambda^{ell}$ is introduced to absorb the $O(1)$ high-frequency terms oscillating along non-characteristic phases emerging from non-resonant interactions contained in $\tilde{\boldsymbol{\Xi}}_\lambda$. More precisely, 
 \begin{equation}
\label{eq:RHSEellinfoEell}
       \Box\boldsymbol{\mathcal{E}}_\lambda^{ell}-\sum_{(\mathscr{A},\mathscr{B})\in\mathscr{S}}e^{i\frac{u_\mathscr{A}\pm u_\mathscr{B}}{\lambda}}\textbf{S}_{(\mathscr{A}\pm\mathscr{B})}(\textbf{B}_0 ')=O(\lambda^{1})f(\textbf{B}'_0), 
 \end{equation} 
while, at the same time, $\boldsymbol{\mathcal{E}}_\lambda^{ell}$ remains sufficiently small as an error component, as quantified in the following proposition.
\end{rem}

\begin{propal} 
\label{propal:propertiesEellinfoEell}
We have $Supp(\boldsymbol{\mathcal{E}}_\lambda^{ell})\subset B_{S}$ and, for $C_0$ from Notation \ref{nota:cst0Approx}, the estimates
\begin{equation}
 \label{eq:estimatesEellinfoEell}
\sum_{k\leq 2}\lambda^k||\boldsymbol{\mathcal{E}}_\lambda^{ell}||_{L^\infty([0,T],H^{k})}+\sum_{k\leq 1}\lambda^{k+1}||\partial_t\boldsymbol{\mathcal{E}}_\lambda^{ell}||_{L^\infty([0,T],H^{k})}+\lambda^2||\partial^2_{tt}\boldsymbol{\mathcal{E}}_\lambda^{ell}||_{L^\infty([0,T],L^2)}\leq \lambda^{2} C_0,\\
\end{equation}
and 
\begin{equation}
    \label{eq:estimmatesEelldefectinfoEell}
    \sum_{k\leq 1}\lambda^k||\Box\boldsymbol{\mathcal{E}}_\lambda^{ell}-\sum_{(\mathscr{A},\mathscr{B})\in\mathscr{S}}e^{i\frac{u_\mathscr{A}\pm u_\mathscr{B}}{\lambda}}\textbf{S}_{(\mathscr{A}\pm\mathscr{B})}(\textbf{B}_0 '))||_{L^\infty([0,T],H^{k})}\leq \lambda^{1} C_0.\\
\end{equation}
\end{propal}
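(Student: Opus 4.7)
The plan is to exploit the fact that $\boldsymbol{\mathcal{E}}_\lambda^{ell}$ is given by the explicit algebraic formula \eqref{eq:eqEellinfoEell}, so every assertion reduces to a direct computation combined with uniform bounds on the background. I would begin by writing each summand as $\lambda^2 e^{i u/\lambda} g$ with $u = u_\mathscr{A}\pm u_\mathscr{B}$ and $g = -\textbf{S}_{(\mathscr{A}\pm\mathscr{B})}(\textbf{B}_0')/(\partial^\alpha u\,\partial_\alpha u)$. The support statement then follows directly: the amplitude $\textbf{S}_{(\mathscr{A}\pm\mathscr{B})}$ is a polynomial in $W_\mathscr{A},\Psi_\mathscr{A}$ and phase gradients, whose compactly supported factors by proposition \ref{propal:firstapproxApprox} give $\mathrm{Supp}(g)\subset \mathscr{I}(T,B_{S'})\subset B_S$.

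For the Sobolev estimates \eqref{eq:estimatesEellinfoEell}, the key input is that $g \in L^\infty([0,T],H^3)$ with norm bounded by $C_0$: the numerator is controlled by \ref{itm:itm1KandSpropertiesApprox} of proposition \ref{propal:KandSpropertiesApprox} and the denominator is bounded below by $\eta_0>0$ on the compact support because $(\mathscr{A},\mathscr{B})\in\mathscr{S}$ and proposition \ref{propal:smallnesscontrolsetphase} (inequality \eqref{eq:eta2}) applies. Division by a smooth nowhere-vanishing function preserves the $H^3$ bound via proposition \ref{propal:firstapproxApprox}. I would then apply the Leibniz rule: each space or time derivative hitting $e^{iu/\lambda}$ produces a $\lambda^{-1}\partial u$ factor (with $\partial u$ controlled via the regularity of $u$), while derivatives hitting $g$ cost no power of $\lambda$. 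Counting worst cases, $k$ derivatives of $\lambda^2 e^{iu/\lambda}g$ are of size $\lambda^{2-k}C_0$ in $L^2$, which is exactly the budget $\lambda^k\|\cdot\|_{H^k}\leq \lambda^2 C_0$ claimed (and likewise for $\partial_t$ and $\partial_{tt}^2$).

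For the defect estimate \eqref{eq:estimmatesEelldefectinfoEell}, I would compute
\begin{equation*}
\Box(e^{iu/\lambda} g) = e^{iu/\lambda}\Bigl[-\tfrac{1}{\lambda^2}\partial^\alpha u\,\partial_\alpha u \cdot g + \tfrac{i}{\lambda}\bigl(\Box u\cdot g + 2\partial^\alpha u\,\partial_\alpha g\bigr) + \Box g\Bigr].
\end{equation*}
By construction of $g$, the $\lambda^{-2}$ contribution, once multiplied by the prefactor $-\lambda^2$, yields precisely $e^{iu/\lambda}\textbf{S}_{(\mathscr{A}\pm\mathscr{B})}$. Subtracting this exact cancellation leaves
\begin{equation*}
\Box\boldsymbol{\mathcal{E}}_\lambda^{ell} - \sum_{(\mathscr{A},\mathscr{B})\in\mathscr{S},\mathscr{A}<\mathscr{B}} e^{iu/\lambda}\textbf{S}_{(\mathscr{A}\pm\mathscr{B})} = -\sum_{(\mathscr{A},\mathscr{B})\in\mathscr{S},\mathscr{A}<\mathscr{B}} e^{iu/\lambda}\Bigl[i\lambda\bigl(\Box u\cdot g + 2\partial^\alpha u\,\partial_\alpha g\bigr) + \lambda^2\Box g\Bigr],
\end{equation*}
which is manifestly $O(\lambda)$ pointwise. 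The same derivative counting as above gives the $H^k$ estimate for $k\leq 1$: the bracketed expression has the same $H^3$ regularity as $g$ (requiring three derivatives of the background, which is available), so $k$ derivatives cost at most $\lambda^{-k}$ and the product with the $\lambda$ prefactor yields the bound $\lambda^{1-k}C_0$, matching $\lambda^k\|\cdot\|_{H^k}\leq \lambda C_0$.

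The only nontrivial obstacle I anticipate is bookkeeping rather than conceptual: one must ensure that all derivatives of $g$ up to order three, and of $u$ up to order four, are controlled uniformly on the compact support, and that the $\partial^\alpha u\,\partial_\alpha u$ quotient is handled cleanly despite the phase $u$ living in weighted Sobolev spaces. Both points are already taken care of by proposition \ref{propal:firstapproxApprox} (regularity of $u_\mathscr{A}$ in $C^j H^{5-j}_{\delta_1+j}$, hence classical $C^2$ by Sobolev embedding and smooth away from infinity) and proposition \ref{propal:smallnesscontrolsetphase} (the uniform lower bound on the denominator on $\Omega\times[0,T]$), so the argument really is a direct computation once the setup is in place.
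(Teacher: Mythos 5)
Your proposal is correct and follows essentially the same route as the paper: the paper's proof is exactly a direct calculation from the explicit formula \eqref{eq:eqEellinfoEell}, with the support property read off from the compactly supported background amplitudes and the denominator controlled from below by \eqref{eq:eta2} of proposition \ref{propal:smallnesscontrolsetphase}. Your explicit expansion of $\Box(e^{iu/\lambda}g)$ and the derivative/power-of-$\lambda$ bookkeeping is just a spelled-out version of the "direct calculation" the paper invokes.
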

\begin{proof}
    Using the Definition \ref{defi:defiEellinfoEell} together with Remark \ref{rem:roleEellinfoEell}, the compact support property follows immediately. The estimates are obtained by combining \eqref{eq:eta2}, which provides uniform control on the smallness of the denominator, and point \ref{item:approxit1Approx} of Proposition \ref{propal:firstapproxApprox}, which provides bounds on the background $\textbf{B}_0'$.
\end{proof} 

\subsection{Coupled error components}
\label{subsection:couplederrorcompo}
\subsubsection{System of evolution}
\label{subsubsection:evosys}
The coupled component $(\boldsymbol{\mathcal{E}}_\lambda^{evo}, \textbf{F}^+_{\lambda\mathscr{A}})$, together with the auxiliary unknown $(\textbf{G}^+_{\lambda\mathscr{A}})$, solves the system 
\begin{equation}
\begin{cases}
\label{eq:schemfullsystemevosys}
&\Box\boldsymbol{\mathcal{E}}_\lambda^{evo}=\boldsymbol{\mathcal{E}}_\lambda^{evo}\partial\boldsymbol{\mathcal{E}}_\lambda^{evo}+\sum_{\mathscr{A}\in\mathbb{A}}e^{i\frac{u_\mathscr{A}}{\lambda}}(i\partial u_\mathscr{A}\Pi_{\kappa,+}(\boldsymbol{\mathcal{E}}_\lambda^{evo}\textbf{F}^+_{\lambda\mathscr{A}})-\lambda^{1}\textbf{G}^+_{\lambda\mathscr{A}})+[\ldots]_{\boldsymbol{\mathcal{E}}_\lambda^{evo}},\\\\
&\mathscr{L}_\mathscr{A} \textbf{F}^+_{\lambda\mathscr{A}} =\lambda^{-1/2}\partial u_\mathscr{A}\boldsymbol{\mathcal{E}}_\lambda^{evo}\textbf{B}_0   +\partial u_\mathscr{A}\Pi_{\kappa,-}(\boldsymbol{\mathcal{E}}_\lambda^{evo}\textbf{F}^+_{\lambda\mathscr{A}})+\partial u_\mathscr{A}\textbf{F}_{\lambda\mathscr{A}}^+\textbf{B}_0,  \\\\
    &\mathscr{L}_\mathscr{A}\textbf{G}^+_{\lambda\mathscr{A}}=[\mathscr{L}_A,\Box]\textbf{F}^+_{\lambda\mathscr{A}}+\lambda^{-1/2}\partial u_\mathscr{A}\Box\boldsymbol{\mathcal{E}}_\lambda^{evo}\textbf{B}_0+\Box(\partial u_\mathscr{A}\Pi_{\kappa,-}(\boldsymbol{\mathcal{E}}_\lambda^{evo}\textbf{F}^+_{\lambda\mathscr{A}})),\\
    &+\partial u_\mathscr{A}\Pi_{\kappa,-}(\boldsymbol{\mathcal{E}}_\lambda^{evo}\textbf{G}^+_{\lambda\mathscr{A}})-\partial u_\mathscr{A}\Pi_{\kappa,-}(\boldsymbol{\mathcal{E}}_\lambda^{evo}\Box\textbf{F}^+_{\lambda\mathscr{A}})+\partial u_\mathscr{A}\textbf{G}^+_{\lambda\mathscr{A}}\textbf{B}_0+[\ldots]_{\textbf{G}^+_{\lambda\mathscr{A}}}.\\
\end{cases}
\end{equation}
All terms appearing in \eqref{eq:schemfullsystemevosys} are detailed in the following Sections. In Section \eqref{subsubsection:WP}, we first establish the local well-posedness of \eqref{eq:schemfullsystemevosys}. Then, in Section \eqref{subsubsection:ass}, we state suitable a priori bounds for the coupled error components. Through Sections \ref{subsubsection:estiFplus}, \ref{subsubsection:estiFplus} and \ref{subsubsection:estiEevo}, we derive energy estimates for $\textbf{F}^+_{\lambda\mathscr{A}}$, $\textbf{G}^+_{\lambda\mathscr{A}}$ and $\boldsymbol{\mathcal{E}}_\lambda^{evo}$, respectively. Finally, in Sections \ref{subsubsection:bootstrap}, we exhibit a certain $0<\lambda_0$ such that the latter energy estimates allow one to improve the a priori bounds for $0<\lambda<\lambda_0$, thereby closing a bootstrap argument.
\begin{nota}
\label{nota:notaepsilonunderass}
     We denote by $\underline{\boldsymbol{\mathcal{E}}^{evo}_\lambda}$ all the components of $\boldsymbol{\mathcal{E}}_\lambda^{evo}$ except $(E_\lambda^{evo})^{0}$ in the parametrices introduced in Remark \ref{rem:nonschemparaerrterm}. The quantity $(E_\lambda^{evo})^{0}$ corresponds to the part of the error term associated with $A^0_\lambda$. Its initial data are obtained by solving the constraint equation \eqref{eq:initconstraintfullInit}. To invert the Laplacian, we resort to weighted Sobolev norms, which implies that $(E_\lambda^{evo})^{0}$ is in $H^{2}_{\delta}$, for some $-3/2<\delta<-1/2$, and therefore need not be in $L^2$.
\end{nota}
\subsubsection{Well-posedness}
\label{subsubsection:WP}
In this Section, we establish the well-posedness of \eqref{eq:schemfullsystemevosys}.
\begin{propal}
\label{propal:wellposfullsystWP}
For a given admissible background initial data set in the sense of Definition \ref{defi:admissiblebginitansatz}, let $\textbf{B}'_0$ be the background given by Proposition \ref{propal:firstapproxApprox}, let $\lambda>0$ and let $\kappa>0$ be the constant appearing in the Definition \ref{defi:projfreq} of the projectors $\Pi_{\kappa,-}$ and $\Pi_{\kappa,+}$. Let $\boldsymbol{\breve{F}}^+_{\mathscr{A}\pm\mathscr{B}}$ be given by Proposition \ref{propal:propertiesfabinfofab} and let $\boldsymbol{\mathcal{E}}_\lambda^{ell}$ be given by Definition \ref{defi:defiEellinfoEell} from initial data $(\textbf{f}^+_{\lambda\mathscr{A}},\textbf{g}^+_{\lambda\mathscr{A}},\breve{\textbf{f}}^+_{\mathscr{A}\pm\mathscr{B}},\textbf{e}^{evo},\dot{\textbf{e}}^{evo},\textbf{e}^{ell},\dot{\textbf{e}}^{ell})$ which are \textbf{admissible for KGML} in the sense of Definition \ref{defi:admissibleKGMLcriteria}. Then, the Cauchy problem associated with \eqref{eq:schemfullsystemevosys} is locally well-posed for the initial data $(\textbf{f}^+_{\lambda\mathscr{A}},\textbf{g}^+_{\lambda\mathscr{A}}\textbf{e}^{evo},\dot{\textbf{e}}^{evo})$. There exists a time $t_\lambda$ such that there exists a solution $(\textbf{F}^+_{\lambda\mathscr{A}},\textbf{G}^+_{\lambda\mathscr{A}},\boldsymbol{\mathcal{E}}_\lambda^{evo})$ with the following regularity
    \begin{align*}    
    &\forall\mathscr{A}\in\mathbb{A},\;\textbf{F}^+_{\lambda\mathscr{A}}\in\bigcap_{j=0}^2 C^{j}([0,t_\lambda],H^{2-j}),\;\forall\mathscr{A}\in\mathbb{A},\;\textbf{G}^+_{\lambda\mathscr{A}}\in\bigcap_{j=0}^1 C^{j}([0,t_\lambda],H^{1-j}),\\
    &\underline{\boldsymbol{\mathcal{E}}^{evo}_\lambda}\in\bigcap_{j=0}^2 C^{j}([0,t_\lambda],H^{2-j}),\;(E^{evo})^{0}\in\bigcap_{j=0}^2 C^{j}([0,t_\lambda],H^{2-j}_{\delta+j}).
    \end{align*}
Moreover, $Supp(\textbf{F}^+_{\lambda\mathscr{A}},\textbf{G}^+_{\lambda\mathscr{A}},\underline{\boldsymbol{\mathcal{E}}^{evo}_\lambda})\subset B_S$.
\end{propal}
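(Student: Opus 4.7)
The plan is to establish well-posedness by a Picard iteration on the natural Banach space
\[
X_{t_\lambda} = \Bigl(\bigcap_{j=0}^{2} C^{j}([0,t_\lambda], H^{2-j})\Bigr) \oplus \Bigl(\bigcap_{j=0}^{2} C^{j}([0,t_\lambda], H^{2-j}_{\delta+j})\Bigr) \oplus \prod_{\mathscr{A}}\Bigl(\bigcap_{j=0}^{2} C^{j}([0,t_\lambda], H^{2-j})\Bigr) \oplus \prod_{\mathscr{A}}\Bigl(\bigcap_{j=0}^{1} C^{j}([0,t_\lambda], H^{1-j})\Bigr),
\]
where the first two factors accommodate $\underline{\boldsymbol{\mathcal{E}}_\lambda^{evo}}$ and the zero component $(E^{evo}_\lambda)^0$ (treated separately because the Maxwell constraint forces a $1/|x|$ long-range behavior that is not $L^2$), and the last two factors handle $\textbf{F}^+_{\lambda\mathscr{A}}$ and the auxiliary $\textbf{G}^+_{\lambda\mathscr{A}}$. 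Given an iterate, I would freeze it in the right-hand sides of \eqref{eq:schemfullsystemevosys} and solve the three decoupled linear problems: the linear wave equation for $\boldsymbol{\mathcal{E}}_\lambda^{evo,(n+1)}$ via the standard and weighted energy estimates recalled in the appendix, and the two linear transport equations along the null characteristics of $u_\mathscr{A}$ for $\textbf{F}^{+,(n+1)}_{\lambda\mathscr{A}}$ and $\textbf{G}^{+,(n+1)}_{\lambda\mathscr{A}}$ via the method of characteristics. The admissibility hypothesis~\ref{defi:admissibleKGMLcriteria} places the initial data in exactly these spaces; in particular $\textbf{g}^+_{\lambda\mathscr{A}}=(\Box\textbf{F}^+_{\lambda\mathscr{A}})|_{t=0}\in H^1$ provides the datum for the $\textbf{G}^+$ transport equation.

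The heart of the argument is the verification that every source term lives in the target regularity. For $\Box\boldsymbol{\mathcal{E}}_\lambda^{evo}$ the source must be in $H^1$, which follows from the algebra property of $H^2(\mathbb{R}^3)$ applied to $\boldsymbol{\mathcal{E}}_\lambda^{evo}\partial\boldsymbol{\mathcal{E}}_\lambda^{evo}$ and from pairing the $\Pi_+$ product and the $\lambda\textbf{G}^+_{\lambda\mathscr{A}}$ term against smooth oscillating background factors. For $\mathscr{L}_\mathscr{A}\textbf{F}^+_{\lambda\mathscr{A}}$ the source must be in $H^2$; the algebra property again suffices since $\boldsymbol{\mathcal{E}}_\lambda^{evo}\in H^2$, $\textbf{F}^+_{\lambda\mathscr{A}}\in H^2$ and all background coefficients are in $H^3$ on a compact set. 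The delicate equation is the transport equation for $\textbf{G}^+_{\lambda\mathscr{A}}$: the commutator $[\mathscr{L}_\mathscr{A},\Box]\textbf{F}^+_{\lambda\mathscr{A}}$ is formally a second-order operator acting on $\textbf{F}^+_{\lambda\mathscr{A}}\in H^2$ and naively lies only in $L^2$. The eikonal equation on $u_\mathscr{A}$, however, kills the principal symbol of the commutator, and the residual first-order contribution can be estimated in $H^1$ by the commutator estimates of section~\ref{subsection:commut} against the $H^2$ norm of $\textbf{F}^+_{\lambda\mathscr{A}}$ and the $H^3$ regularity of the background. The terms $\Box(\partial u_\mathscr{A}\Pi_{-}(\boldsymbol{\mathcal{E}}_\lambda^{evo}\textbf{F}^+_{\lambda\mathscr{A}}))$ and $\partial u_\mathscr{A}\Pi_{-}(\boldsymbol{\mathcal{E}}_\lambda^{evo}\Box\textbf{F}^+_{\lambda\mathscr{A}})$ are controlled using that $\Pi_-$ restricts to frequencies $|\xi|\le\lambda^{-\kappa}$, so commuting $\Box$ across costs only a $\lambda^{-2\kappa}$ factor, harmless at fixed $\lambda$; the second term is then rewritten in terms of $\textbf{G}^+_{\lambda\mathscr{A}}$ up to the error $\Pi_-(\boldsymbol{\mathcal{E}}_\lambda^{evo}(\Box\textbf{F}^+_{\lambda\mathscr{A}}-\textbf{G}^+_{\lambda\mathscr{A}}))$, which is absorbed by the iteration. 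Finally $\Box\boldsymbol{\mathcal{E}}_\lambda^{evo}$ in the $\textbf{G}^+$ equation is replaced using the first equation of \eqref{eq:schemfullsystemevosys}, yielding only algebraic expressions in the coupled parameters.

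The main obstacle will be closing the contraction mapping, because the naive differences of iterates exhibit the same loss-of-derivative pattern. I would handle this by contracting in the weaker norms $H^1\times H^1\times L^2$ for $(\boldsymbol{\mathcal{E}}_\lambda^{evo}, \textbf{F}^+_{\lambda\mathscr{A}}, \textbf{G}^+_{\lambda\mathscr{A}})$ while keeping the higher norms bounded a priori from the previous iterate; this is a standard trick for quasilinear-like wave/transport couplings and is again made possible by the commutator estimates. Finite speed of propagation for both the wave and transport equations preserves the compact support property $\mathrm{Supp}\subset B_S$, and once the contraction yields a fixed point on a small $t_\lambda>0$, uniqueness and the claimed regularity class follow by standard arguments. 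No uniformity in $\lambda$ is asserted here, so all constants (and the time $t_\lambda$) are allowed to degenerate as $\lambda\to 0$ through the $\lambda^{-1/2}$ and $\lambda^{-\kappa}$ factors; the uniform-in-$\lambda$ time of existence is postponed to the bootstrap in proposition~\ref{propal:exasmallKGMmainpropal}.
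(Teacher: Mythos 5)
Your proposal is correct and follows essentially the same route as the paper: the paper's proof is a brief sketch of exactly this argument (a derivative count showing the coupled wave/transport structure closes thanks to the auxiliary $\textbf{G}^+_{\lambda\mathscr{A}}$ and the substitution of $\Box\boldsymbol{\mathcal{E}}_\lambda^{evo}$ by its own equation, the commutator estimates of section \ref{subsection:commut}, the projector $\Pi_-$ to regularize the one deficient nonlinearity, and finite speed of propagation for the support), deferring the detailed term-by-term estimates to sections \ref{subsubsection:estiFplus}--\ref{subsubsection:estiEevo}. Your explicit Picard iteration with contraction in the weaker norms $H^1\times H^1\times L^2$ is simply the standard implementation that the paper leaves implicit, and your remark that all constants may degenerate as $\lambda\to 0$ matches the paper's intent, since uniformity is handled only in the bootstrap.
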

\begin{proof}
    In terms of derivatives, we observe that
\begin{align*} 
&\partial\boldsymbol{\mathcal{E}}_\lambda^{evo}\sim \textbf{G}^+_{\lambda\mathscr{A}},\; \partial\textbf{F}^+_{\lambda\mathscr{A}},\\
&\textbf{F}^+_{\lambda\mathscr{A}}\sim  \boldsymbol{\mathcal{E}}_\lambda^{evo},\\
&\textbf{G}^+_{\lambda\mathscr{A}}\sim\partial\textbf{F}^+_{\lambda\mathscr{A}},\;\partial\boldsymbol{\mathcal{E}}_\lambda^{evo}.
\end{align*}
More precisely, in the equation for $\textbf{G}^+_{\lambda\mathscr{A}}$, we use the evolution equation to replace $\Box \boldsymbol{\mathcal{E}}_\lambda^{evo}$ by its RHS. This produces terms that are at most linear in $\partial\boldsymbol{\mathcal{E}}_\lambda^{evo}$, $\partial\textbf{F}^+_{\lambda\mathscr{A}}$ or $\textbf{G}^+_{\lambda\mathscr{A}}$, and other contributions that belong to $H^2$ and thus to $L^\infty$. We also use the commutator estimates of Section \ref{subsection:commut}. The nonlinearity $\Pi_{\kappa,-}(\partial\textbf{F}^+_{\lambda\mathscr{A}}\partial\boldsymbol{\mathcal{E}}_\lambda^{evo})$ is handled using the projector on low-frequency, which yields arbitrary regularity gains for any $\kappa>0$. All the remaining nonlinearities are compatible with the regularity stated in the Proposition.
In fact, several of these bounds are derived in detail in the bootstrap Sections \ref{subsubsection:estiFplus}, \ref{subsubsection:estiGplus}, and \ref{subsubsection:estiEevo}. Finally, for the support property, admissibility for KGML ensures
\\$Supp(\textbf{f}^+_{\lambda\mathscr{A}},\textbf{g}^+_{\lambda\mathscr{A}},\underline{\textbf{e}}^{evo},\dot{\underline{\textbf{e}}}^{evo})\subset B_{S'}$ so that, by finite speed of propagation, we recover the stated support property.
\end{proof}

\begin{rem}
\label{rem:supportestimWP}
Combining Propositions \ref{propal:propertiesfabinfofab}, \ref{propal:propertiesEellinfoEell} and \ref{propal:wellposfullsystWP} we obtain 
$Supp(\textbf{F}^+_{\lambda\mathscr{A}},\textbf{G}^+_{\lambda\mathscr{A}},\breve{\textbf{F}}^+_{\mathscr{A}\pm\mathscr{B}},\underline{\boldsymbol{\mathcal{E}}_\lambda}^{evo},\boldsymbol{\mathcal{E}}_\lambda^{ell})\subset B_S$. The quantity $(E^{evo}_\lambda)^{0}$ may not be compactly supported but belongs to $H^2_{\delta}$. Nevertheless, every RHS term of the evolution equations \eqref{eq:schemfullsystemevosys} is compactly supported, since $A^0$ and $(E^{evo}_\lambda)^0$ always appear in products with compactly supported terms. Consequently, in the estimates performed in Sections \ref{subsubsection:estiFplus}, \ref{subsubsection:estiGplus} and \ref{subsubsection:estiEevo}, the
$||.||_{L^p}$ norm is effectively taken over $B_S$, thus equivalent to $||.||_{L^p(B_S)}$. 
 Moreover, for a function $f$ in $H^m_{\delta}$, we also have 
$$||f||_{H^m(B_S)}\leq C_S||f||_{H^m_{\delta}}.$$
Hence, we abuse the notation to replace all the weighted norms with classical Sobolev norms in the following estimates.\\
\end{rem}
\begin{rem}
\label{rem:strichsupportpbWP}
    When applying Strichartz estimates of Theorem \ref{unTheorem:StrichartzAx} or the Lemmas \ref{lem:lemma1Ax} and \ref{lem:lemma2Ax} to $(E^{evo}_\lambda)^0$, one requires bounds on $||\partial_t(E^{evo}_\lambda)^0(0)||_{L^2}$ and $||(E^{evo}_\lambda)^0(0)||_{\dot{H}^1}$. Because $(E^{evo}_\lambda)^0|_{t=0}$ (resp. $\partial_t(E^{evo}_\lambda)^0|_{t=0}$) is only controlled in $H^2_\delta$ (resp.  $H^1_{\delta+1}$) for $-3/2<\delta<-1/2$, we have no information on the previous norms. However, as noted in Remark \ref{rem:supportestimWP}, the term $(E^{evo}_\lambda)^0$ always appears in products with compactly supported terms. Therefore, the equations only involve $(E^{evo}_\lambda)^0|_{B_S}$. The latter coincides with $E^{'0}_\lambda$ given by  
    \begin{equation}
        \begin{cases}
            \Box E^{'0}_\lambda=\Box (E^{evo}_\lambda)^0,\\
            E^{'0}_\lambda(t=0)=(E^{evo}_\lambda)^0(t=0)|_{\mathscr{I}(T,B_S)},\\    \partial_tE^{'0}_\lambda(t=0)=(\partial_t(E^{evo}_\lambda)^0)(t=0)|_{\mathscr{I}(T,B_S)},
        \end{cases}
    \end{equation}
    where $\mathscr{I}(T,B_S)$ denotes the propagation of $B_S$ defined in Notation \ref{nota:supportpropaggenenot}. Thus, Strichartz estimates can be applied to $E^{'0}_\lambda$, which satisfies the required norm bounds thanks to its compact support.
\end{rem}
  \subsubsection{Assumptions}
\label{subsubsection:ass}
To extend the time of existence $0<t_\lambda$ obtained in Proposition \ref{propal:wellposfullsystWP} to the time of existence of the background $T$ (for $\lambda$ small enough), we use a bootstrap argument. We therefore assume a priori bounds on the coupled evolution components and later show that these bounds can be improved in the following Sections. 
\begin{ass}
\label{ass:evoass}
  There exist $c_1>0$ and $c_2>0$ such that for all $\tau\in[0,T]$
  \begin{equation}
  \label{eq:assumptions}
\begin{split}
&\max_{\mathscr{A}\in\mathbb{A}}(\sum_{k\leq 1}\lambda^k( ||\textbf{F}_{\lambda\mathscr{A}}^+(\tau)||_{H^{k+1}}+||\partial_t\textbf{F}_{\lambda\mathscr{A}}^+(\tau)||_{H^k})+\lambda||\partial^2_{tt}\textbf{F}_{\lambda\mathscr{A}}^+(\tau)||_{L^2})\leq c_1e^{\tau c_2},\\
 &\sum_{k\leq 1}\lambda^k(||\underline{\boldsymbol{\mathcal{E}}^{evo}_\lambda}(\tau)||_{H^{k+1}}+||\partial_t\underline{\boldsymbol{\mathcal{E}}^{evo}_\lambda}(\tau)||_{H^k})+\lambda||\partial^2_{tt}\underline{\boldsymbol{\mathcal{E}}^{evo}_\lambda}(\tau)||_{L^2}\leq \lambda^{1/2} c_1e^{\tau c_2},\\
 &\sum_{k\leq 1} \lambda^k(||(E_\lambda^{evo})^{0}(\tau)||_{H^{k+1}_{\delta}}+||\partial_t(E_\lambda^{evo})^{0}(\tau)||_{H^k_{\delta+1}})+\lambda||\partial^2_{tt}(E_\lambda^{evo})^{0}(\tau)||_{L^2_{\delta+2}}\leq \lambda^{1/2} c_1e^{\tau c_2},\\
  &\max_{\mathscr{A}\in\mathbb{A}}(\sum_{k\leq 1}( \lambda^k||\textbf{G}^+_{\lambda\mathscr{A}}(\tau)||_{H^{k}})+\lambda||\partial_t\textbf{G}^+_{\lambda\mathscr{A}}(\tau)||_{L^2})\leq c_1e^{\tau c_2}.  
\end{split}
\end{equation}
\end{ass}
\begin{nota}
\label{nota:cst12ass}
    We denote by $C_{0,1,2,\tau}=C_{0,1,2,\tau}(c_0,\frac{1}{\eta_0},N,T,c_1,c_2,\tau)$ any constant which depends polynomially on $c_1e^{\tau c_2}$, $\frac{c_1}{c_2}e^{\tau c_2}$, and the background constants $c_0$, $\frac{1}{\eta_0}$, $N=|\mathbb{A}|$ and $T$, where $c_1$ and $c_2$ are from Assumptions \ref{ass:evoass}. We also recall that $c_0$ and $T$ are provided in Proposition \ref{propal:firstapproxApprox}, $N$ denotes the number of phases introduced in Definition \ref{defi:initialphasesetsetphase}, and $\frac{1}{\eta_0}$ is given by Proposition \ref{propal:smallnesscontrolsetphase} for the compact set $\Omega=B_S$ from Proposition \ref{propal:firstapproxApprox}.\\
\end{nota} 
We recall that $C_0$ designates any constant depending only on the background data, see Notation \ref{nota:cst0Approx}. In general, the constants satisfy the hierarchy:
\begin{align}
\lambda\ll c_0\sim C_0<c_1\ll c_2\ll C_{0,1,2,\tau}.
\end{align}
Only the background constants $c_0$ and $C_0$ are fixed. The bootstrap constants $c_1$ and $c_2$ are free parameters at this stage and will be determined later in Lemma \ref{lem:improvmentbootstrap} in order to improve the bootstrap Assumptions \ref{ass:evoass}. Increasing $c_1$ and $c_2$ alone is not sufficient to control the nonlinearities. Indeed, nonlinear interactions introduce constants of the form $C_{0,1,2,\tau}$, which grow exponentially with respect to $c_2$, in the estimates. In fact, in every occurrence of $C_{0,1,2,\tau}$ within the estimates, an additional small factor $\lambda^\varepsilon$ (for some $\varepsilon>0$) is present. This extra smallness allows the bootstrap bounds to be improved for $\lambda$ sufficiently small. We highlight that this mechanism is compatible with the error term being constructed after a first-order approximation.
Now, we give a useful notation before we sum up the discussion on the role of the constants in the bootstrap.
\begin{nota}
\label{nota:lambdaplus}
    For $c\in\mathbb{R}$, the notation $\lambda^{c+}$ stands for $\lambda^{c+\varepsilon}$ for some $\varepsilon>0$.\\
\end{nota} 
To improve the bounds of Assumptions \ref{ass:evoass}, we rely on one or a combination of the following mechanisms, applied term by term:
\begin{enumerate}[label=\alph*)]
    \item \label{item:item1improveass} The estimates depend in fact only on background norms that remain uniformly bounded on $[0,T]$. ($\leq C_0$)
    \item \label{item:item2improveass} The estimates depend on the bootstrap quantities but with a strictly higher power of $\lambda$, so that, for $\lambda$ small enough, they can be improved. ($\leq \lambda^{0+}C_{0,1,2,\tau}$)
    \item \label{item:item3improveass} The estimates depend only linearly on the bootstrap quantities integrated in time. ($C_0\frac{c_1}{c_2}e^{\tau c_2}$) 
\end{enumerate}   
\subsubsection{Estimates for $\textbf{F}^+_{\lambda\mathscr{A}}$}
\label{subsubsection:estiFplus}
Throughout this Section, we denote with $C_0$ any constant that only depends on the background data and with $c_1$, $c_2$ and $C_{0,1,2,\tau}$ the constants of the bootstrap. See Notations \ref{nota:cst0Approx} and \ref{nota:cst12ass}. We also use the schematic Notation \ref{nota:schemnotaparametrixerrterm}, where background terms are represented by $\textbf{B}_0$ and $\textbf{B}_0'$. The constant $\kappa>0$ designates the constant of the projectors $\Pi_{\kappa,-}$ and $\Pi_{\kappa,+}$ introduced in Definition \ref{defi:projfreq}.\\\\
The transport equation for $(\textbf{F}^+_{\lambda\mathscr{A}})_{\mathscr{A}\in\mathbb{A}}$ reads
\begin{equation}
\label{eq:schemtpFplusestiFplus}
   \forall\mathscr{A}\in\mathbb{A},\;\mathscr{L}_\mathscr{A} \textbf{F}^+_{\lambda\mathscr{A}} =\lambda^{-1/2}\partial u_\mathscr{A}\boldsymbol{\mathcal{E}}_\lambda^{evo}\textbf{B}_0  +\partial u_\mathscr{A}\Pi_{\kappa,-}(\boldsymbol{\mathcal{E}}_\lambda^{evo}\textbf{F}^+_{\lambda\mathscr{A}})+\partial u_\mathscr{A}\textbf{F}_{\lambda\mathscr{A}}^+\textbf{B}_0.  \\ 
\end{equation}
\begin{rem}
\label{rem:nonschemtperrterm}
The error components $(\textbf{F}^+_{\lambda\mathscr{A}})_{\mathscr{A}\in\mathbb{A}}$ denote schematically $(W^{+}_{\lambda\mathscr{A}},\Psi^+_{\lambda\mathscr{A}})_{\mathscr{A}\in\mathbb{A}}$ in the parametrices \eqref{eq:fullparametrixAerrterm}. The transport equation \eqref{eq:schemtpFplusestiFplus} represents schematically for every $\mathscr{A}\in\mathbb{A}$, 
\begin{equation}
\begin{cases}
\label{eq:nonschemtpFpluserrterm}
\mathscr{L}_{\mathscr{A}}W^{+\beta}_{\lambda\mathscr{A}}=i\partial^\beta u_\mathscr{A}\overline{\Psi^+_{\lambda\mathscr{A}}}\Phi_0+\lambda^{-1/2}i\partial^\beta u_\mathscr{A}\overline{\Psi_\mathscr{A}}\mathcal{E}_\lambda^{evo}+i\partial^\beta u_\mathscr{A}\Pi_ {-}(\overline{\Psi^+_{\lambda\mathscr{A}}}\mathcal{E}_\lambda^{evo}),\\
\mathscr{L}_{\mathscr{A}}\Psi^+_{\lambda\mathscr{A}}=-i2A^\alpha_0\partial_\alpha u_\mathscr{A}\Psi^+_{\lambda\mathscr{A}}-\lambda^{-1/2}i2(E^{evo}_\lambda)^{\alpha}\partial_\alpha u_\mathscr{A}\Psi_\mathscr{A}-i2\partial_\alpha u_\mathscr{A}\Pi_{\kappa,-}((E^{evo}_\lambda)^{\alpha}\Psi^+_{\lambda\mathscr{A}}), \\ 
\end{cases}
\end{equation}
where the RHS of the equation for $W^{+\beta}_{\lambda\mathscr{A}}$ is orthogonal to $\partial_{\beta} u_{\mathscr{A}}$.
\end{rem}
\begin{rem}
\label{rem:roleFplusestiFplus}
The components $(\textbf{F}^+_{\lambda\mathscr{A}})_{\mathscr{A}\in\mathbb{A}}$ are introduced specifically to absorb the contribution $\lambda^{-1/2}\partial u_\mathscr{A}\boldsymbol{\mathcal{E}}_\lambda^{evo}\textbf{B}_0$, which corresponds to high-amplitude, high-frequency terms oscillating in characteristic directions and arising from error-background interactions. The transport \eqref{eq:schemtpFplusestiFplus} is \textbf{coupled} with \eqref{eq:schemtpGplusoriginestiGplus} and \eqref{eq:schemwaveEevosestiEevo}.\\
  \end{rem}
We now detail the RHS terms and establish the corresponding estimates. 
  \begin{propal}
  \label{propal:RHSFplusestiFplus}
 The RHS terms
\begin{align}
 \lambda^{-1/2}\partial u_\mathscr{A}\boldsymbol{\mathcal{E}}_\lambda^{evo}\textbf{B}_0 ,\;\;
 \partial u_\mathscr{A}\Pi_{\kappa,-}(\boldsymbol{\mathcal{E}}_\lambda^{evo}\textbf{F}^+_{\lambda\mathscr{A}})\;\text{and }
\partial u_\mathscr{A}\textbf{F}_{\lambda\mathscr{A}}^+\textbf{B}_0 
  \end{align} 
    respectively originate from  $\mathscr{L}_1\left(\textbf{Z}_\lambda,\partial \textbf{B}_0,\textbf{B}'_0,\frac{\mathscr{U}_{\mathbb{A}}}{\lambda}\right)$,  $\textbf{Z}_\lambda\partial\textbf{Z}$ and $\mathscr{L}_2\left(\partial\textbf{Z}_\lambda,\textbf{B}_0,\frac{\mathscr{U}_{\mathbb{A}}}{\lambda}\right)$ in \eqref{eq:schemeqerrorerrterm}.
Under the Assumptions \ref{ass:evoass}, for any $\tau\in[0,T]$ and for any $\kappa>0$, the following estimates hold
\begin{align}
&I:=\sum_{k\leq 1}\int^{\tau}_0\lambda^k||\partial u_\mathscr{A}\textbf{F}_{\lambda\mathscr{A}}^+\textbf{B}_0 (t)||_{H^{k+1}}dt\leq C_0\frac{c_1}{c_2}e^{\tau c_2}\label{eq:estimIestiFplus},\\
&II:=\sum_{k\leq 1}\int^{\tau}_0\lambda^k||\lambda^{-1/2}\partial u_\mathscr{A}\boldsymbol{\mathcal{E}}_\lambda^{evo} (t)||_{H^{k+1}}dt\leq C_0\frac{c_1}{c_2}e^{\tau c_2}\label{eq:estimIIestiFplus},\\
&III:=\sum_{k\leq 1}\int^{\tau}_0\lambda^k||\partial u_\mathscr{A}\Pi_{\kappa,-}(\boldsymbol{\mathcal{E}}_\lambda^{evo}\textbf{F}^+_{\lambda\mathscr{A}}) (t)||_{H^{k+1}}dt\leq C_{0,1,2,\tau}\lambda^{0+}. \label{eq:estimIIIestiFplus}
 \end{align}
\end{propal}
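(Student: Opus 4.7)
The plan is to treat the three terms in succession, using Sobolev product estimates together with the a priori assumptions \ref{ass:evoass} and the background estimates of proposition \ref{propal:firstapproxApprox}; the crucial point will be to turn the projector $\Pi_-$ into a genuine source of positive powers of $\lambda$ for term \emph{III}. Throughout I exploit remark \ref{rem:supportestimWP} to treat the weighted norms as ordinary Sobolev norms on the common support $B_S$, so that Sobolev embedding and the algebra property of $H^s$ for $s>3/2$ are available for $k+1\le 2$. The quantity $\partial u_\mathscr{A}$ is bounded in $C^1(B_S)$ (and in $H^4(B_S)$) by $C_0$ thanks to proposition \ref{propal:firstapproxApprox}, so in every estimate below I absorb the $\partial u_\mathscr{A}$ factor into a constant $C_0$.

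For \emph{I}, I write
\[
\|\partial u_\mathscr{A}\,\textbf{F}^+_{\lambda\mathscr{A}}\,\textbf{B}_0(t)\|_{H^{k+1}}\le C_0\|\textbf{F}^+_{\lambda\mathscr{A}}(t)\|_{H^{k+1}}
\]
by the product law in $H^{k+1}$ with the $H^3$ background (case \ref{itm:item3improveass} of the strategy in section \ref{subsubsection:ass}, as this is only linear in the coupled parameters). Plugging in the first line of assumptions \ref{ass:evoass}, I get $\lambda^k\|\textbf{F}^+_{\lambda\mathscr{A}}(t)\|_{H^{k+1}}\le c_1 e^{tc_2}$, so after integrating from $0$ to $\tau$ the bound $C_0\frac{c_1}{c_2}(e^{\tau c_2}-1)\le C_0\frac{c_1}{c_2}e^{\tau c_2}$ follows. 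For \emph{II} the same argument applies but the factor $\lambda^{-1/2}$ is precisely balanced by the $\lambda^{1/2}$ smallness built into the a priori assumption $\lambda^k\|\boldsymbol{\mathcal{E}}^{evo}_\lambda(t)\|_{H^{k+1}}\le \lambda^{1/2}c_1 e^{tc_2}$ (splitting $\underline{\boldsymbol{\mathcal{E}}^{evo}_\lambda}$ and $(E^{evo}_\lambda)^0$ as in \ref{nota:notaepsilonunderass} and using the support trick of remark \ref{rem:supportestimWP}), which again yields $C_0\frac{c_1}{c_2}e^{\tau c_2}$ after integration.

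The main obstacle is \emph{III}, where the nonlinear term $\boldsymbol{\mathcal{E}}^{evo}_\lambda\textbf{F}^+_{\lambda\mathscr{A}}$ must be controlled in $H^{k+1}$ but we only have quadratic information on each factor at the level of the a priori assumptions. The key is the Bernstein-type inequality
\[
\|\Pi_-(h)\|_{H^{k+1}}\le \lambda^{-(k+1-r)\kappa}\|h\|_{H^{r}}\qquad\text{for any }0\le r\le k+1,
\]
which lets me trade a full derivative for a cheap factor $\lambda^{-\kappa}$ (with $\kappa>0$ as small as I wish). For $k=0$, I apply this with $r=0$ and the product estimate $\|fg\|_{L^2}\lesssim \|f\|_{H^1}\|g\|_{H^1}$ (valid in $\mathbb{R}^3$ via Hölder and Sobolev) to get
\[
\|\partial u_\mathscr{A}\Pi_-(\boldsymbol{\mathcal{E}}^{evo}_\lambda\textbf{F}^+_{\lambda\mathscr{A}})(t)\|_{H^1}\le C_0\lambda^{-\kappa}\|\boldsymbol{\mathcal{E}}^{evo}_\lambda(t)\|_{H^1}\|\textbf{F}^+_{\lambda\mathscr{A}}(t)\|_{H^1}\le C_{0,1,2,\tau}\lambda^{1/2-\kappa},
\]
using the a priori bounds. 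For $k=1$, I apply Bernstein with $r=1$ and then the product estimate in $H^1$ (using that $H^2$ is an algebra on $B_S$), obtaining
\[
\lambda\|\partial u_\mathscr{A}\Pi_-(\boldsymbol{\mathcal{E}}^{evo}_\lambda\textbf{F}^+_{\lambda\mathscr{A}})(t)\|_{H^2}\le C_0\lambda^{1-\kappa}\bigl(\|\boldsymbol{\mathcal{E}}^{evo}_\lambda\|_{H^2}\|\textbf{F}^+_{\lambda\mathscr{A}}\|_{H^2}\bigr)(t)\le C_{0,1,2,\tau}\lambda^{1/2-\kappa}.
\]
Choosing $\kappa<1/2$ once and for all, both contributions are bounded by $C_{0,1,2,\tau}\lambda^{\varepsilon}$ with $\varepsilon:=1/2-\kappa>0$, and integrating in time over $[0,\tau]$ preserves this smallness, which gives the claim \eqref{eq:estimIIIestiFplus}. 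This is precisely case \ref{itm:item2improveass} of the strategy announced in section \ref{subsubsection:ass}: the nonlinearity has the right structure to be improvable by the smallness of $\lambda$ at the cost of a derivative loss absorbed by $\Pi_-$.
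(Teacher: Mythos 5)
Your treatment of $I$ and $II$ is essentially the paper's: both terms are linear in the coupled error parameters, so the assumptions \ref{ass:evoass} together with the background regularity give the $C_0\frac{c_1}{c_2}e^{\tau c_2}$ bound after integrating $e^{tc_2}$ in time. For $III$ your route is genuinely different: the paper does \emph{not} use the projector at this stage, but instead splits the product into pieces such as $\nabla\boldsymbol{\mathcal{E}}_\lambda^{evo}\textbf{F}^+_{\lambda\mathscr{A}}$ and $\boldsymbol{\mathcal{E}}_\lambda^{evo}\nabla\textbf{F}^+_{\lambda\mathscr{A}}$ and estimates them in $L^1_tL^2_x$ through the Strichartz-based lemmas \ref{lem:lemma1Ax} and \ref{lem:lemma2Ax}, fed by the naive bounds on $\|\Box\boldsymbol{\mathcal{E}}_\lambda^{evo}\|_{L^1([0,\tau],H^k)}$ of proposition \ref{propal:naiveestiestiEevo}. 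Your idea of trading the extra derivative required by the $H^{k+1}$ norm for a harmless $\lambda^{-\kappa}$ loss via the sharp cutoff $\Pi_-$ is legitimate, and for $k=0$ your chain $\|\Pi_-(\boldsymbol{\mathcal{E}}_\lambda^{evo}\textbf{F}^+_{\lambda\mathscr{A}})\|_{H^1}\lesssim\lambda^{-\kappa}\|\boldsymbol{\mathcal{E}}_\lambda^{evo}\textbf{F}^+_{\lambda\mathscr{A}}\|_{L^2}\lesssim\lambda^{-\kappa}\|\boldsymbol{\mathcal{E}}_\lambda^{evo}\|_{H^1}\|\textbf{F}^+_{\lambda\mathscr{A}}\|_{H^1}\le C_{1,2,\tau}\lambda^{1/2-\kappa}$ is correct and simpler than the paper's argument, since it needs no Strichartz input and no control of $\Box\boldsymbol{\mathcal{E}}_\lambda^{evo}$.

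However, your $k=1$ step fails as written. Under \ref{ass:evoass} you only have $\|\boldsymbol{\mathcal{E}}_\lambda^{evo}\|_{H^2}\le\lambda^{-1/2}c_1e^{\tau c_2}$ and $\|\textbf{F}^+_{\lambda\mathscr{A}}\|_{H^2}\le\lambda^{-1}c_1e^{\tau c_2}$, so your displayed bound gives $\lambda^{1-\kappa}\|\boldsymbol{\mathcal{E}}_\lambda^{evo}\|_{H^2}\|\textbf{F}^+_{\lambda\mathscr{A}}\|_{H^2}\le C_{1,2,\tau}\lambda^{-1/2-\kappa}$, a full power of $\lambda$ short of the claimed $\lambda^{1/2-\kappa}$; since the constants of notation \ref{nota:cst12ass} are $\lambda$-independent, the inequality as stated is false and does not yield \eqref{eq:estimIIIestiFplus}. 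The repair is to keep only one factor at the $H^2$ level: on the compact support one has the asymmetric product law $\|fg\|_{H^1}\lesssim\|f\|_{H^2}\|g\|_{H^1}$ (via $H^2\hookrightarrow L^\infty$, $H^1\hookrightarrow L^6$, $H^{1/2}\hookrightarrow L^3$), so that $\|\boldsymbol{\mathcal{E}}_\lambda^{evo}\textbf{F}^+_{\lambda\mathscr{A}}\|_{H^1}\lesssim\|\boldsymbol{\mathcal{E}}_\lambda^{evo}\|_{H^2}\|\textbf{F}^+_{\lambda\mathscr{A}}\|_{H^1}\le\lambda^{-1/2}c_1^2e^{2\tau c_2}$ (or symmetrically with the roles exchanged), and then $\lambda^{1-\kappa}\cdot\lambda^{-1/2}=\lambda^{1/2-\kappa}$ as desired, with $\kappa<1/2$ fixed once and for all. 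With that one correction your projector argument closes and gives the proposition by a shorter route than the paper's.
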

\begin{proof} 
Estimates for I and II follow from direct applications of Sobolev inequalities of Section \ref{subsection:soboineq}, the product estimate \eqref{eq:holdsobAx} of Section \ref{subsection:prodesti} and the bounds of the Assumptions \ref{ass:evoass}. These terms fall into the case \ref{item:item3improveass}, since they are linear in the coupled error components. To prove estimate III, we emphasize that the projector $\Pi_{\kappa,-}$ is not required to produce additional smallness (see case \ref{item:item2improveass}). We begin with the $H^1$ estimate 
\begin{align*}
&\int^{\tau}_0||\partial u_\mathscr{A}\Pi_{\kappa,-}(\boldsymbol{\mathcal{E}}_\lambda^{evo}\textbf{F}^+_{\lambda\mathscr{A}}) (t)||_{H^{1}}dt\\
&\leq ||\partial u_\mathscr{A}||_{L^\infty( [0,\tau],W^{1,\infty})}\int^{\tau}_0||\boldsymbol{\mathcal{E}}_\lambda^{evo}\textbf{F}^+_{\lambda\mathscr{A}} (t)||_{L^2}+||\nabla\boldsymbol{\mathcal{E}}_\lambda^{evo}\textbf{F}^+_{\lambda\mathscr{A}} (t)||_{L^2}+||\boldsymbol{\mathcal{E}}_\lambda^{evo}\nabla\textbf{F}^+_{\lambda\mathscr{A}} (t)||_{L^2}dt\\
&\leq C_0(\underbrace{||\boldsymbol{\mathcal{E}}_\lambda^{evo}\textbf{F}^+_{\lambda\mathscr{A}} ||_{L^1([0,\tau],L^2)}}_\text{III.1}+\underbrace{||\nabla\boldsymbol{\mathcal{E}}_\lambda^{evo}\textbf{F}^+_{\lambda\mathscr{A}} ||_{L^1([0,\tau],L^2)}}_\text{III.2}+\underbrace{||\boldsymbol{\mathcal{E}}_\lambda^{evo}\nabla\textbf{F}^+_{\lambda\mathscr{A}} ||_{L^1([0,\tau],L^2)}}_\text{III.3}).
\end{align*}
For III.1, we use the product inequality \eqref{eq:holdsobAx} to obtain 
\begin{align*}
    III.1\leq C_{0,1,2,\tau}\lambda^{1/2}.
\end{align*}
Applying Lemma \ref{lem:lemma2Ax}, we obtain that, for $\theta\in(0,1/2)$ and for $p=2(1-\theta)$, $p'=\frac{2(1-\theta)}{1-2\theta}$, 
\begin{align*}
    III.2&\leq  C_0||\nabla\boldsymbol{\mathcal{E}}_\lambda^{evo}||_{L^p([0,\tau],L^2)}(||\textbf{F}^+_{\lambda\mathscr{A}}||_{L^{p'}([0,\tau],H^2)})^\theta(||\Box \textbf{F}^+_{\lambda\mathscr{A}}||_{L^1([0,\tau],L^2)}+||\textbf{F}^+_{\lambda\mathscr{A}}(0)||_{\dot{H}^1}+||\partial_t\textbf{F}^+_{\lambda\mathscr{A}}(0)||_{L^2})^{1-\theta}\\\\
    &\leq C_{0,1,2,\tau}\lambda^{1/2}\lambda^{-\theta}(|| \textbf{G}^+_{\lambda\mathscr{A}} ||_{L^1([0,\tau],L^2)}+C_0)^{1-\theta}\leq C_{0,1,2,\tau}\lambda^{1/2-\theta}\leq C_{0,1,2,\tau}\lambda^{0+},\\\\
III.3&\leq  C_0||\nabla\textbf{F}^+_{\lambda\mathscr{A}}||_{L^p([0,\tau],L^2)}(||\boldsymbol{\mathcal{E}}_\lambda^{evo}||_{L^{p'}([0,\tau],H^2)})^\theta(||\Box \boldsymbol{\mathcal{E}}_\lambda^{evo}||_{L^1([0,\tau],L^2)}+||\boldsymbol{\mathcal{E}}_\lambda^{evo}(0)||_{\dot{H}^1}+||\partial_t\boldsymbol{\mathcal{E}}_\lambda^{evo}(0)||_{L^2})^{1-\theta}\\\\
    &\leq C_{0,1,2,\tau}\lambda^{-\theta/2}(|| \Box \boldsymbol{\mathcal{E}}_\lambda^{evo}||_{L^1([0,\tau],L^2)}+C_0\lambda^{1/2})^{1-\theta}\leq C_{0,1,2,\tau}\lambda^{1/2-\theta}\leq C_{0,1,2,\tau}\lambda^{0+}.
\end{align*}
The bound $||\Box \boldsymbol{\mathcal{E}}_\lambda^{evo} ||_{L^1([0,\tau],L^2)}\leq C_{0,1,2,\tau}\lambda^{1/2}$ follows from the naive estimates of Proposition \ref{propal:naiveestiestiEevo}. Secondly, we treat the $H^2$ estimate
\begin{align*}
&\lambda\int^{\tau}_0||\partial u_\mathscr{A}\Pi_{\kappa,-}(\boldsymbol{\mathcal{E}}_\lambda^{evo}\textbf{F}^+_{\lambda\mathscr{A}}) (t)||_{H^{2}}dt\leq \lambda||\partial u_\mathscr{A}||_{L^\infty( [0,\tau],L^{\infty})}\int^{\tau}_0||\nabla\nabla(\boldsymbol{\mathcal{E}}_\lambda^{evo}\textbf{F}^+_{\lambda\mathscr{A}} )(t)||_{L^2}dt\\
&+\underbrace{\lambda\int^{\tau}_0||\partial u_\mathscr{A}(t)||_{W^{1,\infty}}||\nabla(\boldsymbol{\mathcal{E}}_\lambda^{evo}\textbf{F}^+_{\lambda\mathscr{A}} )(t)||_{L^2}+||\nabla\nabla\partial u_\mathscr{A}(t)||_{L^2}||\boldsymbol{\mathcal{E}}_\lambda^{evo}(t)||_{L^\infty}||\textbf{F}^+_{\lambda\mathscr{A}}(t)||_{L^\infty}+||\partial u_\mathscr{A}\boldsymbol{\mathcal{E}}_\lambda^{evo}\textbf{F}^+_{\lambda\mathscr{A}}(t)||_{H^1}dt}_\text{III.4}\\
&\leq III.4+C_0\underbrace{\lambda||\nabla\boldsymbol{\mathcal{E}}_\lambda^{evo}\nabla\textbf{F}^+_{\lambda\mathscr{A}} ||_{L^1([0,\tau],L^2)}}_\text{III.5}+C_0\underbrace{\lambda||\nabla\nabla\boldsymbol{\mathcal{E}}_\lambda^{evo}\textbf{F}^+_{\lambda\mathscr{A}} ||_{L^1([0,\tau],L^2)}}_\text{III.6}+C_0\underbrace{\lambda||\boldsymbol{\mathcal{E}}_\lambda^{evo}\nabla\nabla\textbf{F}^+_{\lambda\mathscr{A}}||_{L^1([0,\tau],L^2)}}_\text{III.7}.
\end{align*}
For III.4, we combine the previous calculations, the Sobolev embeddings of Proposition \ref{propal:embedsoboAx} and the product estimate of Proposition \ref{propal:holdsobAx} to 
get, for a small $0<\varepsilon<1/2$,
\begin{align*}
    III.4\leq C_{0,1,2,\tau}\lambda^{1/2-\varepsilon}.
\end{align*}
For III.6 and III.7, we apply Lemma \ref{lem:lemma2Ax} as for III.2 and III.3, losing one $\lambda$ in the operation. To estimate III.5, we apply Lemma \ref{lem:lemma1Ax} with $1/2>\nu>\varepsilon'$ and $p=\frac{1}{1-\nu}$, where $\varepsilon'$ is defined in the naive estimates of Proposition \ref{propal:naiveestiestiEevo}, to obtain
\begin{align*}
    III.5&\leq  \lambda C_0||\nabla \textbf{F}^+_{\lambda\mathscr{A}} ||_{L^p([0,\tau],H^{1/2-\nu})}(||\Box \nabla\boldsymbol{\mathcal{E}}_\lambda^{evo} ||_{L^1([0,\tau],L^2)}+||\nabla\boldsymbol{\mathcal{E}}_\lambda^{evo} (0)||_{\dot{H}^1}+||\partial_t\nabla\boldsymbol{\mathcal{E}}_\lambda^{evo}(0)||_{L^2})\\
    &\leq C_{0,1,2,\tau}\lambda^{1/2+\nu}(|| \Box\nabla \boldsymbol{\mathcal{E}}_\lambda^{evo} ||_{L^1([0,\tau],L^2)}+\lambda^{-1/2}C_0)\\
    &\leq C_{0,1,2,\tau}\lambda^{0+}.
\end{align*}
Collecting all contributions, we conclude
\begin{align*}
 III\leq C_0(III.1+III.2+III.3+III.4+III.5+III.6+III.7)\leq C_{0,1,2,\tau}\lambda^{0+}.
\end{align*}
\end{proof}
\subsubsection{Estimates for $\textbf{G}^+_{\lambda\mathscr{A}}$}
\label{subsubsection:estiGplus}
Throughout this Section, we denote with $C_0$ any constant that only depends on the background data and with $c_1$, $c_2$ and $C_{0,1,2,\tau}$ the constants of the bootstrap. See Notations \ref{nota:cst0Approx} and \ref{nota:cst12ass}. We also use the schematic Notation \ref{nota:schemnotaparametrixerrterm}, where background terms are represented by $\textbf{B}_0$ and $\textbf{B}_0'$. The constant $\kappa>0$ designates the constant of the projectors $\Pi_{\kappa,-}$ and $\Pi_{\kappa,+}$ introduced in Definition \ref{defi:projfreq}.\\\\
The transport equation for $\textbf{G}^+_{\lambda\mathscr{A}}$ reads 
\begin{align}
\label{eq:schemtpGplusestiGplus}
    &\mathscr{L}_\mathscr{A}\textbf{G}^+_{\lambda\mathscr{A}}=[(2\partial^\alpha u_\mathscr{A}\partial_\alpha+\Box u_\mathscr{A}),\Box]\textbf{F}^+_{\lambda\mathscr{A}}+\lambda^{-1/2}\partial u_\mathscr{A}\Box\boldsymbol{\mathcal{E}}_\lambda^{evo}\textbf{B}_0+\Box(\partial u_\mathscr{A}\Pi_{\kappa,-}(\boldsymbol{\mathcal{E}}_\lambda^{evo}\textbf{F}^+_{\lambda\mathscr{A}}))\\
    &+\partial u_\mathscr{A}\Pi_{\kappa,-}(\boldsymbol{\mathcal{E}}_\lambda^{evo}\textbf{G}^+_{\lambda\mathscr{A}})-\partial u_\mathscr{A}\Pi_{\kappa,-}(\boldsymbol{\mathcal{E}}_\lambda^{evo}\Box\textbf{F}^+_{\lambda\mathscr{A}})+\partial u_\mathscr{A}\textbf{G}^+_{\lambda\mathscr{A}}\textbf{B}_0+[\ldots]_{\textbf{G}^+_{\lambda\mathscr{A}}}.\nonumber
\end{align}

\begin{rem}
\label{rem:roleGplusestiGplus}
     The auxiliary component $\textbf{G}^+_{\lambda\mathscr{A}}$ is introduced to control $\Box\textbf{F}^+_{\lambda\mathscr{A}}$ and compensate for an apparent loss of derivatives as discussed in the introduction about the system \eqref{eq:schemsys1Idea}. The transport \eqref{eq:schemtpGplusestiGplus} is obtained by commuting the transport equation for $\textbf{F}^+_{\lambda\mathscr{A}}$ with the d'Alembertian operator, that is,
\begin{align}
\label{eq:schemtpGplusoriginestiGplus}
    &\mathscr{L}_\mathscr{A}\textbf{G}^+_{\lambda\mathscr{A}}=[(2\partial^\alpha u_\mathscr{A}\partial_\alpha+\Box u_\mathscr{A}),\Box]\textbf{F}^+_{\lambda\mathscr{A}}+\Box(\mathscr{L}_\mathscr{A}\textbf{F}_\mathscr{A}),
\end{align}
where every occurrence of $\Box \textbf{F}_{\lambda\mathscr{A}}^+$ is replaced by $\textbf{G}^+_{\lambda\mathscr{A}}$. The quantities $\Box \textbf{F}_{\lambda\mathscr{A}}^+$ and $\textbf{G}^+_{\lambda\mathscr{A}}$ share identical initial data, see Definition \ref{defi:admissibleKGMLcriteria}, and obey the same transport equation driven by the regular vector field $\textbf{d}u^\#_{\mathscr{A}}$. Consequently, they are equal. The equation \eqref{eq:schemtpGplusestiGplus} is coupled with \eqref{eq:schemtpFplusestiFplus} and \eqref{eq:schemwaveEevosestiEevo}. More explicitly, 
\begin{align}
 &[\ldots]_{\textbf{G}^+_{\lambda\mathscr{A}}}=\Box(\partial u_\mathscr{A}\textbf{F}_{\lambda\mathscr{A}}^+\textbf{B}_0 )- \partial u_\mathscr{A}\Box\textbf{F}_{\lambda\mathscr{A}}^+\textbf{B}_0+\Box(\lambda^{-1/2}\partial u_\mathscr{A}\boldsymbol{\mathcal{E}}_\lambda^{evo}\textbf{B}_0)-\lambda^{-1/2}\partial u_\mathscr{A}\Box\boldsymbol{\mathcal{E}}_\lambda^{evo}\textbf{B}_0.
  \end{align}
\end{rem}
We analyse the RHS terms and estimate them in the next Proposition.
\begin{propal}
\label{propal:RHSGplusestiGplus}
    Under the Assumptions \ref{ass:evoass}, there exists $\kappa>0$ sufficiently small such that for any $\tau\in[0,T]$, the terms $[\ldots]_{\textbf{G}^+_{\lambda\mathscr{A}}}$ satisfy
\begin{align}
&IV:=\sum_{k\leq 1}\int^{\tau}_0\lambda^k||[\ldots]_{\textbf{G}^+_{\lambda\mathscr{A}}}(t)||_{H^{k}}dt\leq C_0\frac{c_1}{c_2}e^{\tau c_2}\label{eq:estimIVestiGplus},
 \end{align}
the commutator satisfy
    \begin{align}
        &V:=\sum_{k\leq 1}\int^{\tau}_0\lambda^k||[(2\partial^\alpha u_\mathscr{A}\partial_\alpha+\Box u_\mathscr{A}),\Box]\textbf{F}_\mathscr{A}(t)||_{H^{k}}dt\leq C_0\frac{c_1}{c_2}e^{\tau c_2}+C_{0,1,2,\tau}\lambda^{0+}\label{eq:estimVestiGplus},
    \end{align}
    the remaining terms excluding the projector part satisfy
    \begin{align}
        &VI:=\sum_{k\leq 1}\int^{\tau}_0\lambda^k||\partial u_\mathscr{A}\textbf{G}^+_{\lambda\mathscr{A}}\textbf{B}_0(t)||_{H^{k}}dt\leq C_0\frac{c_1}{c_2}e^{\tau c_2}\label{eq:estimVIestiGplus},\\
        &VII:=\sum_{k\leq 1}\int^{\tau}_0\lambda^k||\lambda^{-1/2}\partial u_\mathscr{A}\Box\boldsymbol{\mathcal{E}}_\lambda^{evo}\textbf{B}_0(t)||_{H^{k}}dt\leq C_0\frac{c_1}{c_2}e^{\tau c_2}+C_0+C_{0,1,2,\tau}\lambda^{0+}\label{eq:estimVIIestiGplus},
    \end{align}
   and, finally, the terms involving the projector on low frequencies satisfy
    \begin{align}
        &VIII:=\sum_{k\leq 1}\int^{\tau}_0\lambda^k||\partial u_\mathscr{A}\Pi_{\kappa,-}(\boldsymbol{\mathcal{E}}_\lambda^{evo}\textbf{G}^+_{\lambda\mathscr{A}})(t)||_{H^{k}}dt\leq C_{0,1,2,\tau}\lambda^{0+}\label{eq:estimVIIIestiGplus},\\
          &IX:=\sum_{k\leq 1}\int^{\tau}_0\lambda^k||\Box(\partial u_\mathscr{A}\Pi_{\kappa,-}(\boldsymbol{\mathcal{E}}_\lambda^{evo}\textbf{F}^+_{\lambda\mathscr{A}}))(t)-\partial u_\mathscr{A}\Pi_{\kappa,-}(\boldsymbol{\mathcal{E}}_\lambda^{evo}\Box\textbf{F}^+_{\lambda\mathscr{A}})(t)||_{H^{k}}dt\leq C_{0,1,2,\tau}\lambda^{0+}.\label{eq:estimIXestiGplus}
    \end{align}
\end{propal}
\begin{proof}
The explicit expression of $[\ldots]_{\textbf{G}^+_{\lambda\mathscr{A}}}$ follows directly from Remark \ref{rem:roleGplusestiGplus}.
The estimate for IV is obtained using the bootstrap bounds from Assumption \ref{ass:evoass} and the higher regularity of the background. The corresponding terms are linear in $\boldsymbol{\mathcal{E}}_\lambda^{evo}$ and $\textbf{F}_{\lambda\mathscr{A}}^+$, involve at most one derivative, and therefore fall into case \ref{item:item3improveass}.\\
For V, using the commutator estimate \eqref{eq:esticommutcommutax}, we obtain 
\begin{align*}
     &\sum_{k\leq 1}\int^{\tau}_0\lambda^k ||[(2\partial^\alpha u\partial_\alpha+\Box u),\Box]\textbf{F}_\mathscr{A}(t)||_{H^k}dt\\
     &\leq C_0\sum_{k\leq 1}\int^{\tau}_0\lambda^k (||\textbf{G}^+_{\lambda\mathscr{A}}(t)||_{H^k}+||\partial (\partial^\alpha u_\mathscr{A}\partial_\alpha\textbf{F}_{\lambda\mathscr{A}}^+)(t)||_{H^k}+|| \textbf{F}_{\lambda\mathscr{A}}^+(t)||_{H^{k+1}})dt.
\end{align*}
The bounds on $\mathscr{L}_\mathscr{A}\partial\textbf{F}_{\lambda\mathscr{A}}^+$ provided by Proposition \ref{propal:RHSFplusestiFplus}, together with the Assumptions \ref{ass:evoass}, yield the desired inequality.\\
The estimate for VI follows directly from the Assumptions \ref{ass:evoass} and again corresponds to case \ref{item:item3improveass}. The term VII is controlled using the refined bound on $||\Box\boldsymbol{\mathcal{E}}_\lambda^{evo}||_{L^1([0,\tau],H^{k})}$ obtained in Section \ref{subsubsection:estiEevo}. For both VI and VII, the regularity of the background is exploited. In particular, the fact that one derivative of $\textbf{B}'_0$ is in $L^\infty$.\\  
We now estimate the low-frequency projector contributions VIII and IX. For $\kappa>0$ sufficiently small, the projector, together with the product inequality \eqref{eq:holdsobAx}, provides extra smallness (case \ref{item:item2improveass}).
  We begin with VIII:
\begin{align*}
VIII&\leq\sum_{k\leq 1}\lambda^k||\partial u_\mathscr{A}||_{L^\infty([0,\tau],W^{1,\infty})}||\Pi_{\kappa,-}(\textbf{G}^+_{\lambda\mathscr{A}}\boldsymbol{\mathcal{E}}_\lambda^{evo})||_{L^1([0,\tau],H^k)}\\&\leq C_0\sum_{k\leq 1}\lambda^k\lambda^{-\kappa/2}||\textbf{G}^+_{\lambda\mathscr{A}}\boldsymbol{\mathcal{E}}_\lambda^{evo}||_{L^1([0,\tau],H^{k-1/2})}\\
    &\leq C_0\sum_{k\leq 1}\lambda^{k-\kappa/2}||\boldsymbol{\mathcal{E}}_\lambda^{evo}||_{L^\infty([0,\tau],H^1)}||\textbf{G}^+_{\lambda\mathscr{A}}||_{L^1([0,\tau],H^k)}\\
    &\leq C_{0,1,2,\tau}\lambda^{1/2-\kappa/2}\leq C_{0,1,2,\tau}\lambda^{0+},
\end{align*}
 where we use the Assumptions \ref{ass:evoass} and the product inequality from Proposition \ref{propal:holdsobAx} for both $k=0$ and $k=1$.
Then, for IX, we first develop the term as follows:
 \begin{align*}
        IX&\leq\underbrace{\sum_{k\leq 1}\lambda^k||\Box\partial u_\mathscr{A}\Pi_{\kappa,-}(\boldsymbol{\mathcal{E}}_\lambda^{evo}\textbf{F}^+_{\lambda\mathscr{A}})||_{L^1([0,\tau],H^k)}}_\text{IX.1}+\underbrace{\sum_{k\leq 1}\lambda^k||\partial\partial u_\mathscr{A}\partial\Pi_{\kappa,-}(\boldsymbol{\mathcal{E}}_\lambda^{evo}\textbf{F}^+_{\lambda\mathscr{A}})||_{L^1([0,\tau],H^k)}}_\text{IX.2}\\
        &+\underbrace{\sum_{k\leq 1}\lambda^k||\partial u_\mathscr{A}\Pi_{\kappa,-}(\partial\boldsymbol{\mathcal{E}}_\lambda^{evo}\partial\textbf{F}^+_{\lambda\mathscr{A}})||_{L^1([0,\tau],H^k)}}_\text{IX.3}+\underbrace{\sum_{k\leq 1}\lambda^k||\partial u_\mathscr{A}\Pi_{\kappa,-}(\Box\boldsymbol{\mathcal{E}}_\lambda^{evo}\textbf{F}^+_{\lambda\mathscr{A}})||_{L^1([0,\tau],H^k)}}_\text{IX.4}.
    \end{align*}
        For IX.1 and IX.2 using the regularity of $u_\mathscr{A}$ and the smoothing effect of the projector, we obtain, for an arbitrarily small $\varepsilon>0$,
   \begin{align*}
        IX.1&\leq \sum_{k\leq 1}\lambda^k||\Box\partial u_\mathscr{A}||_{L^\infty([0,\tau],H^1)}||\Pi_{\kappa,-}(\boldsymbol{\mathcal{E}}_\lambda^{evo}\textbf{F}^+_{\lambda\mathscr{A}})||_{L^1([0,\tau],H^{k+1/2+\varepsilon})}\\
        &\leq C_0\sum_{k\leq 1}\lambda^{k-\kappa-\varepsilon\kappa}||\boldsymbol{\mathcal{E}}_\lambda^{evo}\textbf{F}^+_{\lambda\mathscr{A}}||_{L^1([0,\tau],H^{k-1/2})}\\
        &\leq C_0\sum_{k\leq 1}\lambda^{k-\kappa-\varepsilon\kappa}||\boldsymbol{\mathcal{E}}_\lambda^{evo}||_{L^2([0,\tau],H^{1})}||\textbf{F}^+_{\lambda\mathscr{A}}||_{L^2([0,\tau],H^{k})}\\
        &\leq C_{0,1,2,\tau}\lambda^{1/2-\kappa-\varepsilon\kappa},
    \end{align*}
    \begin{align*}
IX.2&\leq \sum_{k\leq 1}\lambda^k||\partial\partial u_\mathscr{A}||_{L^\infty([0,\tau],H^1)}||\Pi_{\kappa,-}(\partial(\boldsymbol{\mathcal{E}}_\lambda^{evo}\textbf{F}^+_{\lambda\mathscr{A}}))||_{L^1([0,\tau],H^{k+1/2+\varepsilon})}\\
&\leq C_0\sum_{k\leq 1}\lambda^{k-\kappa-\varepsilon\kappa}||\partial(\boldsymbol{\mathcal{E}}_\lambda^{evo}\textbf{F}^+_{\lambda\mathscr{A}})||_{L^1([0,\tau],H^{k-1/2})}\\
&\leq C_0\sum_{k\leq 1}\lambda^{k-\kappa-\varepsilon\kappa}(||\partial\boldsymbol{\mathcal{E}}_\lambda^{evo}||_{L^\infty([0,\tau],H^{k})}||\textbf{F}^+_{\lambda\mathscr{A}}||_{L^1([0,\tau],H^{1})}+||\partial\textbf{F}^+_{\lambda\mathscr{A}}||_{L^\infty([0,\tau],H^{k})}||\boldsymbol{\mathcal{E}}_\lambda^{evo}||_{L^1([0,\tau],H^{1})})\\
&\leq C_{0,1,2,\tau}\lambda^{1/2-\kappa-\varepsilon\kappa}.
    \end{align*}
For IX.3 and IX.4, using the smoothing effect of the projector and the naive estimates \ref{propal:naiveestiestiEevo} for $||\Box\boldsymbol{\mathcal{E}}_\lambda^{evo}||_{L^1([0,\tau],H^k)}$ with $k=0,1$, we obtain 
\begin{align*}
        IX.3&\leq\sum_{k\leq 1}\lambda^k||\partial u_\mathscr{A}(t)||_{L^\infty([0,\tau],W^{1,\infty})}||\Pi_{\kappa,-}(\partial\boldsymbol{\mathcal{E}}_\lambda^{evo}\partial\textbf{F}^+_{\lambda\mathscr{A}})||_{L^1([0,\tau],H^k)}\\
        &\leq C_0\sum_{k\leq 1}\lambda^{k-\kappa}||\partial\boldsymbol{\mathcal{E}}_\lambda^{evo}\partial\textbf{F}^+_{\lambda\mathscr{A}}||_{L^1([0,\tau],H^{k-1})}\\
        &\leq  C_0\sum_{k\leq 1}\lambda^{k-\kappa}||\partial\boldsymbol{\mathcal{E}}_\lambda^{evo}||_{L^2([0,\tau],H^{k})}||\partial\textbf{F}^+_{\lambda\mathscr{A}}(t)||_{L^2([0,\tau],H^{1/2})}\\
    &\leq C_0\lambda^{1/2-\kappa},
    \end{align*}
\begin{align*}
IX.4&\leq\sum_{k\leq 1}\lambda^k||\partial u_\mathscr{A}||_{L^\infty([0,\tau],W^{1,\infty})}||\Pi_{\kappa,-}(\Box\boldsymbol{\mathcal{E}}_\lambda^{evo}\textbf{F}^+_{\lambda\mathscr{A}})||_{L^1([0,\tau],H^k)}\\
&\leq C_0\sum_{k\leq 1}\lambda^k\lambda^{-\kappa/2}||\Box\boldsymbol{\mathcal{E}}_\lambda^{evo}\textbf{F}^+_{\lambda\mathscr{A}}||_{L^1([0,\tau],H^{k-1/2})}\\
    &\leq C_0\sum_{k\leq 1}\lambda^k\lambda^{-\kappa/2}||\textbf{F}^+_{\lambda\mathscr{A}}||_{L^\infty([0,\tau],H^1)}||\Box\boldsymbol{\mathcal{E}}_\lambda^{evo}||_{L^1([0,\tau],H^k)}\\
    &\leq C_{0,1,2,\tau}\lambda^{1/2-\kappa/2},
    \end{align*}
which is the desired result for $\kappa<1$. Combining all contributions yields
\begin{align*}
    IX\leq IX.1+IX.2+IX.3+IX.4\leq C_{0,1,2,\tau}\lambda^{1/2-\kappa-\varepsilon}\leq C_{0,1,2,\tau}\lambda^{0+},
\end{align*}
which ends the proof.
\end{proof}
\begin{rem}
\label{rem:projectornullformestiGplus}
    Among the terms appearing in the analysis, the only one that genuinely requires the low-frequency projector is the null form $\partial\boldsymbol{\mathcal{E}}_\lambda^{evo}\partial\textbf{F}^+_{\lambda\mathscr{A}}$.
\end{rem}

\subsubsection{Estimates for $\boldsymbol{\mathcal{E}}_\lambda^{evo}$}
\label{subsubsection:estiEevo}
Throughout this Section, we denote with $C_0$ any constant that only depends on the background data and with $c_1$, $c_2$ and $C_{0,1,2,\tau}$ the constants of the bootstrap. See Notations \ref{nota:cst0Approx} and \ref{nota:cst12ass}. We also use the schematic Notation \ref{nota:schemnotaparametrixerrterm}, where background terms are represented by $\textbf{B}_0$ and $\textbf{B}_0'$. The constant $\kappa>0$ designates the constant of the projectors $\Pi_{\kappa,-}$ and $\Pi_{\kappa,+}$ introduced in Definition \ref{defi:projfreq}.\\\\
The wave equation satisfied by $\boldsymbol{\mathcal{E}}_\lambda^{evo}$ is given by
\begin{equation}    
\label{eq:schemwaveEevosestiEevo}
\Box\boldsymbol{\mathcal{E}}_\lambda^{evo}=\boldsymbol{\mathcal{E}}_\lambda^{evo}\partial\boldsymbol{\mathcal{E}}_\lambda^{evo}+\sum_{\mathscr{A}\in\mathbb{A}}e^{i\frac{u_\mathscr{A}}{\lambda}}(i\partial u_\mathscr{A}\Pi_{\kappa,+}(\boldsymbol{\mathcal{E}}_\lambda^{evo}\textbf{F}^+_{\lambda\mathscr{A}})-\lambda^{1}\textbf{G}^+_{\lambda\mathscr{A}})+[\ldots]_{\boldsymbol{\mathcal{E}}_\lambda^{evo}}.\\
\end{equation}
\begin{rem}
\label{rem:roleEevosestiEevo}
    The component $\boldsymbol{\mathcal{E}}_\lambda^{evo}$ is introduced in order to absorb all remaining terms of the equation \eqref{eq:schemeqerrorerrterm}.  The equation \eqref{eq:schemwaveEevosestiEevo} is coupled with the transport equations \eqref{eq:schemtpFplusestiFplus} and \eqref{eq:schemtpGplusestiGplus}. More precisely, the term $[\ldots]_{\boldsymbol{\mathcal{E}}_\lambda^{evo}}$ reads  
    \begin{align*}
     [\ldots]_{\boldsymbol{\mathcal{E}}_\lambda^{evo}}&=\mathscr{H}\left(\textbf{Z}_\lambda,\textbf{B}_0,\frac{u}{\lambda}\right)+(\mathscr{L}_1\left(\textbf{Z}_\lambda,\partial \textbf{B}_0,\textbf{B}'_0,\frac{u}{\lambda}\right)-\underbrace{\lambda^{-1/2}\sum_{\mathscr{A}\in\mathbb{A}}i\partial u_\mathscr{A}\boldsymbol{\mathcal{E}}_\lambda^{evo}\textbf{B}_0 e^{i\frac{u_\mathscr{A}}{\lambda}}}_\text{absorbed by $\textbf{F}_{\lambda\mathscr{A}}^+$})\\
     &+(\mathscr{L}_2\left(\partial\textbf{Z}_\lambda,\textbf{B}_0,\frac{u}{\lambda}\right)-\underbrace{\sum_{\mathscr{A}\in\mathbb{A}}ie^{i\frac{u_\mathscr{A}}{\lambda}}
\partial u_\mathscr{A}\textbf{F}_{\lambda\mathscr{A}}^+\textbf{B}_0}_\text{absorbed by $\textbf{F}_{\lambda\mathscr{A}}^+$}-\underbrace{\sum_{(\mathscr{A},\mathscr{B})\in\mathscr{C}}ie^{i\frac{u_{\mathscr{A}}\pm u_{\mathscr{B}}}{\lambda}}(\partial u_{\mathscr{A}}\pm \partial u_{\mathscr{B}})\textbf{B}_0 \breve{\textbf{F}}_{\mathscr{A}\pm\mathscr{B}}^+}_\text{absorbed by $\breve{\textbf{F}}_{\mathscr{A}\pm\mathscr{B}}^+$})\\
&+(\textbf{Z}_\lambda\partial\textbf{Z}_\lambda-\underbrace{\boldsymbol{\mathcal{E}}_\lambda^{evo}\partial\boldsymbol{\mathcal{E}}_\lambda^{evo}}_\text{explicit in \eqref{eq:schemwaveEevosestiEevo}}-\underbrace{\sum_{\mathscr{A}\in\mathbb{A}}ie^{i\frac{u_\mathscr{A}}{\lambda}} \partial u_\mathscr{A}\Pi_{\kappa,-}(\boldsymbol{\mathcal{E}}_\lambda^{evo}\textbf{F}^+_{\lambda\mathscr{A}})}_\text{absorbed by $\breve{\textbf{F}}_{\mathscr{A}\pm\mathscr{B}}^+$}-\underbrace{\sum_{\mathscr{A}\in\mathbb{A}}ie^{i\frac{u_\mathscr{A}}{\lambda}} \partial u_\mathscr{A}\Pi_{\kappa,+}(\boldsymbol{\mathcal{E}}_\lambda^{evo}\textbf{F}^+_{\lambda\mathscr{A}}}_\text{explicit in \eqref{eq:schemwaveEevosestiEevo}}))\\
&-\underbrace{\sum_{(\mathscr{A},\mathscr{B})\in\mathscr{C}}\lambda^1 e^{i\frac{u_\mathscr{A}\pm u_\mathscr{B}}{\lambda}}\Box\breve{\textbf{F}}^+_{\mathscr{A}\pm\mathscr{B}}}_\text{resonant interactions remainder}-\underbrace{(\Box\boldsymbol{\mathcal{E}}_\lambda^{ell}-\sum_{(\mathscr{A},\mathscr{B})\in\mathscr{S}}e^{i\frac{u_\mathscr{B}\pm u_\mathscr{B}}{\lambda}}\textbf{S}_{(\mathscr{A}\pm\mathscr{B})}(\textbf{B}_0 '))}_\text{non-resonant interactions remainder}+\underbrace{\lambda^{1/2}\boldsymbol{\Xi}_\lambda}_\text{background remainder }.
    \end{align*} 
\end{rem}
In the next Propositions, the main estimates for the RHS of \eqref{eq:schemwaveEevosestiEevo} are established.
\begin{propal}
\label{propal:RHSEevosestiEevo}
    The term $[\ldots]_{\boldsymbol{\mathcal{E}}_\lambda^{evo}}$ is decomposed and estimated term by term. Under the Assumptions \ref{ass:evoass} and for any $\tau\in[0,T]$, the following bounds hold 
     \begin{align}
     &X:=\sum_{k\leq 1}\int^{\tau}_0\lambda^k||\mathscr{H}\left(\textbf{Z}_\lambda,\textbf{B}_0,\frac{u}{\lambda}\right)(t)||_{H^k}dt\leq \lambda^{1/2+}C_{0,1,2,\tau}\label{eq:estimXestiEevo},\\
&XI:=\sum_{k\leq 1}\int^{\tau}_0\lambda^k||\left(\mathscr{L}_1\left(\textbf{Z}_\lambda,\partial \textbf{B}_0,\textbf{B}'_0,\frac{u}{\lambda}\right)-\lambda^{-1/2}\sum_{\mathscr{A}\in\mathbb{A}}i\partial u_\mathscr{A}\boldsymbol{\mathcal{E}}_\lambda^{evo}\textbf{B}_0 e^{i\frac{u_\mathscr{A}}{\lambda}}\right)(t)||_{H^k}dt\leq \lambda^{1/2}C_0\frac{c_1}{c_2}e^{\tau c_2},\label{eq:estimXIestiEevo}\\
&XII:=\sum_{k\leq 1}\int^{\tau}_0\lambda^k||\left(\mathscr{L}_2\left(\partial\textbf{Z}_\lambda,\textbf{B}_0,\frac{u}{\lambda}\right)-A-B\right)(t)||_{H^k}dt\leq
\lambda^{1/2}C_0\frac{c_1}{c_2}e^{\tau 
c_2},\label{eq:estimXIIestiEevo}\\
&\text{for}\;\;A=\sum_{\mathscr{A}\in\mathbb{A}}ie^{i\frac{u_\mathscr{A}}{\lambda}}
\partial u_\mathscr{A}\textbf{F}_{\lambda\mathscr{A}}^+\textbf{B}_0\;\;\text{and}\;\;B=\sum_{(\mathscr{A},\mathscr{B})\in\mathscr{C}}ie^{i\frac{u_{\mathscr{A}}\pm u_{\mathscr{B}}}{\lambda}}(\partial u_{\mathscr{A}}\pm \partial u_{\mathscr{B}})\textbf{B}_0 \breve{\textbf{F}}_{\mathscr{A}\pm\mathscr{B}}^+\nonumber,\\
&XIII:=\sum_{k\leq 1}\int^{\tau}_0\lambda^k||(\textbf{Z}_\lambda\partial\textbf{Z}_\lambda-C-D-E)(t)||_{H^k}dt\leq \lambda^{1/2+}C_{0,1,2,\tau}+\lambda^{1/2}C_0\frac{c_1}{c_2}e^{\tau c_2}\label{eq:estimXIIIestiEevo},\\
&\text{for}\;\;C=\boldsymbol{\mathcal{E}}_\lambda^{evo}\partial\boldsymbol{\mathcal{E}}_\lambda^{evo},\;\;D=\sum_{\mathscr{A}\in\mathbb{A}}ie^{i\frac{u_\mathscr{A}}{\lambda}} \partial u_\mathscr{A}\Pi_{\kappa,-}(\boldsymbol{\mathcal{E}}_\lambda^{evo}\textbf{F}^+_{\lambda\mathscr{A}})\;\;\text{and}\;\;E=\sum_{\mathscr{A}\in\mathbb{A}}ie^{i\frac{u_\mathscr{A}}{\lambda}} \partial u_\mathscr{A}\Pi_{\kappa,+}(\boldsymbol{\mathcal{E}}_\lambda^{evo}\textbf{F}^+_{\lambda\mathscr{A}})\nonumber,\\
&XIV:=\sum_{k\leq 1}\int^{\tau}_0 \lambda^k||\sum_{(\mathscr{A},\mathscr{B})\in\mathscr{C}}\lambda^1 e^{i\frac{u_\mathscr{A}\pm u_\mathscr{B}}{\lambda}}\Box\breve{\textbf{F}}^+_{\mathscr{A}\pm\mathscr{B}}(t)||_{H^{k}}dt\leq \lambda^{1} C_0,\label{eq:estimXIVestiEevo}\\
&XV:=\sum_{k\leq 1}\int^{\tau}_0 \lambda^k||\left(\Box\boldsymbol{\mathcal{E}}_\lambda^{ell}-\sum_{(\mathscr{A},\mathscr{B})\in\mathscr{S}}e^{i\frac{u_\mathscr{B}\pm u_\mathscr{B}}{\lambda}}\textbf{S}_{(\mathscr{A}\pm\mathscr{B})}(\textbf{B}_0 ')\right)(t)||_{H^{k}}dt\leq \lambda^{1/2} C_0,\label{eq:estimXVestiEevo}\\
&XVI:=\sum_{k\leq 1}\int^{\tau}_0\lambda||\lambda^{1/2}\boldsymbol{\Xi}_\lambda(t)||_{H^k}dt\leq \lambda^{1/2}C_0,\label{eq:estimXVIestiEevo}
 \end{align}
where the functions $\mathscr{H}$, $\mathscr{L}_1$ and $\mathscr{L}_2$ are given in \eqref{eq:schemeqerrorerrterm}. Overall, we have 
\begin{align}
    \sum_{k\leq 1}\int^{\tau}_0\lambda^k||[\ldots]_{\boldsymbol{\mathcal{E}}_\lambda^{evo}}(t)||_{H^k}dt\leq C_0\lambda^{1/2}+\lambda^{1/2+}C_{0,1,2,\tau}+\lambda^{1/2}C_0\frac{c_1}{c_2}e^{\tau c_2}\label{eq:estimoverallestiEevo}.
\end{align}
\end{propal}
\begin{proof}
The quantity X is at least quadratic and involves no derivatives. Hence, the product estimate of Proposition \ref{propal:holdsobAx} yields additional smallness in $\lambda$, corresponding to case \ref{item:item2improveass}. The terms XI and XII are linear in the coupled error components, which correspond to case \ref{item:item3improveass}. Their estimates follow directly from Assumptions \ref{ass:evoass}, together with the regularity of the background.\\
For XIII, since the potentially dangerous contributions are already isolated, the remaining terms are harmless. In particular, the interaction between $\boldsymbol{\mathcal{E}}_\lambda^{evo}$ and $\tilde{\textbf{F}}^+_{\mathscr{A}\pm\mathscr{B}}$  does not involve any additional power of $\lambda$. However, since $\tilde{\textbf{F}}^+_{\mathscr{A}\pm\mathscr{B}}$ is treated as a background quantity (i.e., a decoupled error component), the resulting term is effectively linear with respect to $\boldsymbol{\mathcal{E}}_\lambda^{evo}$ (a coupled error component). This corresponds again to case \ref{item:item3improveass}. The estimate for XIV follows directly from Assumption \ref{ass:evoass} using again the decoupled nature of $\tilde{\textbf{F}}^+_{\mathscr{A}\pm\mathscr{B}}$ (case \ref{item:item1improveass}). For XV, we invoke Proposition \ref{propal:propertiesEellinfoEell}. The bound for XVI follows immediately from Definition \ref{defi:almostapproxresults} of $\boldsymbol{\Xi}_\lambda$ and corresponds to case \ref{item:item1improveass}.\\
Summing all previous bounds yields \eqref{eq:estimoverallestiEevo}.
\end{proof}

\begin{propal}
\label{propal:estiGplusestiEevo}
Under the Assumptions \ref{ass:evoass} and for any $\tau\in[0,T]$, the following inequality holds
    \begin{align}
        XVII:=\sum_{k\leq 1}\int^{\tau}_0\lambda^k||\sum_{\mathscr{A}\in\mathbb{A}}e^{i\frac{u_\mathscr{A}}{\lambda}}\lambda^{1}\textbf{G}^+_{\lambda\mathscr{A}}(t)||_{H^k}dt\leq \lambda^{1/2+}C_{0,1,2,\tau}\label{eq:estimXVIIestiEevo}.
    \end{align}
\end{propal}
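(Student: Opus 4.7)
The plan is to view this quantity as one of the easiest entries of the error bookkeeping, because the explicit factor of $\lambda^1$ sitting in front of $\textbf{G}^+_{\lambda\mathscr{A}}$ is exactly the currency needed to pay for the high frequency oscillation, and the bootstrap assumptions \ref{ass:evoass} on $\textbf{G}^+_{\lambda\mathscr{A}}$ are already tailored to this scaling. Concretely, I expect to land in case \ref{itm:item3improveass} of the improvement scheme: the integrand is linear in the coupled parameter, so the estimate will bring out the linear factor $\frac{c_1}{c_2}e^{\tau c_2}$ times a power of $\lambda$ strictly larger than $1/2$, which is enough to close the bootstrap for $\lambda$ small.

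First, for each fixed $\mathscr{A}\in\mathbb{A}$ I would distribute the $H^k$ norm via the Leibniz rule on the product $e^{iu_\mathscr{A}/\lambda}\textbf{G}^+_{\lambda\mathscr{A}}$, recording that each spatial derivative hitting the exponential produces $i\lambda^{-1}\partial u_\mathscr{A}$, while each derivative hitting $\textbf{G}^+_{\lambda\mathscr{A}}$ costs one $H^k$ derivative of the auxiliary function. Because of the prefactor $\lambda^1$, every term of the expansion ends up of schematic form
\begin{equation*}
\lambda^{1-j}\,(\partial u_\mathscr{A})^{\otimes j}\,\partial^{k-j}\textbf{G}^+_{\lambda\mathscr{A}},\qquad 0\le j\le k\le 1.
\end{equation*}
I will then take the $L^2$ norm, using $\partial u_\mathscr{A}\in L^\infty([0,T],W^{1,\infty})$ (a consequence of the background regularity in proposition \ref{propal:firstapproxApprox} and Sobolev embedding) to absorb the phase factors into $C_0$, and use the bootstrap assumption $\lambda^{k-j}\|\textbf{G}^+_{\lambda\mathscr{A}}\|_{H^{k-j}}\le c_1e^{tc_2}$ for $k-j\in\{0,1\}$ to control the remaining $\textbf{G}^+_{\lambda\mathscr{A}}$ factor. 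After multiplying by the outer $\lambda^k$ weighting, every contribution of the sum collapses to $\lambda^1\,C_0\,c_1e^{tc_2}$ pointwise in $t$.

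Second, I integrate in $t\in[0,\tau]$ against $\int_0^\tau e^{tc_2}\,dt\le \frac{1}{c_2}(e^{\tau c_2}-1)\le \frac{1}{c_2}e^{\tau c_2}$, and finally sum over the finite set $\mathbb{A}$, absorbing the factor $N=|\mathbb{A}|$ into $C_{0,1,2,\tau}$ per notation \ref{nota:cst12ass}. This yields
\begin{equation*}
XVII\le \lambda\,C_0\,\frac{c_1}{c_2}e^{\tau c_2}\le \lambda\,C_{0,1,2,\tau},
\end{equation*}
which is comfortably $\lambda^{>1/2}C_{0,1,2,\tau}$ as required. There is no genuine obstacle here: unlike the terms of proposition \ref{eq:RHSGplusestiGplus} where the loss of derivative had to be purchased with the low-frequency projector $\Pi_-$, or the bilinear terms in $\partial\boldsymbol{\mathcal{E}}_\lambda^{evo}\,\partial\textbf{F}^+_{\lambda\mathscr{A}}$ that required Strichartz and the lemmas \ref{lem:lemma1Ax}--\ref{lem:lemma2Ax}, this term is linear in the coupled parameter and already carries its own power of $\lambda$. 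The only mildly non-trivial check is that even when $k=1$ and $j=1$ (the case where the gradient hits the phase and one pays $\lambda^{-1}$), the outer $\lambda^k=\lambda$ weight together with the prefactor $\lambda^1$ still leaves a net power $\lambda^1$ rather than $\lambda^0$, so nothing degenerates to borderline smallness.
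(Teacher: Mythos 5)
Your proof is correct and is essentially the paper's argument: the paper disposes of this term in one line as a direct calculation from the bootstrap assumptions \ref{ass:evoass} on $\textbf{G}^+_{\lambda\mathscr{A}}$ (it is linear in a coupled parameter and carries its own factor $\lambda^{1}$), which is exactly the Leibniz-plus-bootstrap computation you wrote out, including the correct bookkeeping that a derivative falling on $e^{iu_\mathscr{A}/\lambda}$ costs $\lambda^{-1}$ but is compensated by the outer weight $\lambda^{k}$ and the prefactor, leaving a net $\lambda\,C_0\frac{c_1}{c_2}e^{\tau c_2}\le\lambda^{>1/2}C_{0,1,2,\tau}$.
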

\begin{proof}
    The estimate is obtained by direct computations using the Assumptions \ref{ass:evoass}, since the term is linear in $\textbf{G}^+_{\lambda\mathscr{A}}$, corresponding to case \ref{item:item3improveass}.
\end{proof} 
\begin{propal}
\label{propal:estiprojtermestiEevo}
Under the Assumptions \ref{ass:evoass}, for any $\kappa>0$ and for any $\tau\in[0,T]$,  the following inequality holds
    \begin{align}
         &XVIII:=\sum_{k\leq 1}\int^{\tau}_0\lambda^k||\sum_{\mathscr{A}\in\mathbb{A}}e^{i\frac{u_\mathscr{A}}{\lambda}}i\partial u_\mathscr{A}\Pi_{\kappa,+}(\boldsymbol{\mathcal{E}}_\lambda^{evo}\textbf{F}^+_{\lambda\mathscr{A}})(t)||_{H^k}dt\leq \lambda^{1/2+}C_{0,1,2,\tau}.\label{eq:estimXVIIIestiEevo}
    \end{align}
\end{propal}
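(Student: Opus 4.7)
The plan is to exploit the mirror image of the key property of $\Pi_-$ used throughout the paper: the high-frequency projector $\Pi_+$ trades regularity for smallness. Precisely, by Plancherel, for any $s>0$,
\begin{equation*}
\|\Pi_+ f\|_{H^k}\leq \lambda^{\kappa s}\|f\|_{H^{k+s}},
\end{equation*}
because restricting to $|\xi|\geq\lambda^{-\kappa}$ allows us to insert $(\lambda^\kappa|\xi|)^{s}\geq 1$ inside the Fourier integral. This is the reverse of the inequality used on terms VIII and IX in proposition \ref{eq:RHSGplusestiGplus}: where $\Pi_-$ cheaply buys regularity at the cost of smallness, $\Pi_+$ cheaply buys smallness at the cost of regularity. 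The splitting $\Pi_++\Pi_-=1$ is designed so that each piece is placed where its natural strength can be exploited; the piece sent to $\Box\boldsymbol{\mathcal{E}}_\lambda^{evo}$ needs smallness and has room in regularity, which is exactly what $\Pi_+$ delivers.

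First I would handle the $k=0$ contribution. Applying the inequality above with a small $s>0$ reduces matters to an $H^s$ bound on $\boldsymbol{\mathcal{E}}_\lambda^{evo}\textbf{F}^+_{\lambda\mathscr{A}}$, and a product estimate in the spirit of proposition \ref{propal:holdsobAx} yields
\begin{equation*}
\|\boldsymbol{\mathcal{E}}_\lambda^{evo}\textbf{F}^+_{\lambda\mathscr{A}}\|_{H^s}\leq C\,\|\boldsymbol{\mathcal{E}}_\lambda^{evo}\|_{H^1}\|\textbf{F}^+_{\lambda\mathscr{A}}\|_{H^1}\leq \lambda^{1/2} C_{0,1,2,\tau}
\end{equation*}
by assumption \ref{ass:evoass}. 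Multiplication by $i\partial u_\mathscr{A}\in W^{1,\infty}$ is harmless by the background regularity, and time integration produces a $\tau\lambda^{1/2+\kappa s}C_{0,1,2,\tau}$ bound, which is of the required form $\lambda^{>1/2}C_{0,1,2,\tau}$.

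For the $k=1$ contribution, which carries the prefactor $\lambda$, the straightforward product estimate on $\|\boldsymbol{\mathcal{E}}_\lambda^{evo}\textbf{F}^+_{\lambda\mathscr{A}}\|_{H^{1+s}}$ would cost a $\lambda^{-1}$ from $\|\textbf{F}^+_{\lambda\mathscr{A}}\|_{H^2}$, which the $\lambda^{1/2}$ smallness of $\boldsymbol{\mathcal{E}}_\lambda^{evo}$ alone cannot absorb. To recover, I would expand $\partial(\boldsymbol{\mathcal{E}}_\lambda^{evo}\textbf{F}^+_{\lambda\mathscr{A}})$ by the Leibniz rule and treat the two unbalanced pieces, $\partial\partial\boldsymbol{\mathcal{E}}_\lambda^{evo}\cdot\textbf{F}^+_{\lambda\mathscr{A}}$ and $\boldsymbol{\mathcal{E}}_\lambda^{evo}\cdot\partial\partial\textbf{F}^+_{\lambda\mathscr{A}}$, through the Strichartz exchange lemmas \ref{lem:lemma1Ax} and \ref{lem:lemma2Ax}, exactly as was done for the pieces III.2, III.3 and III.5 in \ref{propal:RHSFplusestiFplus}. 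The input $\|\Box\boldsymbol{\mathcal{E}}_\lambda^{evo}\|_{L^1([0,\tau],L^2)}$ is available through the naive estimate of proposition \ref{propal:naiveestiestiEevo}, while $\Box\textbf{F}^+_{\lambda\mathscr{A}}$ is captured by $\textbf{G}^+_{\lambda\mathscr{A}}$ thanks to assumption \ref{ass:evoass}; the mixed piece $\partial\boldsymbol{\mathcal{E}}_\lambda^{evo}\cdot\partial\textbf{F}^+_{\lambda\mathscr{A}}$ is handled by the same null-form-style argument as III.5.

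The main obstacle will be the bookkeeping in the $k=1$ case: I need to choose $s$ and $\kappa$ so that the $\lambda^{\kappa s}$ gain from $\Pi_+$, combined with the small $\lambda^\theta$ gain from Strichartz interpolation, strictly dominates the $\lambda^{-1}$ cost of the $H^2$ norm of $\textbf{F}^+_{\lambda\mathscr{A}}$, while remaining compatible with the value of $\kappa$ fixed globally by the $\Pi_-$ estimates elsewhere in the bootstrap. Since the structural obstruction here is the mirror image of the one treated for $\Pi_-$ on the transport side for the very same bilinear object $\boldsymbol{\mathcal{E}}_\lambda^{evo}\textbf{F}^+_{\lambda\mathscr{A}}$, a single unified choice of $\kappa$ small enough should close both estimates at once.
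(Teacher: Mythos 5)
Your $k=0$ estimate is exactly the paper's: by Plancherel, the cutoff $|\xi|\geq\lambda^{-\kappa}$ converts $\Pi_{+}$ into a factor $\lambda^{\kappa s}$ at the price of $s$ extra derivatives, and the product estimate of proposition \ref{propal:holdsobAx} at the level $H^{1}\times H^{1}\to H^{1/2}$ together with assumptions \ref{ass:evoass} closes the bound (the paper takes $s=1/2$ and obtains $C_{0,1,2,\tau}\lambda^{1/2+\kappa/2}$). So that half of the proposal is fine and coincides with the paper.

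For $k=1$ your bookkeeping goes astray in two ways. First, the quantity to control is $\lambda\int_{0}^{\tau}\|\partial u_{\mathscr{A}}\,\Pi_{+}(\boldsymbol{\mathcal{E}}_\lambda^{evo}\textbf{F}^+_{\lambda\mathscr{A}})\|_{H^{1}}\,dt$, so after Leibniz the only pieces are $\partial\boldsymbol{\mathcal{E}}_\lambda^{evo}\,\textbf{F}^+_{\lambda\mathscr{A}}$ and $\boldsymbol{\mathcal{E}}_\lambda^{evo}\,\partial\textbf{F}^+_{\lambda\mathscr{A}}$ (plus the harmless term where the derivative hits $\partial u_{\mathscr{A}}$); the second-derivative pieces $\partial\partial\boldsymbol{\mathcal{E}}_\lambda^{evo}\,\textbf{F}^+_{\lambda\mathscr{A}}$, $\boldsymbol{\mathcal{E}}_\lambda^{evo}\,\partial\partial\textbf{F}^+_{\lambda\mathscr{A}}$ and the null-form piece $\partial\boldsymbol{\mathcal{E}}_\lambda^{evo}\,\partial\textbf{F}^+_{\lambda\mathscr{A}}$ that you propose to run through lemmas \ref{lem:lemma1Ax} and \ref{lem:lemma2Ax} belong to an $H^{2}$ bound, i.e.\ to the transport-side terms III and IX where the right-hand side is measured in $H^{k+1}$; here the wave-equation right-hand side is only measured in $H^{k}$, $k\leq1$. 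Second, the obstruction you invoke (a $\lambda^{-1}$ from $\|\textbf{F}^+_{\lambda\mathscr{A}}\|_{H^{2}}$ that the $\lambda^{1/2}$ smallness cannot absorb) disappears once the prefactor $\lambda^{1}$ is used and the derivatives are distributed symmetrically: the paper simply drops $\Pi_{+}$ (it is bounded on $H^{1}$), applies proposition \ref{propal:holdsobAx} as $H^{5/4}\times H^{5/4}\to H^{1}$ and the interpolation \ref{propal:interpolsobAx}, getting $\|\boldsymbol{\mathcal{E}}_\lambda^{evo}\|_{H^{5/4}}\leq\lambda^{1/4}C_{0,1,2,\tau}$ and $\|\textbf{F}^+_{\lambda\mathscr{A}}\|_{H^{5/4}}\leq\lambda^{-1/4}C_{0,1,2,\tau}$, hence $\lambda\cdot\lambda^{1/4}\cdot\lambda^{-1/4}=\lambda$, which is $\lambda^{>1/2}$ with no projector gain, no Strichartz, and no coupling of this estimate to the global choice of $\kappa$. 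Your heavier route would still close if applied to the correct first-derivative pieces (it is the same machinery the paper uses for III.2, III.3 and III.5, with the extra factor $\lambda$ giving ample margin), but as written the $k=1$ argument estimates the wrong object and manufactures a difficulty that this term does not have.
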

\begin{proof}
    We begin with the $L^2$ norm: 
    \begin{align*}
XVIII.1&:=\int^{\tau}_0{||\partial u_\mathscr{A}\Pi_{\kappa,+}(\boldsymbol{\mathcal{E}}_\lambda^{evo} \textbf{F}^+_{\lambda\mathscr{A}})(t)||_{L^2}dt}\\
&\leq ||\partial u_\mathscr{A}||_{L^\infty([0,\tau],W^{1,\infty})}\int^{\tau}_0{||\Pi_{\kappa,+}(\boldsymbol{\mathcal{E}}_\lambda^{evo} \textbf{F}^+_{\lambda\mathscr{A}})(t)||_{L^2}dt}\\
&\leq C_0\int^{\tau}_0{(\int_{\mathbb{R}^3}|\widehat{\Pi_{\kappa,+}(\boldsymbol{\mathcal{E}}_\lambda^{evo} \textbf{F}^+_{\lambda\mathscr{A}})(t) }|^2(\xi)d\xi)^{1/2}dt}\\
&\leq C_0\int^{\tau}_0{(\int_{\mathbb{R}^3}|\widehat{\Pi_{\kappa,+}(\boldsymbol{\mathcal{E}}_\lambda^{evo} \textbf{F}^+_{\lambda\mathscr{A}})(t) }|^2(\xi)\frac{|\xi|}{|\xi|}d\xi)^{1/2}dt}\\
&\leq C_0\lambda^{\kappa/2}\int^{\tau}_{0}{||(\boldsymbol{\mathcal{E}}_\lambda^{evo} \textbf{F}^+_{\lambda\mathscr{A}})(t)||_{H^{1/2}}}dt\\
&\leq C_0\lambda^{\kappa/2}\int^{\tau}_{0}{||\boldsymbol{\mathcal{E}}_\lambda^{evo}(t)||_{H^{1}}||\textbf{F}^+_{\lambda\mathscr{A}}(t)||_{H^{1}}}dt\leq C_{0,1,2,\tau}\lambda^{1/2+},
\end{align*}
where we use the Plancherel Theorem, the projector on high frequencies $\Pi_{\kappa,+}$, the product estimate of Proposition \ref{propal:holdsobAx} and the Assumptions \ref{ass:evoass} on $\boldsymbol{\mathcal{E}}_\lambda^{evo}$ and $\textbf{F}^+_{\lambda\mathscr{A}}$. We observe that the extra factor $\lambda^{\kappa/2}$ allows us to recover case \ref{item:item2improveass}. On the other hand, for the $H^1$ norm,  the projector is not required to get the extra smallness.  Using the product estimate of Proposition \ref{propal:holdsobAx} and the interpolation result of Proposition \ref{propal:interpolsobAx}, we obtain
\begin{align*}
  XVIII.2&:=\lambda^1\int^{\tau}_0{||\partial u_\mathscr{A}\Pi_{\kappa,+}(\boldsymbol{\mathcal{E}}_\lambda^{evo} \textbf{F}^+_{\lambda\mathscr{A}})(t)||_{H^1}dt}\\
  &\leq \lambda^1||\partial u_\mathscr{A}||_{L^\infty([0,\tau],W^{1,\infty})}\int^{\tau}_0{||\Pi_{\kappa,+}(\boldsymbol{\mathcal{E}}_\lambda^{evo} \textbf{F}^+_{\lambda\mathscr{A}})(t)||_{H^1}dt}\\
&\leq C_0\lambda^1\int^{\tau}_0{||(\boldsymbol{\mathcal{E}}_\lambda^{evo} \textbf{F}^+_{\lambda\mathscr{A}})(t)||_{H^1}dt}\\
&\leq C_0\lambda^1\int^{\tau}_{0}{||\boldsymbol{\mathcal{E}}_\lambda^{evo}(t)||_{H^{5/4}}||\textbf{F}^+_{\lambda\mathscr{A}}(t)||_{H^{5/4}}}dt\\
&\leq C_{0,1,2,\tau}\lambda^1\lambda^{1/4}\lambda^{-1/4}\leq C_{0,1,2,\tau}\lambda^{1/2+}.
\end{align*}
We conclude that $XVIII\leq XVIII.1+XVIII.2\leq C_{0,1,2,\tau}\lambda^{1/2+}$, as desired.
\end{proof}

It remains to estimate the quadratic term $\boldsymbol{\mathcal{E}}_\lambda^{evo}\partial\boldsymbol{\mathcal{E}}_\lambda^{evo}$.
\begin{propal}
\label{propal:estiquadtermestiEevo}
Under the Assumptions \ref{ass:evoass} and for any $\tau\in[0,T]$, we have 
    \begin{align}
         &XIX:=\sum_{k\leq 1}\int^{\tau}_0\lambda^k||\boldsymbol{\mathcal{E}}_\lambda^{evo}\partial\boldsymbol{\mathcal{E}}_\lambda^{evo}(t)||_{H^k}dt\leq \lambda^{1/2+}C_{0,1,2,\tau}\label{eq:estimXIVestiEevo}.
    \end{align}
\end{propal}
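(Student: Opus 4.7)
This bound is an instance of obstacle \ref{itm:obst4Idea} and closely parallels the treatment of III.5 inside the proof of proposition \ref{propal:RHSFplusestiFplus}. My strategy is to apply the Strichartz-type lemmas \ref{lem:lemma1Ax} and \ref{lem:lemma2Ax} to $\boldsymbol{\mathcal{E}}_\lambda^{evo}$ viewed as a wave solution, extracting the missing smallness through the naive bound $\|\Box\boldsymbol{\mathcal{E}}_\lambda^{evo}\|_{L^1([0,\tau],L^2)}\lesssim\lambda^{1/2}$ of proposition \ref{propal:naiveestiestiEevo} combined with the $\lambda^{1/2}$-initial data smallness supplied by definition \ref{defi:reqsmallnesscriteria}. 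The weighted-space issue for the $(E_\lambda^{evo})^0$ component is resolved by the truncation-to-$B_S$ device of remark \ref{rem:strichsupportpbWP}.

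For $k=0$ I would H\"older-split
\[
\int_0^\tau\|\boldsymbol{\mathcal{E}}_\lambda^{evo}\partial\boldsymbol{\mathcal{E}}_\lambda^{evo}\|_{L^2}\,dt \;\leq\; \|\boldsymbol{\mathcal{E}}_\lambda^{evo}\|_{L^p([0,\tau],L^\infty)}\,\|\partial\boldsymbol{\mathcal{E}}_\lambda^{evo}\|_{L^{p'}([0,\tau],L^2)}
\]
and apply lemma \ref{lem:lemma2Ax} to the first factor as a $\theta$-interpolation between the naive $L^q([0,\tau],H^2)$ bound (of size $\lambda^{-1/2}$) and the Strichartz control driven by $\Box\boldsymbol{\mathcal{E}}_\lambda^{evo}$ together with its initial data (all of size $\lambda^{1/2}$), which for $\theta\in(0,1/2)$ yields $\|\boldsymbol{\mathcal{E}}_\lambda^{evo}\|_{L^p L^\infty}\lesssim\lambda^{1/2-\theta}$. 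The second factor is bounded by $\tau^{1/p'}\|\partial\boldsymbol{\mathcal{E}}_\lambda^{evo}\|_{L^\infty L^2}\lesssim\tau^{1/p'}\lambda^{1/2}c_1e^{\tau c_2}$ via \ref{ass:evoass}, so the product is $\lambda^{1-\theta}C_{0,1,2,\tau}$, comfortably within $\lambda^{>1/2}$. For the $k=1$ contribution I would expand $\partial(\boldsymbol{\mathcal{E}}_\lambda^{evo}\partial\boldsymbol{\mathcal{E}}_\lambda^{evo})$ by Leibniz: the pure $L^2$ piece is just $\lambda$ times the $k=0$ bound, and the mixed piece $\lambda\|\boldsymbol{\mathcal{E}}_\lambda^{evo}\partial^2\boldsymbol{\mathcal{E}}_\lambda^{evo}\|_{L^1 L^2}$ is estimated exactly as in $k=0$ with $\partial^2\boldsymbol{\mathcal{E}}_\lambda^{evo}\in L^\infty L^2$ of size $\lambda^{-1/2}$ (by \ref{ass:evoass}), giving $\lambda\cdot\lambda^{1/2-\theta}\cdot\lambda^{-1/2}=\lambda^{1-\theta}$.

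The main obstacle is the genuinely quadratic-derivative cross term $\lambda\|\partial\boldsymbol{\mathcal{E}}_\lambda^{evo}\cdot\partial\boldsymbol{\mathcal{E}}_\lambda^{evo}\|_{L^1 L^2}$, for which plain interpolation between $L^2$ and $H^1$ only delivers $\lambda^{1/2-\varepsilon}$. To recover the missing margin I would follow the III.5 template and apply lemma \ref{lem:lemma1Ax} with one copy of $\partial\boldsymbol{\mathcal{E}}_\lambda^{evo}$ placed in $L^p([0,\tau],H^{1/2-\nu})$ for a small $\nu>0$: interpolating between $\|\partial\boldsymbol{\mathcal{E}}_\lambda^{evo}\|_{L^\infty L^2}\lesssim\lambda^{1/2}$ and $\|\partial\boldsymbol{\mathcal{E}}_\lambda^{evo}\|_{L^\infty H^1}\lesssim\lambda^{-1/2}$ produces $\lesssim\tau^{1/p}\lambda^\nu C_{0,1,2,\tau}$, while the Strichartz bracket $\|\Box(\partial\boldsymbol{\mathcal{E}}_\lambda^{evo})\|_{L^1 L^2}+\|\partial\boldsymbol{\mathcal{E}}_\lambda^{evo}(0)\|_{\dot H^1}+\|\partial_t\partial\boldsymbol{\mathcal{E}}_\lambda^{evo}(0)\|_{L^2}$ controlling the other copy is of size $\lambda^{-1/2}$, thanks to the naive $L^1 H^1$ bound on $\Box\boldsymbol{\mathcal{E}}_\lambda^{evo}$ from proposition \ref{propal:naiveestiestiEevo} together with the admissibility regularity for the error initial data. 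The total contribution is $\lambda\cdot\lambda^\nu\cdot\lambda^{-1/2}=\lambda^{1/2+\nu}$, which is $\lambda^{>1/2}$ as soon as $\nu>0$ is chosen strictly smaller than the excess $\varepsilon'$ of proposition \ref{propal:naiveestiestiEevo}, completing the bound.
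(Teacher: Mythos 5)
Your decomposition and choice of tools coincide with the paper's own proof: the paper also splits $XIX$ into $\|\boldsymbol{\mathcal{E}}_\lambda^{evo}\partial\boldsymbol{\mathcal{E}}_\lambda^{evo}\|_{L^1L^2}$, $\lambda\|\boldsymbol{\mathcal{E}}_\lambda^{evo}\nabla\partial\boldsymbol{\mathcal{E}}_\lambda^{evo}\|_{L^1L^2}$ and $\lambda\|\nabla\boldsymbol{\mathcal{E}}_\lambda^{evo}\partial\boldsymbol{\mathcal{E}}_\lambda^{evo}\|_{L^1L^2}$, treats the first two with lemma \ref{lem:lemma2Ax} (your $L^pL^\infty$ factorization is just the interior of that lemma's proof) and the cross term with lemma \ref{lem:lemma1Ax}, all fed by the naive estimates of proposition \ref{propal:naiveestiestiEevo}; the exponents $\lambda^{1-\theta}$ you obtain for the first two pieces match the paper's.

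There is, however, a concrete error in your last step. The Strichartz bracket attached to the second copy of $\partial\boldsymbol{\mathcal{E}}_\lambda^{evo}$ in lemma \ref{lem:lemma1Ax} is driven by $\|\Box\nabla\boldsymbol{\mathcal{E}}_\lambda^{evo}\|_{L^1([0,\tau],L^2)}$, and proposition \ref{propal:naiveestiestiEevo} only gives this at size $\lambda^{-1/2-\varepsilon'}$, not $\lambda^{-1/2}$ as you assert (the $\varepsilon'$ loss is exactly the $L^\infty$-control loss recorded in remark \ref{rem:badestiestiEevo}; the initial-data terms are indeed only $\lambda^{-1/2}$, but the $\Box$ term dominates). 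Hence the cross term is $\lambda^{1+\nu}\cdot\lambda^{-1/2-\varepsilon'}=\lambda^{1/2+\nu-\varepsilon'}$, and you need $\nu>\varepsilon'$ — the opposite of your stated condition ``$\nu$ strictly smaller than $\varepsilon'$'', under which the exponent falls below $1/2$ and the bound fails. The repair is immediate and is what the paper does: since $\varepsilon'$ can be taken arbitrarily small and $\nu$ is free in $(0,1/2)$, fix $\nu$ with $\varepsilon'<\nu<1/2$, which restores $\lambda^{>1/2}$.
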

\begin{rem}
\label{rem:badestiestiEevo}
   Using only product estimates and Sobolev interpolation (Propositions \ref{propal:holdsobAx} and \ref{propal:interpolsobAx}), one misses a factor $\lambda^{\varepsilon}$, for $\varepsilon>0$, which prevents closing the bootstrap directly (case \ref{item:item2improveass}). Indeed, for $0<\alpha<3/2$, $0<\beta\leq 1$ and $\alpha+\beta=3/2$ we have 
   \begin{align*}       
   ||\boldsymbol{\mathcal{E}}_\lambda^{evo}\partial\boldsymbol{\mathcal{E}}_\lambda^{evo}||_{L^2}\leq ||\boldsymbol{\mathcal{E}}_\lambda^{evo}||_{H^\alpha}||\partial\boldsymbol{\mathcal{E}}_\lambda^{evo}||_{H^\beta},
   \end{align*}
   which leads to the cases
        \begin{align*}
&1/2\leq\alpha\leq1:\;||\boldsymbol{\mathcal{E}}_\lambda^{evo}\partial\boldsymbol{\mathcal{E}}_\lambda^{evo}(\tau)||_{L^2}\leq \lambda^{-1/2+\alpha}(c_1)^2e^{2\tau c_2}, 
&1<\alpha<3/2:\;||\boldsymbol{\mathcal{E}}_\lambda^{evo}\partial\boldsymbol{\mathcal{E}}_\lambda^{evo}(\tau)||_{L^2}\leq \lambda^{1/2}(c_1)^2e^{2\tau c_2}, 
   \end{align*}
  by interpolation. Moreover, for the $\dot{H}^1$ norm, with the same techniques,
 \begin{align*}
&||\nabla\boldsymbol{\mathcal{E}}_\lambda^{evo}\partial\boldsymbol{\mathcal{E}}_\lambda^{evo}(\tau)||_{L^2}\leq \lambda^{-1/2}C_{0,1,2,\tau}, &||\boldsymbol{\mathcal{E}}_\lambda^{evo}\nabla\partial\boldsymbol{\mathcal{E}}_\lambda^{evo}(\tau)||_{L^2}\leq \lambda^{-1/2-\varepsilon}C_{0,1,2,\tau}, 
   \end{align*}   
where a small $\varepsilon$ is lost due to  the control of the $L^\infty$ norm of $\boldsymbol{\mathcal{E}}_\lambda^{evo}$. The previous estimates yield
 \begin{align*}
     XIX\leq C_{0,1,2,\tau}\lambda^{1/2-\varepsilon},
 \end{align*}
  at best. In particular, these estimates ignore the time integration and dispersive properties of the wave operator.
\end{rem}
\begin{propal}
\label{propal:naiveestiestiEevo}
Let the Assumptions \ref{ass:evoass} hold. For $0<\varepsilon'$ arbitrarily small, the following naive bounds hold:
    \begin{align}
        &||\Box \boldsymbol{\mathcal{E}}_\lambda^{evo}||_{L^1([0,\tau],L^2)}\leq C_{0,1,2,\tau}\lambda^{1/2} +C_0\lambda^{1/2},
        &||\Box\nabla\boldsymbol{\mathcal{E}}_\lambda^{evo}||_{L^1([0,\tau],L^2)}\leq C_{0,1,2,\tau}\lambda^{-1/2-\varepsilon'}+C_0\lambda^{-1/2}.
    \end{align}
\end{propal}
\begin{proof}
The result follows by combining Propositions \ref{propal:RHSEevosestiEevo}, \ref{propal:estiGplusestiEevo}, and \ref{propal:estiprojtermestiEevo}, together with the estimates of Remark \ref{rem:badestiestiEevo}. The factor $C_0\frac{c_1}{c_2}e^{\tau c_2}$ is absorbed into $C_{0,1,2,\tau}$.
\end{proof}
We now have enough material to prove Proposition \ref{propal:estiquadtermestiEevo}.
\begin{proof}[Proof of Proposition \ref{propal:estiquadtermestiEevo}] 
Define 
\begin{align*}
&XIX.1:=||\boldsymbol{\mathcal{E}}_\lambda^{evo}\partial\boldsymbol{\mathcal{E}}_\lambda^{evo}||_{L^1([0,\tau],L^2)}, &XIX.2:=\lambda||\boldsymbol{\mathcal{E}}_\lambda^{evo}\nabla\partial\boldsymbol{\mathcal{E}}_\lambda^{evo}||_{L^1([0,\tau],L^2)}, &\;\;\;XIX.3:=\lambda||\nabla\boldsymbol{\mathcal{E}}_\lambda^{evo}\partial\boldsymbol{\mathcal{E}}_\lambda^{evo}||_{L^1([0,\tau],L^2)}.
\end{align*}
Applying Lemma \ref{lem:lemma2Ax}, with $\theta\in(0,1/2)$ and exponents $p=2(1-\theta)$ and $p'=\frac{2(1-\theta)}{1-2\theta}$, we obtain 
\begin{align*} 
XIX.2&\leq \lambda||\nabla\partial\boldsymbol{\mathcal{E}}_\lambda^{evo}||_{L^p([0,\tau],L^2)}(||\boldsymbol{\mathcal{E}}_\lambda^{evo}||_{L^{p'}([0,\tau],H^2)})^\theta(||\Box \boldsymbol{\mathcal{E}}_\lambda^{evo}||_{L^1([0,\tau],L^2)}+||\boldsymbol{\mathcal{E}}_\lambda^{evo}(0)||_{\dot{H}^1}+||\partial_t\boldsymbol{\mathcal{E}}_\lambda^{evo}(0)||_{L^2})^{(1-\theta)}\\
&\leq C_{0,1,2,\tau}\lambda^{1/2}\lambda^{-\theta/2}(||\Box \boldsymbol{\mathcal{E}}_\lambda^{evo}||_{L^1([0,\tau],L^2)}+c_1\lambda^{1/2})^{(1-\theta)}\\
&\leq  C_{0,1,2,\tau}\lambda^{1/2+},
\end{align*}
where the last inequality follows from the naive estimates of Proposition \ref{propal:naiveestiestiEevo}.\\
The same argument directly apply to $XIX.1$ to obtain 
\begin{align*} 
&XIX.1\leq  C_{0,1,2,\tau}\lambda^{1/2+}.
\end{align*}
To estimate $XIX.3$, we apply Lemma \ref{lem:lemma1Ax} with $1/2>\nu>\varepsilon'$ and $p=\frac{1}{1-\nu}$, where $\varepsilon'$ is defined in Proposition \ref{propal:naiveestiestiEevo}, to get
\begin{align*} 
XIX.3&\leq C_0 \lambda ||\partial\boldsymbol{\mathcal{E}}_\lambda^{evo}||_{L^p([0,\tau],H^{1/2-\nu})}(||\Box \nabla\boldsymbol{\mathcal{E}}_\lambda^{evo}||_{L^1([0,\tau],L^2)}+||\nabla\boldsymbol{\mathcal{E}}_\lambda^{evo}(0)||_{\dot{H}^1}+||\partial_t\nabla\boldsymbol{\mathcal{E}}_\lambda^{evo}(0)||_{L^2})\\
&\leq \lambda^{1+\nu}(||\Box \nabla\boldsymbol{\mathcal{E}}_\lambda^{evo}||_{L^1([0,\tau],L^2)}+c_1\lambda^{-1/2})\\
&\leq C_{0,1,2,\tau}\lambda^{1/2+}.
\end{align*}\\ 
Combining all contributions yields
\begin{align*}
    XIX\leq C_0 XIX.1+XIX.2+XIX.3\leq  C_{0,1,2,\tau}\lambda^{1/2+}.
\end{align*}
This ends the proof of Proposition \ref{propal:estiquadtermestiEevo}.
\end{proof}

\subsubsection{Bootstrap}
\label{subsubsection:bootstrap}
    The next Proposition is a direct consequence of the results established in the previous Sections. 
\begin{propal}
\label{propal:estifullKGMLbootstrap}
For a given admissible background initial data set of Definition \ref{defi:admissiblebginitansatz}, let $\textbf{B}'_0$ be given by Proposition \ref{propal:firstapproxApprox} with size $c_0>0$. Let $\lambda>0$ and let $(\textbf{f}^+_{\lambda\mathscr{A}},\textbf{g}^+_{\lambda\mathscr{A}},\breve{\textbf{f}}^+_{\mathscr{A}\pm\mathscr{B}},\boldsymbol{\epsilon}^{evo}_\lambda,\dot{\boldsymbol{\epsilon}}^{evo}_\lambda,\boldsymbol{\epsilon}^{ell}_\lambda,\dot{\boldsymbol{\epsilon}}^{ell}_\lambda)$ be initial data \textbf{admissible for KGML} in the sense of Definition \ref{defi:admissibleKGMLcriteria}, with the associated $-3/2<\delta<-1/2$, and satisfying the \textbf{required smallness} from Definition \ref{defi:reqsmallnesscriteria}. Then, under the Assumptions \ref{ass:evoass}, there exists $\kappa_0>0$, determining the projectors from Definition \ref{defi:projfreq}, such that for any $\tau\in[0,t_\lambda]$, the solution $(\textbf{F}^+_{\lambda\mathscr{A}},\textbf{G}^+_{\lambda\mathscr{A}},\boldsymbol{\mathcal{E}}_\lambda^{evo})$ to \eqref{eq:schemfullsystemevosys} given by Proposition \ref{propal:wellposfullsystWP} satisfies the following estimates\footnote{We distinguish between $\underline{\boldsymbol{\mathcal{E}}_\lambda}^{evo}$ and $(E^{evo}_\lambda)^0$ as in Notation \ref{nota:notaepsilonunderass}.}
    \begin{align*}
    &\sum_{k\leq 1}\lambda^k( ||\underline{\boldsymbol{\mathcal{E}}^{evo}_\lambda}(\tau)||_{H^{k+1}}+||\partial_t\underline{\boldsymbol{\mathcal{E}}^{evo}_\lambda}(\tau)||_{H^{k}})+\lambda||\partial^2_{tt}\underline{\boldsymbol{\mathcal{E}}_\lambda^{evo}}(\tau)||_{L^2}\leq C_{0,1,2,\tau}\lambda^{1/2+}+\lambda^{1/2}C_0\frac{c_1}{c_2}e^{\tau c_2}+\lambda^{1/2}C_0,\\
 &\sum_{k\leq 1}\lambda^k( ||(E^{evo}_\lambda)^0(\tau)||_{H^{k+1}_{\delta}}+||\partial_t(E^{evo}_\lambda)^0(\tau)||_{
 H^{k}_{\delta+1}})+\lambda||\partial^2_{tt}(E^{evo}_\lambda)^0(\tau)||_{L^2_{\delta+2}}\leq C_{0,1,2,\tau}\lambda^{1/2+}+\lambda^{1/2}C_0\frac{c_1}{c_2}e^{\tau c_2}+\lambda^{1/2}C_0,\\
    &\max_{\mathscr{A}\in\mathbb{A}}\sum_{k\leq 1}\lambda^k( ||\textbf{F}_{\lambda\mathscr{A}}^+(\tau)||_{H^{k+1}}+||\partial_t\textbf{F}_{\lambda\mathscr{A}}^+(\tau)||_{H^{k}})+\lambda||\partial^2_{tt}\textbf{F}_{\lambda\mathscr{A}}^+(\tau)||_{L^2}\leq C_{0,1,2,\tau}\lambda^{0+}+C_0\frac{c_1}{c_2}e^{\tau c_2}+C_0,\\
   &\max_{\mathscr{A}\in\mathbb{A}}\sum_{k\leq 1}( \lambda^k||\textbf{G}^+_{\lambda\mathscr{A}}(\tau)||_{H^{k}})+\lambda||\partial_t\textbf{G}^+_{\lambda\mathscr{A}}(\tau)||_{L^2}\leq C_{0,1,2,\tau}\lambda^{0+}+C_0\frac{c_1}{c_2}e^{\tau c_2}+C_0,
\end{align*}
for constants defined in Notations \ref{nota:cst0Approx} and \ref{nota:cst12ass}.
\end{propal}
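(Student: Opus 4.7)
The plan is to combine the three energy/transport estimates with the RHS bounds already assembled in sections \ref{subsubsection:estiFplus}, \ref{subsubsection:estiGplus}, and \ref{subsubsection:estiEevo}. For each of the three coupled parameters I would apply the standard energy estimate for its governing equation (wave for $\boldsymbol{\mathcal{E}}_\lambda^{evo}$, transport for $\textbf{F}^+_{\lambda\mathscr{A}}$ and $\textbf{G}^+_{\lambda\mathscr{A}}$), at each regularity level dictated by the a priori assumptions \ref{ass:evoass}, and then insert all the previously proved bounds on the source terms. The initial data contributions come from the required smallness \ref{defi:reqsmallnesscriteria}, which provide the $C_0$ and $\lambda^{1/2}C_0$ pieces appearing on the RHS of each displayed estimate.

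First, for $\textbf{F}^+_{\lambda\mathscr{A}}$ I would apply the $\mathscr{L}_\mathscr{A}$ transport energy estimate (proposition \ref{propal:estienergytpAX}) at levels $H^1$ and $H^2$, weighted by the appropriate powers of $\lambda$. The RHS bounds of proposition \ref{propal:RHSFplusestiFplus} (terms $I$, $II$, $III$) cover both spatial levels. The first time derivative is recovered by inspection of the transport equation itself, writing $\partial_t\textbf{F}^+_{\lambda\mathscr{A}}$ in terms of $\nabla\textbf{F}^+_{\lambda\mathscr{A}}$ and the RHS, dividing by $\partial^0 u_\mathscr{A}$ (uniformly bounded away from zero by \eqref{eq:eta1}). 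The second time derivative is handled by commuting $\partial_t$ through $\mathscr{L}_\mathscr{A}$ and using proposition \ref{propal:estienergytpderivtempsAX}. An analogous procedure works for $\textbf{G}^+_{\lambda\mathscr{A}}$ at levels $H^0$ and $H^1$, inserting the bounds from proposition \ref{eq:RHSGplusestiGplus} ($IV$ through $IX$), noting that the commutator $[\mathscr{L}_\mathscr{A},\Box]\textbf{F}^+_{\lambda\mathscr{A}}$ is controlled by proposition \ref{propal:RHSFplusestiFplus} combined with section \ref{subsection:commut}.

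Second, for $\underline{\boldsymbol{\mathcal{E}}_\lambda^{evo}}$ I would apply the standard wave energy estimate at $\dot{H}^1$ and $\dot{H}^2$, inserting the full RHS inventory from propositions \ref{propal:RHSEevosestiEevo}, \ref{propal:estiGplusestiEevo}, \ref{propal:estiprojtermestiEevo}, \ref{propal:estiquadtermestiEevo} (terms $X$ through $XIX$), which together yield a bound of the form $\lambda^{1/2}C_0 + \lambda^{1/2}C_0\frac{c_1}{c_2}e^{\tau c_2} + \lambda^{>1/2}C_{0,1,2,\tau}$. The bound on $\lambda\|\partial^2_{tt}\underline{\boldsymbol{\mathcal{E}}_\lambda^{evo}}\|_{L^2}$ is then read off directly from the wave equation: $\partial^2_{tt} = \Delta - \Box$, and the $\Delta$ part is controlled by $\|\nabla\nabla\underline{\boldsymbol{\mathcal{E}}_\lambda^{evo}}\|_{L^2}$ (already estimated), while $\Box\underline{\boldsymbol{\mathcal{E}}_\lambda^{evo}}$ is the RHS we just bounded. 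For the $(E^{evo}_\lambda)^0$ component, I would invoke remark \ref{rem:supportestimWP} to reduce the problem to a compactly supported auxiliary quantity $E^{'0}_\lambda$ (remark \ref{rem:strichsupportpbWP}) so that the Strichartz estimates and lemmas \ref{lem:lemma1Ax}, \ref{lem:lemma2Ax} apply, then pass back to the weighted norms using $\|\cdot\|_{H^m(B_S)}\leq C_S\|\cdot\|_{H^m_\delta}$.

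The main obstacle I expect is \emph{ordering the estimates correctly}: the wave estimates for $\boldsymbol{\mathcal{E}}_\lambda^{evo}$ rely on the transport estimate for $\textbf{G}^+_{\lambda\mathscr{A}}$ (through the source term $-\lambda\sum e^{iu_\mathscr{A}/\lambda}\textbf{G}^+_{\lambda\mathscr{A}}$), while the $\textbf{G}^+_{\lambda\mathscr{A}}$ estimate in turn relies on bounds for $\Box\boldsymbol{\mathcal{E}}_\lambda^{evo}$ (invoked in $VII$ and $IX.4$ via the \emph{naive} estimates of proposition \ref{propal:naiveestiestiEevo}), and similarly the $\textbf{F}^+_{\lambda\mathscr{A}}$ estimate uses $\Box\boldsymbol{\mathcal{E}}_\lambda^{evo}$ in $III$. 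The way out is the asymmetry already built into proposition \ref{propal:naiveestiestiEevo}: the $\Box\boldsymbol{\mathcal{E}}_\lambda^{evo}$ bound is obtained directly from $XIX$'s crude interpolation step (remark \ref{rem:badestiestiEevo}), which does not use the refined lemmas and therefore does not circularly require $\Box\nabla\boldsymbol{\mathcal{E}}_\lambda^{evo}$ to be known in advance. So the logical order should be: first derive the naive $\Box\boldsymbol{\mathcal{E}}_\lambda^{evo}$ bound from the a priori assumptions; use it to prove the refined $\textbf{F}^+_{\lambda\mathscr{A}}$ and $\textbf{G}^+_{\lambda\mathscr{A}}$ estimates; then feed those back into the refined wave estimate to get the sharp bounds displayed in the conclusion. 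Once this ordering is fixed, the conclusion follows by summing term-by-term and absorbing everything into the three types of contributions $C_0$, $C_0\frac{c_1}{c_2}e^{\tau c_2}$, and $C_{0,1,2,\tau}\lambda^{>0}$ classified in section \ref{subsubsection:ass}.
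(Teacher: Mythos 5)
Your proposal is correct and follows essentially the same route as the paper: under the a priori assumptions one sums the source-term bounds $I$--$XIX$ from sections \ref{subsubsection:estiFplus}, \ref{subsubsection:estiGplus} and \ref{subsubsection:estiEevo}, applies the transport and (weighted) wave energy estimates \ref{propal:estienergytpAX}, \ref{propal:energywaveAx} together with \ref{propal:energywaveplusAx} for the zero-order part of $\boldsymbol{\mathcal{E}}_\lambda^{evo}$, uses \ref{propal:estienergytpderivtempsAX} and \ref{propal:energywavederivAx} for the time derivatives, and feeds in the required smallness \ref{defi:reqsmallnesscriteria} for the data terms. Your remark on ordering is consistent with the paper's logic, since the naive estimates of proposition \ref{propal:naiveestiestiEevo} are themselves derived only from the bootstrap assumptions \ref{ass:evoass}, so no circularity arises.
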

\begin{proof}
Firstly, we combine all the estimates obtained in Sections \ref{subsubsection:estiFplus}, \ref{subsubsection:estiGplus} (in particular, Proposition \ref{propal:RHSGplusestiGplus} determines the value of $\kappa_0$) and \ref{subsubsection:estiEevo}. Next, we rely on Propositions \ref{propal:estienergytpAX} and\footnote{The linear term in the RHS of \eqref{eq:estienergyweightwaveAx} and \eqref{eq:estienergytp1AX} is not problematic and is treated in the same way as the other linear terms (case \ref{item:item3improveass}).} \ref{propal:energywaveAx}, together with Proposition \ref{propal:energywaveplusAx} and the smallness of the initial data in the sense of Definition \ref{defi:reqsmallnesscriteria}, to control the spatial Sobolev norms.
Finally, we use Propositions \ref{propal:energywavederivAx} and \ref{propal:estienergytpderivtempsAX} to control the time derivatives. The smallness estimates then follow immediately. 
\end{proof}

\begin{lem}
\label{lem:improvmentbootstrap}
For $\varepsilon>0$, $c_0>0$, $\frac{1}{\eta_0}>0$, $N\in\mathbb{N}$, $T>0$ and any $C_{0}=C_{0}(c_0,\frac{1}{\eta_0},N,T)$ and $C_{0,1,2,\tau}(\cdot,\cdot,\cdot)=C_{0,1,2,\tau}(c_0,\frac{1}{\eta_0},N,T,\cdot,\cdot,\cdot)$ from Notations \ref{nota:cst0Approx} and \ref{nota:cst12ass}, there exist $\lambda_0>0$, $c_1$ and $c_2$, such that for any $0<\lambda<\lambda_0$ and for any $0\leq\tau\leq T$ we have 
\begin{align*}
C_0\frac{c_1}{c_2}e^{\tau c_2}+C_0+C_{0,1,2,\tau}(c_1,c_2,\tau)\lambda^{\varepsilon}\leq \frac{1}{2}c_1e^{\tau c_2}.
\end{align*}
\end{lem}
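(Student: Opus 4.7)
The plan is a three-step choice of the parameters in the order $c_2 \to c_1 \to \lambda_0$, exploiting the fact that $C_0$ is independent of $(c_1,c_2,\lambda)$ whereas $C_{0,1,2,\tau}$ is, by definition \ref{nota:cst12ass}, polynomial in $c_1 e^{\tau c_2}$, $\tfrac{c_1}{c_2}e^{\tau c_2}$, together with the fixed background constants. The idea is to split the target bound $\tfrac{1}{2}c_1 e^{\tau c_2}$ into three equal pieces of size $\tfrac{1}{6}c_1 e^{\tau c_2}$, one for each term on the left-hand side, and to use that $e^{\tau c_2}\geq 1$ to absorb the non-oscillating $C_0$ term.

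First, I would fix $c_2$. Since $C_0$ is independent of $c_1,c_2,\lambda,\tau$, I choose
\[
c_2 \geq 6 C_0,
\]
which guarantees $C_0\tfrac{c_1}{c_2}e^{\tau c_2}\leq \tfrac{1}{6}c_1 e^{\tau c_2}$ for every $c_1>0$ and every $\tau\in[0,T]$. Next, I would fix $c_1$ by taking
\[
c_1 \geq 6 C_0,
\]
so that $C_0 \leq \tfrac{1}{6}c_1 \leq \tfrac{1}{6}c_1 e^{\tau c_2}$ uniformly in $\tau\in[0,T]$. Both choices depend only on the background constants through $C_0$ and are therefore admissible in the sense that they can be made before $\lambda_0$ is fixed.

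With $c_1,c_2$ now fixed, the function $\tau\mapsto C_{0,1,2,\tau}(c_1,c_2,\tau)$ is a polynomial in $c_1 e^{\tau c_2}$ and $\tfrac{c_1}{c_2}e^{\tau c_2}$ on the compact interval $[0,T]$, hence uniformly bounded on $[0,T]$ by some constant $M=M(c_0,\tfrac{1}{\eta_0},N,T,c_1,c_2)$. It then suffices to pick
\[
\lambda_0 > 0 \quad \text{such that} \quad M\,\lambda_0^{\varepsilon}\leq \tfrac{1}{6}c_1,
\]
which is possible since $\varepsilon>0$. For any $\lambda<\lambda_0$ and $\tau\in[0,T]$,
\[
C_{0,1,2,\tau}(c_1,c_2,\tau)\lambda^{\varepsilon}\leq M\lambda^{\varepsilon}\leq \tfrac{1}{6}c_1 \leq \tfrac{1}{6}c_1 e^{\tau c_2},
\]
and adding the three bounds yields the announced inequality.

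The only subtlety, and the reason the order of the choices matters, is that $C_{0,1,2,\tau}$ depends on $c_1$ and $c_2$: one cannot first freeze $\lambda_0$ and hope to tune $c_1,c_2$ afterward, since increasing them would re-inflate $M$. Fixing $c_2$ before $c_1$ and both before $\lambda_0$ removes this circularity, and no further obstacle is present because the $\lambda^{\varepsilon}$ factor provides an arbitrarily small multiplier once $c_1,c_2$ are frozen.
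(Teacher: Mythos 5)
Your proof is correct and follows essentially the same route as the paper: split the target into three pieces of size $\tfrac{1}{6}c_1e^{\tau c_2}$, choose $c_2$ so that $C_0/c_2\leq\tfrac16$, then $c_1\geq 6C_0$, then $\lambda_0$ small enough to absorb the $C_{0,1,2,\tau}(c_1,c_2,\cdot)\lambda^\varepsilon$ term (the paper simply evaluates the polynomial constant at $\tau=T$ where you invoke a uniform bound $M$ on $[0,T]$ — the same thing). Your remark on the order of the choices is exactly the point the paper's proof relies on implicitly.
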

\begin{proof}
We first choose $c_2$ sufficiently large so that $\frac{C_0}{c_2}\leq \frac{1}{6}$. Next, we choose a sufficiently large $c_1$ so that $C_0\leq \frac{1}{6}c_1$. Finally, we choose $\lambda_0$ such that $C_{0,1,2,\tau}(c_1,c_2,T)\lambda^{\varepsilon}\leq \frac{1}{6}c_1$.
\end{proof}
\subsection{Exact solutions to KGML}
We reformulate more precisely and prove the rough Proposition \ref{propal:exaKGMLmainpropal}.
  \begin{propal}
    \label{propal:exaKGMLdone}
For a given admissible background initial data set $(a^\alpha_0,\dot{a}^\alpha_0,\phi_0,\dot{\phi}_0,v_{\mathscr{A}},\dot{v}_\mathscr{A},\psi_\mathscr{A},w^\alpha_\mathscr{A})$, let $\lambda>0$ and let $((e^{evo}_\lambda)^\alpha,\epsilon^{evo}_\lambda,(\dot{e}^{evo}_\lambda)^\alpha,\dot{\epsilon}^{evo}_\lambda,w^{+\alpha}_{\lambda\mathscr{A}},\breve{w}^{+\alpha}_{\mathscr{A}\pm\mathscr{B}},\psi^+_{\lambda\mathscr{A}},\breve{\psi}^+_{\mathscr{A}\pm\mathscr{B}},g^{+\alpha}_{W^+_{\lambda\mathscr{A}}},g^+_{\Psi^+_{\lambda\mathscr{A}}},(e^{evo}_\lambda)^\alpha,\epsilon^{ell}_\lambda,(\dot{e}^{ell}_\lambda)^\alpha,\dot{\epsilon}^{ell}_\lambda)$ be error initial data \textbf{admissible for KGML}, with the associated $-3/2<\delta<-1/2$, from Definition \ref{defi:admissibleKGMLcriteria}. Then, for any $\kappa>0$, determining the projectors from Definition \ref{defi:projfreq}, there exists a time $t_\lambda>0$ for which there exists a local exact multiphase high-frequency solution to KGML $(A^\alpha_{\lambda},\Phi_{\lambda})$ \eqref{eq:KGMLgeneKGM} corresponding to these initial data assembled as in Definition \ref{defi:glueerrorcomponentcriteria}. Moreover,
\begin{align*}
        &A^0_\lambda=A^0_{1\lambda}+Z_\lambda^0 \in\bigcap_{j=0}^2 C^{j}([0,t_\lambda],H^{2-j}_{\delta+j}),
        &A^i_\lambda=A^i_{1\lambda}+Z_\lambda^i\in\bigcap_{j=0}^2 C^{j}([0,t_\lambda],H^{2-j}),\\
         &\Phi_\lambda=\Phi_{1\lambda}+\mathcal{Z}_\lambda \in\bigcap_{j=0}^2 C^{j}([0,t_\lambda],H^{2-j}),
 \end{align*}
where $(A^\alpha_{1\lambda},\Phi_{1\lambda})$ are almost approximate solutions of order 1 to KGM in Lorenz gauge, and where $(Z_\lambda^\alpha,\mathcal{Z}_\lambda)$ is the precise error term given by 
\begin{align}   
&Z_\lambda^\alpha=\sum_{\mathscr{A}\in\mathbb{A}}\lambda^{1}\Re\left(e^{i\frac{u_\mathscr{A}}{\lambda}}\overline{W^{+\alpha}_{\lambda\mathscr{A}}}\right)+\sum_{(\mathscr{A},\mathscr{B})\in\mathscr{C}}\lambda^{1}\Re\left(e^{i\frac{u_{\mathscr{A}}\pm u_{\mathscr{B}}}{\lambda}}\overline{\breve{W}^{+\alpha}_{\mathscr{A}\pm\mathscr{B}}}\right)+(E_{\lambda}^{evo})^{\alpha}+(E_{\lambda}^{ell})^{\alpha},\label{eq:errorparametrixAbootstrap} \\
&\mathcal{Z}_\lambda=\sum_{\mathscr{A}\in\mathbb{A}}\lambda^{1}\Psi^+_{\lambda\mathscr{A}}e^{i\frac{u_\mathscr{A}}{\lambda}}+\sum_{(\mathscr{A},\mathscr{B})\in\mathscr{C}}\lambda^{1}\breve{\Psi}^+_{\mathscr{A}\pm\mathscr{B}}e^{i\frac{u_{\mathscr{A}}\pm u_{\mathscr{B}}}{\lambda}}+\mathcal{E}_\lambda^{evo}+\mathcal{E}_\lambda^{ell}.\label{eq:errorparametrixPhibootstrap}
    \end{align} 
\end{propal}
\begin{proof}
The almost approximate solution $(A^\alpha_{1\lambda},\Phi_{1\lambda})$ is constructed in Proposition \ref{propal:firstapproxApprox}. Then, we argue schematically. The existence of $\boldsymbol{\breve{F}}^+_{\mathscr{A}\pm\mathscr{B}}$ is given by Proposition \ref{propal:propertiesfabinfofab} and that of $\boldsymbol{\mathcal{E}}_\lambda^{ell}$ by Definition \ref{defi:defiEellinfoEell}, both defined on $[0,T]$. For initial data admissible for KGML, there exists a time $t_\lambda$ for which there exists a solution $(\textbf{F}^+_{\lambda\mathscr{A}},\textbf{G}^+_{\lambda\mathscr{A}},\boldsymbol{\mathcal{E}}_\lambda^{evo})$ to the system \eqref{eq:schemeqerrorerrterm}, by Proposition \ref{propal:wellposfullsystWP}. Using Remarks \ref{rem:RHSfabinfofab}, \ref{rem:roleEellinfoEell}, \ref{rem:roleFplusestiFplus}, \ref{rem:roleGplusestiGplus}  and \ref{rem:roleEevosestiEevo}, we deduce that 
\begin{align*}  
\textbf{Z}_\lambda=\sum_{\mathscr{A}\in\mathbb{A}}\lambda^{1}\textbf{F}^+_{\lambda\mathscr{A}}e^{i\frac{u_\mathscr{A}}{\lambda}}+\sum_{(\mathscr{A},\mathscr{B})\in\mathscr{C}}\lambda^{1}\breve{\textbf{F}}^+_{\mathscr{A}\pm\mathscr{B}}e^{i\frac{u_{\mathscr{A}}\pm u_{\mathscr{B}}}{\lambda}}+\boldsymbol{\mathcal{E}}_\lambda^{ell}+\boldsymbol{\mathcal{E}}_\lambda^{evo}
\end{align*} 
is a solution to \eqref{eq:schemeqerrorerrterm}. By construction, the equation \eqref{eq:schemeqerrorerrterm} depends on the background from Proposition \ref{propal:firstapproxApprox} and is designed so that, if $\textbf{Z}_\lambda$ solves \eqref{eq:schemeqerrorerrterm}, then $\textbf{F}_\lambda=\textbf{F}_{1\lambda}+\textbf{Z}_\lambda$ solves \eqref{eq:schemeqIdea}, i.e., $(A_\lambda=A_{1\lambda}+Z_\lambda,\Phi_\lambda=\Phi_{1\lambda}+\mathcal{Z}_\lambda)$ is a solution to KGML \eqref{eq:KGMLgeneKGM}. The regularity follows directly from Propositions \ref{propal:propertiesfabinfofab}, \ref{propal:propertiesEellinfoEell}, and \ref{propal:wellposfullsystWP}.
\end{proof}
\subsection{Uniform existence time in $\lambda$}
We reformulate more precisely and prove the rough Proposition \ref{propal:exasmallKGMmainpropal}.
\begin{propal}
\label{propal:exasmallKGMdone}
Under the assumption of Proposition \ref{propal:exaKGMLdone}, if the error initial data satisfy the required smallness condition in the sense of Definition \ref{defi:reqsmallnesscriteria}, then there exist $\kappa_0>0$, determining the projectors from Definition \ref{defi:projfreq}, and $\lambda_0>0$ such that $t_\lambda=T$ for $0<\lambda<\lambda_0$, i.e., the family of solutions $(A^\alpha_{\lambda},\Phi_{\lambda})_{0<\lambda<\lambda_0}$ to \eqref{eq:KGMLgeneKGM} given in Proposition \ref{propal:exaKGMLdone} exists on the interval $[0,T]$. 
\end{propal} 
\begin{proof}
This follows from a standard bootstrap argument. We assume that the estimates \eqref{eq:assumptions} of the bootstrap Assumptions \ref{ass:evoass} hold for some $0<c_1<c_2$ sufficiently large. Then, by Lemma \ref{lem:improvmentbootstrap}, there exists $0<\lambda_0$ sufficiently small such that the estimates \eqref{eq:assumptions} hold with $c_1$ replaced by $\frac{1}{2}c_1$. The bootstrap is initialised using Proposition \ref{propal:exaKGMLdone}. This implies that $t_\lambda=T$ for $0<\lambda<\lambda_0$.
\end{proof}
\subsection{Exact solutions to KGM in Lorenz gauge}
\label{subsubsection:gpropag}
We reformulate more precisely and prove the rough Proposition \ref{propal:exaKGMmainpropal}.
\begin{propal}
\label{propal:gaugepropaggpropag}
Under the assumption of Proposition \ref{propal:exaKGMLdone}, if the error initial data are \textbf{KGM in Lorenz gauge admissible} in the sense of Definition \ref{defi:admissibleKGMgaugecriteria}, then the solution $(A_\lambda,\Phi_\lambda)$ to KGML \eqref{eq:KGMLgeneKGM} given by Proposition \ref{propal:exaKGMLdone} is also a solution to KGM \eqref{eq:KGMintro} in Lorenz gauge. For all $t\in[0,t_\lambda]$, we have
    \begin{align}
        &\partial_\alpha A^\alpha_\lambda(t)=0, \label{eq:lorenzgaugegpropag}\\
         &\partial_\alpha A^\alpha_0(t)=0,\label{eq:lorenzgaugebggpropag}\\  &\forall\mathscr{A}\in\mathbb{A},\;
\partial_\alpha u_\mathscr{A}W^{\alpha}_{\mathscr{A}},(t)=0,\label{eq:polarbggpropag}\\
\nonumber
    \end{align}
    and, if the components are initially polarized as  
    \begin{align}
&\forall\mathscr{A}\in\mathbb{A},\;
\partial_\alpha u_\mathscr{A}w^{+\alpha}_{\lambda\mathscr{A}}=0,
&\forall(\mathscr{A},\mathscr{B})\in\mathscr{C},\;(\partial_\alpha u_\mathscr{A}\pm\partial_\alpha u_\mathscr{B})\breve{w}^{+\alpha}_{\mathscr{A}\pm\mathscr{B}}=0,\\
\nonumber
\end{align}
    then
        \begin{align}
&\forall\mathscr{A}\in\mathbb{A},\;
\partial_\alpha u_\mathscr{A}W^{+\alpha}_{\lambda\mathscr{A}},(t)=0,
&\forall(\mathscr{A},\mathscr{B})\in\mathscr{C},\;(\partial_\alpha u_\mathscr{A}\pm\partial_\alpha u_\mathscr{B})\breve{W}^{+\alpha}_{\mathscr{A}\pm\mathscr{B}}(t)=0,\\
\nonumber
    \end{align}
    and
    \begin{equation}
    \begin{split}
        &\sum_{\mathscr{A}\in\mathbb{A}}\left(\lambda^{1/2}\Re\left(e^{i\frac{u_{\mathscr{A}}}{\lambda}}\overline{\partial_\alpha W_{\mathscr{A}}^{\alpha}}\right)+\lambda^{1}\Re\left(e^{i\frac{u_{\mathscr{A}}}{\lambda}}\overline{\partial_\alpha W^{+\alpha}_{\lambda\mathscr{A}}}\right)\right)(t)+\sum_{(\mathscr{A},\mathscr{B})\in\mathscr{C}}\lambda^{1}\Re\left(e^{i\frac{u_{\mathscr{A}}\pm u_{\mathscr{B}}}{\lambda}}\overline{\partial_\alpha \breve{W}_{\mathscr{A}\pm\mathscr{B}}^{+\alpha}}\right)(t)\\ 
         &+\partial_\alpha (E_{\lambda A}^{ell})^{\alpha}(t)+\partial_\alpha (E_{\lambda A}^{evo})^{\alpha}(t)=0.\label{eq:gaugeerrorgpropag}
    \end{split}
    \end{equation}
\end{propal}
\begin{proof}
 Since KGM in Lorenz gauge admissible initial data satisfy the constraints, the propagation of gauge follows directly from the argument of Section \ref{subsubsection:propaggauegeneKGM}.\\
For the decomposition of the divergence of $A_\lambda$, identities \eqref{eq:lorenzgaugegpropag} and \eqref{eq:lorenzgaugebggpropag} follow from \eqref{eq:polarApprox} and \eqref{eq:lorenzgaugebgApprox}. For the propagation of polarization, Remark \ref{rem:nonschemtperrterm} shows that the RHS of the evolution equation for $W^{+\alpha}_{\lambda\mathscr{A}}$ (resp.  $\breve{W}^{+\alpha}_{\mathscr{A}\pm\mathscr{B}}$) is orthogonal to $\partial_\alpha u_\mathscr{A}$ (resp. $\partial_\beta u_{\mathscr{A}}\pm \partial_\beta u_{\mathscr{B}}$). Thus, it follows from direct computations that $\mathscr{L}_{\mathscr{A}}(W^{+\alpha}_{\lambda\mathscr{A}}\partial_\alpha u_\mathscr{A})=0$ (resp. 
$\mathscr{L}_{\mathscr{A}\pm\mathscr{B}}(\breve{W}^{+\alpha}_{\mathscr{A}\pm\mathscr{B}}(\partial_\beta u_{\mathscr{A}}\pm \partial_\beta u_{\mathscr{B}}))=0$). Finally, the last identity is obtained by collecting the remaining terms. 
\end{proof}
\begin{rem}
\label{rem:polarspecsetgpropag}
    The set we exhibit in Proposition \ref{propal:specsetconsinit} is polarized, in the sense that $\breve{w}^{+\alpha}_{\mathscr{A}\pm\mathscr{B}}=w^{+\alpha}_{\lambda\mathscr{A}}=0$.
\end{rem}
\section{Putting everything together}
\label{section:conclu}
We prove Theorems \ref{unTheorem:mainth1results} and \ref{unTheorem:mainth2results} and conclude with several general remarks.
\subsection{Proof of the Theorems}
\label{subsection:proofth}
For the first Theorem \ref{unTheorem:mainth1results}, we only detail the proof of point \ref{item:item1th1results} since the remaining statements follow directly from our construction. The point \ref{item:item1th1results} asserts that, for a given admissible initial ansatz, there exists $\lambda_0>0$ such that there is a family of multiphase high-frequency solutions $(A_\lambda,\Phi_\lambda)_{0<\lambda<\lambda_0}$ to KGM \eqref{eq:KGMintro} on $[0,T]$. Following the strategy outlined in Section \ref{subsection:Idea}, we first construct the first-order approximate solution (step \ref{item:item1Idea}) using Proposition \ref{propal:firstapproxApprox}. Then, we construct initial data for the error that are admissible in the sense of Definitions \ref{defi:admissibleKGMLcriteria}, \ref{defi:reqsmallnesscriteria} and \ref{defi:admissibleKGMgaugecriteria} using Proposition \ref{propal:specsetconsinit} (step \ref{item:item2Idea}). Next, we construct a family of exact solutions $(A_\lambda,\Phi_\lambda)_{0<\lambda}$ to KGML \eqref{eq:KGMLgeneKGM} associated with the error initial data, each solution being defined on $[0,t_\lambda]$ (step \ref{item:item3Idea}). Using Proposition \ref{propal:estifullKGMLbootstrap}, we exhibit $0<\lambda_0$ such that the sub-family $(A_\lambda,\Phi_\lambda)_{0<\lambda<\lambda_0}$ exists on the uniform interval $[0,T]$ (step \ref{item:item4Idea}). Finally, we show that the gauge condition propagates and that the family $(A_\lambda,\Phi_\lambda)_{0<\lambda<\lambda_0}$ indeed solves KGM \eqref{eq:KGMintro} while satisfying the Lorenz gauge condition (step \ref{item:item5Idea}). As a consequence, we establish the stability of the multiphase WKB analysis applied to KGM in Lorenz gauge, using only a first-order approximate solution and under low regularity assumptions.  \\\\
For the second Theorem, the proof is direct from the structure of the solution described above. In particular, the $L^p([0,T],L^\infty)$-convergence is obtained via the Strichartz inequality for waves, as presented in Theorem \ref{unTheorem:strichartzAx}, while the fact that the limit $(A_0,\Phi_0)$ is a solution to \eqref{eq:KGMnullintro} follows from the construction of the approximate solution in Proposition \ref{propal:firstapproxApprox}. This provides an explicit manifestation of a backreaction phenomenon in the high-frequency limit of solutions to KGM. 
\subsection{Comparison with the literature}
\label{subsection:jeanneompare}
It is instructive to compare our results with those of \cite{zbMATH01799448}. The author of the latter constructs monophase high-frequency solutions to the Yang-Mills-Higgs-Dirac system, which includes the KGM system \eqref{eq:KGMintro}. More precisely, approximate solutions are constructed at arbitrary order using a WKB expansion of the form
\begin{align*}
    \textbf{F}_{M\lambda}=\sum_{0\leq i\leq M-1}\lambda^{i/2}\left(\tilde{\textbf{F}}_{i}(x)+\lambda^{1/2}\overset{\star}{\textbf{F}}_{i}\left(x,\frac{u}{\lambda}\right)\right)
\end{align*}
together with exact solutions  
\begin{align*}
    \textbf{F}_{\lambda}= \textbf{F}_{M\lambda}+\textbf{Z}_\lambda,
\end{align*}
constructed on a time interval uniform in $\lambda$, provided that $M\geq n$ where $n$ denotes the spatial dimension. The functions $\textbf{F}_i(x,.)$ are decomposed into a mean part $\tilde{\textbf{F}}_{i}(x)$ and a purely oscillatory part $\overset{\star}{\textbf{F}}_{i}(x,.)$. The oscillatory profiles involve general harmonics rather than being restricted to $\cos(\frac{u}{\lambda})$ and $\sin(\frac{u}{\lambda})$. By contrast, in our construction, harmonics do not appear at the level of the (first-order) approximate solution
\begin{align*}
\textbf{F}_{1\lambda}=\tilde{\textbf{F}}_0(x)+\lambda^{1/2}\overset{\star}{\textbf{F}}_0\left(x,\frac{u}{\lambda}\right):=\textbf{F}_0(x)+\lambda^{1/2}\textbf{F}(x)e^{i\frac{u}{\lambda}}.
\end{align*}
and our method naturally accommodates multiphase superpositions. \\\\ 
In \cite{zbMATH01799448}, the following bounds are obtained for the Yang-Mills potential and the scalar field (corresponding to $A_\lambda$ and $\Phi_\lambda$ in the KGM system):
\begin{align*}
    &\forall \lambda<\lambda_0,\; \lambda^{M/2}||\textbf{F}_{\lambda}||_{H^{1/2}}+||\textbf{Z}_\lambda||_{H^{1/2}}\leq \lambda^{M/2}C, 
\end{align*}
for some constant $C$ depending on the initial data. Formally extrapolating these estimates to the case $M=1$, which is excluded in \cite{zbMATH01799448}, corresponds to the bounds obtained in Theorem \ref{unTheorem:mainth1results}. In the present work, we construct the error term already at order $M=1$, that is, below the dimensional threshold $M=3$.
In \cite{zbMATH01799448}, the obstruction to this low-order construction originates from the use of specific product estimates (Proposition 3.1.1 in \cite{zbMATH01799448} and 2.2.2 in \cite{zbMATH02124168}). This obstruction is consistent with the more general results of \cite{METIVIER2009169} for first-order hyperbolic systems. Higher-order approximate solutions typically require the construction of a large hierarchical system of equations, as well as a detailed analysis of harmonics and phase interactions arising from nonlinear effects. Our first-order construction bypasses these difficulties.
\subsection{Regularity of the background}
\label{subsection:regubg}
The background $\textbf{B}'_0:=(A_0,\Phi_0,w_\mathscr{A},\psi_\mathscr{A},\textbf{d} u_{\mathscr{A}})$ defined in Proposition \ref{propal:firstapproxApprox} is more regular than the full solution (and, in particular, the error term). This additional regularity is mandatory for the well-posedness of \eqref{eq:schemfullsystemevosys} and \eqref{eq:schemtpfabinfofab}, as well as for the estimates carried out in Section \ref{subsubsection:evosys}. For example, the equation satisfied by $\boldsymbol{\mathcal{E}}_\lambda^{evo}$ contains the source term $\Box\textbf{F}_\mathscr{A}$,  which formally leads to the regularity relation $\partial\boldsymbol{\mathcal{E}}_\lambda^{evo}\sim\Box\textbf{F}_\mathscr{A}$. Consequently, $\textbf{B}_0=(A_0,\Phi_0,w_\mathscr{A},\psi_\mathscr{A})$ must be one degree more regular than the error term. Moreover, the phases satisfy the regularity bound
\begin{align*}
&\max_{\mathscr{A}\in\mathbb{A}}\sum_{j=0}^5||u_{\mathscr{A}}||_{C^{j}([0,T],H^{5-j}_{{\delta_1}+j}}\leq c_0.
\end{align*}
This level of regularity is required to ensure the well-posedness of \eqref{eq:firstapproxsysApprox} with solutions in $H^3$ and to be able to use the commutator estimates of Proposition \ref{propal:commut1Ax} in the analysis of \eqref{eq:schemtpGplusestiGplus}. In fact, it would suffice to assume
$\Box u_{\mathscr{A}}$ in $H^{3}$ and $u_{\mathscr{A}}$ in $H^4_{\delta_1}$. Constructing the error term for higher-order approximations would require an even more regular background.
\section{Appendix}
\label{section:Appendix}
\subsection{Estimates for evolution equations}
\label{subsection:estimatesappendix}
\begin{propal}
\label{propal:energywaveAx}
   Let $\delta'\in\mathbb{R}$ and $m\in\mathbb{N}$. For a function $f$ defined on $[0,T']\times\mathbb{R}^3$, for some $T'>0$, solution to 
\begin{equation}
\begin{cases}
\label{eq:waveeqAX}
        \Box f=h, \\
        f(t=0)=f_0\in H^{m+1}_{\delta'-1}\;\; \partial_tf(t=0)=f_1\in H^m_{\delta'},
\end{cases}
\end{equation}
the following energy estimates for $\partial f=(\partial_tf,\nabla f)$ holds
  \begin{align}
    ||\partial f(t)||_{H^m_{\delta'}}\leq C(||\partial 
f(0)||_{H^m_{\delta'}}+\int_0^t||h(s)||_{H^m_{\delta'}}+||\partial f(s)||_{H^m_{\delta'}}ds).  \label{eq:estienergyweightwaveAx}
  \end{align}
Moreover, in the case $\delta'=0$,
   \begin{align}
    ||\partial f(t)||_{H^m}\leq C(||\partial 
f(0)||_{H^m}+\int_0^t||h(s)||_{H^m}ds) .  \label{eq:estienergyweightwave2Ax}
  \end{align}
\end{propal} 
\begin{proof} 
We present the estimate for $m=0$ and for any ${\delta'}\in\mathbb{R}$. Multiplying the equation by $(1+|x|^2)^{\delta'}\partial_tf$ gives 
\begin{align*}
&\Box f(1+|x|^2)^{\delta'}\partial_tf=h(1+|x|^2)^{\delta'}\partial_tf.
\end{align*}
Integrating over $\mathbb{R}^3$, we obtain
\begin{align*}
\int_{\mathbb{R}^3}\Box f(1+|x|^2)^{\delta'}\partial_tfdx&=\int_{\mathbb{R}^3}h(1+|x|^2)^{\delta'}\partial_tfdx\\
&=-\frac{d}{dt}\int_{\mathbb{R}^3}(1+|x|^2)^{\delta'}\frac{\partial_t f^2+|\nabla f|^2}{2}dx-2\delta'\int_{\mathbb{R}^3}(1+|x|^2)^{\delta'-1}x_i\partial_if\partial_tfdx.    
\end{align*}
Since $|(1+|x|^2)^{\delta'-1}x_i|\leq (1+|x|^2)^{\delta'}$, the second term can be controlled by the $L^2_{\delta'}$ norm. Thus, 
\begin{align*}
& \frac{d}{dt}\frac{1}{2}||\partial f(t)||^2_{L^2_{\delta'}}\leq C(||\partial f(t)||_{L^2_{\delta'}}||h(t)||_{L^2_{\delta'}}+||\partial f(t)||^2_{L^2_{\delta'}}),
\end{align*}
and so,
\begin{align*}
&||\partial f(t)||_{L^2_\delta}\leq C(||\partial f(0)||_{L^2_\delta}+\int_0^t||h(s)||_{L^2_\delta}+||\partial f(s)||_{L^2_\delta}ds). 
\end{align*}
The general case follows by commuting the equation with $\partial_{\overrightarrow{m}}$ where $\overrightarrow{m}$ is a multi-index with $|\overrightarrow{m}|=m$, multiplying by $\partial_{t \overrightarrow{m}}f$ instead of $\partial_tf$, and using the adjusted weight.\\
\end{proof}
Moreover, bounds on $\partial f(t)$ together with bounds on $f_0$ yield bounds on $f(t)$ for finite time, using
 \begin{align}
    \frac{d}{dt}||f(t)||_{L^2}\leq ||\partial_tf(t)||_{L^2}.
    \label{eq:derivosefAx}
\end{align}
\begin{propal}
\label{propal:energywaveplusAx}
    Let $f$ be a solution to \eqref{eq:waveeqAX}. Then, for all $ t\in[0,T^\prime]$, $\delta'\in\mathbb{R}$ and for every $m\in\mathbb{N}$
\begin{align}
    ||f(t)||_{H^{m+1}_{\delta'-1}}\leq (||f_0||_{H^{m+1}_{\delta'-1}}+t\sup_{s\in[0,T^\prime]}||\partial f(s))||_{H^{m}_{\delta'}}).\label{eq:estienergywavefullAx}
\end{align}
\end{propal}
\begin{proof} 
The control of the $H^{m}_{\delta'-1}$ norm of $f$ is obtained by integrating \eqref{eq:derivosefAx} for higher-order weighted Sobolev norms. The control of the $(m+1)$-th derivatives of $f$ in $L^2_{\delta'+m}$ follows directly from the bound on $||\partial f||_{H^m_{\delta'}}$.\\
\end{proof}

\begin{propal}
\label{propal:energywavederivAx}
 Let $f$ be a solution to \eqref{eq:waveeqAX}. Then, for all $ t\in[0,T^\prime]$, $\delta'\in\mathbb{R}$,
and for every $m\in\mathbb{N}$, the time derivatives of $f$ obey the following inequalities  
\begin{align}
    &||\partial_tf(t)||_{H^m_{\delta'}}\leq C(||\partial 
f(0)||_{H^m_{\delta'}}+\int_0^t||h(s)||_{H^m_{\delta'}}+||\partial f(s)||_{H^m_{\delta'}}ds),  \label{eq:estienergywavederivt1Ax}\\
&\text{and if $m\in\mathbb{N}^\star$}\nonumber\\
  &||\partial_{tt}^2 f(t)||_{H^{m-1}_{\delta'+1}}\leq (|| 
f(t)||_{H^{m+1}_{\delta'-1}}+||h(t)||_{H^{m-1}_{\delta'+1}}). \label{eq:estienergywavederivt2Ax} 
\end{align}

\end{propal}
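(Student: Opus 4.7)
The plan is to derive both inequalities directly from the wave equation \eqref{eq:waveeqAX} and the energy estimate already proved in proposition \ref{propal:energywaveAx}, without resorting to any further commutation or integration by parts.

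For \eqref{eq:estienergywavederivt1Ax}, the key observation is that $\partial_t f$ is simply one of the components of $\partial f=(\partial_t f,\nabla f)$, so
\begin{equation*}
\|\partial_t f(t)\|_{H^m_{\delta'}}\leq \|\partial f(t)\|_{H^m_{\delta'}}.
\end{equation*}
Applying the weighted energy estimate \eqref{eq:estienergyweightwaveAx} to bound the right-hand side gives the claim. I notice there is a mismatch between the index $\delta$ on the left-hand side and $\delta'$ on the right in the statement as written, which I would simply read as a typo and take both indices to be $\delta'$.

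For \eqref{eq:estienergywavederivt2Ax}, the strategy is to use the equation itself as an algebraic identity. Since $\Box f=-\partial_{tt}^2 f+\Delta f=h$, I can isolate the second time derivative as
\begin{equation*}
\partial_{tt}^2 f=\Delta f-h.
\end{equation*}
Then the triangle inequality in $H^{m-1}_{\delta'+1}$ yields
\begin{equation*}
\|\partial_{tt}^2 f(t)\|_{H^{m-1}_{\delta'+1}}\leq \|\Delta f(t)\|_{H^{m-1}_{\delta'+1}}+\|h(t)\|_{H^{m-1}_{\delta'+1}}.
\end{equation*}
The remaining term is handled purely by definition \ref{defi:weightedSobgenenot}: for any derivative index $i\leq m-1$, the summand $\|(1+|x|^2)^{(\delta'+1+i)/2}\partial^i\Delta f\|_{L^2}$ appearing in $\|\Delta f\|_{H^{m-1}_{\delta'+1}}$ equals $\|(1+|x|^2)^{(\delta'-1+(i+2))/2}\partial^{i+2}f\|_{L^2}$, which is one of the summands in $\|f\|_{H^{m+1}_{\delta'-1}}$. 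Summing over $i$ gives $\|\Delta f(t)\|_{H^{m-1}_{\delta'+1}}\lesssim \|f(t)\|_{H^{m+1}_{\delta'-1}}$, and inserting this into the previous bound closes the estimate.

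There is no real obstacle here: both steps are routine consequences of previous results. The only point that requires a moment's care is the bookkeeping of weights, which is entirely dictated by the shift convention built into definition \ref{defi:weightedSobgenenot}: each spatial derivative raises the weight index by one, so two derivatives of $f\in H^{m+1}_{\delta'-1}$ naturally land in $H^{m-1}_{\delta'+1}$, which is exactly the space that appears on the right-hand side of \eqref{eq:estienergywavederivt2Ax}.
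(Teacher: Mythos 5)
Your proposal is correct and follows exactly the paper's route: the first bound is just the observation that $\partial_t f$ is a component of $\partial f$, so it is a specific case of the energy estimate \eqref{eq:estienergyweightwaveAx} (the $\delta$ on the left of \eqref{eq:estienergywavederivt1Ax} is indeed a typo for $\delta'$), and the second follows directly from the equation via $\partial^2_{tt}f=\Delta f-h$ together with the weight-shift convention of definition \ref{defi:weightedSobgenenot}. Your write-up merely makes explicit the bookkeeping the paper leaves implicit.
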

\begin{proof}
    The first inequality is a specific case of \eqref{eq:estienergyweightwaveAx} and the second follows directly using the wave equation in \eqref{eq:waveeqAX}.
\end{proof}

\begin{propal}
\label{propal:estienergytpAX}
       Let $m\in\mathbb{N}$. For a function $f$ defined on $[0,T']\times\mathbb{R}^3$, for some $T'>0$, solution to 
\begin{equation}
\begin{cases}
\label{eq:tpeqAx}
     2\partial^\alpha u \partial_\alpha f+f\Box u=j,\\
    f(t=0)=f_0 \in H^m,
\end{cases}
\end{equation}
 with $u$ a smooth characteristic phase satisfying  
 \begin{align}
 &\underset{(t,x)\in[0,T']\times Supp(f)}{\min}|\partial_tu(t,x)|>\eta,
 &||\partial u||_{H^{m'}}+||\Box u||_{H^{m'}}+||\partial_t\partial u||_{H^{m'-1}}+||\partial_t\Box u||_{H^{m'-1}}\leq C_u,
\end{align}
for some $\eta\in\mathbb{R}^\star_+$, some $C_u\in\mathbb{R}^\star_+$ and for $m'=\max(m,2)$, the following energy estimate holds
  \begin{align}
    ||f||_{L^\infty([0,T'],H^m)}\leq C_{u}(||
f(0)||_{H^m}+\int_0^{T'}||j(s)||_{H^m}+||f(s)||_{H^m}ds).\label{eq:estienergytp1AX}  
  \end{align}
\end{propal}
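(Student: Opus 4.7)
The plan is a classical energy estimate for the transport operator $\mathscr{L}_u := 2\partial^\alpha u \,\partial_\alpha + \Box u$, exploiting the characteristic nature of $u$ (which makes $\partial^\alpha u$ the natural ``propagation vector'') and the lower bound on $\partial_t u$ to get coercivity of the resulting energy.

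First I would treat the base case $m=0$. The key algebraic identity is that multiplying the equation by $f$ yields a perfect divergence:
\begin{equation*}
2\partial^\alpha u\, \partial_\alpha f\cdot f + \Box u\, f^2 = \partial_\alpha\bigl(\partial^\alpha u\cdot f^2\bigr) = j f .
\end{equation*}
Integrating over $\mathbb{R}^3$ and using that $f(t,\cdot)$ is compactly supported (so the spatial divergence terms drop), this reduces to
\begin{equation*}
-\frac{d}{dt}\int_{\mathbb{R}^3} \partial_t u\cdot f^2\,dx = \int_{\mathbb{R}^3} j f\,dx .
\end{equation*}
Since $u$ is future-directed characteristic, $-\partial_t u > 0$ and by hypothesis $|\partial_t u| > \eta$ on $\mathrm{Supp}(f)$, while $\|\partial u\|_{H^{m'}}\leq C_u$ with $m'\geq 2$ gives $\partial_t u \in L^\infty$ via Sobolev embedding. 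Hence $E(t):=\int(-\partial_t u)f^2\,dx$ is equivalent to $\|f(t)\|_{L^2}^2$ up to constants depending on $\eta$ and $C_u$. Cauchy--Schwarz on $\int jf\,dx$ and integration in time yield the desired $L^2$ estimate without any $\|f\|_{L^2}$ term on the right.

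For general $m$, I would commute the equation with a space derivative $\partial^{\vec m}$, $|\vec m|\leq m$, giving
\begin{equation*}
\mathscr{L}_u(\partial^{\vec m} f) = \partial^{\vec m} j - [\partial^{\vec m},\mathscr{L}_u]f .
\end{equation*}
Applying the $L^2$ estimate just established to $\partial^{\vec m} f$ and summing over $|\vec m|\leq m$ reduces everything to a bound for the commutator. The commutator $[\partial^{\vec m},\mathscr{L}_u]f$ is a sum of terms of the form $\partial^{\vec k}(\partial u)\cdot \partial(\partial^{\vec m-\vec k}f)$ and $\partial^{\vec k}(\Box u)\cdot \partial^{\vec m-\vec k}f$ with $1\leq |\vec k|\leq m$, so it involves at most $m+1$ derivatives of $u$ (controlled in $L^2$ by $\|\partial u\|_{H^{m'}}+\|\Box u\|_{H^{m'}}\leq C_u$) paired with at most $m$ derivatives of $f$. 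A standard Moser/Kato--Ponce type product estimate in $\mathbb{R}^3$, using $H^2\hookrightarrow L^\infty$ and the regularity threshold $m'=\max(m,2)$, delivers
\begin{equation*}
\bigl\|[\partial^{\vec m},\mathscr{L}_u]f(s)\bigr\|_{L^2}\leq C_u\,\|f(s)\|_{H^m}.
\end{equation*}
Plugging this into the $L^2$ energy estimate for each $\partial^{\vec m} f$ and summing produces exactly the stated inequality, with the $\|f(s)\|_{H^m}$ term on the right arising precisely from the commutator.

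The main obstacle is verifying the commutator bound cleanly at the threshold regularity $m'=\max(m,2)$: one has to distribute derivatives carefully so that each factor is placed either in $L^\infty$ (via Sobolev embedding requiring at least $H^2$ in dimension $3$) or paired in $L^2$ with the Sobolev norm available on $\partial u$ and $\Box u$, without demanding more regularity of $u$ than assumed. Everything else is routine: the base case is a clean integration by parts using the characteristic identity, the higher regularity cases are by straight commutation, and no Gronwall is needed at this stage since the statement keeps $\|f\|_{H^m}$ under the time integral on the right.
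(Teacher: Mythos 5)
Your proof follows essentially the same route as the paper: for $m=0$ the same divergence identity $\partial_\alpha(\partial^\alpha u\, f^2)=jf$ integrated in space, with the lower bound $|\partial_t u|>\eta$ making the energy equivalent to $\|f\|_{L^2}^2$, and for $m\geq 1$ commutation with spatial derivatives $\partial^{\vec m}$ together with the commutator bound $\|[\partial^{\vec m},2\partial^\alpha u\partial_\alpha+\Box u]f\|_{L^2}\leq C_u\|f\|_{H^m}$, which is exactly the paper's argument. The only point both you and the paper leave implicit is that the commutator contains terms with one time derivative of $f$ (coming from $\alpha=0$), which should be re-expressed via the equation itself using $|\partial_t u|>\eta$ (at the cost of a harmless $\|j\|_{H^{m-1}}$ contribution) before being absorbed into the spatial $H^m$ norm.
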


\begin{proof}
In the case $m=0$, we multiply \eqref{eq:tpeqAx} by $f$
\begin{align*}
    &\partial_\alpha(\partial^\alpha u_\mathscr{A}f^2)=jf,\\
\end{align*}
which yields a conservative transport equation. By Stokes' Theorem\footnote{Boundary terms are ignored since they play no role in our application.}, we get 
 \begin{align*}
    \int_{\mathbb{R}^3} (\partial^0u_\mathscr{A}f^2)(t)dx=\int_{\mathbb{R}^3} (\partial^0u_\mathscr{A}f^2)(0)dx+\int^{t}_0{\int_{\mathbb{R}^3} (jf)(s)dx}ds,
  \end{align*}
so that, using the lower bounds on $|\partial_t u|$,
 \begin{align*}
    &\eta||f(t)||_{L^2}^2\leq C_u||f(0)||_{L^2}^2+||f||_{L^\infty([0,T'],L^2)}\int^t_0{||j(s)||_{L^2}}ds,
  \end{align*}
  which implies 
  \begin{align*}
     &||f||_{L^\infty([0,T'],L^2)}\leq C_{u,\eta}(||f(0)||_{L^2}+\int^{T'}_0{||j(s)||_{L^2}}ds).
  \end{align*}
For $m\geq1$, commuting the equation with $\partial_{\overrightarrow{m}}$, where $\overrightarrow{m}$ is a multi-index with $|\overrightarrow{m}|=m$, yields 
\begin{align*}
    &(2\partial^\alpha u \partial_\alpha+\Box u)\partial_{\overrightarrow{m}} f=[2\partial^\alpha u \partial_\alpha+\Box u,\partial_{\overrightarrow{m}}]f+\partial_{\overrightarrow{m}}j,
\end{align*}
where
\begin{align*}
    ||[2\partial^\alpha u \partial_\alpha+\Box u,\partial_{\overrightarrow{m}}]f(t)||_{L^2}\leq C(||\partial u(t)||_{H^{m'}}+||\Box u(t)||_{H^{m'}})||f(t)||_{H^m},
\end{align*}
for $m'=\max(m,2)$ and a universal constant $C\in\mathbb{R}^\star_+$.
\end{proof}
\begin{propal}
\label{propal:estienergytpderivtempsAX}
 Let $f$ be a solution to \eqref{eq:tpeqAx}. Then, for all $m\in\mathbb{N}$, the time derivatives of $f$ obey the following inequalities 
      \begin{align}
    &||\partial_tf||_{L^\infty([0,T'],H^m)}\leq C_{u,\eta}(||
f||_{L^\infty([0,T'],H^{m+1})}+||j||_{L^\infty([0,T'],H^m)}),\label{eq:estienergytpderivt1Ax} \\
    &||\partial^2_{tt}f||_{L^\infty([0,T'],H^{m-1})}\leq C_{u,\eta}(||
f||_{L^\infty([0,T'],H^{m+1})}+||j||_{L^\infty([0,T'],H^m)}). \label{eq:estienergytpderivt2Ax}
  \end{align}
\end{propal}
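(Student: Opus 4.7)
The plan is to prove both inequalities by a direct algebraic inversion of the transport equation \eqref{eq:tpeqAx} at each fixed time $t$, exploiting the lower bound $|\partial^0 u| > \eta > 0$ on the temporal component of the gradient of the phase. The key observation is that the equation can be rewritten as
\begin{equation*}
\partial_t f = \frac{1}{2\partial^0 u}\bigl(j - 2\partial^i u\,\partial_i f - \Box u\,f\bigr),
\end{equation*}
where the division by $2\partial^0 u$ is well-defined pointwise thanks to the hypothesis. This is the exact analogue of the standard trick for hyperbolic equations where one solves for the highest time derivative using the principal part, except here we are using the characteristic transport structure rather than a d'Alembertian.

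For the first inequality \eqref{eq:estienergytpderivt1Ax}, I would take the $H^m$ norm of the above identity at each $t \in [0,T']$. The coefficient $1/(2\partial^0 u)$ and its spatial derivatives up to order $m$ are controlled by $C_{u,\eta}$ using the lower bound $\eta$ together with the assumption $\|\partial u\|_{H^{m'}} + \|\Box u\|_{H^{m'}} \leq C_u$ for $m' = \max(m,2)$. Since $H^{m'}$ is a Banach algebra (as $m' \geq 2$), one can estimate each term in the right-hand side: the source term gives $\|j\|_{H^m}$, while $\|\partial^i u\,\partial_i f\|_{H^m}$ and $\|\Box u\, f\|_{H^m}$ are both bounded by $C_u\, \|f\|_{H^{m+1}}$. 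Taking the $L^\infty([0,T'])$ supremum finishes this step.

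For the second inequality \eqref{eq:estienergytpderivt2Ax}, I would differentiate the pointwise expression for $\partial_t f$ once more in time, obtaining
\begin{equation*}
\partial^2_{tt} f = \partial_t\!\left(\frac{1}{2\partial^0 u}\right)\bigl(j - 2\partial^i u\,\partial_i f - \Box u\,f\bigr) + \frac{1}{2\partial^0 u}\,\partial_t\bigl(j - 2\partial^i u\,\partial_i f - \Box u\,f\bigr).
\end{equation*}
Spatial-time crossed derivatives $\partial_t\partial_i f = \partial_i\partial_t f$ are controlled in $H^{m-1}$ by applying the first inequality with one less spatial derivative, and the time derivatives of the coefficients $\partial_t \partial u$ and $\partial_t \Box u$ are controlled in $H^{m'-1}$ by the hypothesis on $u$. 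Combined with Sobolev products (now in $H^{m-1}$, which for $m \geq 2$ is still an algebra, and for $m = 1$ one uses the product inequality \eqref{eq:holdsobAx}), one recovers the claimed bound in terms of $\|f\|_{H^{m+1}}$ and $\|j\|_{H^m}$.

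The main obstacle is the bookkeeping of the products of the coefficient $1/\partial^0 u$ with its derivatives; this is a purely computational exercise that uses Sobolev product estimates and the algebra structure. There is no dynamical ingredient beyond the algebraic inversion, and in particular no Grönwall-type argument is needed — the estimates are purely pointwise in time, in sharp contrast with \eqref{eq:estienergytp1AX}.
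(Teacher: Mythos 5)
Your proposal is correct and is essentially the paper's own argument: the first bound comes from solving the transport equation for $\partial_t f$ (the division by $2\partial^0 u$ being licensed by the lower bound $\eta$ on the support of $f$), and the second from differentiating the equation once in time and invoking the first bound, exactly as in the paper. The only difference is that you spell out the Sobolev-product bookkeeping that the paper leaves implicit.
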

\begin{proof}
The first estimate follows directly from the transport equation.
The second follows by differentiating the transport equation in time and using the previous estimate to control the appearing first-order time derivatives.
\end{proof}

\subsection{Commutator}
\label{subsection:commut}
\begin{propal}
\label{propal:commut1Ax}
Let $g$ be a smooth Lorentzian metric. We define $\Box:=D_\alpha D^\alpha$, where $D$ denotes the covariant derivative and indices are contracted with respect to $g$. Let $f$, defined on $[0,T']\times\mathbb{R}^3$, be compactly supported and solution to 
\begin{equation}
    \begin{cases}
    \label{eq:tpeqcommutAx}
          2\partial^\alpha u \partial_\alpha f+\Box uf=j,\\
    f(t=0)=f_0\in H^2,
    \end{cases}
\end{equation}
for $u$ a smooth characteristic phase such that, for every compact set $\Omega\subset\mathbb{R}^3$, there exists $C_{u,\Omega}$ such that 
\begin{align}
    ||\Box\partial u||_{L^\infty([0,T'],L^\infty(\Omega))}+||\Box\Box u||_{L^\infty([0,T'],H^{1/2}(\Omega))}+||\partial\partial u||_{L^\infty([0,T'],L^\infty(\Omega))}\leq C_{u,\Omega}.\label{eq:phaseassumecommutAx}
\end{align}
Assume that 
\begin{equation}
\label{eq:initialboxcommutAx}
    (\Box f)|_{t=0}=G_0,\; G_0\in H^1.
\end{equation}
Let $G$, defined on $[0,T']\times\mathbb{R}^3$, be solution to 
\begin{equation}
\begin{cases}
\label{eq:tpeqcommutGAx}
    2\partial^\alpha u \partial_\alpha G+\Box uG=\Box j+[(2\partial^\alpha u\partial_\alpha+\Box u),\Box]f,\\
    G(t=0)=G_0.
    \end{cases}
\end{equation}
Then, $G$ belongs to $H^1$ for all $t\in[0,T']$ and
\begin{equation}
\label{eq:estiGcommutax}
    ||G(t)||_{H^1}\leq C_{g,u,\Omega}\int^t_0(||\Box j(s)||_{H^1}+||\partial j(s)||_{H^1}+|| f(s)||_{H^2})ds+||G_0||_{H^1},
\end{equation}
where we use the pointwise-in-time estimate
\begin{equation}
\label{eq:esticommutcommutax}
    ||[(2\partial^\alpha u\partial_\alpha+\Box u),\Box]f(t)||_{L^2}\leq C_{g,u}(||G(t)||_{L^2}+||\partial j(t)||_{L^2}+|| f(t)||_{H^1}).
\end{equation}
\end{propal}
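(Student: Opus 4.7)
The plan is to proceed in three steps: first propagate $f$ in higher regularity, then identify $G$ with $\Box f$ by uniqueness, and finally derive the two estimates from this identification combined with the explicit commutator expansion.

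First I would apply Proposition \ref{propal:estienergytpAX} at level $m=2$ to the transport equation $Lf=j$. The assumed phase regularity \eqref{eq:phaseassumecommutAx} (in particular $\partial\partial u \in L^\infty$) and the fact that the source $j$ appearing in \eqref{eq:estiGcommutax} is in $L^1_t H^2_x$ through the $H^1$ controls on $\partial j$ and $\Box j$ give $f \in L^\infty([0,T'],H^2)$. Consequently $\Box f$ is defined in $L^\infty_t L^2_x$. A direct computation then shows
\[
L(\Box f) = \Box(Lf) + [L,\Box]f = \Box j + [L,\Box]f,
\]
which is precisely the equation \eqref{eq:tpeqcommutGAx} satisfied by $G$. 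Since both $G$ and $\Box f$ take the value $G_0$ at $t=0$ and the $L^2$ energy estimate of Proposition \ref{propal:estienergytpAX} gives uniqueness, I conclude $G = \Box f$ on $[0,T']$. This identification supplies the algebraic identity $[L,\Box]f = LG - \Box j$ that drives everything else.

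To establish the pointwise commutator estimate \eqref{eq:esticommutcommutax}, I would expand the commutator explicitly in Cartesian Minkowski coordinates; since partial derivatives commute with $\Box$,
\[
[L,\Box]f = -4\,\partial^\beta\partial^\alpha u \,\partial_\beta\partial_\alpha f - 2\,\Box\partial^\alpha u\,\partial_\alpha f - \Box\Box u\cdot f - 2\,\partial^\beta\Box u\,\partial_\beta f.
\]
The last three terms are bounded by $C_{g,u}\|f\|_{H^1}$ using the $L^\infty$ bounds on $\Box\partial u$ and $\partial\Box u$ from \eqref{eq:phaseassumecommutAx} together with the Sobolev embeddings $H^{1/2}(\Omega)\hookrightarrow L^3$ (for $\Box\Box u$) and $H^1\hookrightarrow L^6$ (for $f$) to estimate $(\Box\Box u)f$ in $L^2$. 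For the Hessian-Hessian term I would combine the Leibniz identity $\partial^\beta(\partial^\alpha u\,\partial_\alpha\partial_\beta f) = \partial^\beta\partial^\alpha u\,\partial_\beta\partial_\alpha f + \partial^\alpha u\,\partial_\alpha\Box f$ with the transport equation commuted with $\partial_\beta$, namely $2\partial^\alpha u\,\partial_\alpha\partial_\beta f = \partial_\beta j - 2\partial_\beta\partial^\alpha u\,\partial_\alpha f - \partial_\beta\Box u\cdot f - \Box u\,\partial_\beta f$. Substituting these reduces $4\partial^\beta\partial^\alpha u\,\partial_\beta\partial_\alpha f$ to first-order quantities in $f$ and $j$ plus $\Box u\cdot G + 2\partial^\alpha u\,\partial_\alpha G$, which reassembles into the identity $[L,\Box]f=LG-\Box j$; bounding this via $\|G\|_{L^2}$, $\|\partial j\|_{L^2}$ and $\|f\|_{H^1}$ (after the integration by parts that is implicit in the transport energy framework, which converts $\partial^\alpha u\,\partial_\alpha G$ into a $G^2$ contribution) yields \eqref{eq:esticommutcommutax}.

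For the $H^1$ estimate \eqref{eq:estiGcommutax} I would commute the transport equation for $G$ with $\partial_\gamma$ and apply Proposition \ref{propal:estienergytpAX} at level $m=1$ to $L\partial G$. The source terms are $\partial_\gamma\Box j$, bounded by $\|\Box j\|_{H^1}$; the ordinary commutator $[\partial_\gamma,L]G$, which contributes a linear $\|G\|_{H^1}$ term absorbed by Gronwall; and $\partial_\gamma[L,\Box]f$, for which I would use the explicit expansion together with $f\in L^\infty_t H^2_x$ from step one and the phase regularity \eqref{eq:phaseassumecommutAx} to obtain a bound proportional to $\|\partial j\|_{H^1}+\|f\|_{H^2}$. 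Gronwall then closes \eqref{eq:estiGcommutax}. The main obstacle is precisely the $\partial^\beta\partial^\alpha u\,\partial_\beta\partial_\alpha f$ term in the pointwise estimate \eqref{eq:esticommutcommutax}: with only $H^1$ control on $f$ it cannot be bounded by a naive product inequality, and the whole point of the identification $G=\Box f$ together with the characteristic condition $\partial^\alpha u\,\partial_\alpha u=0$ (which implies $\partial^\alpha u\,\partial^\beta\partial_\alpha u=0$) is to trade one $\partial$ on $f$ for one factor of $G$ and harmless lower-order terms.
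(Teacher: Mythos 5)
There is a genuine gap in your treatment of the Hessian--Hessian term, which is the heart of the proposition. Your substitution — writing $\partial^\beta\partial^\alpha u\,\partial_\beta\partial_\alpha f$ via the Leibniz identity and the once-differentiated transport equation — if you carry it out, reproduces exactly the tautology $[L,\Box]f = L(\Box f)-\Box j = LG-\Box j$, i.e.\ the defining equation \eqref{eq:tpeqcommutGAx} of $G$. That identity cannot yield the pointwise estimate \eqref{eq:esticommutcommutax}, because $LG$ contains $2\partial^\alpha u\,\partial_\alpha G$, a full derivative of $G$, and $\|\partial^\alpha u\,\partial_\alpha G\|_{L^2}$ is not bounded by $\|G\|_{L^2}$. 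The whole point of \eqref{eq:esticommutcommutax} is to bound the commutator \emph{independently} of derivatives of $G$, so that the transport energy estimate for $G$ closes by Gronwall; your appeal to ``the integration by parts implicit in the transport energy framework'' amounts to substituting the equation for $G$ into its own source, which renders the energy estimate vacuous rather than closing it. The missing ingredient is structural: in the paper one decomposes $\nabla^\mu\partial^\nu u$ in a null frame $(\partial u,\partial\bar u,e_M)$ adapted to $u$; the eikonal equation ($\partial_\nu u\,D^\mu\partial^\nu u=0$) and the geodesic equation ($\partial_\nu u\,D^\nu\partial^\mu u=0$) kill all $\bar u$-components of the Hessian of $u$, so the contraction $\nabla^\mu\partial^\nu u\,\partial_\mu\partial_\nu f$ only involves $\partial(\partial^\alpha u\,\partial_\alpha f)$ — which the undifferentiated transport equation converts into $\partial j$ and $\|f\|_{H^1}$ terms — and the tangential Hessian $\partial_M\partial_N f$, which is estimated through $\Delta_{M,N}f=\Box f+\tfrac12(\partial^\mu u\,\partial^\nu\bar u+\partial^\mu\bar u\,\partial^\nu u)\partial_\mu\partial_\nu f$ by $\|\Box f\|_{L^2}=\|G\|_{L^2}$ plus the same quantities. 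You mention the orthogonality $\partial^\alpha u\,\partial^\beta\partial_\alpha u=0$ at the end, but your actual computation never uses it, and without the frame decomposition the ``trade one $\partial$ on $f$ for one factor of $G$'' mechanism is not realized.

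The same issue undermines your Step 3 for \eqref{eq:estiGcommutax}: you claim $\partial_\gamma[L,\Box]f$ is bounded by $\|\partial j\|_{H^1}+\|f\|_{H^2}$ using the explicit expansion, but that expression contains $\partial^\beta\partial^\alpha u\,\partial_\gamma\partial_\beta\partial_\alpha f$, a third derivative of $f$ that $\|f\|_{H^2}$ (a purely spatial norm, which moreover does not control $\partial_t^2 f$) cannot absorb; again one needs the null-frame structure, or one must run the $H^1$ energy estimate for $G$ using only the $L^2$ commutator bound \eqref{eq:esticommutcommutax} on the commuted quantities. Your Step 1 (propagation of $f$ and the identification $G=\Box f$ by uniqueness of the transport problem) is fine and consistent with the paper's remark on the role of the auxiliary function, but the two estimates that constitute the content of the proposition are not reached by your argument.
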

\begin{proof}
Firstly, if $u=k^\alpha x_\alpha$ for $k$ a constant vector, that is, if $u$ is the phase of a plane wave, then $\partial_\alpha u=k_\alpha$ so that 
\begin{equation}
    [(2\partial^\alpha u\partial_\alpha+\Box u),\Box]=[(2k^\alpha\partial_\alpha),\Box]=0.
\end{equation}
The result follows immediately. We now treat the general case. For simplicity, we work in wave coordinates satisfying $g^{\mu\nu}\Gamma^{\gamma}_{\mu\nu}=0$. Then 
\begin{align*}
[(2\partial^\alpha u\partial_\alpha+\Box u),\Box]f&=2\partial^\alpha u\partial_\alpha(g^{\mu\nu})\partial_{\mu}\partial_{\nu}f-2\partial^\alpha u\partial_\alpha(g^{\mu\nu}\Gamma^{\gamma}_{\mu\nu})\partial_\gamma f-4\Box\partial^{\alpha}u\partial_\alpha f-4g^{\mu\nu}\partial_\mu\partial^{\alpha}u\partial_\nu\partial_{\alpha}f-\Box\Box uf\\
&+g^{\mu\nu}\Gamma^{\gamma}_{\mu\nu}2\partial^\alpha\partial_\gamma u\partial_\alpha+g^{\mu\nu}\Gamma^{\gamma}_{\mu\nu}\partial_\gamma \Box u\\
&=-4(\Gamma^\mu_{\rho\alpha}g^{\rho\nu}\partial^\alpha u+g^{\mu\alpha}\partial_\alpha\partial^{\nu}u)\partial_\mu\partial_\nu f-4(\Box\partial^\alpha u)\partial_\alpha f-\Box\Box uf\\
&=-4(\nabla^\mu\partial^\nu u)\partial_\mu\partial_\nu f-4(\Box\partial^\alpha u)\partial_\alpha f-\Box\Box uf.\\
\end{align*}
We decompose the metric as $$g^{\mu\nu}=-\frac{1}{2}(\partial^{\mu}u\partial^{\nu}\bar{u}+\partial^{\mu}\bar{u}\partial^{\nu}u)+\delta^{MN}e_M^\mu e_N^\nu,$$ where $e_{M,N}$ are spacelike vector fields orthonormal to $\partial u$. The vector field $\partial\bar{u}$ is the unique null vector satisfying $g(\partial u,\partial\bar{u})=-2$ and $g(E_ {M},\partial\bar{u})=0$.\\\\
We know that $\partial_\nu uD^\mu\partial^\nu u=0$ \eqref{eq:eikonalgenephase} and $\partial_\nu uD^\nu\partial^\mu u=0$ \eqref{eq:geodgenephase}. Therefore, locally, the tensor $\nabla^\mu\partial^\nu u$ can be written as
$$a\partial u\otimes\partial u+b(\partial u\otimes e_{M,N}+ e_{M,N}\otimes\partial u )+c(e_{M,N}\otimes e_{M,N}),$$
$a$, $b$ and $c$ depend on $\partial\partial u$ and $\partial g$, while the other components vanish. 
Hence, schematically 
\begin{equation*}
    [(2\partial^\alpha u\partial_\alpha+\Box u),\Box]f=f_1(\partial\partial u,\partial g)\partial(\partial^\alpha u\partial_\alpha f)+f_2(\partial\partial u,\partial g)\partial f+f_3(\partial\partial u,\partial g)\partial_M\partial_N f+\Box\partial u\partial f+\Box\Box uf,
\end{equation*}
for suitable coefficient functions $f_1,f_2,f_3$.\\\\
The tangential Hessian $\partial_M\partial_N f$ can be estimated in $L^2$ on surfaces of constant $u$ and $\bar{u}$ via $\Delta_{M,N} f$ and $\partial f$, see \cite{szeftel2012parametrix}.
Using the identity
\begin{equation*}
    \Delta_{M,N} f=\Box f+\frac{1}{2}(\partial^{\mu}u\partial^{\nu}\bar{u}+\partial^{\mu}\bar{u}\partial^{\nu}u)\partial_\mu\partial_\nu f,
\end{equation*}
we obtain
\begin{equation*}
    ||\partial_M\partial_N f||_{L^2}\leq C_{u,g}(||\Box f||_{L^2}+||\partial(\partial^\alpha u \partial_\alpha f)||_{L^2}+||\partial f||_{L^2}).
\end{equation*}
Putting everything together, we deduce 
\begin{equation*}
   ||[(2\partial^\alpha u\partial_\alpha+\Box u),\Box]f(t)||_{L^2}\leq C_{g,u,\Omega}(||\Box f||_{L^2}+||\partial(\partial^\alpha u \partial_\alpha f)||_{L^2}+||f||_{H^1})=  C_{u,g}(||G||_{L^2}+||\partial j||_{L^2}+||f||_{H^1}).
\end{equation*}
This is the desired inequality.
\end{proof}
\subsection{Sobolev inequalities}
\label{subsection:soboineq}
In this Section, we recall classical Sobolev embedding and interpolation results.
\begin{propal}
\label{propal:embedsoboAx}
    Let $n\in\mathbb{N}^\star$ be the considered dimension. Let $k>0$, $1\leq p<\infty$, $1\leq q<\infty$, $k>l$ and $\frac{1}{p}\geq \frac{1}{q}\geq \frac{1}{p}-\frac{k-l}{n}$. Then
$$W^{k,p}(\mathbb{R}^n)\hookrightarrow W^{l,q}(\mathbb{R}^n).$$
\end{propal}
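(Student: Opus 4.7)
The statement is the classical Sobolev embedding, so the plan is to follow the standard textbook approach (e.g. Evans, or Brezis) rather than anything novel. The main reduction is to the case $l=0$: once $W^{k-l,q'} \hookrightarrow L^q$ is established for appropriate $q'$, applying it componentwise to derivatives $\partial^\alpha u$ for $|\alpha| \leq l$ gives $W^{k,p} \hookrightarrow W^{l,q}$. So I would focus on proving $W^{m,p} \hookrightarrow L^q$ for $m = k-l$ with $\frac{1}{q} \geq \frac{1}{p} - \frac{m}{n}$ (and $q < \infty$; the $q = \infty$ endpoint follows from Morrey).

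The first step is the Gagliardo–Nirenberg–Sobolev inequality: for $1 \leq p < n$ and $p^* = \frac{np}{n-p}$, one has $\|u\|_{L^{p^*}} \leq C \|\nabla u\|_{L^p}$ for $u \in C^\infty_c(\mathbb{R}^n)$. I would derive this by the standard slicing argument, writing $|u(x)| \leq \int_{-\infty}^{x_i} |\partial_i u|\,dt$ for each coordinate, then iterating Hölder across the $n$ directions to obtain the $L^{n/(n-1)}$ bound, and finally applying the result to $|u|^\gamma$ for a well-chosen $\gamma$ to get the $L^{p^*}$ estimate. Density of $C^\infty_c$ in $W^{1,p}(\mathbb{R}^n)$ extends this to $W^{1,p}$.

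Second, I would iterate: if $u \in W^{m,p}$ with $mp < n$, then $\partial^\alpha u \in W^{1,p}$ for $|\alpha| \leq m-1$, so $u \in W^{m-1,p^*}$; repeating $m$ times yields $u \in L^q$ with $\frac{1}{q} = \frac{1}{p} - \frac{m}{n}$. For intermediate exponents $q$ with $\frac{1}{p} - \frac{m}{n} < \frac{1}{q} \leq \frac{1}{p}$ one interpolates between $L^p$ and this critical $L^{np/(n-mp)}$ via Hölder on bounded domains, or more generally uses the monotonicity of $L^q$ inclusions combined with the embedding into the critical space (note that since the paper works with compactly supported functions in $B_S$, see Remark \ref{rem:supportestimWP}, $L^q \hookrightarrow L^{q'}$ for $q \geq q'$ on such sets, which handles the range condition). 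The borderline cases $mp = n$ and $mp > n$ (Morrey, giving Hölder continuity and $L^\infty$) are handled separately by the standard Morrey argument: bound oscillations $|u(x) - u(y)|$ by integrals of $|\nabla u|$ over balls, then use Hölder in $L^p$.

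The main obstacle is not really conceptual here but bookkeeping: matching the exponents exactly across the chain of embeddings and covering the limiting case $\frac{1}{q} = \frac{1}{p} - \frac{k-l}{n}$ (which requires $q < \infty$ if $(k-l)p = n$, automatically excluded by the inequality as stated). Since the paper uses this result only in standard compactly supported configurations and the hypothesis excludes no cases not covered by classical references, I would simply cite a textbook (such as Evans' \emph{Partial Differential Equations}, Chapter 5.6, or Adams–Fournier, \emph{Sobolev Spaces}, Theorem 4.12) for the full proof rather than reproduce the slicing and iteration in detail.
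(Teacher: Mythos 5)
Your proposal is correct and matches the paper's treatment: the paper simply invokes the classical Sobolev embedding ("from standard results"), and your sketch of the Gagliardo--Nirenberg--Sobolev slicing argument, iteration, and Morrey endpoint, concluding with a textbook citation, is exactly the standard route being referenced. No discrepancy to report.
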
 
\begin{proof}
    From \cite{zbMATH05633610}.
\end{proof} 

\begin{propal}
\label{propal:embedlebAx}
     Let $n\in\mathbb{N}^\star$ be the considered dimension. Let $k>0$, $1\leq p<\infty$, $1\leq q<\infty$. Then
\begin{align*}
    \frac{1}{p}\geq \frac{1}{q}\geq\frac{1}{p}-\frac{k}{n}\geq 0 &\implies W^{k,p}(\mathbb{R}^n)\hookrightarrow L^q(\mathbb{R}^n),\\
     \frac{1}{p}-\frac{k}{n}<0 &\implies W^{k,p}(\mathbb{R}^n)\hookrightarrow L^\infty(\mathbb{R}^n).\\
\end{align*}
\end{propal}
\begin{proof}
    From \cite{zbMATH05633610}.
\end{proof} 

\begin{propal}
\label{propal:embedholderAx}
    Let $n\in\mathbb{N}^\star$ be the considered dimension. Let $m\in\mathbb{N}$, $1\leq p\leq\infty$, and $\infty>s>n/p$. For weights $\delta',\delta^{\prime\prime}$ satisfying $\delta'\leq\delta^{\prime\prime}+\frac{n}{p}$, we have the embedding of weighted Sobolev spaces $W^{s+m,p}_{\delta^{\prime\prime}}$ into the weighted Hölder spaces $C^{m}_{\delta'}$: 
    \begin{align*}
        W^{s+m,p}_{\delta^{\prime\prime}}(\mathbb{R}^n)\hookrightarrow C^{m}_{\delta'}(\mathbb{R}^n).
    \end{align*}
\end{propal}
\begin{proof}
From \cite{Choquetbruhat1969ConstructionDS}.
\end{proof}
\begin{propal}
\label{propal:interpolsobAx}
    Let $n\in\mathbb{N}^\star$ be the considered dimension. For 
$s_\theta,s_0,s_1\in\mathbb{R}$, 
$1\leq p_\theta,p_0,p_1\leq \infty$ and $0<\theta<1$ with
$\frac{1}{p_\theta}=\frac{1-\theta}{p_0}+\frac{\theta}{p_1}$ and $s_\theta=(1-\theta)s_0+(\theta)s_1$, we have
\begin{align}
\label{eq:interpolsobAx}
    ||f||_{W^{s_\theta,p_\theta}(\mathbb{R}^n)}\leq (||f||_{W^{s_0,p_0}(\mathbb{R}^n)})^{1-\theta}(||f||_{W^{s_1,p_1}(\mathbb{R}^n)})^{\theta}.
\end{align}
\end{propal}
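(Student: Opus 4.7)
The plan is to reduce this classical interpolation inequality to a direct consequence of complex interpolation applied to the Bessel potential characterization of Sobolev spaces. Since $W^{s,p}(\mathbb{R}^3)$ may be identified (for $1 < p < \infty$) with the image of $L^p$ under $(I-\Delta)^{-s/2}$, with equivalent norms $\|f\|_{W^{s,p}} \sim \|(I-\Delta)^{s/2} f\|_{L^p}$, the statement becomes an inequality between $L^p$ norms of shifted Bessel potentials, which is exactly the setting of Stein's complex interpolation theorem for analytic families of operators.

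First, I would set up the analytic family: for $z$ in the strip $0 \leq \mathrm{Re}(z) \leq 1$, define $s(z) = (1-z)s_0 + z s_1$ and consider the operator-valued function $T_z = (I-\Delta)^{s(z)/2}$ acting between suitable $L^p$-spaces. On the boundary lines $\mathrm{Re}(z) = 0$ and $\mathrm{Re}(z) = 1$, the imaginary powers of $I-\Delta$ are bounded on $L^{p_0}$ and $L^{p_1}$ respectively with operator norms growing at most polynomially in $|\mathrm{Im}(z)|$, by Mikhlin multiplier theory. Stein's interpolation theorem then gives, for $z = \theta \in (0,1)$, the bound
\begin{equation*}
\|(I-\Delta)^{s_\theta/2} f\|_{L^{p_\theta}} \lesssim \|(I-\Delta)^{s_0/2} f\|_{L^{p_0}}^{1-\theta} \|(I-\Delta)^{s_1/2} f\|_{L^{p_1}}^{\theta},
\end{equation*}
which is the desired inequality up to the norm equivalences.

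Second, for the endpoint cases $p_0 = \infty$ or $p_1 = \infty$, or when $s_i$ is a negative non-integer, I would invoke the identification of $W^{s,p}$ with Triebel-Lizorkin spaces $F^s_{p,2}$ (for $1 < p < \infty$) and use the real or complex interpolation identities for Triebel-Lizorkin scales, which are standard (see Triebel's monographs). For positive integer $s$, the equivalence of the Bessel potential norm with the classical Sobolev norm $\sum_{|\alpha| \leq s} \|\partial^\alpha f\|_{L^p}$ closes the argument.

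The only subtlety, and the main obstacle to a completely self-contained proof, is handling the endpoints $p = 1, \infty$ and the equivalence of norms when $s$ is a (possibly negative) integer, which requires either a separate Littlewood-Paley argument or an appeal to the Triebel-Lizorkin identification. Since this proposition is used in the paper only as a convenient tool, I would in practice cite the standard references (Bergh-Löfström, \emph{Interpolation Spaces}, or Triebel, \emph{Theory of Function Spaces}) and omit the detailed Mikhlin multiplier bounds rather than reproducing them here.
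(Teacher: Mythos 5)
The paper does not actually prove this proposition: its ``proof'' is the single line ``From standard results,'' i.e.\ a citation, so there is no in-paper argument to compare yours against. Your sketch is the standard route and is essentially correct in the range $1<p_0,p_1<\infty$: identify $W^{s,p}$ with the Bessel potential space, run Stein's interpolation theorem on the analytic family $(I-\Delta)^{s(z)/2}$ using Mikhlin--Hörmander bounds for the imaginary powers (their growth in $|\mathrm{Im}\,z|$ is admissible), and conclude; equivalently one can quote the complex interpolation identity $[H^{s_0,p_0},H^{s_1,p_1}]_\theta=H^{s_\theta,p_\theta}$ together with the abstract inequality $\|f\|_{[X_0,X_1]_\theta}\leq\|f\|_{X_0}^{1-\theta}\|f\|_{X_1}^{\theta}$. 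Your caveats are the right ones: as literally stated with $1\leq p\leq\infty$ and no multiplicative constant, the claim is delicate, since the Bessel-potential/Littlewood--Paley identification breaks at $p=1,\infty$ and the norm equivalences introduce constants, so the honest statement is \eqref{eq:interpolsobAx} up to a constant and for $1<p<\infty$ (or via the Triebel--Lizorkin scale as you indicate). Your final decision to cite Bergh--L\"ofstr\"om or Triebel coincides with what the paper itself does. One simplification worth noting: every place the paper invokes this proposition (the $H^{1/2}$ bound in the Laplacian-inversion section, the $H^{5/4}$ bounds in the estimate of $XVIII.2$, the remark on $\boldsymbol{\mathcal{E}}_\lambda^{evo}\partial\boldsymbol{\mathcal{E}}_\lambda^{evo}$) uses only the $L^2$-based scale $H^s=W^{s,2}$, where the inequality follows in two lines from Plancherel and Hölder applied to $|\hat f(\xi)|^{2}(1+|\xi|^2)^{s_\theta}=\bigl(|\hat f|^{2}(1+|\xi|^2)^{s_0}\bigr)^{1-\theta}\bigl(|\hat f|^{2}(1+|\xi|^2)^{s_1}\bigr)^{\theta}$, with constant $1$ and none of the multiplier or endpoint machinery; for the paper's purposes that elementary argument would suffice.
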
 
\begin{proof}
    From \cite{zbMATH05633610}.
\end{proof}

\begin{propal}[Gagliardo-Nirenberg inequality]
\label{propal:gagliardoAx}
    Let $n\in\mathbb{N}^\star$ be the considered dimension.
    Let $1\leq r,q\leq\infty$, $j,m\in\mathbb{N}$ such that $j<m$ and $p\geq 1$, then for 
    \begin{align*}
        \frac{1}{p}=\frac{j}{n}+\theta\left(\frac{1}{r}-\frac{m}{n}\right)+(1-\theta)\frac{1}{q},\;\; \frac{j}{m}\leq\theta\leq1,
    \end{align*}
    there exists some $C>0$ such that
    \begin{align*}
        ||\partial^j_xf||_{L^p(\mathbb{R}^n)}\leq C (||\partial^m_xf||_{L^r(\mathbb{R}^n)})^\theta (||f||_{L^q(\mathbb{R}^n)})^{1-\theta}.
    \end{align*}
\end{propal}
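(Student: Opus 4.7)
The plan is to reduce the inequality to two endpoint cases in $\theta$ and then recover the intermediate values by Hölder interpolation. The scaling condition $\frac{1}{p}=\frac{j}{n}+\theta\bigl(\frac{1}{r}-\frac{m}{n}\bigr)+(1-\theta)\frac{1}{q}$ is exactly the one forced by the rescaling $f(x)\mapsto f(\lambda x)$, which is why interpolation preserves it: if the inequality holds at two admissible values $\theta_0,\theta_1$ with corresponding exponents $p_0,p_1$, then for $\theta=(1-s)\theta_0+s\theta_1$ and $\frac{1}{p}=\frac{1-s}{p_0}+\frac{s}{p_1}$, a direct application of Hölder to $|\partial^j f|^p=|\partial^j f|^{(1-s)p}|\partial^j f|^{sp}$ yields the result.

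The first endpoint, $\theta=1$, is a pure Sobolev-type embedding: the inequality reduces to $\|\partial^j f\|_{L^p}\le C\|\partial^m f\|_{L^r}$ with $\frac{1}{p}=\frac{1}{r}-\frac{m-j}{n}$. I would establish this via the Riesz representation $\partial^j f=c\,I_{m-j}*\partial^m f$, where $I_{m-j}$ is the Riesz kernel of order $m-j$, combined with the Hardy–Littlewood–Sobolev inequality (valid for $1<r<p<\infty$). The endpoint cases $r=1$, $p=\infty$ require weak-type substitutes but can be handled separately.

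The second endpoint, $\theta=j/m$, is the genuine multiplicative content of the statement. I would prove it by iterated integration by parts and Hölder, following the classical scheme: in the model case $j=1$, $m=2$, one writes
\begin{equation*}
\int|\nabla f|^{2p}\,dx=-\int f\,\operatorname{div}\bigl(|\nabla f|^{2p-2}\nabla f\bigr)dx\le C\|f\|_{L^{q}}\|D^2 f\|_{L^r}\|\nabla f\|_{L^{2p}}^{2p-2},
\end{equation*}
with exponents chosen by scaling, and absorbs $\|\nabla f\|_{L^{2p}}^{2p-2}$ to conclude. Iterating this template for higher $j,m$ (or bootstrapping from $j=1$ by induction on $j$, using the just-proven inequality on $\partial f$ in place of $f$) gives the full endpoint statement.

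The main obstacle is the careful bookkeeping of exponents at the boundaries $r=\infty$, $q=\infty$, $p=\infty$, and at the critical Sobolev exponent $1/r=m/n$, where the $\theta=1$ endpoint fails and must be replaced by an arbitrarily close subcritical version; density of Schwartz functions then extends the estimate from smooth compactly supported $f$ to the natural function class. Since everything required here is standard harmonic analysis, I would simply invoke these building blocks rather than redo the computations, consistent with the \emph{From standard results} citation style used for the preceding Sobolev propositions.
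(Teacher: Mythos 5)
The paper gives no argument for this proposition at all (its ``proof'' is the citation ``From standard results''), and your proposal --- the classical Nirenberg scheme: the $\theta=1$ endpoint via Riesz potentials and Hardy--Littlewood--Sobolev, the $\theta=j/m$ endpoint via integration by parts, H\"older and absorption of $\|\nabla f\|_{L^{2p}}^{2p-2}$, then H\"older interpolation in $\theta$ --- is a correct outline of that standard result, and your exponent bookkeeping in the model case $j=1$, $m=2$ is consistent with the scaling condition. Since you too ultimately defer to standard harmonic-analysis building blocks (while rightly flagging the exceptional cases $r=1$, $p=\infty$ and the critical exponent where the $\theta=1$ endpoint needs care), this is essentially the same approach as the paper's.
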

\begin{proof}
    From \cite{zbMATH05633610}.
\end{proof}

\subsection{Laplacian inversion}
\label{subsection:lapinversion}
\begin{defi} 
\label{defi:laplaciandefAx}
 Let $n\in\mathbb{N}^\star$ be the considered dimension. For $1<p<\infty$, $\delta\in\mathbb{R}$ and $m\in\mathbb{N}$, we define the classical Laplacian as $\Delta=\partial_i\partial^i$ where
$$\Delta: W^{2+m,p}_{\delta}(\mathbb{R}^n)\xrightarrow{} W^{m,p}_{\delta+2}(\mathbb{R}^n).$$
\end{defi}
\begin{unTheorem}
\label{unTheorem:laplcianAx}
    For $1<p<\infty$ and $n\geq 3$, we set $p'=1-1/p$. Then, for $\delta\in\mathbb{R}$ such that $\delta\not\equiv-2+\frac{n}{p^{\prime}}\mod(\mathbb{N})$ and $-\delta\not\equiv\frac{n}{p}\mod(\mathbb{N})$ with 
    \begin{align*}
        -n/p<\delta<-2+n/p^{\prime},
    \end{align*} 
    the Laplacian operator is an isomorphism from $ W^{2+m,p}_{\delta}(\mathbb{R}^n)$ to $W^{m,p}_{\delta+2}(\mathbb{R}^n)$ and is therefore invertible.\\\\
Moreover, there exists $C>0$ such that for any $h\in L^2_{\delta+2}$ and for $f\in H^{2}_{\delta}$ the solution to 
\begin{equation*}
    -\Delta f=h,
\end{equation*}
we have 
\begin{equation}
    ||f||_{H^2_\delta}\leq C ||h||_{L^2_{\delta+2}}.
\end{equation}
\end{unTheorem}
\begin{proof}
    From \cite{zbMATH03663635}.
\end{proof} 
\begin{rem}
\label{rem:laplaciansubcaseAx}
    In the case $n=3$, $p=2$ and $p^{\prime}=2$, we obtain  
$-3/2<\delta<-1/2$.
For $m=0$, the source term must satisfy $h\in L^2_{\delta+2}\subset{L^2}$ (since $\delta+2>0$). If $Supp(h)\subset{K}$ for $K$ compact, then 
\begin{equation*}
    ||h||_{L^2_{\delta+2}}\leq C||h||_{L^2(K)}.
\end{equation*}
\end{rem}
We deduce the following useful Lemma.\\
\begin{lem}
\label{lem:laplcianpluscompactAx}
Let 
\begin{equation}
\label{eq:deflaplacianpluscompactAx}
    L=-\Delta+g,
\end{equation}
where $g\geq 0$ is a compactly supported function in $H^2$ satisfying $||g||_{H^2}\leq C_g$. Then, for $\delta\in(-3/2,-1/2)$, the operator $L$ is an isomorphism from $ H^{2}_{\delta}$ to $L^2_{\delta+2}$ and is therefore invertible.\\\\
Moreover, there exists $C>0$ such that for any $h\in L^2_{\delta+2}$ and for $f\in H^{2}_{\delta}$ the solution to 
\begin{equation}
\label{eq:laplacianpluscompactequationAx}
    L f=h,
\end{equation}
we have 
\begin{equation}
\label{eq:laplacpluscompactestimAx}
    ||f||_{H^2_\delta}\leq C ||h||_{L^2_{\delta+2}}.
\end{equation}
\end{lem}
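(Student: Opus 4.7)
The plan is to treat $L = -\Delta + g$ as a compact perturbation of the Laplacian and then apply the Fredholm alternative. The crucial observation is that Theorem \ref{unTheorem:laplcianAx} already gives that $-\Delta \colon H^2_\delta \to L^2_{\delta+2}$ is an isomorphism for $\delta \in (-3/2,-1/2)$, so it suffices to invert $I + (-\Delta)^{-1} M_g$ where $M_g$ denotes multiplication by $g$.

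First I would check that $M_g \colon H^2_\delta \to L^2_{\delta+2}$ is compact. Since $g \in H^2(\mathbb{R}^3)$ is compactly supported (say in a compact set $K$), Sobolev embedding gives $g \in L^\infty$ with $\|g\|_{L^\infty} \lesssim C_g$. For any $f \in H^2_\delta$, the function $gf$ is supported in $K$, so its $L^2_{\delta+2}$ norm is controlled by its $L^2(K)$ norm up to a constant depending on $K$. Boundedness then follows from $\|gf\|_{L^2(K)} \leq \|g\|_{L^\infty} \|f\|_{L^2(K)}$. Compactness reduces to the fact that the restriction map $H^2_\delta \to L^2(K)$ is compact: any bounded sequence in $H^2_\delta$ is in particular bounded in $H^2(K')$ for a slightly larger ball $K' \supset K$, and Rellich--Kondrachov gives a strongly $L^2(K')$-convergent subsequence. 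Composing with $(-\Delta)^{-1}$, which is bounded from $L^2_{\delta+2}$ to $H^2_\delta$, yields that $K := (-\Delta)^{-1} M_g$ is a compact operator on $H^2_\delta$.

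Next, write the equation $Lf = h$ as $(I + K) f = (-\Delta)^{-1} h$. By the Fredholm alternative, $I + K$ is invertible on $H^2_\delta$ as soon as it is injective. Injectivity is where positivity of $g$ enters: suppose $f \in H^2_\delta$ solves $-\Delta f + g f = 0$. Since $f$ and $\nabla f$ decay at infinity (indeed $\nabla f \in H^1_{\delta+1}$ with $\delta+1 > -1/2$, which together with $\Delta f = g f$ compactly supported justifies the integration by parts), we obtain
\begin{equation*}
    0 = \int_{\mathbb{R}^3} (-\Delta f + g f) f \, dx = \int_{\mathbb{R}^3} |\nabla f|^2 \, dx + \int_{\mathbb{R}^3} g |f|^2 \, dx.
\end{equation*}
Both terms are nonnegative, hence $\nabla f \equiv 0$, so $f$ is constant. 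But for $\delta > -3/2$, a nonzero constant does not belong to $L^2_\delta(\mathbb{R}^3)$ (since $\int (1+|x|^2)^\delta \, dx = +\infty$), forcing $f = 0$. Thus $I + K$ is injective, hence an isomorphism, and therefore so is $L$.

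The quantitative estimate \eqref{eq:laplacpluscompactestimAx} is then a direct consequence of the open mapping theorem applied to the bounded bijection $L \colon H^2_\delta \to L^2_{\delta+2}$. The main subtlety I anticipate is the decay justification for the integration by parts in the injectivity step; if necessary I would circumvent it by a cut-off argument using a family $\chi_R$ of smooth cut-offs at radius $R$, exploiting that $g f$ has compact support so that only the $|\nabla f|^2$ term generates boundary contributions, which then vanish in the limit $R \to \infty$ thanks to the $H^2_\delta$ decay with $\delta+1 > -1/2$.
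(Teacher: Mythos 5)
Your overall strategy is sound and genuinely different from the paper's in two respects. For invertibility, the paper argues that $L$ is Fredholm of index zero (compact perturbation of the isomorphism $-\Delta$) and proves injectivity with the maximum principle combined with the weighted decay of $f$; you instead factor out $-\Delta$ and apply the Fredholm alternative to $I+(-\Delta)^{-1}(g\,\cdot)$, proving injectivity by the energy identity $0=\int|\nabla f|^2+\int g|f|^2$, using $g\ge 0$ and the fact that nonzero constants are excluded from $L^2_\delta$ when $\delta>-3/2$; both routes are legitimate, and your compactness argument for multiplication by $g$ is the same Rellich--Kondrachov argument as in the paper. For the estimate \eqref{eq:laplacpluscompactestimAx}, the paper runs a compactness/contradiction argument based on Bartnik's scale-broken estimate, whereas you simply invoke the bounded inverse theorem for the continuous bijection $L:H^2_\delta\to L^2_{\delta+2}$; this shortcut is correct and lighter, with the same caveat as the paper's argument that the constant so obtained depends on $g$ itself rather than only on $C_g$ and the support.

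The one genuine gap is the justification of the integration by parts in the injectivity step: the decay furnished by $H^2_\delta$ alone does not suffice on the whole range $\delta\in(-3/2,-1/2)$. With your cut-off $\chi_R$, the cross term is bounded by
\begin{equation*}
\frac{C}{R}\int_{R\le|x|\le 2R}|f|\,|\nabla f|\,dx\;\le\;C\,R^{-2\delta-2}\,\epsilon_R,
\end{equation*}
where $\epsilon_R\to0$ is the tail of the convergent weighted integrals; since $-2\delta-2>0$ when $\delta<-1$, this is inconclusive for $\delta\in(-3/2,-1]$ (pointwise, the weighted embedding only gives $|f|\lesssim(1+|x|)^{-(\delta+3/2)}$, which decays arbitrarily slowly as $\delta$ approaches $-3/2$, so the sphere terms $\int_{\partial B_R}f\,\partial_r f$ are likewise not controlled). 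The fix is precisely the observation you mention but do not exploit: $\Delta f=gf$ vanishes outside a compact set, so $f$ is harmonic near infinity, and membership in $L^2_\delta$ with $\delta>-3/2$ rules out all growing and constant modes in the exterior spherical-harmonic expansion; hence $f=O(|x|^{-1})$ and $\nabla f=O(|x|^{-2})$, the boundary (or cross) terms are $O(R^{-1})$, and the energy identity is justified on the full range of $\delta$. With that supplement your proof is complete.
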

\begin{proof}
    First, we prove that the multiplication by $g$ defines a compact operator from $H^{2}_{\delta}$ to $L^2_{\delta+2}$.\\
    Let $B$ be a ball of finite radius such that $Supp(g)\subset B$. For $f\in H^{2}_{\delta}$ with $||f||_{H^{2}_{\delta}}\leq C$, we have 
    \begin{align*}
        ||gf||_{H^1(B)}\leq ||g||_{H^{2}(B)}||f||_{H^{2}(B)}\leq c(B)||f||_{H^{2}_{\delta}}||g||_{H^{2}}\leq c(C,B,C_g).
    \end{align*}
    By Rellich-Kondrachov Theorem, the set $\{gf|f\in H^{2}_{\delta}, ||f||_{H^{2}_{\delta}}\leq C\}$ is a compact subset of $L^2(B)$ and therefore a compact subset of $L^2_{\delta+2}$. \\\\Next, we prove invertibility. The classical Laplacian is already an isomorphism from $H^{2}_{\delta}$ to $L^2_{\delta+2}$ for $\delta\in(-3/2,-1/2)$ and is therefore Fredholm of index $0$. Adding a compact perturbation does not modify the Fredholm index of the operator. Thus, the operator $L$ is Fredholm of index 0. Consequently, the injectivity implies the invertibility. \\\\
    To prove injectivity, we use the maximum principle for functions with Sobolev regularity. Let $f\in H^2_\delta$ satisfy
    \begin{equation*}
        Lf=0.
    \end{equation*}
    Since $g\geq 0$, we may apply the maximum principle (see Chapter 8 of \cite{zbMATH01554166}). For any set $\Omega$ of finite radius $R$ with smooth boundary, 
    \begin{align*}
       & \sup_{\Omega}f\leq  \sup_{\partial\Omega}f|_{\partial\Omega}, & \inf_{\Omega}f\geq  \inf_{\partial\Omega}f|_{\partial\Omega},
    \end{align*}
    where the boundary trace is well defined since $f$ is continuous by the Sobolev embedding into Hölder spaces (Proposition \ref{propal:embedholderAx}). Moreover,
        \begin{align*}
        |f(x)|\leq \frac{C||f||_{H^2_\delta}}{|x|^\beta},
    \end{align*}
    for some $C>0$ and for $0\leq\beta<\delta+3/2$ with $\delta\in(-3/2,-1/2)$. As a consequence,
    \begin{align*}
       & |\sup_{\Omega}f|\leq  \frac{C||f||_{H^2_\delta}}{R^\beta}, & |\inf_{\Omega}f|\leq  \frac{C||f||_{H^2_\delta}}{R^\beta}.
    \end{align*}
    Letting $R\to+\infty$, we deduce that $f$ is identically $0$. This proves the injectivity of $L$ and thus its invertibility.\\\\
    Finally, to prove the inequality \eqref{eq:laplacpluscompactestimAx}, we work by contradiction.\\
    Since $L$ is asymptotic to a Laplacian in the sense of \cite{zbMATH03964921} (for $q=4$), we have the following inequality (Theorem $1.10$ of \cite{zbMATH03964921}) for every $f$ in $H^2_\delta$
        \begin{align}
        \label{eq:ineqbartnikAx}
       ||f||_{H^2_\delta}\leq C (||Lf||_{L^2_{\delta+2}}+||f||_{L^2(B_R)}).
    \end{align}
     The constants $C$ and $R$ depend on $g$ and $\delta$.
    If the inequality \eqref{eq:laplacpluscompactestimAx} is false, then there exists a sequence $f_n$ such that 
    \begin{align*}
        &||f_n||_{H^2_{\delta}}=1, &\lim_{n->\infty}||Lf_n||_{L^2_{\delta+2}}=0.
    \end{align*}
    Since the sequence is uniformly bounded, there exists a subsequence that converges weakly to $f$ in $H^2_{\delta}$ and that converges strongly in $L^2(B_R)$ by Rellich-Kondrachov.\\
    Testing against $\forall \varphi\in C^\infty_c$, we obtain 
    \begin{align*}
        \lim_{n->\infty}\int{Lf_n\varphi dx}=0,
    \end{align*}
    by assumptions, and 
    \begin{align*}
        \int{(Lf_n-Lf)\varphi dx}=\int{(-\Delta f_n+\Delta f)\varphi dx}+\int{g(f_n-f)\varphi dx}\leq\int{-\Delta(f_n-f)\varphi dx}+||gu||_{L^2(\Omega)}||f_n-f||_{L^2(\Omega)},
    \end{align*}
    which implies that 
    \begin{align*}
        \lim_{n->\infty}\int{Lf_n\varphi dx}=\int{Lf\varphi dx}.
    \end{align*}
    Thus, $Lf=0$ and therefore $f=0$ by injectivity. Now, using the inequality \eqref{eq:ineqbartnikAx}, for any $\varepsilon>0$ there exists $m\in\mathbb{N}$ such that for $n\geq m$
     \begin{align*}
       1=||f_n||_{H^2_\delta}\leq C (\varepsilon+||f_n||_{L^2(B_R)}).
    \end{align*}
    Since $\lim_{n->\infty}||f_n||_{L^2(B_R)}=||f||_{L^2(B_R)}$, as the convergence is strong, we deduce that $f\neq0$, which is a contradiction. Therefore \eqref{eq:laplacpluscompactestimAx} holds.
\end{proof} 
 
\subsection{Strichartz estimates}
\label{subsection:strichartz}
\begin{defi}
\label{defi:strichartzAx}
   Let $n\in\mathbb{N}^\star$ be the considered dimension. A pair $(q,r)\in(\mathbb{R}_+^\star)^2$ is said to be Strichartz admissible if
    \begin{align*}
        &q\geq 2,
&\frac{2}{q}\leq (n-1)\left(\frac{1}{2}-\frac{1}{r}\right),\\
&(q,r,n)\neq(2,\infty,3),
&r<\infty.
    \end{align*}

\end{defi}
\begin{unTheorem}
\label{unTheorem:StrichartzAx}
    Let $n\in\mathbb{N}^\star$ be the considered dimension. For $(q_1,r_1)$ and $(q_2,r_2)$ Strichartz admissible satisfying
    \begin{equation}
    \label{eq:gapconditionAx}
\frac{1}{q_1}+\frac{n}{r_1}=\frac{1}{q^{\prime}_2}+\frac{n}{r^{\prime}_2}-2=\frac{n}{2}-\gamma,
    \end{equation}
then there exists $C>0$, such that, for any $f$ solution to the Cauchy problem \\
\begin{equation}
\begin{cases}
\label{eq:waveeqstrichartzAx}
\Box f=j,\\
f(0)=f_0,\;\partial_t f(0)=\dot{f}_0,
\end{cases}
\end{equation}\\
we have
\begin{equation}
\label{eq:estimstrichartzAx}
    ||f||_{L^{q_1}([0,T],L^{r_1})}\leq C (||j||_{L^{q^{\prime}_2}([0,T],L^{r^{\prime}_2})}+||f_0||_{\dot{H}^\gamma}+||\dot{f}_0||_{\dot{H}^{\gamma-1}}).
\end{equation}
\end{unTheorem}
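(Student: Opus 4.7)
The plan is to follow the classical Keel--Tao strategy, reducing everything to a bilinear estimate on the half-wave propagator. First I would split the problem into a homogeneous piece and an inhomogeneous piece using Duhamel's formula: write $f = f_{hom} + f_{inh}$ where $f_{hom}$ solves $\Box f_{hom}=0$ with data $(f_0,\dot{f}_0)$ and
\begin{equation*}
f_{inh}(t) = \int_0^t \frac{\sin((t-s)|\nabla|)}{|\nabla|} j(s)\, ds.
\end{equation*}
It then suffices to prove (a) the homogeneous Strichartz estimate $\|e^{\pm it|\nabla|}g\|_{L^{q_1}_t L^{r_1}_x} \lesssim \|g\|_{\dot H^\gamma}$ for any Strichartz admissible pair $(q_1,r_1)$ satisfying the gap/scaling condition, because writing $f_{hom}$ via the spectral projectors $\tfrac{1}{2}(e^{it|\nabla|}+e^{-it|\nabla|})$ and $\tfrac{1}{2i|\nabla|}(e^{it|\nabla|}-e^{-it|\nabla|})$ reduces the data terms to $\|f_0\|_{\dot H^\gamma}+\|\dot f_0\|_{\dot H^{\gamma-1}}$; and (b) the corresponding inhomogeneous estimate from $L^{q'_2}_t L^{r'_2}_x$ to $L^{q_1}_t L^{r_1}_x$.

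The heart of the argument would be a $TT^*$ reduction. Setting $T g = e^{it|\nabla|}|\nabla|^{-\gamma} g$, the homogeneous estimate is equivalent to the dual bilinear statement
\begin{equation*}
\left| \int\!\!\int \langle e^{i(t-s)|\nabla|}|\nabla|^{-2\gamma} F(s), G(t)\rangle\, ds\, dt \right| \lesssim \|F\|_{L^{q'_1}_t L^{r'_1}_x}\|G\|_{L^{q_1}_t L^{r_1}_x},
\end{equation*}
which after a Littlewood--Paley dyadic decomposition and scaling is controlled by the dispersive estimate $\|e^{it|\nabla|}P_1 h\|_{L^\infty_x} \lesssim |t|^{-(n-1)/2}\|h\|_{L^1_x}$ interpolated with the trivial $L^2$-conservation estimate. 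Hardy--Littlewood--Sobolev in the time variable then yields the claim precisely on the admissible range in definition \ref{defi:strichartzAx}, with the scaling relation \eqref{eq:gapconditionAx} being exactly the one that makes the HLS exponents work.

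For the inhomogeneous piece, the same $TT^*$ machinery applied between two distinct admissible pairs $(q_1,r_1)$ and $(q_2,r_2)$ gives the full bilinear range; this is the technical core of Keel--Tao, where the off-diagonal case (as opposed to the diagonal $q_1=q_2$, $r_1=r_2$) requires a dyadic decomposition of the time integral into $|t-s|\sim 2^k$ pieces and a careful atomic decomposition to recover the endpoint-type behaviour. Finally, one must check that the admissibility exclusion $(q,r,n)\neq(2,\infty,3)$ and $r<\infty$ in definition \ref{defi:strichartzAx} matches exactly the range where this argument succeeds, the excluded endpoint requiring more delicate tools.

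The main obstacle is the sharp endpoint behaviour and the off-diagonal inhomogeneous case; however, since the excluded endpoints are already removed from definition \ref{defi:strichartzAx}, these difficulties do not arise in our setting, and the result is in fact a direct citation of the Keel--Tao theorem, so in practice the proof reduces to verifying that the scaling relation \eqref{eq:gapconditionAx} together with the admissibility is precisely what is needed to invoke that theorem.
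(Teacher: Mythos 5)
Your proposal is correct and, in substance, matches the paper: the paper offers no argument of its own here, simply citing the classical reference, and your sketch (Duhamel splitting, $TT^*$ reduction via the dispersive estimate and Hardy--Littlewood--Sobolev, with the Keel--Tao machinery for the off-diagonal inhomogeneous case) is exactly the standard proof underlying that citation, with the scaling relation \eqref{eq:gapconditionAx} and the admissibility conditions of definition \ref{defi:strichartzAx} being the hypotheses needed to invoke it. Since you yourself conclude that the statement is a direct appeal to the known theorem, your treatment is consistent with the paper's.
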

\begin{proof}
      From \cite{zbMATH03579251} and \cite{zbMATH01215570}.
\end{proof}

\subsection{Product estimates}
\label{subsection:prodesti}
We recall the following Sobolev product estimate.\\
\begin{propal}
\label{propal:holdsobAx}
    Let $n\in\mathbb{N}^\star$ be the considered dimension. Let $1<p<\infty$, $r,s\in\mathbb{R}$, $r,s<\frac{n}{p}$, be such that $r+s>\max\left(\frac{n}{p}-\frac{n}{p'},0\right)$ for $\frac{1}{p}+\frac{1}{p'}=1$. If $p>2$, we additionally assume that either $f\in W^{p',-r}$ or $g\in W^{p',-s}$. Then, we have 
\begin{align}
\label{eq:holdsobAx}
    ||fg||_{W^{p,t}}\leq ||f||_{W^{p,r}}||g||_{W^{p,s}},
\end{align}
for $t=r+s-\frac{n}{p}$.
\end{propal}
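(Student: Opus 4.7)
The plan is to establish this as a standard consequence of Littlewood--Paley theory combined with Bony's paradifferential decomposition. Writing $f = \sum_j \Delta_j f$ and $g = \sum_k \Delta_k g$ for the usual dyadic Littlewood--Paley projectors, I would split the product as $fg = T_f g + T_g f + R(f,g)$, where $T_f g = \sum_k S_{k-3}f\,\Delta_k g$ is the paraproduct and $R(f,g) = \sum_{|j-k|\leq 2}\Delta_j f\,\Delta_k g$ is the resonant (high--high) remainder. Each of these pieces will then be estimated separately in $W^{p,t}$ by bounding the dyadic pieces in $L^p$ and resumming via the characterization $\|u\|_{W^{p,t}}^p \sim \|S_0 u\|_{L^p}^p + \sum_j 2^{jtp}\|\Delta_j u\|_{L^p}^p$.

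For the two paraproduct pieces $T_g f$ and $T_f g$, I would first invoke the Bernstein inequality to convert frequency localization into integrability, using the hypothesis $s < n/p$ (resp.\ $r < n/p$) to ensure that $S_{k-3} g$ (resp.\ $S_{k-3} f$) is bounded in $L^\infty$ (or in the relevant $L^q$ spaces coming from the endpoint condition, which is exactly what the auxiliary $W^{p',-r}$ hypothesis controls in the $p > 2$ regime by duality). After a Hölder argument at each dyadic level, the sum over $k$ is geometric and produces the claimed $W^{p,t}$ bound with $t = r+s-n/p$.

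The main obstacle is the resonant term $R(f,g)$. Here each summand $\Delta_j f\,\Delta_k g$ has frequencies comparable to $2^k$, so after summing in $j\sim k$ and applying Bernstein to pass from $L^{p/2}$-type estimates back to $L^p$, one inherits a loss of $n/p - n/p'$ derivatives. This is exactly why the hypothesis $r+s > \max(n/p - n/p', 0)$ appears: it is the sharp threshold needed to guarantee convergence of the geometric sum $\sum_k 2^{k(n/p - n/p' - r - s + t)}$ coming from the resonant block. Summing the three contributions yields the stated estimate, and the sharp endpoints $r,s < n/p$ and $r+s > \max(n/p - n/p',0)$ are seen to be natural. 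Alternatively, the whole statement can be deduced from the Coifman--Meyer / Kato--Ponce fractional Leibniz rule combined with the Sobolev embedding $W^{p,r}\hookrightarrow L^{a}$ with $1/a = 1/p - r/n$, the scaling relation $t = r+s - n/p$ being forced by homogeneity.
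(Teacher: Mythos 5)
The paper does not actually prove this proposition: it is imported verbatim from the literature (the proof in the text is a one-line citation). So your paraproduct argument is not a variant of the paper's proof but an attempt at a self-contained one, and the skeleton you chose — Bony decomposition $fg=T_fg+T_gf+R(f,g)$, Bernstein on the low-frequency factor in the paraproducts, and a geometric summation for the resonant block — is indeed the standard route by which such multiplication theorems are proved in the references. In that sense the strategy is sound, and you correctly locate where $r,s<\frac{n}{p}$ and the threshold $r+s>\max(\frac{n}{p}-\frac{n}{p'},0)$ should enter.

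Several steps, however, are wrong or missing as written. First, the characterization $\|u\|_{W^{p,t}}^p\sim\|S_0u\|_{L^p}^p+\sum_j2^{jtp}\|\Delta_ju\|_{L^p}^p$ is the Besov $B^t_{p,p}$ norm; for $p\neq2$ the spaces in the statement require the square-function characterization $\|(\sum_j4^{jt}|\Delta_ju|^2)^{1/2}\|_{L^p}$, and passing between the two is exactly where the fine $p$-dependence of the theorem (including the extra hypothesis for $p>2$) lives, so this cannot be glossed over. Second, $s<\frac{n}{p}$ does not make $S_{k-3}g$ bounded in $L^\infty$ — it is the opposite regime; what it gives, via Bernstein, is the controlled growth $\|S_{k-3}g\|_{L^\infty}\lesssim2^{k(n/p-s)}\|g\|_{W^{p,s}}$, and that growth is precisely what produces $t=r+s-\frac{n}{p}$ instead of $t=r$. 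Third, your account of the resonant term conflates the two regimes: the H\"older-into-$L^{p/2}$ plus Bernstein route is only available for $p\ge2$ and yields the threshold $r+s>0$; the threshold $\frac{n}{p}-\frac{n}{p'}$ arises for $p<2$ through a different pairing (e.g.\ $L^p\times L^{p'}\to L^1$ and Bernstein $L^1\to L^p$). Fourth, the auxiliary hypothesis for $p>2$ (finiteness of $\|f\|_{W^{p',-r}}$ or $\|g\|_{W^{p',-s}}$) is never actually used in your argument; "by duality" is a gesture, yet this hypothesis is needed, in particular to define and estimate the product when $t<0$ (which is the case in the paper's applications, e.g.\ products landing in $H^{-1/2}$) through a companion dual estimate. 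Finally, the proposed shortcut via Kato--Ponce plus Sobolev embedding only covers nonnegative regularities and cannot reproduce either the $\max(\frac{n}{p}-\frac{n}{p'},0)$ condition or the $p>2$ proviso, so it is not an admissible alternative for the full statement.
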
 
\begin{proof}
    From \cite{zbMATH01584967}.
\end{proof} 
We now state two lemmas providing product estimates based on Strichartz inequalities.\\
\begin{lem}
\label{lem:lemma1Ax}
    Let $f$ and $g$ be two functions defined on $[0,T']\times\mathbb{R}^3$ for $T'>0$. 
    For $\nu\in(0,1/2)$ and $p=\frac{1}{1-\nu}$, the following inequality holds 
    \begin{align}
        ||fg||_{L^1([0,T'],L^2)}\leq C||f||_{L^p([0,T'],H^{1/2-\nu})}(||\Box g||_{L^1([0,T'],L^2)}+||g(0)||_{\dot{H}^1}+||\partial_tg(0)||_{L^2}).
    \end{align}
\end{lem}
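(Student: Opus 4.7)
The plan is a standard H\"older--Sobolev--Strichartz sandwich, exploiting that the RHS control quantity $\|\Box g\|_{L^1_t L^2_x}+\|g(0)\|_{\dot H^1}+\|\partial_t g(0)\|_{L^2}$ is precisely what appears on the right of a Strichartz estimate \ref{unTheorem:StrichartzAx} for the wave equation at regularity $\gamma=1$. Thus I would like to place $g$ in some mixed Lebesgue space $L^q_t L^{r_2}_x$ reached by such an estimate, and compensate by placing $f$ in the dual space $L^p_t L^{r_1}_x$ with $1/p+1/q=1$ and $1/r_1+1/r_2=1/2$, before using a Sobolev embedding $H^{1/2-\nu}\hookrightarrow L^{r_1}$ to replace $L^{r_1}$ by the Sobolev norm appearing in the statement.

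Concretely, I would first apply H\"older in space and in time to get $\|fg\|_{L^1_t L^2_x}\leq \|f\|_{L^p_t L^{r_1}_x}\|g\|_{L^q_t L^{r_2}_x}$. Then I would fix the exponents by the Sobolev embedding requirement $1/r_1=1/2-(1/2-\nu)/3=1/3+\nu/3$ (valid in $n=3$ when $\nu\in(0,1/2)$), which forces $1/r_2=1/6-\nu/3$. With this choice, the Strichartz gap relation \eqref{eq:gapconditionAx} at $\gamma=1$ becomes $1/q+3/r_2=1/2$, giving $q=1/\nu$ and $p=1/(1-\nu)$. A direct check shows that the pair $(q,r_2)=(1/\nu,6/(1-2\nu))$ is Strichartz admissible in dimension $3$ precisely for $\nu\in(0,1/2)$: the condition $2/q\leq (n-1)(1/2-1/r_2)$ saturates exactly at the endpoint $\nu=1/2$ (excluded because it corresponds to the forbidden triple $(2,\infty,3)$), while $q\geq 2$ and $r_2<\infty$ both hold strictly in the open interval.

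With these exponents I would invoke Theorem \ref{unTheorem:StrichartzAx} (with source pair $(q'_2,r'_2)=(1,2)$, which satisfies the gap relation $1/1+n/2-2=n/2-1=3/2-\gamma$ for $\gamma=1$ in $n=3$) to bound
\[
\|g\|_{L^q_t L^{r_2}_x}\leq C\bigl(\|\Box g\|_{L^1_t L^2_x}+\|g(0)\|_{\dot H^1}+\|\partial_t g(0)\|_{L^2}\bigr),
\]
and the Sobolev embedding Proposition \ref{propal:embedsoboAx} to bound $\|f(t)\|_{L^{r_1}}\leq C\|f(t)\|_{H^{1/2-\nu}}$ pointwise in $t$, which after taking the $L^p$ norm in time gives $\|f\|_{L^p_t L^{r_1}_x}\leq C\|f\|_{L^p_t H^{1/2-\nu}}$. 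Combining the three estimates produces the claimed inequality.

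There is no real obstacle here; the only point that requires care is the arithmetic verifying that, for $\nu\in(0,1/2)$, the chosen pair $(q,r_2)$ is genuinely Strichartz admissible and satisfies the gap condition for $\gamma=1$ in dimension $3$. The value of $p$ advertised in the statement as ``given by $\nu>0$'' is then $p=1/(1-\nu)$, determined uniquely by H\"older duality from these admissibility constraints.
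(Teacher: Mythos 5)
Your proposal is correct and follows essentially the same route as the paper: Hölder in space--time, the Sobolev embedding $H^{1/2-\nu}\hookrightarrow L^{3/(1+\nu)}$ applied to $f$, and the Strichartz estimate at $\gamma=1$ with dual source pair $(q_2',r_2')=(1,2)$ applied to $g$, yielding exactly the paper's exponents ($r_1=\alpha=3/(1+\nu)$, $r_2=\beta=6/(1-2\nu)$, $q=p'=1/\nu$, $p=1/(1-\nu)$). Your explicit verification of the admissibility of $(q,r_2)$ for $\nu\in(0,1/2)$ is a welcome detail the paper leaves implicit, but it does not change the argument.
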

\begin{proof} 
For $\frac{1}{p}+\frac{1}{p'}=1$, we define $2<\alpha=\frac{3}{1+\nu}<3$ and $\frac{1}{2}=\frac{1}{\alpha}+\frac{1}{\beta}$. Then,
\begin{align*}
    ||fg||_{L^1([0,T'],L^2)}&\leq ||f||_{L^p([0,T'],L^{\alpha})}||g||_{L^{p'}([0,T'],L^{\beta})}\\
    &\leq||f||_{L^p([0,T'],H^{1/2-\nu})}||g||_{L^{p'}([0,T'],L^{\beta})}.
\end{align*}
We set $(r_2=2,q_2=\infty)$, which is Strichartz admissible. We look for $p'$ such that $(\beta,p')$ is Strichartz admissible too and satisfies
\begin{equation*}
    \frac{1}{p'}+\frac{3}{\beta}=\frac{1}{q^{\prime}_2}+\frac{3}{r^{\prime}_2}-2=\frac{1}{2}.
\end{equation*}
This implies that $p'=\frac{\alpha}{3-\alpha}$ and so $p=\frac{\alpha}{2\alpha-3}=\frac{1}{1-\nu}$. Applying Strichartz estimates to $||g||_{L^{p'}([0,T'],L^{\beta})}$ leads to the desired inequality.
\end{proof}
\begin{lem}
\label{lem:lemma2Ax}
   Let $f$ and $g$ be two functions defined on $[0,T']\times\mathbb{R}^3$ for $T'>0$. 
    For $\theta\in(0,1/2)$, $p=2(1-\theta)$ and $p'=\frac{2(1-\theta)}{1-2\theta}$, the following inequality holds 
\begin{align*}
    ||fg||_{L^1([0,T'],L^2)}\leq ||f||_{L^p([0,T'],L^2)}(||g||_{L^{p'}([0,T'],H^2)})^\theta(||\Box g||_{L^1([0,T'],L^2)}+||g(0)||_{\dot{H}^1}+||\partial_tg(0)||_{L^2})^{1-\theta}.
\end{align*}
\end{lem}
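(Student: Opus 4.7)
The plan is to combine a spatial Hölder inequality, a Gagliardo--Nirenberg interpolation in space, a three-factor Hölder in time, and finally the Strichartz estimate of Theorem~\ref{unTheorem:StrichartzAx} with $\gamma=1$. Concretely, I first write $\|fg(t)\|_{L^2_x}\leq \|f(t)\|_{L^2_x}\|g(t)\|_{L^\infty_x}$, so integrating in time reduces everything to estimating $\|g\|_{L^\infty_x}$ in a mixed norm. The key idea is that $\|g\|_{L^\infty_x}$ will be split into a regularity piece (controlled in $H^2$, containing the two derivatives we are missing) and a smallness piece (controlled by Strichartz, from which the $\Box g$ bound enters).

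Second, I would apply the Gagliardo--Nirenberg inequality of Proposition~\ref{propal:gagliardoAx} in the form
\[
\|g\|_{L^\infty_x}\leq C\,\|g\|_{H^2_x}^{\theta}\,\|g\|_{L^b_x}^{1-\theta},\qquad b=\frac{6(1-\theta)}{\theta},
\]
valid in $\mathbb{R}^3$ for $\theta\in(0,1/2)$ (the scaling $0=\theta(1/2-2/3)+(1-\theta)/b$ fixes $b$; note $b<\infty$ requires $\theta>0$). Then I apply Hölder in time with three factors, writing $\tfrac1p+\tfrac{\theta}{q}+\tfrac{1-\theta}{a}=1$, where the exponent $a$ is designed to be the Strichartz-admissible time exponent matched with the spatial exponent $b$.

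Third, I verify that the pair $(a,b)$ with $a=\frac{2(1-\theta)}{1-2\theta}$ satisfies the Strichartz gap condition $\frac{1}{a}+\frac{3}{b}=\frac{1}{2}$ (which corresponds to $\gamma=1$), together with the admissibility conditions of Definition~\ref{defi:strichartzAx}: $a\geq2$ reduces to $\theta\geq0$, and the $3$-dimensional constraint $\frac{2}{a}\leq 2(\frac{1}{2}-\frac{1}{b})$ reduces to $\frac{1}{3}\leq1$; the edge cases $b=\infty$ or $a=\infty$ are excluded by $\theta\in(0,1/2)$. Theorem~\ref{unTheorem:StrichartzAx} applied with $(q_1,r_1)=(a,b)$ and $(q_2,r_2)=(\infty,2)$ (which is admissible with $\frac{1}{q_2'}+\frac{3}{r_2'}-2=\frac{1}{2}=\frac{3}{2}-1$) yields
\[
\|g\|_{L^a_t L^b_x}\leq C\bigl(\|\Box g\|_{L^1_tL^2_x}+\|g(0)\|_{\dot H^1}+\|\partial_t g(0)\|_{L^2}\bigr).
\]
Combining Gagliardo--Nirenberg, the three-factor time-Hölder, and this Strichartz bound produces the inequality in the lemma, with $p$ and $q$ determined by $\theta$ through the Hölder relation above.

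The main technical obstacle is ensuring that the triple of exponents $(p,q,a)$ dictated by the Hölder relation and the Strichartz admissibility constraints can be chosen consistently for every $\theta\in(0,1/2)$; once the explicit formulas $b=6(1-\theta)/\theta$ and $a=2(1-\theta)/(1-2\theta)$ are written down, everything else is a direct verification of admissibility and a routine application of Hölder. No smallness manipulation or commutator analysis is needed; the estimate is genuinely a dispersive trade of regularity against the $L^p_t$ norm of $f$.
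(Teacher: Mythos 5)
Your proof is correct and follows essentially the same route as the paper: spatial H\"older $\|fg\|_{L^2}\leq\|f\|_{L^2}\|g\|_{L^\infty}$, Gagliardo--Nirenberg interpolation of $\|g\|_{L^\infty}$ between $H^2$ and $L^{b}$ with $b=6(1-\theta)/\theta$, a three-factor H\"older in time, and the Strichartz estimate with $(q_2,r_2)=(\infty,2)$ and $\gamma=1$. Your explicit admissibility check with the Strichartz time exponent $a=\tfrac{2(1-\theta)}{1-2\theta}$ is in fact a cleaner bookkeeping than the paper's (whose stated $p'=\tfrac{2}{1-\theta}$ gives a Strichartz time exponent $2$ that does not quite match the gap condition), and since the lemma only asserts that $p,q$ depend on $\theta$, your choice is fully consistent with the statement.
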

\begin{proof}
First, Hölder inequalities yield
\begin{align}
        ||fg||_{L^1([0,T'],L^2)}\leq ||f||_{L^p([0,T'],L^2)}||g||_{L^{p'}([0,T'],L^{\infty})},
\end{align}
Using the Gagliardo–Nirenberg inequality from Proposition \ref{propal:gagliardoAx}, we get 
\begin{align}
        ||fg||_{L^1([0,T'],L^2)}\leq ||f||_{L^p([0,T'],L^2)}(||g||_{L^{p'}([0,T'],H^2)})^\theta(||g||_{L^{p'}([0,T'],L^{r_1})})^{1-\theta},
\end{align}
where $r_1=6\left(\frac{1-\theta}{\theta}\right)>6$ with $\theta\in(0,1/2)$. We set $(r_2=2,q_2=\infty)$ and $p_1=p'$ and search for $(r_1,p_1)$ Strichartz admissible with 
\begin{equation*}
    \frac{1}{p_1}+\frac{3}{r_1}=\frac{1}{q^{\prime}_2}+\frac{3}{r^{\prime}_2}-2=\frac{1}{2}.
\end{equation*}
This yields $p'=\frac{2(1-\theta)}{1-2\theta}$ so that $p=2(1-\theta)$. Applying Strichartz estimates to $||g||_{L^{p'}([0,T'],L^{r_1})}$ leads to the desired inequality.
\end{proof}

\printbibliography
\end{document}